\documentclass{article}
\usepackage{arxiv}
\usepackage{amsthm,amsmath,amsfonts,amssymb}

\DeclareMathOperator{\proj}{proj}

\DeclareMathOperator{\RLT}{RLT}
\DeclareMathOperator{\conv}{conv}
\DeclareMathOperator{\conc}{conc}
\DeclareMathOperator{\interior}{int}
\DeclareMathOperator{\graph}{gr}

\DeclareMathOperator{\vertex}{vert}

\DeclareMathOperator{\hypo}{hyp}

\DeclareMathOperator{\R}{\mathbb{R}}
\DeclareMathOperator{\Z}{\mathbb{Z}}

\DeclareMathOperator{\ri}{ri}
\DeclareMathOperator{\PPolytope}{AU}

\newtheorem{theorem}{Theorem}[section]
\newtheorem{corollary}{Corollary}[section]
\newtheorem{lemma}{Lemma}[section]
\newtheorem{proposition}{Proposition}[section]
\newtheorem{remark}{Remark}[section]
\newtheorem{example}{Example}[section]
\newtheorem{definition}{Definition}[section]
\usepackage{mathrsfs}
\usepackage{listofitems}
\usepackage{graphicx}
\usepackage{comment}
\excludecomment{OldVersion}
\usepackage{bbm}
\usepackage{blindtext}
\usepackage{stmaryrd}

\usepackage[inline]{enumitem}
\usepackage{tikz, tikz-3dplot}
\usepackage{tikz-qtree}
\usetikzlibrary{positioning,calc}
\usepackage{etoolbox}
\usepackage{xcolor}
\usepackage{algorithm}
\usepackage{algpseudocode}
\usepackage{amsmath,amsfonts}
\usepackage{ulem}
\usepackage{todonotes}
\usepackage{relsize}
\usepackage{caption}
\captionsetup{compatibility=false}
\usepackage{subcaption}
\usepackage{amssymb}
\usepackage[default]{mycolor}
\usepackage{appendix}

\newboolean{paper1}
\newboolean{paper2}
\newboolean{paper3}
\boolfalse{paper1}
\boolfalse{paper2}
\booltrue{paper3}

% Natbib setup for numeric style
%\usepackage{natbib}

%% Hyperref setup
%\usepackage[colorlinks=true,breaklinks=true,bookmarks=true,urlcolor=links,citecolor=links,linkcolor=links,bookmarksopen=false,draft=false]{hyperref}

%\def\EMAIL#1{\href{mailto:#1}{#1}}% When hyperref is used, otherwise outcomment
%\def\URL#1{\href{#1}{#1}}         % When hyperref is used, otherwise outcomment

% Additional packages
%\usepackage{tikz}
\usepackage{algorithm}
\usepackage{algpseudocode}
\usepackage{amsmath,amsfonts}
\usepackage{microtype}
\usepackage{relsize}
\usepackage{caption}
\usepackage{subcaption}

\newenvironment{compositeproof}{\begin{proof}}{\end{proof}}

\let\Halmos\qed

\title{Extracting structure from functional expressions for continuous and discrete relaxations of MINLPs}
\author{Taotao He\\
Antai College of Economics and Management\\ Shanghai Jiao Tong University \\
\texttt{hetaotao@sjtu.edu.cn} 
\And
Mohit Tawarmalani\\
Krannert School of Management\\
Purdue University \\
\texttt{mtawarma@purdue.edu}
}
\date{}
\def\Halmos{\ensuremath{\square}}

\begin{document}
\maketitle
\begin{abstract}%
%In this paper, we develop new discrete relaxations for nonlinear expressions in an mixed-integer nonlinear programming. We utilize specialized convexification results as well as composite relaxations to develop mixed-integer programming relaxations. Our relaxations rely on ideal formulations of convex hulls of outer-functions over a combinatorial structure that captures local inner-function structure. The resulting relaxations often require fewer variables and are tighter than those currently prevalent. Finally, we provide computational evidence to demonstrate that our relaxations close approximately \textbf{70\%} of the gap relative to state-of-the-art solvers on various instances involving polynomial functions.

In this paper, we develop new continuous and discrete relaxations for nonlinear expressions in an MINLP. In contrast to factorable programming, our techniques utilize the inner-function structure by encapsulating it in a polyhedral set, using a technique first proposed in~\cite{he2021new}. We tighten the relaxations derived in~\cite{tawarmalani2013explicit,he2021tractable} and obtain new relaxations for functions that could not be treated using prior techniques. We develop new discretization-based mixed-integer programming relaxations that yield tighter relaxations than similar relaxations in the literature. These relaxations utilize the simplotope that captures inner-function structure to generalize the incremental formulation of~\cite{dantzig1960significance} to multivariate functions. In partic- ular, when the outer-function is supermodular, our formulations require exponen- tially fewer continuous variables than any previously known formulation.
% Enter your abstract
\keywords{Factorable programming  \and Staircase triangulation \and Incremental formulation \and MIP relaxations \and Supermodularity}
\end{abstract}%

\keywords{Mixed-integer nonlinear programs; Factorable programming; Supermodularity; Staircase triangulation; Convexification via optimal transport}

%------- shortcuts-------------
\def \mcirc {\mathop{\circ}}

\def \CR {R} % composite relaxation
\def \J {{\bar{J}}} %complement of subset J
\def \s {{\bar{s}}} %fixed s
\def \u {{\bar{u}}} %fixed u
\def \p {{\bar{\phi}}} %fixed phi
\def \z {{\bar{z}}} %fixed z
\def \ts {{\tilde{s}}}
\def \tu {{\tilde{u}}}
\def \ta {{\tilde{\alpha}}}
\def \tb {{\tilde{b}}}
\def \mJ {{\mathcal{J}}} %collection of indexes
\def \mX {{\mathcal{X}}}
%\section{Contributions}
%In this paper, we introduce a notation, staircase expansion and stepwise relaxation, to construct relaxations for nonconvex programs. To manifest its usefulness and importance, we consider two applications. First, combined with linearization, this notion leads to a hierarchy of relaxations for polynomial programs. Our scheme generates higher degree of polynomials by taking compositions of polynomials and functions in the original problem. For any given higher degree, our approach generates valid inequalities for the original problem while having a full control over the number of introduced variables, spanning from few additional variables for compositions to variables for monomials up to the given degree. When all monomials are linearized, our scheme recovers all RLT inequalities. In addition, RLT relaxation can be strengthened by even lower-level inequalities of our scheme. Second, discretization can be seamlessly incorporated into our scheme. If outer-functions are multilinear, we provide exponential-size MIP formulations, which generalizes a result of. In addition, if outer-functions are multilinear and supermodular, tractable MIP formulations are provided.  
\section{Introduction}
Mixed-integer nonlinear programming (MINLP) algorithms and software rely on factorable programming techniques and convexification results for specially structured functions to relax composite functions 
\cite{mccormick1976computability,tawarmalani2004global,belotti2009branching,misener2014antigone,vigerske2018scip,nagarajan2019adaptive}. Classical factorable programming does not utilize any structural information on the inner function besides bounds~\cite{mccormick1976computability}, while special-structured results are only available for specific function types~\cite{rikun1997convex,sherali1997convex,benson2004concave,meyer2004trilinear}. The approach introduced recently in \cite{he2021new,he2021tractable} develops a framework that lies between these extremes. The technique allows the inner-function structure to be exploited in a completely generic fashion without the need for special-structure identification \cite{he2021new} and develops tractable relaxations when outer-function satisfies certain technical conditions \cite{he2021tractable}.

In the first part of the paper, we extend the composite relaxation technique of \cite{he2021new,he2021tractable} to develop new insights and tighter relaxations. This development relies on an alternate simpler proof of validity for the cuts which were recently derived in~\cite{he2021tractable} for the case when the outer-function is supermodular. With this new proof, the main convex hull result of~\cite{he2021tractable} follows by verifying that the affine cuts interpolate the function on one of the triangulations of the simplotope.
%: (i) the developed cuts are affine, (ii) the domain is a simplotope which has a staircase triangulation, and (iii) each of the cuts interpolates the function on one of the simplices in this triangulation. 
%An immediate corollary is the well-known result that Lov\'asz extension is the convex envelope of supermodular concave-extendable functions over the 0-1 hypercube~\cite{lovasz1983submodular,tawarmalani2013explicit}. 
The proof also reveals various extensions. First, the main argument does not rely on the concave-extendability, a key assumption in \cite{he2021tractable}.
%of the outer-function and, therefore, extends to more general composite functions than are treated in \cite{he2021tractable}. 
%The main achievement of this result is that it allows an $n$-dimensional composite function to be relaxed as a sum of lower-dimensional functions. 
%The result in~\cite{he2021tractable} when the one-dimensional functions are relaxed in a particular way. 
Second, we are able to exploit even more structure of the composite function to develop tighter relaxations for certain functions than in \cite{he2021tractable}. 
%We show that this is possible because there may be better ways to relax the one-dimensional function than the implicit relaxation used in \cite{he2021tractable}.
Third, we show that tighter relaxations can be derived by using bounding functions instead of bounds for underestimators. Fourth, we consider extensions where inner-functions are replaced with vectors of functions.

%we can also treat cases where inequalities are replaced with a general partial order, and bounds casupeinequality relationships in the earlier construction can be replaced with partial orders, thereby allowing   reveals the role played by various assumptions in the construction of relaxations. This allows us to relax the Kronecker product of matrices using chains of matrices, each of which is dominated by the successive matrix in Loewner order.

%This is possible We show Even specializing to a large class of concave-extendable outer-functions, we show that it is possible to relax these one-dimensional functions in better ways can produce relaxations that are strictly tighter than the ones obtained using the techniques of~\cite{he2021tractable}.

In the second part of the paper, we use discretization schemes to derive MIP relaxations of nonlinear programs. The progress in mixed-integer programming (MIP) has sparked significant interest in such relaxations~\cite{misener2011apogee,misener2012global,gupte2013solving,huchette2019combinatorial,nagarajan2019adaptive}.
%[82, 69, 52, 171] that incorporate many advanced techniques [1, 2, 25, 23, 92, 112, 24]
In contrast to our techniques, following the factorable programming paradigm, earlier MIP relaxations decompose composite functions into univariate and multilinear functions and then discretize the multilinear expressions~\cite{misener2011apogee,misener2012global,huchette2019combinatorial}. Unfortunately, the decomposition step abstracts away much of the nonlinear structure. 
%Instead, we develop new ways to exploit the nonlinear structure during discretization.
%This is possible because our new continuous relaxations are particularly suited for discretization. 
Instead, we model the inner function structure using a simplotope, a set that generalizes the incremental simplex often used to derive ideal formulations of piecewise-linear univariate functions~\cite{dantzig1960significance}. Then, we derive new relaxations utilizing the convex/concave envelopes of the outer-function over this simplotope. We show that these relaxations are tighter than those obtained using prevalent discretization schemes. Moreover, when the envelopes are polynomially separable, our relaxations require exponentially fewer continuous variables. Interestingly, ideality of our relaxations is guaranteed when the envelopes are extendable from vertices. Finally, the relaxations we construct, upon branching, automatically tighten by using the local bounding information for underestimators of inner-functions.

The paper is laid out as follows. First, in Section~\ref{section:exp}, we give a new validity proof of inequalities in \cite{he2021tractable}. Then, we discuss various extensions and generalizations.
%express a composite function as a telescoping sum in terms of estimators of inner-functions, and then relax each summand using bounds on estimators. This gives t family of inequalities. 
%Later, in Section~\ref{section:stair-convex}, the procedure is modified to treat functions where the outer-function is not concave-extendable so that the proof technique of \cite{he2021tractable} no longer applies, and also to tighten the convex relaxations derived in~\cite{he2021tractable} for various settings. 
%In Section~\ref{section:matrix_ineq}, we extend the procedure to derive relaxations for the Kronecker product of two matrices of functions. 
%In Section~\ref{section:RLT-Q}, we relate our relaxation techniques with the Reformulation-Linearization Technique (RLT)~\cite{sherali2013reformulation}. 
In Section~\ref{section:discrete-relaxations}, we construct MIP relaxations for composite functions. This construction generalizes the incremental formulation~\cite{dantzig1960significance} from one-dimensional setting to an $n$-dimensional setting by using the proof technique of Section~\ref{section:exp}. We show various structural properties of our constructions.
%Results from continuous relaxations in Section~\ref{section:exp} benefit our proposed MIP relaxations in two ways.
First, we show that our relaxations strengthen prevalent MIP relaxations by exploiting inner-function structure. Second, we show that whenever the outer-function can be relaxed over a simplotope, an MIP discretization can surprisingly be obtained by simply appending a few inequalities involving binary variables. Third, we provide a new model of the simplotope that is structured to exploit local bounding information for the underestimators of inner-functions over each discrete piece. 
%This makes the discretization scheme amenable to use by general-purpose solvers. %In this formulation, the binary variables serve to limit consideration to a face of the simplotope. 

\paragraph{\bf{Notation.}} 
%Throughout this paper we use the following notation.
We shall denote the convex hull of set $S$ by $\conv(S)$, the projection of a set $S$ to the space of $x$ variables by $\proj_x(S)$, the extreme points of $S$ by $\vertex(S)$, and the convex (resp. concave) envelope of $f(\cdot)$ over $S$ by $\conv_{S}(f)(\cdot)$ (resp. $\conc_S(f)(\cdot)$). Let $g:D \to \R^m$ be a vector of functions. The restriction of $g$ to a subset of $S$ of $D$ is defined as:
\[
g|_S = \begin{cases}
	g(x) & x \in S \\
	-\infty & \text{otherwise}.
\end{cases}
\]
The hypograph of $g$ is defined as $\hypo(g): =\{ (x,\mu) \mid \mu \geq g(x), x \in D \}$, and the graph of $g$ is defined as $\graph(g): =\{ (x,\mu) \mid \mu =  g(x), x \in D \}$.
\section{Staircase expansions and termwise relaxations}\label{section:exp}
\def\L{\mathbb{L}}
\def\G{\mathcal{G}}
\def\extreme{\mathop{\text{ext}}}
\def\Qsimplex{\upsilon}
\def\V{\mathcal{V}}
\def\one{\mathbbm{1}}
\def\sfunct{\eta}
\def\DO{\mathcal{D}}
\def\BO{\mathcal{B}}
\def\mcirc{\mathop{\circ}}

Consider a composite function $\phi \mcirc f \colon X \subseteq \R^m \to \R$ defined as $(\phi \mcirc f)(x) = \phi\bigl(f(x)\bigr)$. Here, for $x  \in X$, $f(x) : = \bigl(f_1(x), \ldots, f_d(x) \bigr)$, where each $f_i \colon \R^m \to \R$ will be referred to as \textit{inner-function} while $\phi \colon \R^d \to \R$ will be referred to as the \textit{outer-function}.  In this section, we are interested in deriving overestimators for the composite function $\phi \mcirc f$. Towards this end, we will first express $\phi \mcirc f $ as a telescoping sum in a specific way, and then overestimate each summand. 
%Later, in Section~\ref{section:stair-convex}, we show that this procedure can deal with case beyond the setting in~\cite{tawarmalani2013explicit}. We show that this procedure gives short validity proofs for Lov\'asz extension and various other inequalities that have appeared in~\cite{lovasz1983submodular,he2021new,he2021tractable}. Moreover, in Section~\ref{section:RLT-Q}, we investigate relations between our relaxation techniques and Reformulation-Linearization techniques (RLT)~\cite{sherali2013reformulation}. 
In order to telescope the composite function $\phi \mcirc f $, we will use the following information on the structure of inner-functions $f(\cdot)$. First, we assume that inner-functions $f( \cdot)$ are bounded, \textit{i.e.}, for every $x \in X$, $f(x) \in [f^L,f^U]$ for some vectors $f^L$ and $f^U$ in $\R^d$.
% Without loss of generality, we further assume that $[f^L, f^U] = [f^L,f^U]$. Otherwise, we redefine the inner-functions as $g_i(x) = \bigl(f_i(x) -f_i^L\bigr)/(f_i^U -f_i^L)$ and the outer-function as $\psi(f) = \phi \bigl( (f_1^U -f_1^L)f_1 + f_1^L, \ldots, (f_d^U -f_d^L)f_d + f_d^L \bigr)$. 
 Second, we assume that there is a vector of bounded underestimators of the inner-function $f_i(\cdot)$. More precisely, let $(n_1,\ldots,n_d)\in \Z^d$, we assume that $u: \R^m \to \R^{\sum_{i = 1}^d(n_i+1)}$ and $a: \R^m \to \R^{\sum_{i = 1}^d(n_i+1)}$ are function vectors that satisfy the following inequalities for all $x \in X$
\begin{equation}\label{eq:ordered-oa}
	\begin{aligned}
		&f^L_i \leq a_{i0}(x) \leq \ldots \leq a_{in_i}(x) \leq f^U_i,\\
		&u_{ij}(x) \leq \min \bigl\{f_i(x), a_{ij}(x)\bigr\} \qquad \text{for all } j \in \{0, \ldots, n_i\}, \\
		&u_{i0}(x) = a_{i0}(x), u_{in_i}(x) = f_{i}(x).
	\end{aligned}
\end{equation} 
We refer to $\bigl(u_{ij}(\cdot)\bigr)_{j = 0}^{n_i}$ as underestimators and $\bigl(a_{ij}(\cdot)\bigr)_{j = 0}^{n_i}$ as bounding functions. Constraint~(\ref{eq:ordered-oa}), inspired from \cite{he2021new}, requires that, for $i \in \{1, \ldots, d\}$ and $x\in X$, the pair $\bigl(u_i(\cdot),a_i(\cdot)\bigr)$ satisfies the following conditions: \begin{enumerate*}[label=(\roman*)]
	\item the bounding functions $\bigl(a_{ij}(\cdot)\bigr)_{j = 0}^{n_i}$ increase with $j$;
	\item for $j \in \{0, \ldots, n_i\}$, $u_{ij}(\cdot)$ underestimates $f_i(\cdot)$ and $a_{ij}(\cdot)$; and
	\item the first underestimator $u_{i0}(\cdot)$ (resp. the last underestimator $u_{in_i}(\cdot)$) matches the smallest bounding function $a_{i0}(\cdot)$ (resp. the inner-function $f_i(\cdot)$).
\end{enumerate*}
For notational convenience, we assume without loss of generality that $n_1 = \cdots = n_d := n$ for some $n \geq 1$.  We remark that it is shown in \cite{he2021new} that, if $(o_{ij}(x),a_{ij}(x))$ is such that $o_{ij}(x)\ge \min\{a_{ij}(x), f_i(x)\}$ then this information can be used within \eqref{eq:ordered-oa} by using $(u_{ij}(x),a_{ij}(x))$ instead, where $u_{ij}(x) = f_{i}(x) - o_{ij}(x) + a_{ij}(x)$, because it can be easily verified that $u_{ij}(x)\le \max\{a_{ij}(x),f_{i}(x)\}$.  
%We remark that in addition to above conditions, the relaxation framework in~\cite{he2021new} requires the bounding functions $a(\cdot)$ to be a vector of constants.
% In Section~\ref{section:ordered-oa}, we discuss several ways in which the pair  $\bigl(u(x), a(x)\bigr)$ can be constructed.

Throughout this paper, we introduce a vector of variables $u_i$ (resp. $a_i$) to represent the vector of functions $u_i(\cdot)$ (resp. $a_i(\cdot)$). In addition, we associate the subvector $(u_{1p_1}, \ldots, u_{dp_d})$ of $u:=(u_1, \ldots, u_d)$ with the point $p:=(p_1, \ldots, p_d)$ on a grid $\G$ given by $ \{ 0, \ldots, n\}^d$, and thus denote the subvector as $\Pi(u;p)$. The telescoping sum expansion of $\phi \mcirc f$ will be derived using lattice paths on $\G$. A lattice path in $\G$ is a sequence of points $p^{0}, \ldots, p^{r}$ in $\G$ such that $p^{0}= (0, \ldots, 0)$ and $p^{r}= (n, \ldots, n)$. In particular, a \textit{staircase} is a lattice path of length $dn+1$ such that for all $t \in \{1, \ldots, dn\}$, $p^{t} - p^{t-1} = e_{i_t}$ where $i_t \in \{1, \ldots, d\}$ and $e_{i_t}$ is the principal vector in $i_t^{\text{th}}$ direction. We refer to the movement $p^{t-1}$ to $p^{t}$ as the $t^{\text{th}}$ move. Clearly, there are exactly $n$ moves along each coordinate direction. Therefore, a staircase can be specified succinctly as $\omega:= (\omega_1, \ldots, \omega_{dn})$, where, for $t \in \{1, \ldots, dn \}$, we let $\omega_t = i$ if $p^{t} - p^{t-1} =e_i$.  We will refer to such a vector as \textit{direction vector} in $\G$, and will denote by $\Omega$ the set of all direction vectors in the grid $\G$. It follows easily that $|\Omega| = \frac{(dn)!}{(n!)^d}$. In Figure~\ref{fig:staircase}, we depict three staircases in the $3 \times 3$ grid specified by vectors $(2,2,1,1)$, $(2,1,2,1)$, and $(1,2,1,2)$, respectively.

\begin{figure}[h]
\centering
\begin{subfigure}{0.25\linewidth}	
\begin{tikzpicture}[scale=0.7]
\begin{scope}
    \node[below] at (0.5 ,0) {\scriptsize $0$};
    \node[below] at (1.5 ,0) {\scriptsize $1$};
    \node[below] at (2.5 ,0) {\scriptsize $2$};
%    \node[below] at (3.5 ,0) {\scriptsize $3$};
    \node[left] at (0, 0.5) {\scriptsize $0$};
    \node[left] at (0, 1.5) {\scriptsize $1$};
    \node[left] at (0, 2.5) {\scriptsize $2$};
   	\filldraw[black] (0.5,0.5) circle (3pt);
   	\filldraw[black] (0.5,1.5) circle (3pt);
   	\filldraw[black] (0.5,2.5) circle (3pt);
   	\filldraw[black] (1.5,2.5) circle (3pt);
   	\filldraw[black] (2.5,2.5) circle (3pt);
  % 	\filldraw[black] (3.5,2.5) circle (3pt);	
	\draw[->,dashed,gray,ultra thick]  (0.5,0.6) --  (0.5,1.4) node [right] at (0.4,0.8) {{\color{black} \scriptsize $\omega_1$}};
	\draw[->,dashed,gray,ultra thick]  (0.5,1.6) --  (0.5,2.4) node [right] at (0.4,1.8) {{\color{black} \scriptsize $\omega_2$}};
	\draw[->,dashed,gray,ultra thick]  (0.6,2.5) --  (1.4,2.5) node [above] at (0.8,2.5) {{\color{black} \scriptsize $\omega_3$}};
	\draw[->,dashed,gray,ultra thick]  (1.6,2.5) --  (2.4,2.5) node [above] at (1.8,2.5) {{\color{black} \scriptsize $\omega_4$}};
%	\draw[->,dashed,gray,ultra thick]  (2.6,2.5) --  (3.4,2.5) node [above] at (2.8,2.5) {{\color{black} \scriptsize $\omega_5$}};
%    \draw[fill] [gray] (0,0) rectangle (1,1);
%    \draw[fill] [gray] (0,1) rectangle (1,2);
%    \draw[fill] [gray] (0,2) rectangle (1,3);
%    \draw[fill] [gray] (1,2) rectangle (2,3);
%    \draw[fill] [gray] (2,2) rectangle (3,3);
%    \draw[fill] [gray] (3,2) rectangle (4,3);    
    \draw (0, 0) grid (3, 3);
\end{scope}
\end{tikzpicture}
\caption{$\omega = (2,2,1,1)$}
\end{subfigure}
\begin{subfigure}{0.25\linewidth}	
\begin{tikzpicture}[scale=0.7]
\begin{scope}
    \node[below] at (0.5 ,0) {\scriptsize $0$};
    \node[below] at (1.5 ,0) {\scriptsize $1$};
    \node[below] at (2.5 ,0) {\scriptsize $2$};
%    \node[below] at (3.5 ,0) {\scriptsize $3$};
    \node[left] at (0, 0.5) {\scriptsize $0$};
    \node[left] at (0, 1.5) {\scriptsize $1$};
    \node[left] at (0, 2.5) {\scriptsize $2$};
   	\filldraw[black] (0.5,0.5) circle (3pt);
   	\filldraw[black] (0.5,1.5) circle (3pt);
   	\filldraw[black] (1.5,1.5) circle (3pt);
   	\filldraw[black] (2.5,1.5) circle (3pt);
   	\filldraw[black] (2.5,2.5) circle (3pt);
%   	\filldraw[black] (3.5,2.5) circle (3pt);	
	\draw[->,dashed,gray,ultra thick]  (0.5,0.6) --  (0.5,1.4) node [right] at (0.4,0.8) {{\color{black} \scriptsize $\omega_1$}};
	\draw[->,dashed,gray,ultra thick]  (0.6,1.5) --  (1.4,1.5) node [above] at (0.8,1.5) {{\color{black} \scriptsize $\omega_2$}};
	\draw[->,dashed,gray,ultra thick]  (1.6,1.5) --  (2.4,1.5) node [above] at (1.8,1.5) {{\color{black} \scriptsize $\omega_3$}};
	\draw[->,dashed,gray,ultra thick]  (2.5,1.6) --  (2.5,2.4) node [right] at (2.4,1.8) {{\color{black} \scriptsize $\omega_4$}};
%	\draw[->,dashed,gray,ultra thick]  (2.6,2.5) --  (3.4,2.5) node [above] at (2.8,2.5) {{\color{black} \scriptsize $\omega_5$}};
    \draw (0, 0) grid (3, 3);
\end{scope}
\end{tikzpicture}
\caption{$\omega = (2,1,2,1)$}
\end{subfigure}
\begin{subfigure}{0.25\linewidth}	
\begin{tikzpicture}[scale=0.7]
\begin{scope}
    \node[below] at (0.5 ,0) {\scriptsize $0$};
    \node[below] at (1.5 ,0) {\scriptsize $1$};
    \node[below] at (2.5 ,0) {\scriptsize $2$};
%    \node[below] at (3.5 ,0) {\scriptsize $3$};
    \node[left] at (0, 0.5) {\scriptsize $0$};
    \node[left] at (0, 1.5) {\scriptsize $1$};
    \node[left] at (0, 2.5) {\scriptsize $2$};
   	\filldraw[black] (0.5,0.5) circle (3pt);
   	\filldraw[black] (1.5,0.5) circle (3pt);
   	\filldraw[black] (1.5,1.5) circle (3pt);
   	\filldraw[black] (2.5,1.5) circle (3pt);
   	\filldraw[black] (2.5,2.5) circle (3pt);
  % 	\filldraw[black] (3.5,2.5) circle (3pt);	
	\draw[->,dashed,gray,ultra thick]  (0.6,0.5) --  (1.4,0.5) node [above] at (0.8,0.5) {{\color{black} \scriptsize $\omega_1$}};
	\draw[->,dashed,gray,ultra thick]  (1.5,0.6) --  (1.5,1.4) node [right] at (1.4,0.8) {{\color{black} \scriptsize $\omega_2$}};
	\draw[->,dashed,gray,ultra thick]  (1.6,1.5) --  (2.4,1.5) node [above] at (1.8,1.5) {{\color{black} \scriptsize $\omega_3$}};
	\draw[->,dashed,gray,ultra thick]  (2.5,1.6) --  (2.5,2.4) node [right] at (2.4,1.8) {{\color{black} \scriptsize $\omega_4$}};
%	\draw[->,dashed,gray,ultra thick]  (2.6,2.5) --  (3.4,2.5) node [above] at (2.8,2.5) {{\color{black} \scriptsize $\omega_5$}};
    \draw (0, 0) grid (3, 3);
\end{scope}
\end{tikzpicture}
\caption{$\omega = (1,2,1,2)$}
\end{subfigure}
\caption{direction vectors in the $3 \times 3$ grid.} \label{fig:staircase}
\end{figure}
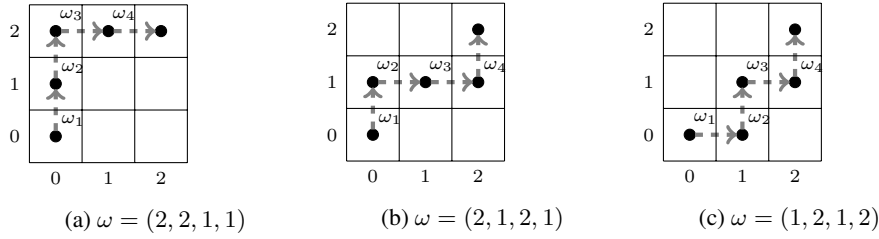
Given  $u \in \R^{d\times(n+1)}$ and a direction vector $\omega \in \Omega$, a staircase $p^{0}, \ldots, p^{dn}$ is specified as $p^{ 0} = 0$ and, for $t \in \{1, \ldots, dn\}$, $p^{t} = p^{ t-1} +e_{\omega_t}$, thus, yielding a sequence $\Pi(u;p^0), \ldots, \Pi(u;p^{dn})$. 
%Notice that the $t^{\text{th}}$ move in the grid $\G$ is associated with two subvectors $\Pi(u;p^{t})$ and $\Pi(u;p^{t-1})$, which differ in the coordinate direction $\omega_t$ of $t^{\text{th}}$ move. 
This sequence is associated with an expansion  
\begin{equation}\label{eq:str-exp}
	\begin{aligned}
\phi(u_{1n}, \ldots, u_{dn}) &=  \phi \bigl(\Pi(u,p^0)\bigr) +  \sum_{t=1}^{dn}\Bigl( \phi\bigl(\Pi(u; p^t)\bigr) - \phi\bigl(\Pi(u;p^{t-1})\bigr)\Bigr) \\
& =:\DO^\omega(\phi)(u), 
%		&=:\phi \bigl(u_{10}, \ldots, u_{d0}\bigr) +  \sum_{t=1}^{dn}\DO\bigl(u;p^t, p^{t-1}\bigr).
	\end{aligned}
\end{equation}
where the right hand side of the first equality telescopes and yields $\phi(u_{1n}, \ldots, u_{dn})$. 
%Notice that the $t^{\text{th}}$ step is associated with the difference of two functions, whose arguments differ only in the $\omega_t^{\text{th}}$ direction. 
Therefore, for any vector of functions $u: X \to \R^{d \times (n+1)}$ so that, for each $i \in \{1, \ldots, d\}$, $u_{in}(x) = f_i(x)$, we obtain that, for every $x \in X$, $(\phi \mcirc f)(x) =\bigl(\DO^\omega(\phi) \mcirc u \bigr)(x)$,
and we shall refer to the latter as a \textit{staircase expansion} of $\phi \mcirc f$. 

The staircase expansion $\DO^\omega(\phi)\bigl(u_1(x), \ldots, u_d(x)\bigr) $ will be relaxed in a termwise manner. In particular, this termwise relaxation is achieved by replacing the underestimating functions $u(\cdot)$ with their bounding functions $a(\cdot)$. Before we formally describe how the replacement is done, we illustrate the main idea using the following example. The example analyzes the product of two functions that was recently studied in~\cite{he2021new}. Here, we provide an alternative derivation of the validity of one of the inequalities derived in Theorems 1 and 5 of~\cite{he2021new}. The remaining inequalities admit similar derivations by altering the staircase used in the expansion. 
%These inequalities provide a simple and concrete improvement to the factorable programming relaxation~\cite{mccormick1976computability}.
 
%In the next example, we show that these inequalities can be derived by relaxing staircase expansions using simple algebraic facts.   
\begin{example}[Theorem 5 in~\cite{he2021new}]\label{ex:factorable-over}
Consider the product of two functions $f_1(x)f_2(x)$ over a convex set $X \subseteq \R^m$. For $i = 1,2$, assume that there exist $u_i(x):=\bigl(u_{i0}(x), u_{i1}(x), u_{i2}(x) \bigr)$ and $a_i:=(a_{i0}, a_{i1}, a_{i2})$ such that, for $x \in X$, 
\begin{equation*}\label{eq:order-ex}
 a_{i0} \leq a_{i1} \leq a_{i2},  \quad u_{i0}(x) = a_{i0}, \quad u_{i1}(x) \leq u_{i2}(x) = f_i(x),  \quad \text{and} \quad  u_{i1}(x) \leq a_{i1}. 
\end{equation*}
In other words, the pair $(u(x), a)$ satisfies the requirement in~(\ref{eq:ordered-oa}). Observe that there are $\frac{4!}{2!2!} = 6$ staircases which yield $6$ inequalities detailed in Theorem 5 of~\cite{he2021new}. In the following, we explicitly derive one of those inequalities that corresponds to the staircase $\bigl((0,0), (1,0), (1,1), (2,1),(2,2)\bigr)$ and does not admit a simple derivation using RLT (see Example~3 in \cite{he2021new}). Let $x \in X $, and let $(s_{i0}, s_{i1}, s_{i2})$ denote $\bigl(u_{i0}(x), \max\{u_{i0}(x),u_{i1}(x)\}, u_{i2}(x)\bigr)$. Then, we obtain
\[
\begin{aligned}
	f_1(x)f_1(x) &= s_{12}s_{22} \\
	&= s_{10}s_{20} + (s_{11}-s_{10})s_{20} +  s_{11} (s_{21} - s_{20})  + (s_{12}-s_{11})s_{21} \\
	&\qquad \qquad + s_{12}(s_{22} - s_{21}) \\
	& \leq a_{10}a_{20} + (s_{11}-s_{10})a_{20} +  a_{11}(s_{21}- s_{20}) + (s_{12} - s_{11})a_{21} \\
	& \qquad \qquad + a_{12}(s_{22} - s_{21}) \\
	& = a_{10} a_{20} - s_{10}a_{20} - a_{11}s_{20} + s_{11}(a_{20} - a_{21})   +  (a_{11} - a_{12})s_{21}\\
	&\qquad \qquad + s_{12}a_{21} + a_{12}s_{22} \\
	& \leq - a_{11}a_{20}+  (a_{20}-a_{21})u_{11}(x) + (a_{11}-a_{12})u_{21}(x)  + a_{21}u_{12}(x) \\
	&\qquad \qquad + a_{12}u_{22}(x), 
\end{aligned}
\]
where the first equality follows by definition, the second equality holds due to staircase expansion \eqref{eq:str-exp}, the first and second inequalities follow from termwise relaxations, and the third equality holds by rearrangement.\Halmos
% The first inequality holds because $s_{10}s_{20} = a_{10}a_{20}$ and $(s_{11}-s_{10})s_{20}= (s_{11}-s_{10}) a_{20}$, and the following termwise relaxations are valid:
%\[
%\begin{aligned}
%s_{11} (s_{21}- s_{20}) \leq  a_{11}  (s_{21} - s_{20}), \quad 
%(s_{12} - s_{11}) s_{21} \leq (s_{12} - s_{11})a_{21}, \quad
%s_{12}(s_{22} -s_{21}) \leq a_{12}  (s_{22} -s_{21}).
%\end{aligned}
%\]
%Similarly, the second inequality follows from termwise relaxations. \Halmos
\end{example}

Now, we formally describe the termwise relaxations used in Example~\ref{ex:factorable-over}. For a vector $v: = (v_1, \ldots, v_d) \in \R^{d \times (n+1)}$ and for $ i \in \{1, \ldots, d\}$, we denote by $v_{-i}$ the subvector $(v_1, \ldots, v_{i-1}, v_{i+1}, \ldots, v_d)$. For any direction vector $\omega \in \Omega$, let $\BO^\omega(\phi) \colon \R^{d\times (n+1)+d\times (n+1)} \to \R$ be a function so that 
\begin{equation}\label{eq:step-rlx}
\begin{aligned}
\BO^\omega(\phi)(u,a) &=  \phi(a_{1 0}, \ldots, a_{d 0})  + \sum_{t = 1}^{dn} \Bigl[ \phi\bigl(\Pi\bigl((a_{-\omega_t}, u_{\omega_t});p^t\bigr)\bigr) \\
& \qquad \qquad - \phi\bigl(\Pi\bigl( (a_{-\omega_t}, u_{\omega_t});p^{t-1}\bigr)\bigr)  \Bigr],
\end{aligned}
\end{equation}
where $(a_{-i},u_i)$ denotes the vector $(a_1, \ldots, a_{i-1}, u_i, a_{i+1}, \ldots, a_d)$.  Consider function $\DO^\omega(\phi)(\cdot)$ defined in~(\ref{eq:str-exp}). Then, observing  $\phi\bigl(\Pi(u,p^0)\bigr) = \phi(a_{10}, \ldots, a_{d0})$, we obtain $\BO^\omega(\phi)(u,a)$ from the function $\DO^\omega(\phi)(u)$ by replacing the summation of telescoping difference terms with the summation in~(\ref{eq:step-rlx}). This step changes the $t^{\text{th}}$ telescoping difference term $\phi\bigl(\Pi(u;p^t)\bigr) - \phi\bigl(\Pi(u;p^{t-1})\bigr)$ to 
\[
\begin{aligned}
&\phi(a_{1p^t_1}, \ldots, a_{\omega^t-1 p^t_{\omega^t-1}}, u_{\omega^t p^t_{\omega^t}}, a_{\omega^t+1 p^t_{\omega^t+1}}, \ldots, a_{dp^t_d} )  \\
& \qquad \qquad - \phi(a_{1p^t_1}, \ldots, a_{\omega^t-1 p^t_{\omega^t-1}}, u_{\omega^t p^t_{\omega^{t}}-1}, a_{\omega^t+1 p^t_{\omega^t+1}}, \ldots, a_{dp^t_d} ),
\end{aligned}
\] 
which is denoted as $\phi\bigl(\Pi\bigl((a_{-\omega_t}, u_{\omega_t});p^t\bigr)\bigr) - \phi\bigl(\Pi\bigl( (a_{-\omega_t}, u_{\omega_t});p^{t-1}\bigr)\bigr)$ in~(\ref{eq:step-rlx}).
%Notice that the function is obtained by replacing all but the $\omega_{t}^{\text{th}}$ coordinate, which is the direction of the $t^{\text{th}}$ movement in the gird, in the difference function $\phi\bigl(\Pi(u;p^t)\bigr) - \phi\bigl(\Pi(u;p^{t-1})\bigr)$, which appears in~(\ref{eq:str-exp}), with corresponding bounds $a_{-\omega_t}$. 
The main result in this section is to show that the \textit{supermodularity} of the outer-function $\phi(\cdot)$ allows this replacement in the construction of an overestimator for $\phi\mcirc f$.
%implies that the composition of $\BO^\omega(\phi)(\cdot,\cdot)$ with a pair $\bigl(u(\cdot), a(\cdot)\bigr)$ satisfying~(\ref{eq:ordered-oa}) is a valid overestimator for the composite function $\phi \mcirc f $ over $X$.
\begin{definition}[\cite{topkis2011supermodularity}]%\label{defn:supermodularity}
A function $\sfunct(x): S \subseteq \R^n \to \R$ is said to be supermodular if $\sfunct(x' \vee x'') + \sfunct(x' \wedge x'') \geq \sfunct(x') + \sfunct(x'')$ for all $x'$, $x'' \in S$. Here, $x' \vee x''$ (resp. $x' \wedge x''$) denotes the component-wise maximum (resp. minmum), and we assume that $x'\vee x''$ and $x'\wedge x''$ belong to $S$ whenever $x'$ and $x''$ belong to $S$. \Halmos

\end{definition}
Although detecting whether a function is supermodular is NP-Hard \cite{crama1989recognition}, there are important special cases where this property can be readily detected \cite{topkis2011supermodularity}. For example, a product of nonnegative, increasing (decreasing) supermodular functions is nonnegative increasing (decreasing) and supermodular; see Corollary 2.6.3 in~\cite{topkis2011supermodularity}. Also, a conic combination of supermodular functions is supermodular. A canonical example of a supermodular function is $\prod_{i=1}^n x_i$ over the non-negative orthant.

\begin{theorem}\label{them:stair-ineq}
Consider a composite function $\phi \mcirc f \colon X \subseteq \R^m \to \R$. 
%For any $u \colon \R^{m} \to \R^{d \times (n+1)}$ so that, for $i \in \{1, \ldots, d\}$ and $x \in X$, $u_{in}(x) = f_i(x)$, and for any $\omega \in \Omega$, we obtain that $(\phi \mcirc f)(x) = \DO^\omega(\phi)\bigl(u(x)\bigr)$  for every $ x \in X$, where $\DO^\omega(\phi)$ is defined as in~(\ref{eq:str-exp}). Moreover, 
If the outer-function $\phi(\cdot)$ is supermodular over $[f^L,f^U]$, for any  $\bigl(u(x),a(x)\bigr)$ satisfying~(\ref{eq:ordered-oa}),  
$(\phi \mcirc f)(x)  \leq  \BO^\omega(\phi)\bigl(u(x),a(x)\bigr)$ for  $x \in X$, where $\BO^\omega(\phi)$ is defined as in~(\ref{eq:step-rlx}).
\end{theorem}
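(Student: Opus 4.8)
The plan is to prove the inequality termwise along the staircase expansion~\eqref{eq:str-exp}, after first replacing the underestimators by a monotone surrogate. The only obstruction to a direct termwise comparison of $\DO^\omega(\phi)(u)$ against $\BO^\omega(\phi)(u,a)$ is that the sequence $\bigl(u_{ij}(x)\bigr)_j$ need not increase with $j$, whereas supermodularity controls increments of $\phi$ only when the active coordinate moves \emph{up}. I would therefore introduce the running maxima $s_{ij}(x):=\max_{0\le k\le j}u_{ik}(x)$. Using $u_{ik}(x)\le\min\{f_i(x),a_{ik}(x)\}\le\min\{f_i(x),a_{ij}(x)\}$ for $k\le j$ (monotonicity of $a_{i\cdot}$) and $u_{in}=f_i$, one checks at once that $s_{i0}=a_{i0}$, $s_{in}=f_i$, the sequence $s_{i\cdot}$ is nondecreasing, $s_{ij}\le\min\{f_i,a_{ij}\}$ (so all its values stay in $[f^L_i,f^U_i]$), and $s_{ij}\ge u_{ij}$. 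Hence $(s,a)$ again satisfies~\eqref{eq:ordered-oa}, $s_{i\cdot}$ is monotone, and $\DO^\omega(\phi)(s)=\phi\bigl(f(x)\bigr)=(\phi\mcirc f)(x)$.

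Next I would record the standard consequence of supermodularity (Topkis) needed in two guises: if $\phi$ is supermodular on $[f^L,f^U]$, then for $y'\ge y$ and $z'_{-i}\ge z_{-i}$ one has $\phi(y',z'_{-i})-\phi(y,z'_{-i})\ge\phi(y',z_{-i})-\phi(y,z_{-i})$; that is, the increment of $\phi$ in coordinate $i$ is nondecreasing in the remaining coordinates, and symmetrically. With the monotone surrogate in hand, I would bound each telescoping difference of $\DO^\omega(\phi)(s)$ separately. For the $t$-th move, in direction $i=\omega_t$, the active coordinate rises from $s_{i,p^t_i-1}$ to $s_{i,p^t_i}$ (a nonnegative step, by monotonicity of $s_{i\cdot}$) while the off-coordinates sit at $s_{j,p^t_j}\le a_{j,p^t_j}$, so the increment inequality gives
\[
\phi\bigl(\Pi(s;p^t)\bigr)-\phi\bigl(\Pi(s;p^{t-1})\bigr)\le\phi\bigl(\Pi((a_{-\omega_t},s_{\omega_t});p^t)\bigr)-\phi\bigl(\Pi((a_{-\omega_t},s_{\omega_t});p^{t-1})\bigr).
\]
Summing over $t$ and using $\phi\bigl(\Pi(s;p^0)\bigr)=\phi(a_{10},\ldots,a_{d0})$ yields $(\phi\mcirc f)(x)\le\BO^\omega(\phi)(s,a)$.

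It remains to pass from $s$ back to $u$, i.e.\ to show $\BO^\omega(\phi)(s,a)\le\BO^\omega(\phi)(u,a)$. Here I would regroup $\BO^\omega(\phi)(\cdot,a)$ by the variable in each coordinate: for an interior level $j\in\{1,\ldots,n-1\}$ in direction $i$, the value $u_{ij}$ enters exactly twice, with a plus sign when the staircase crosses \emph{into} level $j$ and a minus sign when it crosses into level $j+1$, whereas the boundary levels $j=0$ ($=a_{i0}$) and $j=n$ ($=f_i$) contribute identically for $s$ and for $u$. The net coefficient of $u_{ij}$ is thus $G_{ij}(y):=\phi(\beta^{ij}_{-i},y)-\phi(\beta^{i,j+1}_{-i},y)$, where $\beta^{ij}_{-i}$, $\beta^{i,j+1}_{-i}$ are the off-coordinate configurations of $a$-values at the two crossings. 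Since the crossing into level $j+1$ occurs later along the staircase, the other coordinates have weakly advanced and $a$ is monotone, so $\beta^{i,j+1}_{-i}\ge\beta^{ij}_{-i}$ componentwise; the increment inequality then makes $G_{ij}$ nonincreasing. As $s_{ij}\ge u_{ij}$, we get $G_{ij}(s_{ij})\le G_{ij}(u_{ij})$ for every interior $(i,j)$, and summing gives $\BO^\omega(\phi)(s,a)\le\BO^\omega(\phi)(u,a)$. Chaining the two bounds proves the theorem.

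I expect the main obstacle to be the bookkeeping in this last comparison: correctly isolating the coefficient $G_{ij}$ of each interior $u_{ij}$, verifying that the boundary contributions coincide for $s$ and $u$, and establishing that the off-coordinate configuration is monotone in the crossing level $j$ so that the increment inequality applies in the right direction. By contrast, the monotonization and the termwise bound are routine once the supermodular increment inequality is stated in the form above.
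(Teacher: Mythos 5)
Your proposal is correct and follows essentially the same route as the paper's proof: the running-maxima surrogate $s_{ij}=\max_{k\le j}u_{ik}$, the termwise supermodularity bound $\DO^{\omega}(\phi)(s)\le \BO^{\omega}(\phi)(s,a)$, and the coordinate-wise regrouping used to show $\BO^{\omega}(\phi)(s,a)\le \BO^{\omega}(\phi)(u,a)$ all appear there in the same form. In particular, the monotonicity of your net coefficient $G_{ij}$ is exactly the paper's four-point inequality comparing the $s$- and $u$-differences at the grid points $p^{\tau_{ij}}$ and $p^{\tau_{i,j+1}-1}$, so the bookkeeping you flagged as the main obstacle is resolved identically in the paper.
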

\begin{compositeproof}
    See Appendix~\ref{app:stair-ineq}. \Halmos
\end{compositeproof}

To extend the applicability of Theorem~\ref{them:stair-ineq}, we consider a particular linear transformation, referred to as \textit{switching}~\cite{crama1989recognition}, that can be used to make the outer-function supermodular. Such a transformation is useful, for example, to derive underestimators for $f_1(x)f_2(x)$ as in Theorem~1 of \cite{he2021new}. To do so, we replace $f_1(x)f_2(x)$ with $f_1(x)\bigl(f_2^U-f_2(x)\bigr)$, where $f_2^U$ is an upper-bound on $f_2(x)$, before using Theorem~\ref{them:stair-ineq}. More generally, for  $f \in \R^d$ and $T \subseteq \{1, \ldots, d\}$, let $f(T)$ be the vector defined as $f(T)_i = f_i^U - f_i$ if $i \in T$ and $f(T)_i = f_i$ otherwise. For a function $\phi: \R^d  \to \R$, we define a function $\phi(T):\R^d \to \R$ such that $\phi(T)(f) = \phi\bigl(f(T)\bigr)$. In addition, we introduce two affine maps to switch estimating functions and their bounding functions. Let $U(T): u \mapsto \tilde{u}$ and $A(T): a \mapsto \tilde{a}$ so that, for $i \notin T$, $(\tilde{u}_i, \tilde{a}_i)=(u_i,a_i)$, and otherwise $\tilde{u}_{ij} =  a_{in-j} -  u_{in-j} + u_{in}$ and $\tilde{a}_{ij}= a_{in-j}$ for $j \in \{0, \ldots, n \}$.

%Observe that, for any given pair $\bigl( u(x), a(x)\bigr)$ satisfying~(\ref{eq:ordered-oa}), the transformed pair $\bigl(\tilde{u}(x), \tilde{a}(x)\bigr):=\bigl(U\bigl(u(x)\bigr),a(x)\bigr)$ also satisfies the requirement in~(\ref{eq:ordered-oa}), and, for every $x \in X$, $(\phi \mcirc f)(x)$ can be expressed as $\phi(T)\bigl(\tilde{u}_{1n}(x), \ldots, \tilde{u}_{dn}(x)  \bigr)$. Therefore, the next result follows directly from Theorem~\ref{them:stair-ineq}. 

\begin{corollary}\label{cor:stair-switching}
Consider a composite function $\phi \circ f:X \subseteq \R^m \to \R $, and consider a pair $\bigl(u(x),a(x)\bigr)$ satisfying~(\ref{eq:ordered-oa}). Let $T$ be a subset of $ \{0, \ldots,d\}$ for which $f(T)(\cdot)$ is supermodular over $\bigl\{f(T) \bigm| f^L \leq f \leq f^U\bigr\}$. Then, for every $x \in X$, $(\phi\circ f) (x)  \leq \BO^\omega(\phi)\bigl(\tilde{u}(x),\tilde{a}(x) \bigr)$,  where $\tilde{u}(x):= U(T)\bigl(u(x)\bigr)$ and $\tilde{a}(x): =A(T)\bigl(a(x)\bigr) $. 
\end{corollary}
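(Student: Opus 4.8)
The plan is to reduce Corollary~\ref{cor:stair-switching} to Theorem~\ref{them:stair-ineq} by applying the theorem to the switched outer-function $\phi(T)$ together with a switched inner-function and a switched pair of estimators. First I would set $g := f(T)$, so that $g_i(x) = f_i^U - f_i(x)$ for $i \in T$ and $g_i(x) = f_i(x)$ otherwise. A one-line computation shows that switching twice is the identity, namely $g(x)(T) = f(x)$, and therefore $\phi(T)\bigl(g(x)\bigr) = \phi\bigl(g(x)(T)\bigr) = \phi\bigl(f(x)\bigr) = (\phi\circ f)(x)$. Moreover, as $x$ ranges over $X$, $g(x)$ ranges within the box $\{f(T) \mid f^L \le f \le f^U\}$, over which $\phi(T)$ is supermodular by hypothesis. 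Thus Theorem~\ref{them:stair-ineq} applies to the composite function $\phi(T)\circ g$ once a valid pair for $g$ is exhibited, and it remains only to translate the resulting bound back into the $\phi$-world.

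The pair I would supply is $(\hat u, \hat a)$, obtained from $(\tilde u, \tilde a) = \bigl(U(T)(u), A(T)(a)\bigr)$ by re-switching the coordinates in $T$: for $i \in T$, set $\hat a_{ij} = f_i^U - \tilde a_{ij} = f_i^U - a_{i,n-j}$ and $\hat u_{ij} = f_i^U - \tilde u_{ij}$, and for $i \notin T$ leave $(\hat u_i, \hat a_i) = (u_i, a_i)$. The first substantive task is to verify that $(\hat u, \hat a)$ satisfies~(\ref{eq:ordered-oa}) relative to $g$ and its bounds. For $i \notin T$ this is immediate. For $i \in T$, the ordering $\hat a_{i0} \le \cdots \le \hat a_{in}$ is the increasing sequence $(a_{ij})_j$ read backwards; the bound constraints follow from $f_i^L \le a_{i0}$ and $a_{in} \le f_i^U$; the underestimation $\hat u_{ij} \le \min\{g_i, \hat a_{ij}\}$ reduces, after cancelling $f_i^U$, to the two facts $u_{i,n-j} \le f_i$ and $u_{i,n-j} \le a_{i,n-j}$ guaranteed by~(\ref{eq:ordered-oa}); and the endpoint identities $\hat u_{i0} = \hat a_{i0}$ and $\hat u_{in} = g_i$ follow from $u_{i0} = a_{i0}$ and $u_{in} = f_i$.

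The second task is the identity $\BO^\omega(\phi(T))(\hat u, \hat a) = \BO^\omega(\phi)(\tilde u, \tilde a)$, which I would prove termwise against~(\ref{eq:step-rlx}). Each summand of the left-hand side evaluates $\phi(T)$ at a point $\Pi\bigl((\hat a_{-\omega_t}, \hat u_{\omega_t}); p\bigr)$ whose coordinates are drawn from $\hat u, \hat a$ at the grid indices fixed by $p$ and $\omega$. Since switching acts coordinatewise and permutes no coordinates, $\phi(T)$ at such a point equals $\phi$ at the point whose $i$-th coordinate for $i \in T$ is $f_i^U - \hat a_{i, p_i} = \tilde a_{i, p_i}$ (or $f_i^U - \hat u_{i, p_i} = \tilde u_{i, p_i}$), that is, at $\Pi\bigl((\tilde a_{-\omega_t}, \tilde u_{\omega_t}); p\bigr)$ with the very same grid index $p_i$. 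Hence each summand, and so the telescoping sum, agrees with the corresponding quantity for $\BO^\omega(\phi)(\tilde u, \tilde a)$. Chaining the three equalities with the overestimator of Theorem~\ref{them:stair-ineq} gives $(\phi\circ f)(x) = (\phi(T)\circ g)(x) \le \BO^\omega(\phi(T))\bigl(\hat u(x), \hat a(x)\bigr) = \BO^\omega(\phi)\bigl(\tilde u(x), \tilde a(x)\bigr)$, as claimed.

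The step I expect to be most delicate is the verification that the re-switched, index-reversed pair $(\hat u, \hat a)$ still obeys~(\ref{eq:ordered-oa}): the reversal $j \mapsto n-j$ could in principle spoil the monotonicity of the bounding functions, the underestimation inequalities, or the endpoint-matching conditions, and one must check that each is restored precisely by the complementary map $v \mapsto f_i^U - v$. Once this bookkeeping is in place---together with the observation that the reversal lives entirely in the definition of the values $\tilde a_{ij}, \tilde u_{ij}$ and never in the grid indexing, so that no spurious reordering of the staircase moves occurs---the remainder is a routine application of the switching identity $\phi(T)(\cdot) = \phi\bigl((\cdot)(T)\bigr)$ at each evaluation point.
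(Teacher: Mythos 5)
Your proposal is correct and follows essentially the same route as the paper's proof: your re-switched pair $(\hat u,\hat a)$ is exactly the paper's $(\tilde u',\tilde a')$ obtained by applying $v \mapsto f_i^U\mathbf{1} - v$ to $\tilde u_i,\tilde a_i$ for $i \in T$, and both arguments reduce to Theorem~\ref{them:stair-ineq} applied to $\phi(T)$ via the identity $\BO^\omega(\phi)(\tilde u,\tilde a) = \BO^\omega\bigl(\phi(T)\bigr)(\hat u,\hat a)$ together with the verification that $(\hat u,\hat a)$ satisfies~(\ref{eq:ordered-oa}). Your write-up is in fact more explicit than the paper's, which asserts the key identity and the feasibility check without detail.
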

\begin{compositeproof}
    See Appendix~\ref{app:stair-switching}. \Halmos
\end{compositeproof}

We remark that Theorem~\ref{them:stair-ineq} and Corollary~\ref{cor:stair-switching} can be used recursively to develop relaxations for functions specified using their expression tree. Consider an arbitrary node in such an expression tree. Inductively, we construct relaxations for each of its children. These relaxations yield underestimators and overestimators for the function represented by each child node. Then, for each of these estimators, bounds can be obtained in various ways, including, for example, interval arithmetic. The underestimating functions and bounds are then rearranged to satisfy \eqref{eq:ordered-oa}. Finally, the node in consideration is recursively relaxed using Theorem~\ref{them:stair-ineq} and/or Corollary~\ref{cor:stair-switching}. The resulting relaxation is tighter because information about the structure of children is retained via their estimators, not only in the relaxation of the parent, but, by induction, in the relaxation of all their ancestors. Next, we illustrate the use of this technique on an example, where we show that it produces tighter inequalities than those obtained using factorable programming (FP).

\begin{example}\label{ex:recursive}
Consider $x_1^2x_2^2x_3^2$ over  $[1,2]^3$, and consider an expression tree depicted in Figure~\ref{fig:expression-tree-1}, whose edges are labeled with bounds of tail nodes. Using information that the function $x_i^2$ is bounded from below (resp. above) by $1$ (resp. $4$) over $[1,2]$, FP yields the following convex underestimator for the root node's left child, $x_1^2x_2^2 \geq \max \bigl\{x_1^2 + x_2^2 - 1, 4x_1^2 + 4x_2^2 - 16 \bigr\}$.
%\begin{equation*}\label{eq:recursive}
%x_1^2x_2^2 \geq \max \bigl\{x_1^2 + x_2^2 - 1, 4x_1^2 + 4x_2^2 - 16 \bigr\}.
%\end{equation*}
Then, inferring that $x_1^2x_2^2 \in [1,16]$, FP recursively constructs the following convex underestimator for the root node, 
\[
x_1^2x_2^2x_3^2 \geq \max
\left\{
\begin{aligned}
&\max \{x_1^2 + x_2^2 - 1, 4x_1^2 + 4x_2^2 - 16 \} + x_3^2 -1 \\ 
&4 \max \bigl\{x_1^2 + x_2^2 - 1, 4x_1^2 + 4x_2^2 - 16 \bigr\} + 16 x_3^2 -64
\end{aligned} 
\right\}.
\]
If $e(x_1,x_2)\le \min\{7,x_1^2x_2^2\}$, Corollary~\ref{cor:stair-switching} shows $3 e(x_1,x_2) + x_1^2x_2^2  +   7x_3^2 - 28 \leq x_1^2x_2^2x_3^2$. We may choose $e(x_1,x_2) = x_1^2+x_2^2-1$ and relax $x_1^2x_2^2$ to $4x_1^2+4x_2^2 -16$,  obtaining a convex underestimator $7x_1^2 + 7x_2^2 +  7x_3^2 - 47$ for $x_1^2x_2^2x_3^2$. This function strictly dominates the factorable relaxation at various points in the domain $[1,2]^3$. %This shows that information about underestimators for lower-levels of the tree can be utilized while relaxing the higher-levels. 

%Now, exploiting the following ordering relation obtained from the left child of the root node
%\[
%x_1^2 + x_2^2 - 1 \leq \max \bigl\{x_1^2 + x_2^2 - 1, 4x_1^2 + 4x_2^2 - 16 \bigr\}  \quad \text{and} \quad x_1^2 + x_2^2 - 1 \leq 7 \quad \text{for } x \in [1,2]^3,
%\]
%we will construct a convex underestimator $3 e_1(x) +e_6(x)  +   7u_{32}(x) - 28$ $16x_1^2 + 16x_2^2 +  7x_3^2 - 64$ for $x_1^2x_2^2x_3^2$, which is not implied by the factorable one. 

\begin{figure}[h]
\begin{subfigure}{0.48\linewidth}
	\centering
	\begin{tikzpicture}[scale=0.6, every tree node/.style={draw,circle,minimum size=2.5em}, level distance = 1.3cm, sibling distance=0.3cm,
edge from parent/.style={draw, <-,edge from parent path={(\tikzparentnode) -- (\tikzchildnode)}}]
\Tree[ .{$*$} \edge node[auto=right,pos=.5] {$(1,16)$};[.{ $ *$}  \edge node[auto=right,pos=.5] {$(1,4)$};[.$\wedge$  \edge node[auto=right,pos=.5] {$(1,2)$};$x_1$ 2 ]  \edge node[auto=left,pos=.5] {$(1,4)$};[.$\wedge$ \edge node[auto=right,pos=.5] {$(1,2)$};$x_2$ 2 ] ]  \edge node[auto=left,pos=.5] {$(1,4)$};[. $\wedge $  \edge node[auto=right,pos=.5] {$(1,2)$};$x_3$   $2$ ] ]
\end{tikzpicture}
\caption{Bound propagation}\label{fig:expression-tree-1}
\end{subfigure}
\begin{subfigure}{0.48\linewidth}
 \begin{tikzpicture}[scale=0.6, every tree node/.style={draw,circle,minimum size=2.5em}, level distance = 1.3cm, sibling distance=0.3cm,
edge from parent/.style={draw, <-,edge from parent path={(\tikzparentnode) -- (\tikzchildnode)}}]
\Tree[ .{$*$} \edge node[auto=right,pos=.5] {$\bigl(e(x),e^U\bigr)$};[.{ $ *$}  \edge node[auto=right,pos=.5] {$\bigl(u_1(x),a_1\bigr)$};[.$\wedge$  \edge node[auto=right,pos=.5] {$(1,2)$};$x_1$ 2 ]  \edge node[auto=left,pos=.5] {$\bigl(u_2(x),a_2\bigr)$};[.$\wedge$ \edge node[auto=right,pos=.5] {$(1,2)$};$x_2$ 2 ] ]  \edge node[auto=left,pos=.5] {$\bigl(u_3(x),a_3\bigr)$};[. $\wedge $   \edge node[auto=right,pos=.5] {$(1,2)$};$x_3$   $2$ ] ]
\end{tikzpicture}
\caption{Estimator and bound propagation}\label{fig:expression-tree-2}
\end{subfigure}
\end{figure}
More generally, consider an expression tree depicted in Figure~\ref{fig:expression-tree-2}, whose edges are labeled with underestimators of tail nodes and their upper bounds over the box domain $[1,2]^3$. Namely, for $i \in \{1, 2, 3\}$, let $u_i(x):=(1,2x_i-1, x_i^2)$ and $a_i := (1,3,4)$. Exploiting the ordering relation, $u_{ij}(x) \leq x_i^2$, $u_{13}(x) = x_i^2$, and $u_i(x) \leq a_i$ for $i \in \{1,2\}$, we obtain a relaxation for the left child of the root node:
\[
x_1^2x_2^2 \geq \max \left \{
\begin{aligned}
e_0(x)& := 1  \\
e_1(x)&:= u_{12}(x) + u_{22}(x) - 1\\
e_2(x)&:=2u_{11}(x)+ u_{12}(x) +  2u_{21}(x) + u_{22}(x) - 9\\
e_3(x)&:=3u_{11}(x) + u_{12}(x) + 3u_{22}(x) - 12\\
e_4(x)&:=3u_{12}(x) + 3u_{21}(x) + u_{22}(x) - 12\\
e_5(x)&:=u_{11}(x) + 3u_{12}(x) +u_{21}(x) + 3u_{22}(x) - 15 	\\
e_6(x)&:= \max \bigl\{e_0, \ldots, e_5, 4u_{12}(x)+ 4u_{22}(x) -16\bigr\}
\end{aligned}
\right\}
.
\]
Then, we obtain upper bounds for these estimators as $e^U:= (1, 7,11,13,13,15,16)$. Additionally, the relation $e_i(x) \leq x_1^2x_2^2$ and $e_i(x) \leq e^U_i$, for all $i \in \{1, \ldots, 6\}$, is exploited to construct convex underestimators for the root node, which are listed in Appendix~\ref{app:inequalities}. Observe that $u_{ij}(\cdot)$ can be substituted with their defining relations to obtain inequalities that do not require introduction of variables  beyond factorable programming scheme. For notational simplicity, we considered the product of $x_1^2$, $x_2^2$ and $x_3^2$ but the construction generalizes naturally to $f(x)g(y)h(z)$, and as described before the example, to arbitrary expression trees, if each node can be transformed to be supermodular and/or submodular.
%\[
%x_1^2x_2^2x_3^3 \geq \max \left\{
%\begin{aligned}
%&	0 \\
%&	3e_2(x) + 9u_3(x) - 27 \\
%&	3 \max\bigl\{ e_3(x),e_4(x)\bigr\} + 12u_3(x) - 36 \\
%&	3e_5(x) + 15u_3(x) -45 \\
%&	3e_6(x) + 16u_3(x) - 48 \\
%&	4e_2(x) + 9u_3(x) - 36 \\
%&	4 \max\bigl\{ e_3(x),e_4(x)\bigr\} + 12u_3(x) - 48 \\
%&	4e_5(x) + 15u_3(x) -60 \\
%&	e_2(x)+3 \max\bigl\{ e_3(x), e_4(x) \bigr\}+3u_3(x)+9f_3(x)-45 \\
%&	e_2(x)+3e_5(x)+6u_3(x)+9f_3(x)-54\\
%&	e_3(x)+3e_5(x)+3u_3(x)+12f_3(x)-57\\
%&	e_2(x)+3e_6(x)+7u_3(x)+9f_3(x)-57\\
%&	e_3(x)+3e_6(x)+4u_3(x)+12f_3(x)-60\\
%&	e_5(x)+3e_6(x)+u_3(x)+15f_3(x)-63 \\
%&	4e_6(x) + 16f_3(x) - 64 
%\end{aligned}
%\right\}
%\]
\Halmos
\end{example}

\subsection{Convex relaxations for composite functions}\label{section:stair-convex}
%\begin{itemize}
%	\item convex underestimator can be obtained by exploiting termwise function structure
%	\item inequalities from~\cite{he2021new} arise as term-wise interpolating over simplex in an auxiliary space 
%\end{itemize}

%In this subsection, we modify inequalities from Theorem~\ref{them:stair-ineq} to derive convex relaxations for the hypograph of certain composite functions. More specifically, we first treat functions that can not be directly handled by~\cite{tawarmalani2013explicit}. Then, we construct concave overestimators by exploiting the separability 

% We show that this procedure gives short validity proofs for Lov\'asz extension and various other inequalities that have appeared in~\cite{lovasz1983submodular,he2021new,he2021tractable}.

%In addition to giving alternative derivations for the inequalities obtained in~\cite{he2021tractable}, we extend theses techniques to develop concave overestimators for functions that can not be directly handled using the techniques of~\cite{tawarmalani2013explicit,he2021tractable}. 
In this subsection, we are interested in constructing convex relaxations for the hypograph of a composite function. Here, we impose an additional requirement on the bounding functions $a(\cdot)$ introduced in~(\ref{eq:ordered-oa}). Namely, we assume that there exists a pair $\bigl(u(\cdot), a\bigr)$ satisfying~(\ref{eq:ordered-oa}), where $a:=(a_1, \ldots, a_d)$ is a vector in $\R^{d \times (n+1)}$ so that, for each $i$, $f_i^L= a_{i0} < \cdots < a_{in} = f_i^U$. For any  direction vector $\omega \in \Omega$, we denote by $\BO^\omega(\phi)(\cdot;a)$ the function~(\ref{eq:step-rlx}) to emphasize the dependence of $\BO^\omega(\phi)$ on the  vector $a$. 

Although, in general, the function $\BO^\omega(\phi)(\cdot;a)$ is not  concave, we identify a class of functions $\phi(\cdot)$ for which it is so. We remark that Proposition~\ref{prop:sup-concave} differs from the relaxation technique of~\cite{tawarmalani2013explicit} in that it does not require $\phi(\cdot)$ to be \textit{concave-extendable}, formally defined below, from the vertices of $[f^L,f^U]$.
\begin{definition}[\cite{tawarmalani2002convex,tawarmalani2013explicit}]
	A function $g: S \to \R$, where $S$ is a polytope, is said to be concave-extendable (resp. convex-extendable) from $Y \subseteq S$ if the concave (resp. convex) envelope of $g(y)$ is determined by $Y$ only, that is, the concave envelope of $g$ and $g|_Y$ over $S$ are identical, where $g|_Y$ is the restriction of $g$ to $Y$.\Halmos
\end{definition} 

\begin{proposition}\label{prop:sup-concave}
If $\phi:[f^L,f^U] \to \R$ is supermodular, for any permutation $\omega$ of $\{1, \ldots, d\}$, an overestimator for $\phi(\cdot)$ over $[f^L,f^U]$ is given as follows:
\begin{equation}\label{eq:sup-concave}
\begin{aligned}
\phi(f^L) &+ \sum_{i=1}^d\Biggl[ \phi\Biggl( \sum_{j=1}^{i-1} e_{\omega_j}f_{\omega_j}^U + e_{\omega_i}f_{\omega_i} + \sum_{j=i+1}^de_{\omega_j}f_{\omega_j}^L \Biggr) \\
&  \qquad \qquad \qquad \qquad \qquad  - \phi\Biggl( \sum_{j=1}^{i-1} e_{\omega_j}f_{\omega_j}^U +  \sum_{j=i}^de_{\omega_j}f_{\omega_j}^L \Biggr) \Biggr].
\end{aligned}
\end{equation}
If $\phi(\cdot)$ is  concave in each argument when others are fixed then~(\ref{eq:sup-concave}) is concave.

\end{proposition}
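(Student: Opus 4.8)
The plan is to recognize the claimed overestimator \eqref{eq:sup-concave} as a particular instance of the termwise relaxation $\BO^\omega(\phi)$ and then invoke Theorem~\ref{them:stair-ineq} directly. Concretely, I would take $X=[f^L,f^U]\subseteq\R^d$, let the inner-functions be the coordinate projections $f_i(x)=x_i$, and set $n=1$. For each $i$ I would use the constant bounding functions $a_{i0}(x)=f_i^L$ and $a_{i1}(x)=f_i^U$, together with the underestimators $u_{i0}(x)=f_i^L$ and $u_{i1}(x)=x_i$. The first task is the routine verification that this choice satisfies \eqref{eq:ordered-oa}: the chain $f_i^L\le a_{i0}\le a_{i1}\le f_i^U$ holds with equalities, the boundary conditions $u_{i0}=a_{i0}$ and $u_{i1}=f_i$ hold by construction, and $u_{ij}(x)\le\min\{f_i(x),a_{ij}(x)\}$ holds because $f_i^L\le x_i\le f_i^U$ on $X$.

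Since $n=1$, a direction vector is exactly a permutation $\omega$ of $\{1,\dots,d\}$, and the associated staircase visits the point $p^{i}$ with $p^i_{\omega_j}=1$ for $j\le i$ and $p^i_{\omega_j}=0$ for $j>i$. The next step is to substitute this staircase into \eqref{eq:step-rlx}. In the $i$-th difference term the active coordinate is $\omega_i$, which carries $u_{\omega_i}$; the coordinates $\omega_1,\dots,\omega_{i-1}$ are already at level $1$ and contribute $a_{\omega_j 1}=f_{\omega_j}^U$, while $\omega_{i+1},\dots,\omega_d$ remain at level $0$ and contribute $a_{\omega_j 0}=f_{\omega_j}^L$. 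Reading off the levels of $u_{\omega_i}$ at $p^i$ and $p^{i-1}$ then reproduces the two $\phi$-evaluations in the bracket of \eqref{eq:sup-concave}, and the leading term $\phi(a_{10},\dots,a_{d0})$ equals $\phi(f^L)$. Hence $\BO^\omega(\phi)(u(x),a(x))$ coincides with \eqref{eq:sup-concave}, and Theorem~\ref{them:stair-ineq} (applicable because $\phi$ is supermodular over $[f^L,f^U]$) yields $\phi(x)=(\phi\circ f)(x)\le\BO^\omega(\phi)(u(x),a(x))$ for all $x\in X$, which is the desired overestimator.

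For the concavity claim, the key observation is that the difference terms in \eqref{eq:sup-concave} decouple. The second $\phi$-evaluation in each bracket depends only on the fixed vectors $f^L$ and $f^U$ and is therefore constant in $f$, so it may be absorbed into the additive constant. The first $\phi$-evaluation in the $i$-th bracket fixes every coordinate except $\omega_i$ at an endpoint value and leaves $f_{\omega_i}$ free; it is thus a univariate function of $f_{\omega_i}$ alone, and by the hypothesis that $\phi$ is concave in each argument when the others are fixed, it is concave in $f_{\omega_i}$. Because $\omega$ is a permutation, the indices $\omega_1,\dots,\omega_d$ range over all $d$ distinct coordinates, so \eqref{eq:sup-concave} is, up to an additive constant, a separable sum of univariate concave functions, one in each coordinate, and is therefore jointly concave.

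The conceptual content sits entirely in the first two paragraphs: the only real work is the bookkeeping needed to confirm that the generic step-relaxation $\BO^\omega(\phi)$, specialized to identity inner-functions with $n=1$ and endpoint bounding functions, collapses exactly onto \eqref{eq:sup-concave}. Once that identification is made, the overestimation is immediate from Theorem~\ref{them:stair-ineq} and the concavity is a one-line separability argument. I would expect the index-tracking in the staircase evaluation---getting the roles of the level-$1$ and level-$0$ coordinates right in each telescoping difference---to be the most error-prone step, though not a deep one.
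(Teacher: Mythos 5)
Your proposal is correct and follows essentially the same route as the paper's own proof: specialize Theorem~\ref{them:stair-ineq} with $n=1$, identity inner-functions, and endpoint bounding functions $a_{i0}=f_i^L$, $a_{i1}=f_i^U$, observe that $\BO^\omega(\phi)$ then collapses exactly to \eqref{eq:sup-concave}, and obtain concavity from the separable structure together with componentwise concavity of $\phi$. The paper states this in three sentences; your version simply makes the verification of \eqref{eq:ordered-oa} and the staircase bookkeeping explicit.
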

\begin{compositeproof}
Let $n = 1$, and define $s_{\cdot 0}=f^L$ and $s_{\cdot n} = f$. Then, for any staircase $\omega$ in the grid $\{0,1\}^d$, \textit{i.e.},  a permutation $\omega$ of $\{1, \ldots, d\}$, $\BO^\omega(\phi)(\cdot;a)$ reduces to~(\ref{eq:sup-concave}). Thus, the validity of~(\ref{eq:sup-concave}) follows from Theorem~\ref{them:stair-ineq}. Moreover, the concavity of~(\ref{eq:sup-concave}) follows from that of $\phi(\cdot)$ in each argument when others are fixed. \Halmos
\end{compositeproof}
The following example illustrates how Proposition~\ref{prop:sup-concave} can be applied to derive relaxations for functions that are not concave-extendable.
\begin{example}
Consider $ \phi(f_1,f_2):= 4f_1^{0.8}f_2^{0.8} - f_1^{0.6}f_2^{0.6}$, which is supermodular over $[1,2]^2$, and is concave in each argument when the other argument is fixed. By Proposition~\ref{prop:sup-concave}, we obtain that for each $f \in [1,2]^2$:
\[
\begin{aligned}
\phi(f_1,f_2) &= \phi(1, 1) + \bigl(\phi(f_1,1) - \phi(1,1)\bigr) + \bigl(\phi(f_1,f_2) - \phi(f_1,1)\bigr)\\
& \leq \phi(f_1,1) + \bigl(\phi(2,f_2) - \phi(2,1)\bigr).
\end{aligned}
\]
Similarly, we obtain another concave overestimator of $\phi(\cdot)$, that is $\phi(1,f_2) + \phi(f_1,2) - \phi(1,2)$. In contrast, factorable programming~\cite{mccormick1976computability} yields an overestimator 
$\min \bigl\{L_i(f) + R_j(f) \bigm| i = 1,2, \; j = 1,2\bigr\}$, where 
\[
\begin{aligned}
	L_1(f)&:=4f_1^{0.8}+ 2^{2.8}f_2^{0.8} - 2^{2.8}, \\
	L_2(f)&:=  2^{2.8}f_1^{0.8} + 4f_2^{0.8}  - 2^{2.8}, \\
	  R_1(f)&:= - \bigl((2^{0.6} - 1)f_1 -2^{0.6}+2\bigr) - \bigl((2^{0.6} - 1)f_2 -2^{0.6}+2\bigr) +1, \\
	 R_2(f)&:= -2^{0.6} \bigl((2^{0.6} - 1)f_1 -2^{0.6}+2\bigr) -2^{0.6} \bigl((2^{0.6} - 1)f_2 -2^{0.6}+2\bigr) +2^{1.2},
\end{aligned}
\]
where, in $R_1(f)$ and $R_2(f)$, we have replaced $f_i^{0.6}$ by its linear underestimator $(2^{0.6}-1)(f_i-1) +1$ over $[1,2]$. Then, for $f \in [1,2]^2$, we have 
\[
\begin{aligned}
	& \phi(f_1,1) +\phi(2,f_2) - \phi(2,1) - L_1(f)  = (2^{0.6} -f_1^{0.6}) - 2^{0.6} f_2^{0.6} \\
%	 & \qquad \qquad = -f_1^{0.6} -2^{0.6}(f_2^{0.6} -1) \\
%	 & \qquad \qquad = (2^{0.6} -f_1^{0.6}) - 2^{0.6} f_2^{0.6}\\
	& \quad \leq   \min\bigl\{ -f_1^{0.6} - f_2^{0.6} + 1, 2^{0.6}(f_1^{0.6} - 2^{0.6}) - 2^{0.6}f_2^{0.6} \bigr\}  \leq \min\bigl\{R_1(f), R_2(f)\bigr\}.
\end{aligned}
\]
%where the last inequality follows because $R_1(f)$ (resp. $R_2(f)$) was obtained from $-f_1^{0.6}-f_2^{0.6} + 1$ (resp. $-2^{0.6}f_1^{0.6} - 2^{0.6}f_2^{0.6} + 2^{0.6}2^{0.6}$) by replacing, for $i=1, 2$, $f_i^{0.6}$ with its linear underestimator.
Similarly, we obtain that $\phi(1,f_2) + \phi(f_1,2) - \phi(1,2) - L_2(f)\leq \min\{R_1(f),R_2(f)\}$. This shows that the relaxation derived from Proposition~\ref{prop:sup-concave}, which is at least as tight as factorable relaxation, can dominate the latter relaxation strictly.\Halmos
\end{example}

\def \ephi {\bar{\phi} }

Given a pair $\bigl(u(x),a\bigr)$ satisfying~(\ref{eq:ordered-oa}), it is not apparent that the overestimator $\BO^\omega(\phi)\bigl(u(x);a\bigr)$ from Theorem~\ref{them:stair-ineq} is concave. However, we can relax it further by exploiting the structure of $\BO^\omega(\phi)(\cdot;a)$.  Observe that the function $\BO^\omega(\phi)(\cdot;a)$, defined as in~(\ref{eq:step-rlx}), is additively \textit{separable} with respect to $(u_1, \ldots, u_d)$, \textit{i.e.}, it can be written as a sum of $d$ functions, where for each $i \in \{1, \ldots, d\}$, the $i^{\text{th}}$ function depends only on $u_i$. More specifically, let $\BO^\omega_i(\phi)(\cdot;a): \R^{n+1} \to \R$ be defined as
\[
\BO^\omega_i(\phi)(u_i;a): = \sum_{t: \omega_t=i}  \phi\Bigl(\Pi\bigl((u_{i}, a_{-i});p^t\bigr)\Bigr) - \phi\Bigl(\Pi\bigl((u_{i}, a_{-i});p^{t-1}\bigr)\Bigr).
\]
Then, $\BO^\omega(\phi)(u;a)$ from~(\ref{eq:step-rlx}) equals $\phi\bigl(\Pi(a;p^0)\bigr) + \sum_{i =1}^d \BO_i^\omega(\phi)(u_i;a)$. Therefore, given the same setup as Theorem~\ref{them:stair-ineq}, we obtain that for every $x \in X$ 
\begin{equation}~\label{eq:termwise}
\begin{aligned}
(\phi \mcirc f)(x)& \leq \bigl(\BO^\omega(\phi) \mcirc u\bigr)(x;a) \\
& \leq \phi\bigl(\Pi(a;p^0)\bigr) + \sum_{i=1}^d  \conc_{\conv(X)}\bigl(\BO_i^\omega(\phi) \mcirc u_i\bigr)(x;a),
\end{aligned}
\end{equation}
where the first inequality follows from Theorem~\ref{them:stair-ineq}, and the second inequality holds since  $\bigl(\BO_i^\omega(\phi) \mcirc u_i\bigr)(x;a)$ is relaxed to its concave envelope. The next result states that if  $f_i(\cdot)$ depends on a different set of variables restricted to lie in $X_i$ and $X=\prod_{i=1}^d X_i$, the second inequality in~(\ref{eq:termwise}) is the tightest possible. This is because the right hand side equals to the concave envelope of $\bigl(\BO^\omega(\phi) \mcirc u\bigr)(x;a)$ over $\conv(X)$. 

\begin{proposition}\label{prop:decomposition} 
Assume the same setup as Theorem~\ref{them:stair-ineq} except that $a(\cdot)$ is assumed to be a pre-specified vector $a$. Let $M_1, \ldots, M_d$ be a partition of $\{1, \ldots, m\}$, and assume that, for $x = (x_{M_1}, \ldots, x_{M_d})  \in X:=\prod_{i=1}^dX_i$ where $X_i$ is a subset of $\R^{|M_i|}$,  $f(x) = \bigl(f_1(x_{M_1}), \ldots, f_d(x_{M_d}) \bigr)$ and $u(x) = \bigl(u_1(x_{M_1}), \ldots, u_d(x_{M_d}) \bigr)$. Then, 
\[
\begin{aligned}
(\phi\mcirc f)(x)	&\leq  \phi\bigl(\Pi(a;p^0)\bigr) + \sum_{i=1}^d  \conc_{\conv(X_i)}\bigl(\BO_i^\omega(\phi) \mcirc u_i\bigr)(x_{M_i};a) \\
 &= 	\conc_{\conv(X)}\bigl(\BO^\omega(\phi) \mcirc u\bigr)(x;a),
\end{aligned} 
\]
for every $x \in \conv(X)$. \Halmos
\end{proposition}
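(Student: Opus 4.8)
The plan is to reduce the statement to the standard fact that, over a Cartesian product of its variable blocks, the concave envelope of an additively separable function equals the sum of the concave envelopes of its summands. As observed just before the proposition, $\bigl(\BO^\omega(\phi)\mcirc u\bigr)(x;a) = \phi\bigl(\Pi(a;p^0)\bigr) + \sum_{i=1}^d \bigl(\BO_i^\omega(\phi)\mcirc u_i\bigr)(x_{M_i};a)$, and under the hypothesis $u(x)=\bigl(u_1(x_{M_1}),\ldots,u_d(x_{M_d})\bigr)$ the $i$th summand depends on $x$ only through the block $x_{M_i}$. Combined with the elementary identity $\conv(X)=\prod_{i=1}^d\conv(X_i)$ (the convex hull of a product is the product of convex hulls), this furnishes exactly the separable structure that drives the argument. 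Throughout I write $g_i:=\BO_i^\omega(\phi)\mcirc u_i$ on $X_i$ and $g:=\BO^\omega(\phi)\mcirc u$ on $X$.

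For the inequality, I would start from the chain~(\ref{eq:termwise}), valid for $x\in X$, which already gives $(\phi\mcirc f)(x)\le \phi\bigl(\Pi(a;p^0)\bigr)+\sum_{i=1}^d\conc_{\conv(X)}(g_i)(x;a)$. Since $g_i$ depends only on $x_{M_i}$ and the domain factorizes, taking the concave envelope over $\conv(X)$ leaves the blocks $x_{M_j}$ with $j\neq i$ free, so $\conc_{\conv(X)}(g_i)(x)=\conc_{\conv(X_i)}(g_i)(x_{M_i})$; substituting yields the first line of the display.

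For the equality, the direction ``$\le$'' is immediate: each $\conc_{\conv(X_i)}(g_i)$ is concave and overestimates $g_i$, so $\phi\bigl(\Pi(a;p^0)\bigr)+\sum_i\conc_{\conv(X_i)}(g_i)(x_{M_i})$ is a concave overestimator of $g$ on $\conv(X)$ and therefore dominates the envelope $\conc_{\conv(X)}(g)$. For ``$\ge$'' I would exhibit a single convex-combination representation of $x$ by points of $X$ whose $g$-value meets the right-hand side. Using the representation $\conc_{\conv(X_i)}(g_i)(x_{M_i})=\sup\{\sum_k\lambda^i_k g_i(x^{i,k}_{M_i})\}$ over probability weights $\lambda^i$ and points $x^{i,k}_{M_i}\in X_i$ with $\sum_k\lambda^i_k x^{i,k}_{M_i}=x_{M_i}$, I form the \emph{product} family indexed by tuples $(k_1,\ldots,k_d)$, with point $\bigl(x^{1,k_1}_{M_1},\ldots,x^{d,k_d}_{M_d}\bigr)\in X$ and weight $\prod_i\lambda^i_{k_i}$. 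These weights sum to one; their barycenter is $x$ because the $M_j$-block averages to $x_{M_j}$ while the remaining weights sum to one; and, by additive separability of $g$, the weighted $g$-value separates into $\phi\bigl(\Pi(a;p^0)\bigr)+\sum_i\conc_{\conv(X_i)}(g_i)(x_{M_i})$. Hence $\conc_{\conv(X)}(g)(x)$ is at least the right-hand side, and equality holds for every $x\in\conv(X)$.

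The main obstacle is precisely this product-representation construction for ``$\ge$'', together with the technical care that the individual suprema defining $\conc_{\conv(X_i)}(g_i)(x_{M_i})$ be attained by finitely many points of $X_i$ (invoking compactness and upper semicontinuity, or otherwise replacing the optima by $\varepsilon$-optimal representations and passing to the limit). Once the product family is available, the barycenter and value identities are routine consequences of the fact that each $\lambda^i$ is a probability vector, so the remaining calculations do not warrant spelling out.
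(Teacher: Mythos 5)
Your proposal is correct and follows essentially the same route as the paper, which treats the inequality as an immediate consequence of the chain~(\ref{eq:termwise}) and the equality as the standard fact that the concave envelope of an additively separable function over a Cartesian product $\conv(X)=\prod_{i=1}^d\conv(X_i)$ is the sum of the blockwise envelopes. The paper leaves this envelope-decomposition fact implicit (the proposition is stated without a separate proof), and your product-representation construction together with the easy overestimation direction supplies exactly the missing details.
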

%\begin{compositeproof}
%The result holds because the condition on the inner function implies that $\bigl(\BO^\omega(\phi) \mcirc u\bigr)(x;a)$ is separable with respect to $(x_{M_1}, \ldots, x_{M_d})$, that is, the function $\bigl(\BO^\omega_i(\phi) \mcirc u_i\bigr)(\cdot;a)$ only depends on the variable $x_{M_i}$. \Halmos
%\end{compositeproof}
 This result reduces the problem of relaxing a $m$ dimensional composite function to that of relaxing several lower-dimensional functions, \textit{e.g.}, univariate functions when $M_i = \{i\}$ for $i \in \{1, \ldots, d\}$.  This is useful since envelopes or tight relaxations for certain lower dimensional functions are available (see, for example, \cite{tawarmalani2001semidefinite,gounaris2008tight,anstreicher2010computable,tawarmalani2013explicit,locatelli2014convex,burer2015gentle}). We give an example in Appendix~\ref{app:example-more-underestimators} to demonstrate that the relaxed staircase inequalities obtained using additional underestimators of inner functions can help tighten the relaxation by revealing new inequalities for the composite function.

The main result of this subsection is to compare the strength of relaxations given by Proposition~\ref{prop:decomposition} and~\cite{he2021tractable}. Under a mild condition on the outer-function, we show in Theorem~\ref{them:improve_mor} that the former yields tighter relaxations than the latter. In order to show this result, we modify inequalities from Theorem~\ref{them:stair-ineq} to provide an alternative derivation of the relaxation in~\cite{he2021tractable}. More specifically, under the assumption that $\bar{\phi}(\cdot)$ is concave-extendable from $\vertex(Q)$, where $\ephi:\R^{d \times (n+1)} \to \R$ so that $\ephi(s) = \phi(s_{1n}, \ldots, s_{dn})$ and $Q:= \prod_{i=1}^dQ_i$, and $Q_i$ is a simplex in $\R^{n+1}$ with the following extreme points:
\begin{equation}\label{eq:Q-V}
v_{ij} = (a_{i0}, \ldots, a_{ij-1}, a_{ij}, \ldots, a_{ij} ) \qquad \text{for } j =  0, \ldots, n ,
\end{equation} 
we will argue in Propostion~\ref{prop:stair-Q} that, to obtain the concave envelope of $\ephi(\cdot)$ over $Q$, it suffices to linearly interpolate  $\BO_i^\omega(\phi)(\cdot;a)$ over $ \vertex(Q_i)$.
% We will show that this relaxation recovers the convex relaxation for the hypograph of a composite function obtained in~\cite{he2021tractable}.

 We begin by briefly reviewing the construction in \cite{he2021new,he2021tractable} below. First, the ordering relationship in the pair $\bigl(u(x),a\bigr)$ satisfying~(\ref{eq:ordered-oa}) is encoded into a polytope $P:=\prod_{i=1}^d P_i$, where 
\begin{equation}\label{eq:P-3}
P_i = \left\{ u_i \in \R^{n+1} \,\middle|\,
  \begin{aligned}
a_{i0} \leq u_{i j} \leq  \min\{a_{i j}, u_{in}\},\ u_{i 0} = a_{i 0},\; a_{i 0} \leq u_{i n} \leq a_{i n}
  \end{aligned}
  \right\}.
\end{equation}
Then, a relaxation for  $\phi \mcirc f$ is obtained by convexifying the hypograph of $\ephi(\cdot)$ over the polytope $P$. It is showed in~\cite{he2021new} that the convexification over $P$ is equivalent to that over the subset $Q$ of $P$. Thus, the concave envelope of $\ephi(\cdot)$ over $P$, denoted as $\conc_P(\ephi)(u)$, can be easily obtained if that of $\ephi(\cdot)$ over $Q$, denoted as $\conc_Q(\ephi)(s)$.
%, is known, where $Q$ is a subset of $P$ defined as $Q:= \prod_{i=1}^dQ_i$, and $Q_i$ is a simplex in $\R^{n+1}$ with the following extreme points:
%\begin{equation}\label{eq:Q-V}
%v_{ij} = (a_{i0}, \ldots, a_{ij-1}, a_{ij}, \ldots, a_{ij} ) \qquad \text{for } j =  0, \ldots, n .
%\end{equation} 
Here, we summarize results  relevant to our discussion.
\begin{proposition}[\cite{he2021new}]\label{prop:CR}
Consider a vector of composite functions $\theta \mcirc f: X \to \R^\kappa$ defined as $(\theta \mcirc f)(x): = \bigl((\theta_1 \mcirc f)(x), \ldots, (\theta_\kappa \mcirc f) (x)  \bigr)$. Let $\bigl(u(x),a\bigr)$ be a pair satisfying~(\ref{eq:ordered-oa}), where $a:=(a_1, \ldots, a_d)$ so that $a_{i0} < \cdots < a_{in}$, and define $\Theta^P: = \bigl\{(u,\theta) \bigm| \theta = \theta(u_{1n}, \ldots, u_{dn}),\ u \in P \bigr\}$. Then, we obtain 
\[
\graph(\phi \mcirc f) \subseteq \Bigl\{ (x, \phi) \Bigm| (u,\theta) \in \conv\bigl(\Theta^P\bigr) ,\ u(x) \leq u,\  (x,u_{\cdot n}) \in W  \Bigr\},
\]
where $u_{\cdot n}=(u_{1n}, \ldots, u_{dn})$  and $W$ outer-approximates  $\{(x, u_{\cdot n}) \bigm| u_{\cdot n} = f(x), x\in X\}$. Moreover, for a convex relaxation $R$ of $\Theta^Q: = \bigl\{(s, \theta) \bigm| \theta = \theta(s_{1n}, \ldots, s_{dn}), s\in Q \bigr\}$, we obtain  
\begin{equation}\label{eq:extended-P}
\conv\bigl(\Theta^P\bigr) \subseteq  \bigl\{(u,\theta)\bigm| (s, \theta) \in R,\  u \in P,\ u\leq s,\ u_{\cdot n} = s_{\cdot n} \bigr\}, 
\end{equation}
where the equality holds if $R = \conv(\Theta^Q)$. \end{proposition}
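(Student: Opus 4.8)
The plan is to prove the two assertions in turn, exploiting throughout that both $\Theta^P$ and $\Theta^Q$ attach to each feasible point the value $\theta$ evaluated at its \emph{top} coordinates $u_{\cdot n}$ (resp.\ $s_{\cdot n}$), together with the box structure of the factors $P_i$ in~(\ref{eq:P-3}) and of $Q_i \subseteq P_i$.

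For the graph inclusion, I would, for each $x \in X$, lift the underestimator values $u(x)$ into $P$ by setting $u_{ij} := \max\{u_{ij}(x), a_{i0}\}$. Since the standing assumption gives $a_{i0} = f_i^L \le f_i(x) \le f_i^U = a_{in}$ and~(\ref{eq:ordered-oa}) gives $u_{ij}(x) \le \min\{a_{ij}, f_i(x)\}$, one checks directly that $u \in P$, that $u_{\cdot n} = f(x)$, and that $u \ge u(x)$ coordinatewise. Because $\theta$ reads off only the top coordinates, $(u, \theta(f(x))) \in \Theta^P \subseteq \conv(\Theta^P)$, and $(x, f(x)) \in W$ since $W$ outer-approximates $\{(x,f(x)) \mid x \in X\}$; hence the graph point $(x, \theta(f(x)))$ lies in the claimed set.

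The heart of the proof is~(\ref{eq:extended-P}). The key lemma I would establish first is a \emph{domination claim}: every $u_i \in P_i$ is dominated by some $s_i \in Q_i$ with $s_{in} = u_{in}$. Writing $c := u_{in} \in [a_{i0}, a_{in}]$, I would pick $j^\ast$ with $a_{ij^\ast} \le c \le a_{i,j^\ast+1}$ and take the two-vertex combination $\bar s_i$ of the vertices $v_{ij^\ast}, v_{i,j^\ast+1}$ from~(\ref{eq:Q-V}) whose last coordinate equals $c$. A short computation gives $\bar s_{ik} = a_{ik}$ for $k \le j^\ast$ and $\bar s_{ik} = c$ for $k > j^\ast$; comparing with $u_{ik} \le \min\{a_{ik}, c\}$ then yields $\bar s_i \ge u_i$, proving the claim. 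With this in hand, the inclusion in~(\ref{eq:extended-P}) for any convex relaxation $R \supseteq \Theta^Q$ follows by expressing a point of $\conv(\Theta^P)$ as $\sum_k \lambda_k(u^k, \theta(u^k_{\cdot n}))$ with $u^k \in P$, pushing each $u^k$ up to $s^k \in Q$ with $s^k \ge u^k$ and $s^k_{\cdot n} = u^k_{\cdot n}$, and setting $s := \sum_k \lambda_k s^k$: then $u \le s$, $u_{\cdot n} = s_{\cdot n}$, and $(s,\theta) \in \conv(\Theta^Q) \subseteq R$ by convexity.

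The reverse inclusion, needed for equality when $R = \conv(\Theta^Q)$, is the step I expect to be the main obstacle. Starting from $(s,\theta) = \sum_k \lambda_k (s^k, \theta(s^k_{\cdot n}))$ with $s^k \in Q$, and from $u \in P$, $u \le s$, $u_{\cdot n} = s_{\cdot n}$, I would reconstruct a $P$-representation of $u$ coordinatewise: set $u^k_{i0} = a_{i0}$, $u^k_{in} = s^k_{in}$, and for $0 < j < n$ choose $u^k_{ij} \in [a_{i0}, \min\{a_{ij}, s^k_{in}\}]$ with $\sum_k \lambda_k u^k_{ij} = u_{ij}$. The decisive point is feasibility of this averaging, which is exactly where $u \le s$ (rather than merely $u \in P$) is used: the top of the achievable range is $\sum_k \lambda_k \min\{a_{ij}, s^k_{in}\} \ge \sum_k \lambda_k s^k_{ij} = s_{ij} \ge u_{ij}$, using $s^k \in P_i$, while its bottom is $a_{i0} \le u_{ij}$ from $u \in P$. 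Each $u^k$ then lies in $P$ with $u^k_{\cdot n} = s^k_{\cdot n}$, so $u = \sum_k \lambda_k u^k$ exhibits $(u,\theta) = \sum_k \lambda_k(u^k, \theta(u^k_{\cdot n}))$ as a point of $\conv(\Theta^P)$, establishing the equality.
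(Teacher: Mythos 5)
Your proof is correct, but it takes a genuinely different route from the paper: the paper does not prove this proposition at all, it simply attributes both claims to prior work (Theorem 2 and Lemma 7 of \cite{he2021new}), whereas you give a self-contained argument from first principles. Your three ingredients all check out. The lift $u_{ij} := \max\{u_{ij}(x), a_{i0}\}$ lands in $P$ because (\ref{eq:ordered-oa}) forces $a_{i0} \le f_i(x) \le a_{in}$ and $u_{ij}(x) \le \min\{a_{ij}, f_i(x)\}$, and it preserves the top coordinates, giving the graph inclusion. Your domination lemma --- that any $u_i \in P_i$ is majorized by the point $\bar s_i$ with $\bar s_{ik} = \min\{a_{ik}, u_{in}\}$, which is the convex combination of the two consecutive vertices $v_{ij^\ast}, v_{i,j^\ast+1}$ of $Q_i$ bracketing $u_{in}$ --- is exactly the truncation device the paper itself reuses later (in the proof of Theorem~\ref{them:DCR} and via $\conc(\xi_{i,a_i})$ in Proposition~\ref{prop:eval}), so your route makes the mechanism behind the cited Lemma 7 explicit. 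Finally, your disaggregation step for the equality case is the only place where a genuine idea is needed, and you identify correctly where $u \le s$ (as opposed to merely $u \in P$) enters: the per-atom intervals $[a_{i0}, \min\{a_{ij}, s^k_{in}\}]$ have average upper endpoint at least $s_{ij} \ge u_{ij}$ precisely because $Q \subseteq P$, so a common-interpolation choice of the $u^k_{ij}$ exists, and since $u^k_{in} = s^k_{in}$ is fixed, the interval constraints across $j$ decouple, making the coordinatewise construction legitimate. What your approach buys is transparency and independence from \cite{he2021new}; what the paper's citation buys is brevity and consistency with the machinery (polytopes $P$, $Q$ and their envelope correspondence) developed there.
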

\begin{proof}
	The third statement follows from Theorem 2 of~\cite{he2021new}, and the fourth statement follows directly from Lemma 7 of~\cite{he2021new}.   \Halmos 
\end{proof}
%For any given staircase $\pi$ in the grid given by $\{0,\ldots,n\}^d$, by Theorem~\ref{them:stair-ineq}, the supermodularity of $\phi(\cdot)$ implies that $\BO^\omega(\phi)(\cdot;a)$ is an over-estimator of $\ephi(\cdot)$ over $Q$.  Clearly, $\BO^\omega(\phi)(\cdot;a)$ is not necessarily concave. However, due to its separability, a concave over-estimator for $\BO^\omega(\phi)(\cdot;a)$ over $Q$ can be obtained by relaxing the $i^{\text{th}}$ term $\BO_i^\omega(\phi)(\cdot;a)$ over the simplex $Q_i$. This overestimator, together with Proposition~\ref{prop:CR}, yields a convex relaxation for the hypograph of $\phi \mcirc f$. 
We now focus on the case when $\ephi(\cdot)$ is concave-extendable from $\vertex(Q)$. It can be shown that affinely interpolating $\BO_i^\omega(\phi)(\cdot;a)$ over vertices of $Q_i$ yields a concave overestimator for $\ephi(\cdot)$ over $Q_i$. This affine interpolation is given as follows:
\begin{equation}\label{eq:B-hat}
\hat{\BO}_i^\omega(\phi)(s_i;a) := \sum_{t:\omega_t=i} \frac{\phi\bigl(\Pi(a;p^t)\bigr) - \phi\bigl(\Pi(a;p^{t-1})\bigr)}{a_{i p^t_{i}} - a_{i p^{t-1}_{i}}} \bigl(s_{i p^t_{i}} - s_{i p^{t-1}_{i}}\bigr). 
\end{equation}
Consider the affine function $\hat{\BO}^{\omega}(\phi)(s;a)$ obtained from $\BO^{\omega}(\phi)(s;a)$ by replacing $\BO^\omega_i(\phi)(s_i;a)$  with its affine interpolation $\hat{\BO}^{\omega}_i(\phi)(s_i;a)$, that is  
\begin{equation}\label{eq:step-rlx-interpolation}
\begin{aligned}
\hat{\BO}^{\omega}(\phi)(s;a) & = \phi\bigl(\Pi(a;p^0) \bigr)  + \sum_{i=1}^d \hat{\BO}^{\omega}_i(\phi)(s_i;a).
\end{aligned}
\end{equation}
It can be further shown that $\min_{\omega \in \Omega} \hat{\BO}^\omega(\phi)(\cdot;a)$ yields an explicit description of the concave envelope of $\ephi(\cdot)$ over $Q$. 
\begin{proposition}[Theorem 2 and Corollary 4 in~\cite{he2021tractable}]\label{prop:stair-Q}
Assume that $\ephi(\cdot)$ is concave-extendable from $\vertex(Q)$. If $\phi(s_{1n}, \ldots, s_{dn})$ is supermodular over $[f^L,f^U]$ then $\conc_Q(\ephi)(s) = \min_{\omega \in \Omega}\hat{\BO}^\omega(\phi)(s;a)$ for every $s \in Q$. Moreover, let $T\subseteq \{1, \ldots, d\}$ for which $\phi(T)(s_{1n}, \ldots s_{dn})$ is supermodular over $\bigl\{f(T) \bigm| f^L\leq f\leq f^U \bigr\}$. We obtain that $\conc_{Q}(\ephi)(s) = \min_{\omega \in \Omega}\hat{\BO}^\omega(\phi)\bigl(U(T)(s);A(T)(a)\bigr)$ for every $s \in Q$, where $U(T)(\cdot)$ and $A(T)(\cdot)$ are defined preceding Corollary~\ref{cor:stair-switching}. 
\end{proposition}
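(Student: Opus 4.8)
The plan is to establish the two inequalities $\min_{\omega\in\Omega}\hat{\BO}^\omega(\phi)(s;a)\ge\conc_Q(\ephi)(s)$ and $\min_{\omega\in\Omega}\hat{\BO}^\omega(\phi)(s;a)\le\conc_Q(\ephi)(s)$ separately, using that each $\hat{\BO}^\omega(\phi)(\cdot;a)$ is affine (a sum of simplexwise linear interpolants) and that the staircase simplices triangulate the simplotope $Q$. Fix $\omega\in\Omega$ with staircase $p^0,\dots,p^{dn}$ and set $w^t:=(v_{1,p^t_1},\dots,v_{d,p^t_d})\in\vertex(Q)$, the vertex read off the lattice path from the simplex vertices in~\eqref{eq:Q-V}. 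From~\eqref{eq:Q-V} one has the coordinatewise formula $(v_{ij})_\ell=a_{i,\min\{\ell,j\}}$, so the top coordinates of $w^t$ are $\Pi(a;p^t)$ and hence $\ephi(w^t)=\phi\bigl(\Pi(a;p^t)\bigr)$. For the bound $\min_\omega\hat{\BO}^\omega\ge\conc_Q(\ephi)$, I first note that each $\hat{\BO}^\omega_i$ interpolates $\BO^\omega_i$ at $\vertex(Q_i)$, so $\hat{\BO}^\omega(\phi)(v;a)=\BO^\omega(\phi)(v;a)$ at every $v\in\vertex(Q)$; since such $v$ satisfy the ordering encoded in~\eqref{eq:P-3}, Theorem~\ref{them:stair-ineq} gives $\BO^\omega(\phi)(v;a)\ge\ephi(v)$. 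Thus $\hat{\BO}^\omega(\phi)(\cdot;a)$ is an affine majorant of $\ephi$ on $\vertex(Q)$. Because $\ephi$ is concave-extendable from $\vertex(Q)$, evaluating this affine function on a vertex representation attaining $\conc_Q(\ephi)$ shows $\hat{\BO}^\omega(\phi)(\cdot;a)\ge\conc_Q(\ephi)$ on all of $Q$, and taking the minimum over $\omega$ preserves the inequality.

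The heart of the reverse bound is the interpolation identity $\hat{\BO}^\omega(\phi)(w^t;a)=\ephi(w^t)$ at each staircase vertex. To verify it, substitute $s=w^t$ into~\eqref{eq:B-hat} (renaming its summation index to $\tau$): using $(v_{i,p^t_i})_{p^\tau_i}=a_{i,\min\{p^\tau_i,p^t_i\}}$, the factor $s_{i,p^\tau_i}-s_{i,p^{\tau-1}_i}$ equals $a_{i,p^\tau_i}-a_{i,p^{\tau-1}_i}$ (cancelling the denominator, giving ratio $1$) precisely when the $i$-move at step $\tau$ has $p^{\tau-1}_i<p^t_i$, and vanishes otherwise. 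By monotonicity of the staircase, the surviving moves across all $i$ are exactly those with $\tau\le t$, so the sum in~\eqref{eq:step-rlx-interpolation} telescopes to $\phi\bigl(\Pi(a;p^0)\bigr)+\bigl(\phi(\Pi(a;p^t))-\phi(\Pi(a;p^0))\bigr)=\phi\bigl(\Pi(a;p^t)\bigr)=\ephi(w^t)$. This identity uses only the definitions and the strict ordering $a_{i0}<\cdots<a_{in}$, not supermodularity.

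Next I invoke the fact that the simplices $S_\omega:=\conv\{w^0,\dots,w^{dn}\}$ constitute the staircase triangulation of $Q$, so any $s\in Q$ lies in some $S_{\omega^\ast}$ and admits $s=\sum_t\lambda_t w^t$ with $\lambda\ge0$ and $\sum_t\lambda_t=1$. Since $\hat{\BO}^{\omega^\ast}(\phi)(\cdot;a)$ is affine and interpolates $\ephi$ at the $w^t$, we get $\hat{\BO}^{\omega^\ast}(\phi)(s;a)=\sum_t\lambda_t\ephi(w^t)$; as each $w^t\in\vertex(Q)\subseteq Q$, this is a feasible convex combination in the definition of the concave envelope, whence $\min_\omega\hat{\BO}^\omega(\phi)(s;a)\le\hat{\BO}^{\omega^\ast}(\phi)(s;a)\le\conc_Q(\ephi)(s)$. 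Combining with the previous paragraph yields the first claim. For the switching statement I would apply this verbatim to $\phi(T)$, which is supermodular on $\bigl\{f(T)\mid f^L\le f\le f^U\bigr\}$ by hypothesis, together with the switched data $U(T)(\cdot)$ and $A(T)(\cdot)$ from Corollary~\ref{cor:stair-switching}; since these maps are affine and invertible, they carry $Q$ to the corresponding simplotope and preserve both concave-extendability and the envelope identity, giving $\conc_Q(\ephi)(s)=\min_\omega\hat{\BO}^\omega(\phi)\bigl(U(T)(s);A(T)(a)\bigr)$.

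I expect the main obstacle to be the triangulation step, namely the combinatorial fact that the staircase simplices $S_\omega$ cover $Q$ (a standard but nontrivial property of products of simplices that must be invoked or proved in the coordinate system of~\eqref{eq:Q-V}). The interpolation identity, although it is the technical engine of the proof, reduces to routine telescoping once the formula $(v_{ij})_\ell=a_{i,\min\{\ell,j\}}$ and the monotonicity of the lattice path are in hand.
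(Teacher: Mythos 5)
Your proposal is correct and follows essentially the same route as the paper's proof: validity at $\vertex(Q)$ via Theorem~\ref{them:stair-ineq} plus the interpolation equality, concave-extendability to dominate $\conc_Q(\ephi)$, then the staircase triangulation $\{\Upsilon_\omega\}_{\omega\in\Omega}$ together with the telescoping identity $\hat{\BO}^{\omega}(\phi)\bigl(v_{1p^t_1},\ldots,v_{dp^t_d};a\bigr)=\phi\bigl(\Pi(a;p^t)\bigr)$ to obtain the reverse inequality, and the same reduction via Corollary~\ref{cor:stair-switching} for the switched case. The triangulation fact you flag as the main external ingredient is exactly what the paper invokes (citing Theorem 6.2.13 of~\cite{de2010triangulations}), so no gap remains.
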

\begin{compositeproof}
	See Appendix~\ref{app:stair-Q} for an alternate proof that uses Theorem~\ref{them:stair-ineq}. \Halmos
\end{compositeproof}
 Proposition~\ref{prop:stair-Q} makes it easy to derive the inequalities of \cite{he2021tractable} %This is because the construction in \cite{he2021tractable} required interpolation over a triangulation of $Q$, while Proposition~\ref{prop:stair-Q} shows that it suffices 
 by interpolating $\BO^\omega_i(\phi)(\cdot;a)$ over each simplex $Q_i$ using the explicit formula \eqref{eq:B-hat}. To illustrate the construction, we next specialize Proposition~\ref{prop:stair-Q} to three special cases. 
\begin{example}
Consider the multilinear monomial $m(f) = \prod_{i = 1}^d f_i$ over the hypercube $[f^L,f^U]$, where, for all $i$, $f_i^U> f_i^L \geq 0$. Consider an arbitrary staircase $\omega$ from the grid $\{0, \ldots, n\}^d$. Then, the function $\hat{\BO}^{\omega}(m)(s;a)$ defined in~(\ref{eq:step-rlx-interpolation}) reduces to: 
\[
\prod_{i'=1}^da_{i'0} + \sum_{i=1}^d\sum_{t:\omega_t=i}\biggl(\prod_{i'\neq i}a_{i'p^t_{i'}}\bigl(s_{ip^t_i} - s_{ip^{t-1}_{i}}\bigr)\biggr). 
\]
Since $m(\cdot)$ is supermodular over $[f^L,f^U]$ and concave-extendable from $\vertex(Q)$, by Proposition~\ref{prop:stair-Q}, the convex envelope of $m(\cdot)$ over $Q$ is given by $\min_{\omega \in \Omega}\hat{\BO}^\omega(m)(s;a)$. In particular, when $n=1$, a staircase $\omega$ from $\{0,1\}^d$ reduces to a permutation of $\{1, \ldots, d \}$, and we obtain the following inequalities from Theorem 1 of~\cite{benson2004concave}:
%\[
%m(f) \leq \prod_{i=1}^df_i^L  + \sum_{i = 1}^d\biggl( \prod_{j \in I(i) \setminus \omega_i } f^U_j\biggr) \cdot \biggl(\prod_{j \notin I(i)}f^L_j \biggr) \cdot (f_{\omega_i} - f_{\omega_i}^L), 
%\]
\[
m(f) \leq \prod_{i'=1}^df_{i'}^L  + \sum_{i = 1}^d\Biggl( \prod_{i' \in \{\omega_1,\ldots,\omega_{i-1}\}} f^U_{i'}\Biggr) \cdot \Biggl(\prod_{i' \in \{\omega_{i+1},\ldots,\omega_{d}\}}f^L_{i'} \Biggr) \cdot (f_{\omega_i} - f_{\omega_i}^L),
\]
%where, for all $i$, $I(i):=\{\omega_{i'} \mid i' \leq i\}$. 
since $a_{i'0} = f_{i'}^L$, $a_{i'p_{i'}^t} = f_{i'}^L$ if $i'\in \{\omega_{i+1},\ldots,\omega_{d}\}$, and $a_{i'p_{i'}^t} = f_{i'}^U$ if $i'\in \{\omega_{1},\ldots,\omega_{i-1}\}$.
\Halmos
%
%Let $\pi$ be a permutation of $\{1, \ldots, d \}$. For $i \in \{1, \ldots, d\}$ define $I(i):=\{\pi(i') \mid i' \leq i\}$ as a subset of $\{1, \ldots, d\}$. It turns out that
%	\[
%	\begin{aligned}
%		m(x) &= \prod_{i = 1}^d(x_i^L + (x_i - x_i^L))  \\
%		&= \prod_{i = 1}^d x_i^L + \sum_{i = 1}^d\Bigl( \prod_{j \in I(i) \setminus \pi(i) } x_j\Bigr) \cdot \Bigl(\prod_{j \notin I(i)}x^L_j \Bigr) \cdot (x_{\pi(i)} - x_{\pi(i)}^L) \\
%		& \leq \prod_{i = 1}^d x_i^L + \sum_{i = 1}^d\Bigl( \prod_{j \in I(i) \setminus \pi(i) } x^U_j\Bigr) \cdot \Bigl(\prod_{j \notin I(i)}x^L_j \Bigr) \cdot (x_{\pi(i)} - x_{\pi(i)}^L) \\
%		&:= M^\pi(x).
%	\end{aligned}
%	\]
%Therefore, $\conc_{[x^L,x^U]}(m)(x) \leq \min_{\pi \in \Omega}M^\pi(x)$.\Halmos
\end{example} 
Since the bilinear term is convex- and concave-extendable from $\vertex(Q)$ and is supermodular over $[f_1^L,f_1^U] \times [f_2^L,f_2^U]$, while $s_{1n}(f_2^U-s_{2n})$ is submodular over $[f_1^L,f_1^U] \times [0, f_2^U-f_2^L]$, Proposition~\ref{prop:stair-Q} yields the convex and concave envelope of a bilinear term $s_{1n}s_{2n}$ over $Q$.

\begin{corollary}\label{cor:bilinear-stair}
The convex envelope of $s_{1n}s_{2n}$ over $Q$ is given by 
\begin{equation}\label{eq:bilinear-stair-1}
\begin{aligned}
	\check{b}(s) := \max_{\omega \in \Omega} \biggl\{ a_{10}a_{2n} &+ \sum_{t:\omega_t=1}a_{2(n-p^t_2)}\bigl(s_{1p^{t-1}_1} - s_{1p^{t}_1}\bigr) + \\
& \quad \sum_{t:\omega_t=2} a_{1p^t_1}\bigl(a_{2(n-p_2^t)} - a_{2(n-p_2^{t-1})} - s_{2p^{t}_2} + s_{2p^{t-1}_2}\bigr)\biggr\},
\end{aligned}
\end{equation}
and the concave envelope of $s_{1n}s_{2n}$ over $Q$ is given by
\begin{equation}\label{eq:bilinear-stair-2}
 \hat{b}(s) := \min_{ \omega \in \Omega }\biggl\{ a_{10}a_{20} +  \sum_{t:\omega_t=1} a_{2p^t_2}\bigl(s_{1p^t_1} - s_{1p^{t-1}_1}\bigr) + \sum_{t:\omega_t=2} a_{1p^t_1}\bigl(s_{2p^t_2} - s_{2p^{t-1}_2}\bigr)\biggr\},
\end{equation}
where $\Omega$ is the set of all direction vectors in $\{0, \ldots, n\}^2$. \Halmos
\end{corollary}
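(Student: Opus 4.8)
The plan is to derive Corollary~\ref{cor:bilinear-stair} as a direct specialization of Proposition~\ref{prop:stair-Q}, invoking both the plain supermodular statement and its switched counterpart. The key observation, already noted in the sentence preceding the statement, is that the bilinear term $s_{1n}s_{2n}$ is both convex- and concave-extendable from $\vertex(Q)$, that it is supermodular over $[f_1^L,f_1^U]\times[f_2^L,f_2^U]$, and that after the switching transformation on the second coordinate the function $s_{1n}(f_2^U - s_{2n})$ is submodular (equivalently, the switched outer-function is supermodular). Thus both envelopes fall within the hypotheses of Proposition~\ref{prop:stair-Q}.

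First I would establish the concave envelope formula~(\ref{eq:bilinear-stair-2}). Here the outer-function is $\phi(f_1,f_2)=f_1 f_2$, which is supermodular, so by the first part of Proposition~\ref{prop:stair-Q} we have $\conc_Q(\ephi)(s)=\min_{\omega\in\Omega}\hat{\BO}^\omega(\phi)(s;a)$. It remains to unwind the definition~(\ref{eq:B-hat}) of $\hat{\BO}^\omega_i(\phi)(\cdot;a)$ for the specific product function. For a move in direction $i=1$ at step $t$, the difference $\phi(\Pi(a;p^t))-\phi(\Pi(a;p^{t-1}))$ equals $a_{2p^t_2}\bigl(a_{1p^t_1}-a_{1p^{t-1}_1}\bigr)$, since only the first coordinate changes and the second coordinate of the bounding point is held fixed at $a_{2p^t_2}$; dividing by $a_{1p^t_1}-a_{1p^{t-1}_1}$ and multiplying by $(s_{1p^t_1}-s_{1p^{t-1}_1})$ leaves the clean coefficient $a_{2p^t_2}$. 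The symmetric computation handles moves in direction $2$. Adding the base term $\phi(\Pi(a;p^0))=a_{10}a_{20}$ then reproduces~(\ref{eq:bilinear-stair-2}) exactly. This is the routine bookkeeping step.

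Next I would establish the convex envelope formula~(\ref{eq:bilinear-stair-1}) by applying the switched version of Proposition~\ref{prop:stair-Q} with $T=\{2\}$. Convex envelopes are recovered as the negation of concave envelopes of the negated function, but the cleaner route here is the switching corollary: since $\phi(\{2\})(s_{1n},s_{2n})=s_{1n}(f_2^U-s_{2n})$ is supermodular, Proposition~\ref{prop:stair-Q} gives the appropriate envelope after applying the maps $U(\{2\})$ and $A(\{2\})$. The transformation sends $a_{2j}\mapsto a_{2(n-j)}$ in the bounding functions and correspondingly reflects the underestimator indices, which is precisely why the formula~(\ref{eq:bilinear-stair-1}) features the reflected indices $a_{2(n-p^t_2)}$ and the sign pattern $(s_{1p^{t-1}_1}-s_{1p^t_1})$ in the first sum. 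I would carefully substitute the definitions of $U(T)$ and $A(T)$ into~(\ref{eq:B-hat}) and track how each move's coefficient transforms, then verify that minimizing the switched expression corresponds to maximizing~(\ref{eq:bilinear-stair-1}) in the original variables.

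The main obstacle, and the step warranting the most care, will be the index bookkeeping in the switched case: keeping straight how reflection $j\mapsto n-j$ interacts with the telescoping differences and with the direction of each staircase move, so that the reflected coefficients $a_{2(n-p^t_2)}$ and the reversed sign conventions in~(\ref{eq:bilinear-stair-1}) emerge correctly rather than with an off-by-one or a flipped sign. I would guard against errors by checking the formula on the base case $n=1$, where each staircase reduces to a permutation of $\{1,2\}$ and both~(\ref{eq:bilinear-stair-1}) and~(\ref{eq:bilinear-stair-2}) should collapse to the classical McCormick over- and under-estimators of a bilinear term over a box, providing an independent sanity check on the entire derivation.
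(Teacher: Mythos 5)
Your treatment of the concave envelope~(\ref{eq:bilinear-stair-2}) is sound and coincides with the paper's route: the bilinear term is supermodular and concave-extendable from $\vertex(Q)$, so the first part of Proposition~\ref{prop:stair-Q} applies, and unwinding~(\ref{eq:B-hat}) for a product yields the coefficients $a_{2p^t_2}$ and $a_{1p^t_1}$ exactly as you describe.

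The convex-envelope half, however, has a genuine gap, on two counts. First, your modularity claim is backwards: $\phi(\{2\})(f_1,f_2)=f_1(f_2^U-f_2)$ is \emph{submodular}, not supermodular, since for $f_1'\ge f_1$ and $f_2'\ge f_2$ one has
\[
\bigl(f_1'(f_2^U-f_2')-f_1(f_2^U-f_2')\bigr)-\bigl(f_1'(f_2^U-f_2)-f_1(f_2^U-f_2)\bigr)=-(f_1'-f_1)(f_2'-f_2)\le 0;
\]
this is precisely what the sentence preceding the corollary asserts. Second, and more fundamentally, both parts of Proposition~\ref{prop:stair-Q} characterize only the \emph{concave} envelope $\conc_Q(\ephi)$; switching is an affine reflection of coordinates, and no change of variables can turn a concave-envelope characterization into a convex-envelope one. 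The negation identity $\conv_Q(\ephi)=-\conc_Q(-\ephi)$, which you explicitly set aside as unnecessary, is indispensable. The correct derivation applies the switched part of Proposition~\ref{prop:stair-Q} to $-\phi$: since $(-\phi)(\{2\})(f_1,f_2)=-f_1(f_2^U-f_2)=f_1f_2-f_1f_2^U$ differs from $f_1f_2$ by a linear term, it is supermodular (equivalently, $\phi(\{2\})$ is submodular, as the paper says), and $-\ephi$ is concave-extendable from $\vertex(Q)$ because $\ephi$ is convex-extendable; this gives $\conc_Q(-\ephi)$ as a minimum of affine functions over $\Omega$, and negating produces~(\ref{eq:bilinear-stair-1}). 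The flip from $\min$ to $\max$ comes exactly from this negation, not from rewriting switched variables in the original coordinates. Your own $n=1$ sanity check would have exposed the problem: the expression you obtain from switching alone is (a candidate formula for) a concave overestimator, so it cannot collapse to the McCormick underestimators.
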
 
Corollary~\ref{cor:bilinear-stair} generalizes Corollary~5 in \cite{he2021tractable} and provides a compact constructive derivation of the inequalities therein. Last, we observe that the hypercube $[0,1]^d$ arises as a special case of the Cartesian product of simplices $Q$ when $n=1$, $[f^L,f^U]=[0,1]^d$, and the variables $(s_{10}, \ldots, s_{d0})$ are projected out using $s_{i0}=0$. In this case, for each direction vector $\omega$ in $\{0, 1\}^d$, the function in~(\ref{eq:step-rlx-interpolation}) reduces to
\[
\hat{\phi}^\omega(f_1, \ldots, f_d) :=  \phi(0) + \sum_{i=1}^d\Biggl(\phi\biggl( \sum_{j=1}^i e_{\omega_j} \biggr) - \phi\biggl( \sum_{j=1}^{i-1} e_{\omega_j} \biggr)\Biggr)f_{\omega_i}.
\]
As a corollary of Proposition~\ref{prop:stair-Q}, we also obtain that the Lov\'asz extension describes the concave envelope of supermodular concave-extendable functions over $[0,1]^d$.
\begin{corollary}[Proposition 4.1 in~\cite{lovasz1983submodular} and Theorem 3.3 in~\cite{tawarmalani2013explicit}]\label{cor:lovasz}
If $\phi(\cdot)$ is concave-extendable from $\{0,1\}^d$ and supermodular when restricted to $\{0,1\}^d$ then $\conc_{[0,1]^d}(\phi)(f) = \min_{\omega \in \Omega}\hat{\phi}^{\omega}(f)$ for every $f \in [0,1]^d$. \Halmos
\end{corollary}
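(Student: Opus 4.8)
The plan is to recognize Corollary~\ref{cor:lovasz} as the specialization of Proposition~\ref{prop:stair-Q} obtained by setting $n=1$ and $[f^L,f^U]=[0,1]^d$, followed by projecting out the coordinates $s_{i0}=0$ as described in the paragraph preceding the statement. First I would record the dictionary between the two settings. With $n=1$ and $a_{i0}=f_i^L=0$, $a_{i1}=f_i^U=1$, each $Q_i$ is the segment with vertices $(0,0)$ and $(0,1)$, so after eliminating $s_{i0}=0$ we may identify $s_{i1}$ with $f_i$ and $Q$ with $[0,1]^d$; correspondingly $\ephi(s)=\phi(s_{\cdot n})$ is identified with $\phi(f)$ and $\vertex(Q)$ with $\{0,1\}^d$. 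Under this identification the hypothesis that $\ephi$ is concave-extendable from $\vertex(Q)$ is exactly the hypothesis that $\phi$ is concave-extendable from $\{0,1\}^d$.

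Next I would carry out the routine computation showing that $\hat{\BO}^\omega(\phi)(s;a)$ collapses to $\hat{\phi}^\omega(f)$. Because a direction vector for $n=1$ is a permutation $\omega$ of $\{1,\dots,d\}$, each coordinate $i$ is incremented at exactly one step, the partial staircase point satisfies $\Pi(a;p^t)=\sum_{j=1}^t e_{\omega_j}$, every denominator $a_{ip^t_i}-a_{ip^{t-1}_i}$ in~(\ref{eq:B-hat}) equals $1$, and $s_{ip^t_i}-s_{ip^{t-1}_i}=s_{i1}-s_{i0}=f_i$. Substituting into~(\ref{eq:step-rlx-interpolation}) and reindexing the coordinate sum by the step index reproduces $\phi(0)+\sum_{i=1}^d\bigl(\phi(\sum_{j\le i}e_{\omega_j})-\phi(\sum_{j<i}e_{\omega_j})\bigr)f_{\omega_i}=\hat{\phi}^\omega(f)$, so the right-hand side $\min_{\omega\in\Omega}\hat{\phi}^\omega(f)$ matches $\min_{\omega\in\Omega}\hat{\BO}^\omega(\phi)(s;a)$.

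The only genuine subtlety is that Proposition~\ref{prop:stair-Q} requires $\phi$ to be supermodular over the \emph{whole} box $[0,1]^d$, whereas Corollary~\ref{cor:lovasz} assumes supermodularity only on the vertices $\{0,1\}^d$. I would bridge this gap by passing to the multilinear extension $\tilde{\phi}$ of $\phi|_{\{0,1\}^d}$, namely the unique multiaffine function agreeing with $\phi$ at every vertex of $[0,1]^d$. Since $\tilde{\phi}$ agrees with $\phi$ on $\{0,1\}^d$, and both $\hat{\phi}^\omega$ and (by concave-extendability) $\conc_{[0,1]^d}(\phi)$ depend only on the vertex values, replacing $\phi$ by $\tilde{\phi}$ changes neither side of the claimed identity, provided $\tilde{\phi}$ is itself concave-extendable from $\{0,1\}^d$. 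This last fact holds because the product weights $\lambda_v=\prod_{i:v_i=1}f_i\prod_{i:v_i=0}(1-f_i)$ express $\tilde{\phi}(f)=\sum_v\lambda_v\tilde{\phi}(v)$ as a convex combination of vertex values with $\sum_v\lambda_v v=f$, whence $\conc_{[0,1]^d}(\tilde{\phi}|_{\{0,1\}^d})\ge\tilde{\phi}$ and the two concave envelopes coincide. Finally, $\tilde{\phi}$ is supermodular over all of $[0,1]^d$, because its mixed second partials equal the expected discrete cross-differences of $\phi|_{\{0,1\}^d}$, which are nonnegative by supermodularity, and a twice-differentiable function with nonnegative off-diagonal Hessian entries is supermodular (Topkis). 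With $\tilde{\phi}$ in hand, Proposition~\ref{prop:stair-Q} applies and yields $\conc_{[0,1]^d}(\phi)(f)=\min_{\omega\in\Omega}\hat{\phi}^\omega(f)$.

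I expect the main obstacle to be precisely this vertex-to-box supermodularity bridge; the identification of $[0,1]^d$ with the projected $Q$ and the collapse of $\hat{\BO}^\omega$ to $\hat{\phi}^\omega$ are bookkeeping, but one must argue carefully that the multilinear extension is simultaneously supermodular on the continuous box and concave-extendable from its vertices, so that Proposition~\ref{prop:stair-Q} is genuinely applicable and the two concave envelopes agree.
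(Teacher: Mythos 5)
Your proposal is correct, and its skeleton --- specializing Proposition~\ref{prop:stair-Q} to $n=1$ and $[f^L,f^U]=[0,1]^d$, projecting out $s_{i0}=0$, and checking that $\hat{\BO}^\omega(\phi)(s;a)$ collapses to $\hat{\phi}^\omega(f)$ --- is exactly how the paper obtains the corollary; the paper gives no separate proof beyond the remark preceding the statement. Where you genuinely depart from the paper is in handling the hypothesis mismatch you correctly flagged: Proposition~\ref{prop:stair-Q} is stated with supermodularity over the whole box, while the corollary assumes it only on $\{0,1\}^d$. The paper passes over this silently; the implicit justification is that the proof of Proposition~\ref{prop:stair-Q} (Appendix~\ref{app:stair-Q}) invokes supermodularity only when verifying $\ephi(s)\le \BO^\omega(\phi)(s;a)$ at $s\in\vertex(Q)$, and there every evaluation of $\phi$ arising in the argument of Theorem~\ref{them:stair-ineq} occurs at a grid point of $\prod_{i}\{a_{i0},\ldots,a_{in}\}$, which for $n=1$ is precisely $\{0,1\}^d$; the reverse inequality, proved via the staircase triangulation, needs no supermodularity at all. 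So vertex supermodularity suffices once one reopens that proof. Your alternative --- replacing $\phi$ by the multilinear extension of $\phi|_{\{0,1\}^d}$, showing it is concave-extendable from the vertices (via the product-weight convex combination) and supermodular on all of $[0,1]^d$ (nonnegative mixed second partials plus Topkis' characterization) --- is a valid bridge, and it buys you the ability to cite Proposition~\ref{prop:stair-Q} as a black box, exactly as stated, rather than re-inspecting its proof. The trade-offs are the auxiliary construction itself and the fact that the device is tailored to $n=1$: for a general grid one cannot pass to a multilinear extension so cheaply, whereas the grid-point reading of the paper's proof works for any $n$.
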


Now, we are ready to show that, under certain conditions, the relaxation given by Proposition~\ref{prop:decomposition} is tighter than that given by Proposition~\ref{prop:stair-Q}. 
%Notice that (\ref{eq:termwise}) and Proposition~\ref{prop:decomposition} relaxed $\bigl(\BO^\omega_i(\phi) \mcirc u_i\bigr)(x;a)$ to its concave envelope. In contrast, Proposition~\ref{prop:stair-Q} constructed a convex relaxatiohe2021newn for the hypograph of $\bigl(\BO^\omega_i(\phi) \mcirc u_i\bigr)(x;a)$ by interpolating the outer-function $\BO^\omega_i(\phi)(s_i;a)$ over the vertices of $Q_i$ and convexifying the constraints $s \geq u(x)$  and $x \in X$. 

\begin{theorem}~\label{them:improve_mor}
Consider  $\phi \mcirc f:X \to \R$, and consider a convex relaxation  $W$ of the graph of  $f(\cdot)$. Assume that $s:W \to \R^{d \times (n+1)}$ is a vector of convex functions such that $s(W) \subseteq Q$ and for $i\in \{1, \ldots, d\}$ $s_{in}(x,f) =f_i$. If $\phi(\cdot)$ is supermodular and convex in each argument when others are fixed, $\bigl\{(x,f,\phi) \bigm| \phi \leq \phi(f), (x,f) \in W \bigr\} \subseteq R_+ \subseteq R$, where $R$ and $R_+$ are two convex sets defined as follows:
\[
\begin{aligned}
R &:= \Bigl\{(x,f,\phi) \Bigm|  \phi \leq \min_{\omega \in \Omega} \hat{\BO}^\omega(\phi)\bigl(s;a\bigr) ,\  s(x,f) \leq s ,\ (x,f) \in W\Bigr\},\\
R_+ &:= \left\{(x,f,\phi) \left| \; \begin{aligned} &\phi \leq \min_{\omega \in \Omega}\phi\bigl(\Pi(a;p^0)\bigr) + \sum_{i=1}^d  \conc_{W}\bigl(\BO_i^\omega(\phi) \mcirc s_i\bigr)\bigl((x,f);a\bigr)   \\
&(x,f) \in W 
\end{aligned}
\right.
\right\}.
\end{aligned}
\]
\end{theorem}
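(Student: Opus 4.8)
The plan is to prove the two inclusions separately, reducing each—via the additive separability $\BO^\omega(\phi)(\cdot;a)=\phi\bigl(\Pi(a;p^0)\bigr)+\sum_{i=1}^d\BO_i^\omega(\phi)(\cdot;a)$—to statements about the individual summands $\BO_i^\omega(\phi)(\cdot;a)$ and their affine interpolants $\hat{\BO}_i^\omega(\phi)(\cdot;a)$ defined in~(\ref{eq:B-hat}). Throughout, the key domain fact I would use is that $s(W)\subseteq Q$ forces, for every $(x,f)\in W$, the realization $s(x,f)$ to be a nondecreasing vector with $s_{i0}(x,f)=a_{i0}$, $s_{in}(x,f)=f_i$, and $s_{ij}(x,f)\le a_{ij}$ for all $j$.

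For validity ($\{(x,f,\phi)\mid \phi\le\phi(f),\ (x,f)\in W\}\subseteq R_+$), I would fix $(x,f)\in W$ and observe that the ordering just noted means $\bigl(s(x,f),a\bigr)$ satisfies~(\ref{eq:ordered-oa}). Applying Theorem~\ref{them:stair-ineq} at the point $s(x,f)$ then yields, for every $\omega$, $\phi(f)=\phi\bigl(s_{\cdot n}(x,f)\bigr)\le\BO^\omega(\phi)\bigl(s(x,f);a\bigr)=\phi\bigl(\Pi(a;p^0)\bigr)+\sum_{i=1}^d\bigl(\BO_i^\omega(\phi)\mcirc s_i\bigr)(x,f;a)$. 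Since each term is bounded above by its concave envelope over $W$, and this holds for all $\omega$, I obtain $\phi(f)\le\min_{\omega}\bigl[\phi(\Pi(a;p^0))+\sum_{i=1}^d\conc_W(\BO_i^\omega(\phi)\mcirc s_i)((x,f);a)\bigr]$, which is exactly membership in $R_+$.

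For tightness ($R_+\subseteq R$), I would take the auxiliary vector in the description of $R$ to be $s:=s(x,f)$ itself, which is feasible because $s(x,f)\in Q$ and $s(x,f)\le s(x,f)$. It then suffices to show, for every $\omega$ and every $i$, that $\conc_W\bigl(\BO_i^\omega(\phi)\mcirc s_i\bigr)(x,f;a)\le\hat{\BO}_i^\omega(\phi)\bigl(s_i(x,f);a\bigr)$; summing over $i$, adding $\phi(\Pi(a;p^0))$, and minimizing over $\omega$ gives $\mathrm{bound}_{R_+}(x,f)\le\min_\omega\hat{\BO}^\omega(\phi)(s(x,f);a)$, i.e. $R_+\subseteq R$. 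The termwise inequality follows once I prove that $\hat{\BO}_i^\omega(\phi)(\cdot;a)\mcirc s_i$ is a \emph{concave overestimator} of $\BO_i^\omega(\phi)(\cdot;a)\mcirc s_i$ on $W$, since the concave envelope is the smallest such overestimator. This rests on two structural facts. \emph{Overestimation:} writing the $t$th summand of $\BO_i^\omega$ (with $\omega_t=i$, $k=p^t_i$) as $g(s_{ik})-g(s_{i,k-1})$, where $g$ is the convex univariate restriction of $\phi$ in its $i$th argument with the other arguments frozen at the bounding-function values dictated by the staircase, the desired bound reduces to comparing the average slope of $g$ over $[s_{i,k-1},s_{ik}]$ with that over $[a_{i,k-1},a_{ik}]$; because $s_{i,k-1}\le a_{i,k-1}$ and $s_{ik}\le a_{ik}$ on $Q_i$, the first interval lies weakly to the left of the second, and convexity of $g$ (monotone slopes) gives $\BO_i^\omega(\phi)(s_i;a)\le\hat{\BO}_i^\omega(\phi)(s_i;a)$ on $Q_i$. \emph{Concavity of the composition:} collecting coefficients gives $\hat{\BO}_i^\omega(\phi)(s_i;a)=\sum_{k<n}(\gamma_k-\gamma_{k+1})s_{ik}+\gamma_n s_{in}-\gamma_1 a_{i0}$, where $\gamma_k$ is the secant slope across the $k$th direction-$i$ move; supermodularity forces the frozen coordinates to increase along the staircase, which together with convexity gives $\gamma_k\le\gamma_{k+1}$, so each intermediate $s_{ik}$ ($k<n$) is convex with a nonpositive coefficient (contributing a concave term) while the top coordinate $s_{in}=f_i$ is affine—hence the composition is concave on $W$.

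The main obstacle is the overestimation fact. The natural attempt is to deduce $\hat{\BO}_i^\omega\ge\BO_i^\omega$ on $Q_i$ from convexity of $\BO_i^\omega(\phi)(\cdot;a)$, but this function is in general \emph{not} convex on $Q_i$: supermodularity makes the intermediate coordinates enter concavely. The resolution I would emphasize is to bypass convexity entirely and argue the interpolation bound directly and termwise through the average-slope comparison, which is precisely where the defining constraint $s_{ij}\le a_{ij}$ of $Q_i$ is used in an essential way; this step needs only per-argument convexity, not supermodularity. A secondary point to handle carefully is that it is the affineness of the top coordinate $s_{in}(x,f)=f_i$—not mere convexity—that prevents the $\gamma_n s_{in}$ term from destroying concavity of $\hat{\BO}_i^\omega\mcirc s_i$.
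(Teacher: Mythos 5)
Your proposal is correct and takes essentially the same route as the paper: validity of $R_+$ via Theorem~\ref{them:stair-ineq} followed by termwise concave envelopes, and the containment $R_+\subseteq R$ reduced to the termwise bound $\conc_{W}\bigl(\BO_i^\omega(\phi)\mcirc s_i\bigr)\leq \hat{\BO}_i^\omega(\phi)\bigl(s_i(\cdot);a\bigr)$, which both you and the paper establish by showing $\hat{\BO}_i^\omega(\phi)\mcirc s_i$ is a concave overestimator of $\BO_i^\omega(\phi)\mcirc s_i$ on $W$, with pointwise overestimation coming from exactly the same secant-slope comparison based on per-argument convexity and the ordering $s_{ip^{t-1}_i}\leq s_{ip^t_i}$, $s_{ij}\leq a_{ij}$, $a_{ip^{t-1}_i}\leq a_{ip^t_i}$. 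The only cosmetic differences are that you plug in $s=s(x,f)$ directly (rather than first proving the paper's exact characterization of $R$, which uses more than is needed) and you re-derive inline the nonpositivity of the interpolant's middle coefficients, which the paper obtains by citing Lemma~9 of~\cite{he2021new}.
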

\begin{compositeproof}
Clearly, $R$ and $R^+$ are  convex sets, and, by Theorem~\ref{them:stair-ineq}, $R_+$ is a relaxation of $\bigl\{(x,f,\phi) \bigm| \phi \leq \phi(f), (x,f) \in W \bigr\}$. Next, we show that $R_+ \subseteq R$. By Proposition~\ref{prop:stair-Q}, $\min_{\omega \in \Omega} \hat{\BO}^\omega(\phi)(s;a) = \conc_Q(\phi)(s)$. Then, by Lemma 9 of~\cite{he2021new}, for every $\omega \in  \Omega$, $\hat{\BO}^\omega(\phi)(s,a)$ is a linear function so that  for $i \in \{1, \ldots, d\}$ and $j \neq \{0,n\}$ the coefficient of $s_{ij}$ is non-positive. 
%
%Observe that $\hat{\BO}^\omega(\phi)(s;a)$ is a linear function and, for $i \in \{1, \ldots, d\}$ and $j \neq \{0,n\}$, the coefficient of $s_{ij}$ is non-positive, that is, 
%\[
% \frac{\phi\bigl(\Pi(a;p^{\tau_{ij}})\bigr) - \phi\bigl(\Pi(a;p^{\tau_{ij}-1})\bigr)}{a_{i p^{\tau_{ij}}_{i}} - a_{i p^{\tau_{ij}-1}_{i}}} -  \frac{\phi\bigl(\Pi(a;p^{\tau_{ij+1}})\bigr) - \phi\bigl(\Pi(a;p^{\tau_{ij+1}-1})\bigr)}{a_{i p^{\tau_{ij+1}}_{i}} - a_{i p^{\tau_{ij+1}-1}_{i}}} \leq 0,
%\]
%where $\{\tau_{i1} ,\ldots, \tau_{in} \}:= \{ t \mid \omega_t = i \}$ so that $\tau_{i1} < \ldots < \tau_{in}$, and the inequality holds due to the convexity of $\phi(\cdot)$  in each argument when others are fixed. 
Together with the condition on $s(\cdot)$, we obtain that $R = \bigl\{(x,f,\phi) \bigm|  \phi \leq \min_{\omega \in \Omega} \hat{\BO}^\omega(\phi)\bigl(s(x,f);a\bigr) ,\ (x,f) \in W\bigr\}$.
%\[
%R = \Bigl\{(x,f,\phi) \Bigm|  \phi \leq \min_{\omega \in \Omega} \hat{\BO}^\omega(\phi)\bigl(s(x,f);a\bigr) ,\ (x,f) \in W\Bigr\}.
%\]
Therefore, the proof is complete if we can show that, for  $\omega \in \Omega$ and $i \in \{1, \ldots, d\}$, $\conc_W(\BO_i^\omega (\phi) \mcirc s_i)\bigr((x,f);a\bigr) \leq  \hat{\BO}_i^\omega(\phi)\bigl(s_i(x,f);a\bigr)$ holds for  every $(x,f) \in W$.  This holds because $\hat{\BO}_i^\omega(\phi)(s_i(x,f);a)$ is concave on $W$, and, for $(x,f) \in W$ and
	\[
	\begin{aligned}
\BO^\omega_i(\phi)\bigl(s_i(x,f);a\bigr) &= \sum_{t:\omega_t=i} \Bigl[ \phi\bigl(a_{1p^t_1}, \ldots, a_{i-1p^t_{i-1}}, s_{ip^t_i}(x,f), a_{i+1p^t_{i+1}}, \ldots, a_{dp^t_{d}}\bigr) \\
 &  \quad   - \phi\bigl(a_{1p^{t-1}_1}, \ldots, a_{i-1p^{t-1}_{i-1}}, s_{ip^{t-1}_i}(x,f), a_{i+1p^{t-1}_{i+1}}, \ldots, a_{dp^{t-1}_{d}}\bigr) \Bigr]\\
 & \leq \sum_{t:\omega_t=i}  \frac{\phi\bigl(\Pi(a;p^t)\bigr) - \phi\bigl(\Pi(a;p^{t-1})\bigr)}{a_{i p^t_{i}} - a_{i p^{t-1}_{i}}} \bigl(s_{i p^t_{i}}(x,f) - s_{i p^{t-1}_{i}}(x,f)\bigr) \\
 & = \hat{\BO}^\omega_i(\phi)\bigl(s_i(x,f);a\bigr),
 	\end{aligned}
\]
where the inequality holds because $\phi(\cdot)$ is convex in each argument when others are fixed, and for every $t$ with $w_t=i$, $p^t - p^{t-1} = e_i$,  $s_{ip^{t-1}_i}(x,f) \leq s_{ip^t_i}(x,f)$, $s_{ip^{t-1}_i}(x,f) \leq a_{ip^{t-1}_i}$,  $s_{ip^t_i}(x,f) \leq a_{ip^t_i}$ and $a_{ip^{t-1}_i} \leq a_{ip^t_i}$.\Halmos
\end{compositeproof}
In Appendix~\ref{app:example-convex-each-argument}, we illustrate the improvement of Proposition~\ref{prop:decomposition} over Proposition~\ref{prop:stair-Q} using Example 1 from~\cite{he2021tractable}.

\subsection{Two Generalizations}\label{section:matrix_ineq}
Here, we start with generalizing the idea in Example~\ref{ex:factorable-over} to derive inequalities in the form of matrices. Let $F: X \subseteq  R^m \to \R^{p_1\times p_2}$ and $G:X \subseteq \R^m \to \R^{q_1 \times q_2}$ be two matrices of functions defined as follows:
\[
F(x) = \begin{pmatrix}
	f_{11}(x) & \cdots & f_{1r_2}(x) \\
	\vdots & \ddots & \vdots \\
	f_{r_11}(x) & \cdots & f_{r_1r_2}(x)
\end{pmatrix} \quad \text{and} \quad G(x) = \begin{pmatrix}
	g_{11}(x) & \cdots & g_{1q_2}(x) \\
	\vdots & \ddots & \vdots \\
	g_{q_11}(x)& \cdots & 	g_{q_1q_2}(x)
\end{pmatrix},
\]
where $f_{ij}(\cdot)$ and $g_{tk}(\cdot)$ are functions from $\R^m$ to $\R$. As before, our construction exploits structures of estimators for $F(\cdot)$ and $G(\cdot)$. Let $Q \succeq 0 $ denote that a matrix $Q$ is symmetric and positive semidefinite. For two matrices $A$ and $B$, we say $A \succeq B$ if $A - B \succeq 0$. Now,  we assume that there are matrices of functions $\bigl(U^i(\cdot),A^i(\cdot),V^i(\cdot),B^i(\cdot)\bigr)_{i=1}^n$ so that  for every $x \in X$
\begin{equation}\label{eq:matrix-system}
\begin{aligned}
	U^0(x) \preceq \cdots \preceq U^n(x)=F(x)& \qquad V^0(x) \preceq \cdots \preceq V^n(x) =G(x), \\
	U^0(x) =  A^0 (x) \preceq \cdots \preceq A^n(x)& \qquad V^0(x) = B^0(x) \preceq \cdots \preceq B^n(x),  \\ 
	 U^i(x) \preceq A^i(x)& \qquad U^i(x) \preceq B^i(x) \quad i = 0, \ldots, n.
\end{aligned}
\end{equation}
To define the product of functions $F(\cdot)$ and $G(\cdot)$, we introduce the Kronecker product of two matrices. For two matrices $A \in \R^{p_1 \times p_2}$ and $B \in \R^{q_1 \times q_2}$, the \textit{Kronecker product} $A  \otimes B$ is the $p_1q_1 \times p_2q_2$ matrix defined as
\[
A \otimes B = \begin{pmatrix}
	a_{11} B & \cdots & a_{1m} B\\
	\vdots & \ddots & \vdots \\
	a_{m1}B & \cdots & a_{mn}B
\end{pmatrix}.
\]
Some useful properties of the Kronecker product are summarized in the following result; see Chapter 4 in~\cite{horn1994topics}.
\begin{lemma}\label{lemma:Kron-prod}
Let $A, C \in \R^{p_1 \times p_2}$ and $B,D \in \R^{q_1 \times q_2}$. Then,	$(A + C) \otimes B = A\otimes B + C \otimes B $ and $A \otimes (B + D) = A \otimes B + A\otimes D$.
If $A \succeq 0$ and $B \succeq 0 $ then $A\otimes B \succeq 0$. \Halmos
\end{lemma}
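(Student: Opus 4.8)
The plan is to treat the three claims separately: I would verify the two bilinearity identities by a direct blockwise computation, and then deduce positive semidefiniteness from the spectral theorem together with the bilinearity just established.

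First I would prove bilinearity. By the definition of the Kronecker product, the $(i,j)$ block of $(A+C)\otimes B$ is $(a_{ij}+c_{ij})B$. Since scalar multiplication distributes over matrix addition, this equals $a_{ij}B + c_{ij}B$, which is precisely the $(i,j)$ block of $A\otimes B + C\otimes B$; as the two matrices agree block by block, they are equal. The identity $A\otimes(B+D)=A\otimes B+A\otimes D$ follows by the same argument applied to the blocks $a_{ij}(B+D)=a_{ij}B+a_{ij}D$. Both identities extend immediately, by induction, to finite sums in either argument, which is the form I will need below.

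Next I would establish positive semidefiniteness. Since $A\succeq 0$ and $B\succeq 0$ are symmetric, the spectral theorem supplies orthonormal eigenvectors and nonnegative eigenvalues, so $A=\sum_{i}\lambda_i u_iu_i^{\top}$ and $B=\sum_{j}\mu_j v_jv_j^{\top}$ with $\lambda_i\ge 0$ and $\mu_j\ge 0$. Expanding the Kronecker product over these two finite sums by bilinearity yields
\[
A\otimes B=\sum_{i,j}\lambda_i\mu_j\,\bigl(u_iu_i^{\top}\bigr)\otimes\bigl(v_jv_j^{\top}\bigr).
\]
The crux is the rank-one identity $\bigl(uu^{\top}\bigr)\otimes\bigl(vv^{\top}\bigr)=(u\otimes v)(u\otimes v)^{\top}$, which I would verify by comparing blocks: the $(i,j)$ block of the left-hand side is $u_iu_j\,vv^{\top}$, while the $i$th block of the vector $u\otimes v$ is $u_iv$, so the $(i,j)$ block of $(u\otimes v)(u\otimes v)^{\top}$ is $(u_iv)(u_jv)^{\top}=u_iu_j\,vv^{\top}$ as well. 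Each term $(u_i\otimes v_j)(u_i\otimes v_j)^{\top}$ is therefore a symmetric positive semidefinite rank-one matrix, and since $\lambda_i\mu_j\ge 0$, the displayed sum is a nonnegative combination of positive semidefinite matrices, hence positive semidefinite.

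The computation is entirely routine; the only step requiring a moment's care is the rank-one identity, which is a special case of the mixed-product rule for Kronecker products. I would prefer to check it directly by blocks, as above, rather than invoke the general mixed-product formula, so that the argument remains self-contained.
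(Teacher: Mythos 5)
Your proof is correct. Note, however, that the paper itself offers no proof of this lemma: it is stated as a known fact with a pointer to Chapter~4 of the Horn--Johnson reference \cite{horn1994topics}, so there is no in-paper argument to compare against. Your write-up is the standard textbook proof and is self-contained: the blockwise verification of bilinearity is exactly right, and the positive-semidefiniteness argument via the spectral theorem, bilinearity over the two finite sums, and the rank-one identity $\bigl(uu^{\top}\bigr)\otimes\bigl(vv^{\top}\bigr)=(u\otimes v)(u\otimes v)^{\top}$ is sound; it also yields symmetry of $A\otimes B$ for free, which matters because the paper's convention is that $\succeq 0$ means symmetric and positive semidefinite. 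One small observation: the lemma's first sentence allows rectangular $A$ and $B$, while the PSD claim implicitly requires $p_1=p_2$ and $q_1=q_2$ with $A,B$ symmetric; your invocation of the spectral theorem quietly uses this, and it would be worth one sentence making that restriction explicit. An equally short alternative route, should you want one, is to use the mixed-product rule to write $A\otimes B=\bigl(A^{1/2}\otimes B^{1/2}\bigr)\bigl(A^{1/2}\otimes B^{1/2}\bigr)$ with $A^{1/2},B^{1/2}$ the PSD square roots, but your block-level argument has the advantage of not presupposing the mixed-product formula.
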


\begin{proposition}\label{prop:Kronecker}
Consider two matrices of functions $F:\R^m \to \R^{p_1 \times p_2}$ and $G:\R^m \to \R^{q_1 \times q_2}$, and assume that there exists $\bigl(U^i(\cdot),A^i(\cdot),V^i(\cdot),B^i(\cdot)\bigr)_{i=1}^n$ satisfying~(\ref{eq:matrix-system}). Then, for a direction vector $\omega$ in the grid given by $\{0, \ldots, n\}^2$,  
\[
\begin{aligned}
F(x) \otimes G(x)& \preceq  A^0(x) \otimes B^0(x) +  \sum_{t:\omega_t = 1} \Bigl(U^{p^t_1}(x) - U^{p^{t-1}_1}(x)\Bigr) \otimes B^{p^t_2}(x)   \\
&  +  \sum_{t:\omega_t = 2} A^{p^t_1}(x) \otimes \Bigl(V^{p^t_2}(x) - V^{p^{t-1}_2}(x)\Bigr) \quad \text{ for every } x \in X.
\end{aligned}
\]

\end{proposition}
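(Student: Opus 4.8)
The plan is to carry the scalar staircase argument of Example~\ref{ex:factorable-over} over to the Loewner order, replacing ordinary products by Kronecker products and substituting sign considerations with the positivity-preservation clause of Lemma~\ref{lemma:Kron-prod}. Fix $x \in X$ and suppress the argument throughout; all matrices are symmetric, so $\preceq$ is well defined and is preserved under addition. Given the direction vector $\omega$ in $\{0,\ldots,n\}^2$, let $p^0=(0,0),\ldots,p^{2n}=(n,n)$ be the associated staircase, so that $p^t-p^{t-1}=e_{\omega_t}$ for each $t$. First I would write the exact telescoping identity, the matrix analogue of~\eqref{eq:str-exp},
\[
F \otimes G = U^n \otimes V^n = U^0 \otimes V^0 + \sum_{t=1}^{2n}\left(U^{p^t_1}\otimes V^{p^t_2} - U^{p^{t-1}_1}\otimes V^{p^{t-1}_2}\right),
\]
which uses only $U^n=F$, $V^n=G$, and $U^0\otimes V^0 = A^0\otimes B^0$ (from $U^0=A^0$ and $V^0=B^0$).

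Next I would simplify each telescoping difference according to which coordinate advances, invoking the bilinearity in Lemma~\ref{lemma:Kron-prod}. When $\omega_t=1$ we have $p^t_2=p^{t-1}_2$, so the difference collapses to $\left(U^{p^t_1}-U^{p^{t-1}_1}\right)\otimes V^{p^t_2}$; when $\omega_t=2$ we have $p^t_1=p^{t-1}_1$, so it collapses to $U^{p^t_1}\otimes\left(V^{p^t_2}-V^{p^{t-1}_2}\right)$. Grouping the two families of terms gives the exact identity
\[
F\otimes G = A^0\otimes B^0 + \sum_{t:\omega_t=1}\left(U^{p^t_1}-U^{p^{t-1}_1}\right)\otimes V^{p^t_2} + \sum_{t:\omega_t=2} U^{p^t_1}\otimes\left(V^{p^t_2}-V^{p^{t-1}_2}\right).
\]

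The final step is the termwise Loewner relaxation that matches the claimed right-hand side: in each $\omega_t=1$ summand replace $V^{p^t_2}$ by $B^{p^t_2}$, and in each $\omega_t=2$ summand replace $U^{p^t_1}$ by $A^{p^t_1}$. For the first substitution, bilinearity yields a residual $\left(U^{p^t_1}-U^{p^{t-1}_1}\right)\otimes\left(B^{p^t_2}-V^{p^t_2}\right)$, which is $\succeq 0$ by Lemma~\ref{lemma:Kron-prod} since $U^{p^t_1}\succeq U^{p^{t-1}_1}$ (monotonicity of the chain $U^0\preceq\cdots\preceq U^n$) and $B^{p^t_2}\succeq V^{p^t_2}$ (the bounding relation $V^i\preceq B^i$). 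For the second substitution, the residual is $\left(A^{p^t_1}-U^{p^t_1}\right)\otimes\left(V^{p^t_2}-V^{p^{t-1}_2}\right)\succeq 0$, since $A^{p^t_1}\succeq U^{p^t_1}$ and $V^{p^t_2}\succeq V^{p^{t-1}_2}$. Summing these Loewner inequalities over $t$ and adding $A^0\otimes B^0$ delivers exactly the asserted overestimator.

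I do not expect a genuine obstacle, since the argument is structurally identical to Example~\ref{ex:factorable-over}. The only point requiring care is that each substitution move the inequality in the correct direction of the Loewner order; this is guaranteed precisely by pairing a positive semidefinite \emph{increment} of one factor (from the monotone chains of $U^i$ or $V^i$) with a positive semidefinite \emph{gap} in the other factor (from the bounding relations $U^i\preceq A^i$ and $V^i\preceq B^i$), so that their Kronecker product is positive semidefinite by Lemma~\ref{lemma:Kron-prod}. It is exactly here that the Kronecker structure plays the role that nonnegativity of the multipliers played in the scalar derivation.
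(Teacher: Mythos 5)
Your proposal is correct and follows essentially the same route as the paper's own proof: the exact Kronecker telescoping identity, collapse of each difference via bilinearity according to which coordinate advances, and a termwise Loewner relaxation justified by positivity preservation of Kronecker products under the chain and bounding relations in~(\ref{eq:matrix-system}). The only (welcome) difference is presentational: you spell out the PSD residuals $\bigl(U^{p^t_1}-U^{p^{t-1}_1}\bigr)\otimes\bigl(B^{p^t_2}-V^{p^t_2}\bigr)$ and $\bigl(A^{p^t_1}-U^{p^t_1}\bigr)\otimes\bigl(V^{p^t_2}-V^{p^{t-1}_2}\bigr)$ explicitly, where the paper merely cites Lemma~\ref{lemma:Kron-prod} for the final inequality.
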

\begin{compositeproof}
	See Appendix~\ref{app:Kronecker}  \Halmos
\end{compositeproof}

We next generalize our construction in Theorem~\ref{them:stair-ineq} to the case where the outer-function is \textit{increasing difference} on the Cartesian product of \textit{partial ordered sets}. Before presenting the details of our generalization, we introduce some basic notation from order theory. A partially order set is a set $X$ on which there is a binary relation $\preceq$  that is reflexive, antisymmetric, and transitive. A partially ordered set $X$ is a chain if for any two elements $x'$ and $x''$ of $X$, either $x' \preceq x''$ or $x'' \preceq x'$. If two elements $x'$ and $x''$ of a partially ordered set $X$ have a least upper bound (resp. greatest lower bound) in $X$, it is their join (resp. meet) and is denoted as $x' \vee x''$ (resp. $x' \wedge x''$). A partial ordered set that contains the join and the meet of each pair of its elements is a lattice.  A bounded lattice is a lattice that additionally has a greatest element and a least element. Clearly, the Cartesian product of finite number of partial ordered sets (resp. lattices) is also a partial ordered set (resp. lattice).  Given two partially ordered sets $X$ and $T$, a function $f: X \times T \to \R$ has \textit{increasing differences} in $(x,t)$ if for $x' \succeq x$, $f(x',t) - f(x,t)$ is monotone non-decreasing in $t$. Given a finite number of partially ordered sets $(X_i)_{i=1}^d$, a function $f: \prod_{i=1}^dX_i \to \R$ has increasing differences in $x=(x_1, \ldots, x_d)$ if, for all distinct $i'$ and $i''$ and for all $x_i \in X_i$ for every $i \notin \{i',i''\}$, $f(x)$ has increasing differences in $(x_{i'}, x_{i''})$. Given a lattice $X$, a function $f: X \to \R$ is said to be supermodular if $f(x') + f(x'') \leq f(x' \vee x'') + f(x' \wedge x'')$ for all $x'$ and $x''$ in $X$. The notion of supermodularity and increasing differences are related. By Theorem 2.6.1 and Corollary 2.6.1 of~\cite{topkis2011supermodularity}, a function has increasing differences on the Cartesian product of a finite collection of chains if and only if it is supermodular on that product. However, this property does not hold for the Cartesian product of general lattices. 

To illustrate this difference in our context, consider %Theorem~\ref{them:stair-ineq}, we consider a function 
$\phi:[f^L,f^U] \subseteq \R^d \to \R$ defined as $\phi(f) = g\bigl(h_1(f_{D_1}), \ldots, h_k(f_{D_k}) \bigr)$, where $D_1, \ldots, D_k$ is a partition of $\{1, \ldots, d\}$, $h_i: X_i =\prod_{j \in D_i}[f^L_{j}, f^U_{j}] \subseteq \R^{|D_i|} \to \R$ is a non-decreasing function, and $g: \R^k \to  \R$ is a supermodular function. We show next that $\phi(\cdot)$ satisfies increasing differences on the Cartesian product of partially ordered sets $\prod_{i=1}^kX_i$. However, $\phi(\cdot)$ is not, in general, supermodular on $\prod_{i=1}^k X_i$, \textit{e.g.}, for $x\in \R^m_+$ and $y\in \R^n_+$, the function $\|x\|_{\theta}\|y\|_{\rho}$ %(y_1^\rho + y_2^\rho)^{\frac{1}{\rho}}$ 
has increasing differences on $\R^m_+\times \R^n_+$ in that $\|x'\|_{\theta}\|y'\|_{\rho} - \|x'\|_{\theta}\|y''\|_{\rho} \ge \|x''\|_{\theta}\|y'\|_{\rho}-\|x''\|_{\theta}\|y''\|_{\rho}$ for $x'\ge x''\ge 0$ and $y'\ge y''\ge 0$ because $(\|x'\|_{\theta}-\|x''\|_{\theta})(\|y'\|_{\rho}-\|y''\|_{\rho})\ge 0$, but the function is not supermodular over $\R^m_+\times \R^n_+$ when $\rho>1$ and at least one of $m$ or $n$ exceeds one.

\begin{lemma}\label{lemma:non-supermodular}
Given finite partially ordered sets $(X_i)_{i=1}^k$, consider $g \mcirc h: \prod_{i=1}^k X_i \to \R$ defined as $(g \mcirc h)(x_1, \ldots, x_k) =g\bigl(h_1(x_1), \ldots, h_d(x_k)\bigr) $, where $g: \R^k \to \R$ is supermodular and $h_i: X_i \to \R$ is non-decreasing. Then, $(g \mcirc h)(\cdot)$ has increasing differences on $\prod_{i=1}^kX_i$.
\end{lemma}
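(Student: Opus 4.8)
The plan is to verify the definition of increasing differences directly. By definition, it suffices to fix two distinct indices $i'$ and $i''$ together with arbitrary values $x_i \in X_i$ for $i \notin \{i',i''\}$, and to show that $(g\mcirc h)$ has increasing differences in the pair $(x_{i'},x_{i''})$. The whole argument then hinges on a single structural observation about $g$: since $\R$ is a chain, $\R^k$ is a Cartesian product of chains, so by Theorem 2.6.1 and Corollary 2.6.1 of~\cite{topkis2011supermodularity} the assumed supermodularity of $g$ is equivalent to $g$ having increasing differences on $\R^k$. I would record this equivalence first, since it converts the hypothesis into exactly the pairwise inequality I will need.

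Next I would transport the partial orders on $X_{i'}$ and $X_{i''}$ to the real line using the monotonicity of $h_{i'}$ and $h_{i''}$. Fixing $x_{i'}' \succeq x_{i'}$ in $X_{i'}$, monotonicity gives $h_{i'}(x_{i'}') \geq h_{i'}(x_{i'})$, and setting $y_i := h_i(x_i)$ for the frozen coordinates, the increment
\[
(g\mcirc h)\bigl(\ldots,x_{i'}',\ldots\bigr) - (g\mcirc h)\bigl(\ldots,x_{i'},\ldots\bigr)
\]
becomes a difference of two evaluations of $g$ that differ only in the $i'$-th argument, viewed as a function of $x_{i''}$ through $h_{i''}(x_{i''})$, with all remaining arguments held at the constants $y_i$.

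Finally, I would conclude by taking $x_{i''}'' \succeq x_{i''}'$ in $X_{i''}$, using $h_{i''}(x_{i''}'') \geq h_{i''}(x_{i''}')$, and invoking the increasing-differences property of $g$ in the coordinate pair $(i',i'')$ with the real inputs $h_{i'}(x_{i'}') \geq h_{i'}(x_{i'})$ and $h_{i''}(x_{i''}'') \geq h_{i''}(x_{i''}')$. This immediately yields that the increment above is non-decreasing in $x_{i''}$, which is precisely the claim. Since the indices and the frozen coordinates were arbitrary, increasing differences of $(g\mcirc h)$ on $\prod_{i=1}^k X_i$ follows.

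The argument is mostly bookkeeping, so I do not anticipate a serious obstacle; the one point deserving care is the reduction from supermodularity to increasing differences for $g$. This step is licensed only because $\R^k$ is a product of chains, and no such equivalence holds on the abstract posets $X_i$ themselves. That is exactly why the conclusion must be phrased as increasing differences rather than supermodularity, matching the distinction emphasized in the discussion preceding the lemma.
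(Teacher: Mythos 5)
Your proof is correct, and its skeleton matches the paper's: fix two distinct indices, freeze the remaining coordinates, use monotonicity of the $h_i$'s to transport the partial orders into $\R$, and then apply a pairwise property of $g$ at the points $h_i(x_i)$. The one real difference is how that pairwise property is obtained. You invoke Topkis' equivalence (Theorem 2.6.1/Corollary 2.6.1) to convert supermodularity of $g$ on $\R^k$ into increasing differences, and then apply increasing differences in the coordinate pair $(i',i'')$. The paper instead works directly from the supermodularity definition: with indices $s,t$ and $I=\{1,\ldots,k\}\setminus\{s,t\}$, it applies $g(p\vee q)+g(p\wedge q)\ge g(p)+g(q)$ to the points $p=\bigl(h_s(x''_s),h_t(x'_t),h_I(x_I)\bigr)$ and $q=\bigl(h_s(x'_s),h_t(x'_t)\vee h_t(x''_t),h_I(x_I)\bigr)$ — monotonicity of $h_s$ ensures the join and meet land in the right places — and then uses monotonicity of $h_t$ to replace $h_t(x'_t)\vee h_t(x''_t)$ by $h_t(x''_t)$. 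In other words, the paper reproves inline exactly the direction of Topkis' equivalence that you cite as a black box. Both routes are valid: yours is marginally shorter and leans on a result the paper itself records in the paragraph preceding the lemma, while the paper's is self-contained at the cost of one join/meet computation. Your closing caveat — that the equivalence holds only because $\R^k$ is a product of chains and has no analogue on the abstract posets $X_i$ — is accurate and is precisely the distinction the surrounding discussion emphasizes.
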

\begin{compositeproof}
	Let $s$ and $t$ be two distinct indexes, and consider two pairs of elements $(x''_s,x'_s)$ and $(x''_t,x'_t)$ in $X_s$ and $X_t$, respectively, so that $x''_s \succeq_s x'_s$ and $x''_t \succeq_t x'_t$. The proof follows since, for every $x_i \in X_i$ where $i \in I: =\{1, \ldots, k\}\setminus \{ s,t\}$,
	\[
	\begin{aligned}
	&g\bigl( h_s(x''_s), h_{t}(x'_t),h_{I}(x_{I})\bigr) - 	g\bigl( h_s(x'_s), h_{t}(x'_t),h_{I}(x_{I})\bigr) \\
&\qquad \leq		g\bigl( h_s(x''_s), h_{t}(x'_t) \vee h_{t}(x''_t),h_{I}(x_{I})\bigr) - 	g\bigl( h_s(x'_s), h_{t}(x'_t) \vee h_{t}(x''_t),h_{I}(x_{I})\bigr) \\
	&\qquad	= g\bigl( h_s(x''_s),  h_{t}(x''_t),h_{I}(x_{I})\bigr) - 	g\bigl( h_s(x'_s),  h_{t}(x''_t),h_{I}(x_{I})\bigr),
\end{aligned}
	\]
	where the inequality follows from the supermodularity of $g(\cdot)$ on $\R^k$ and the monotonicity of $h_s(\cdot)$, and the equality follows from the monotonicity of $h_t(\cdot)$.   \Halmos
\end{compositeproof}

We note that Theorem~\ref{them:stair-ineq} uses increasing differences of the outer-function over pairs of inner functions $\bigl(f_i(\cdot),f_j(\cdot)\bigr)$. Consequently, the argument extends to the case where each $f_i(\cdot)$ represents a vector of functions and its underestimators are organized to form a partial order. In this case, suitably modifying Theorem~\ref{them:stair-ineq} yields an inequality expressible as a summation of lower-dimensional functions, each involving exactly one inner-function. Overestimating these functions then suffices to obtain a concave overestimator of the composite function.  
%Instead of using supermodularity over $[f^L,f^U]$, Theorem~\ref{them:stair-ineq} can use increasing differences since $\R^d$ is the Cartesian product of $d$ real lines which are chains. With this interpretation, 
Formally, we extend Theorem~\ref{them:stair-ineq} to the case where the outer-function $\phi: \prod_{i=1}^d \Upsilon_i \to \R$ has increasing differences and each $\Upsilon_{i}$ is a partially ordered set with an ordering $\preceq_i$. More specifically, given a collection of pairs of elements $(\upsilon_{ij}, \alpha_{ij} )_{j=0}^n$ in $\Upsilon_i$ so that 
\begin{equation}~\label{eq:order-partial}
 \alpha_{i0} \preceq_i   \cdots \preceq_i  \alpha_{in}\; \;\;  \upsilon_{i0} = \alpha_{i0} \quad \upsilon_{ij}  \preceq_i \upsilon_{in}   \text{ and }  \upsilon_{ij} \preceq_i \alpha_{ij} \;\text{for} \; j \in \{1, \ldots, n\},
\end{equation} 
we overestimate $\phi(\upsilon_{1n}, \ldots, \upsilon_{dn})$ in three steps. First, we show that there exists a lattice subset $(\tilde{\upsilon}_{ij})_{j=0}^n$ of $\Upsilon_i$ that satisfies
\begin{equation}~\label{eq:order-updating}
\begin{aligned}
	\tilde{\upsilon}_{i0} \preceq_i \cdots \preceq_i \tilde{\upsilon}_{in} = \upsilon_{in} \quad \text{and} \quad  \upsilon_{ij} \preceq_{i}\tilde{\upsilon}_{ij}  \preceq_i \alpha_{ij} \text{ for  } j \in \{0, \ldots,n \}.
\end{aligned}
\end{equation}
Second, for each staircase $\omega \in \{0, 1, \ldots, n\}^d$, we telescope $\phi(\upsilon_{1 n}, \ldots, \upsilon_{d n})$ into $\mathcal{D}^\omega(\phi)(\tilde{ \upsilon}_1, \ldots, \tilde{\upsilon}_d)$, where $\mathcal{D}^\omega(\phi)(\cdot)$ is defined in~(\ref{eq:str-exp}). Third, for each difference term in the staircase expansion, we replace the non-changing (resp. changing) coordinates of $\tilde{\upsilon}$ with corresponding coordinates of $\alpha$ (resp. $\upsilon$).
%We show next  that $\mathcal{D}^\omega(\phi)(\tilde{ \upsilon}_1, \ldots, \tilde{\upsilon}_d) \leq \mathcal{B}^\omega(\phi)(\tilde{\upsilon}, \alpha ) \leq \mathcal{B}^\omega(\phi)(\upsilon, \alpha )$.

%{\color{blue}  More discussion on the three-step procedure.}
\begin{theorem}~\label{them:stair-lattice}
Consider a function $\phi:\prod_{i=1}^d \Upsilon_i \to \R$ which has increasing differences. Given $(\upsilon_{ij}, \alpha_{ij} )_{j=0}^n$ satisfying~(\ref{eq:order-partial}), if for each $i \in \{1, \ldots, d\}$ there exists $(\tilde{\upsilon})_{j=0}^n$ then, for each staircase $\omega \in \{0, 1, \ldots, n\}^d$, $\phi(\upsilon_{\cdot n})  \leq \BO^\omega(\phi)(\upsilon,\alpha)$.
\end{theorem}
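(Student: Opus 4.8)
The plan is to execute the three-step scheme announced just before the statement, turning the telescoping identity~(\ref{eq:str-exp}) into the claimed bound by invoking increasing differences twice: once to raise the \emph{non-changing} coordinates of $\tilde\upsilon$ up to $\alpha$, and once to lower the \emph{changing} coordinate from $\tilde\upsilon$ down to $\upsilon$. First I would fix a direction vector $\omega$ and its path $p^0,\ldots,p^{dn}$, and collect the order relations. From~(\ref{eq:order-partial}) we have $\upsilon_{i0}=\alpha_{i0}$; combined with~(\ref{eq:order-updating}) this pins the endpoints as $\tilde\upsilon_{i0}=\alpha_{i0}=\upsilon_{i0}$ and $\tilde\upsilon_{in}=\upsilon_{in}$, while for $0\le j\le n$ we retain $\upsilon_{ij}\preceq_i\tilde\upsilon_{ij}\preceq_i\alpha_{ij}$ and the chain $\tilde\upsilon_{i0}\preceq_i\cdots\preceq_i\tilde\upsilon_{in}$. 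Since $\tilde\upsilon_{in}=\upsilon_{in}$, I may replace $\upsilon_{\cdot n}$ by $\tilde\upsilon_{\cdot n}$ and apply~(\ref{eq:str-exp}) to write $\phi(\upsilon_{\cdot n})=\DO^\omega(\phi)(\tilde\upsilon)$ as the bottom value $\phi(\Pi(\tilde\upsilon;p^0))$ plus the $dn$ difference terms $\phi(\Pi(\tilde\upsilon;p^t))-\phi(\Pi(\tilde\upsilon;p^{t-1}))$, exactly as in the model Theorem~\ref{them:stair-ineq}.

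For the first replacement I consider the $t$-th difference term, say with $\omega_t=i$. Only coordinate $i$ changes between $p^{t-1}$ and $p^t$, moving up from $\tilde\upsilon_{i,p^{t-1}_i}$ to $\tilde\upsilon_{i,p^t_i}$, while each other coordinate $k$ is frozen at $\tilde\upsilon_{k,p^t_k}\preceq_k\alpha_{k,p^t_k}$. Raising these frozen coordinates one at a time from $\tilde\upsilon$ to $\alpha$ and using that $\phi$ has increasing differences in each pair $(i,k)$, the $i$-difference does not decrease. Summing the $dn$ resulting inequalities and restoring the common bottom term $\phi(\Pi(\tilde\upsilon;p^0))=\phi(\Pi(\alpha;p^0))$ yields $\phi(\tilde\upsilon_{\cdot n})\le\BO^\omega(\phi)(\tilde\upsilon,\alpha)$.

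The crux is the second replacement, $\BO^\omega(\phi)(\tilde\upsilon,\alpha)\le\BO^\omega(\phi)(\upsilon,\alpha)$. Using the separability of $\BO^\omega(\phi)(\cdot;\alpha)$ recorded before Proposition~\ref{prop:decomposition}, it suffices to prove $\BO_i^\omega(\phi)(\tilde\upsilon_i;\alpha)\le\BO_i^\omega(\phi)(\upsilon_i;\alpha)$ for each $i$. Let $g_j(y)$ denote $\phi$ evaluated with coordinate $i$ at $y$ and the remaining coordinates frozen at the levels $c^{(j)}$ they occupy during the $j$-th unit move in direction $i$, so that $\BO_i^\omega(\phi)(u_i;\alpha)=\sum_{j=1}^n\bigl(g_j(u_{ij})-g_j(u_{i,j-1})\bigr)$; since the staircase is monotone and $\alpha$ is increasing, $c^{(1)}\preceq\cdots\preceq c^{(n)}$. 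An Abel summation recasts this as $g_n(u_{in})-g_1(u_{i0})-\sum_{j=1}^{n-1}\bigl(g_{j+1}(u_{ij})-g_j(u_{ij})\bigr)$. Because $\tilde\upsilon_{i0}=\upsilon_{i0}$ and $\tilde\upsilon_{in}=\upsilon_{in}$, the boundary terms coincide for $u=\tilde\upsilon$ and $u=\upsilon$, so the two quantities differ only through the middle sum. Increasing differences between $i$ and the other coordinates, together with $c^{(j+1)}\succeq c^{(j)}$, makes $y\mapsto g_{j+1}(y)-g_j(y)$ non-decreasing, and $\upsilon_{ij}\preceq_i\tilde\upsilon_{ij}$ then gives $g_{j+1}(\upsilon_{ij})-g_j(\upsilon_{ij})\le g_{j+1}(\tilde\upsilon_{ij})-g_j(\tilde\upsilon_{ij})$ for each intermediate $j$; the signs align to deliver $\BO_i^\omega(\phi)(\tilde\upsilon_i;\alpha)\le\BO_i^\omega(\phi)(\upsilon_i;\alpha)$. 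Chaining the two replacements gives $\phi(\upsilon_{\cdot n})=\phi(\tilde\upsilon_{\cdot n})\le\BO^\omega(\phi)(\tilde\upsilon,\alpha)\le\BO^\omega(\phi)(\upsilon,\alpha)$.

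I expect the main obstacle to be the bookkeeping in this second replacement: one must check that the frozen levels $c^{(j)}$ are genuinely monotone in $j$ (so increasing differences applies in the correct orientation), and that the Abel summation cancels exactly the two pinned endpoints, leaving a middle sum whose sign is governed by $\upsilon_{ij}\preceq_i\tilde\upsilon_{ij}$. The telescoping and the first replacement are comparatively routine once the order relations are laid out. Finally, the assumed existence of the chain $(\tilde\upsilon_{ij})_{j=0}^n$ is what enables both steps; in the lattice case it is supplied concretely by the running joins $\tilde\upsilon_{ij}=\bigvee_{k\le j}\upsilon_{ik}$, which one verifies satisfies~(\ref{eq:order-updating}) using $\upsilon_{ij}\preceq_i\upsilon_{in}$ and the monotonicity of $\alpha$.
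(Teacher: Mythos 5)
Your proposal is correct and takes essentially the same route as the paper: the paper's proof reuses the argument of Theorem~\ref{them:stair-ineq}, justifying the analogues of~(\ref{eq:them1-1}) and~(\ref{eq:them1-2}) by replacing the frozen coordinates with $\alpha$ one at a time and by decomposing multi-coordinate moves into single-coordinate ones so that pairwise increasing differences can be applied. Your first replacement is exactly the former step (the paper's~(\ref{eq:degreereduce})), and your Abel-summation argument with the monotonicity of $y \mapsto g_{j+1}(y)-g_j(y)$ is precisely the paper's rearrangement of $\BO^\omega(\phi)$ combined with the latter permutation-decomposition step.
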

\begin{compositeproof}
	See Appendix~\ref{app:stair-lattice}.  \Halmos
\end{compositeproof}
Theorem~\ref{them:stair-lattice} generalizes Theorem~\ref{them:stair-ineq} to handle functions that are not necessarily supermodular, but have increasing differences, as in Lemma~\ref{lemma:non-supermodular}. We remark that $\alpha_{ij}$ can be any overestimator for $\tilde{\upsilon}_{ij}$ so that the overestimation in \eqref{eq:degreereduce} can be seen as a way to reduce the degree of the resulting inequalities. We provide more detail on this aspect in Example~\ref{ex:RLT+} later.

\subsection{Leveraging Composite Relaxations alongside RLT}\label{section:RLT-Q}
In this subsection, we modify Reformulation-Linearization Technique ($\RLT$) for polynomial programs~\cite{sherali1992global,sherali2013reformulation} so that the relaxation produced by Proposition~\ref{prop:CR}, when the outer-function is multilinear, is implicitly derived by RLT at the $d^{\text{th}}$ level. To achieve this, will require that the $Q$ polytope is modeled explicitly in the formulation in a specific way. Later, we will also apply Theorem~\ref{them:stair-ineq} in a recursive fashion to strengthen RLT relaxations for polynomial optimization problems. We begin with a brief review of RLT relaxations. Consider the feasible region $\mX$ of a polynomial optimization in $m$ variables defined as 
\begin{equation}\label{eq:pp}
\mX:  = \bigl\{x \in \R^m \bigm| g_k(x) \geq 0,\ k = 1,\ldots, \kappa \bigr\}.
\end{equation}
Let $\deg(\mX)$ denote the maximum degree of polynomials $g_1, \ldots, g_\kappa$, and let $\gamma$ be a positive integer. To obtain a generic LP relaxation of $\mX$, the $\RLT$ procedure \textit{reformulates} $\mX$ by generating implied constraints using distinct product forms: 
\[
g_1(x)^{\alpha_1}\cdots g_\kappa(x)^{\alpha_\kappa} \geq 0 \qquad \text{for } \sum_{k = 1}^\kappa \alpha_k \leq \deg(\mX) + \gamma -1,\ \alpha_k \in  \mathbb{Z}_+^\kappa. 
\]
After this, $\RLT$ \textit{expands} the left-hand side of each resulting polynomial inequality so that it becomes a weighted sum of distinct monomials. Last, $\RLT$ \textit{linearizes} the resulting polynomial inequalities by substituting a new variable $y_\alpha$ for each monomial term $x^\alpha:= \prod_{i=1}^n x_i^{\alpha_i} $, so as to obtain linear inequalities in terms of introduced $y$ variables. The resulting LP relaxation is called the $\gamma^{\text{th}}$ level RLT relaxation for $\mX$, and will be denoted as $\RLT_\gamma(\mX)$. 

Let $p:[x^L, x^U] \to \R^{\kappa} $ be a vector of polynomials, where $[x^L, x^U]$ is a hypercube in $\R^m$ so that $x_i^L < x_i^U$ for every $i = 1, \ldots, m$. Clearly,  $p(\cdot)$ can be represented as a vector of composite functions. In particular, let $\theta: \R^d \to \R^\kappa$ be a vector of multilinear functions and $f: \R^m \to \R^d$ be a vector of polynomial functions so that, for $k \in \{1, \ldots, \kappa \}$, $p_k(x) = \theta_k\bigl(f_{1}(x), \ldots, f_d(x) \bigr)$ for   $x \in [x^L,x^U]$. Assume that  $f(\cdot)$ is associated with a pair of vectors of polynomials $\bigl(u(x),a(x)\bigr)$ satisfying~(\ref{eq:ordered-oa}):
\begin{equation}\label{eq:RLT-phi-f}
	\begin{aligned}
 x_i - x^L_i &\geq 0 &  x^U_i - x_i &\geq 0, \quad i = 1, \ldots, d ,  \\
 f_{i}(x) - a_{i0}(x) &\geq 0 & a_{in}(x) - f_{i}(x)  &\geq 0, \quad i = 1, \ldots, d ,\\
f_{i}(x) - u_{ij}(x) &\geq 0 &  a_{ij}(x) - u_{ij}(x) &\geq 0, \quad i = 1, \ldots, d, \; j  = 0, \ldots, n. 
	\end{aligned}
\end{equation}
The $\gamma^{\text{th}}$ level RLT over~(\ref{eq:RLT-phi-f}) yields a relaxation for the graph of $p$ that is tighter than the standard $\gamma^{\text{th}}$ level RLT relaxation of the graph of $p$, since we have introduced additional inequalities relating $f_i(x)$, $a_{ij}$, and $u_{ij}(x)$. It turns out that RLT relaxation, improved by adding the inequalities in \eqref{eq:RLT-phi-f} is still not sufficient to derive the inequalities of Theorem~\ref{them:stair-ineq}. This is because Theorem~\ref{them:stair-ineq} exploits the inequality $f_i(x) \geq \max\{f_i^L,u_{ij}(x)\}$ to derive estimating functions.  However, RLT does not exploit this inequality. Example 3 in~\cite{he2021new} provides a concrete setting where RLT over~(\ref{eq:RLT-phi-f}) does not imply inequalities obtained using Theorem~\ref{them:stair-ineq}.

%However, with this insight, we can adapt the RLT procedure, to generate relaxations for the graph of $p(\cdot)$. 
Now, we apply RLT to the polytope $P$. This relaxation, together with the linearization of $\theta = \theta(u_{1 n}, \ldots, u_{dn})$, yields a linear relaxation for $\Theta^P: = \bigl\{(u,\theta) \bigm| \theta = \theta(u_{1 n}, \ldots, u_{dn}),\ u \in P \bigr\}$, but does not suffice to generate its convex hull even when we utilize the $d^{\text{th}}$ level RLT of $P$, as we illustrate in the next example. 
%We begin by simply applying RLT to $\Theta^P: = \bigl\{(u,\theta) \bigm| \theta = \theta(u_{1 n}, \ldots, u_{dn}),\ u \in P \bigr\}$ so as to produce the relaxation $\proj_{(u,\theta)}\bigl( \RLT_\gamma(\Theta^P) \bigr)$.
%as a relaxation for  in the space of $(x,u,\theta)$:
%does not effectively utilize Proposition~\ref{prop:CR}. For any given pair $\bigl(u(x), a\bigr)$ satisfying~(\ref{eq:ordered-oa}) and for any $\gamma$, we construct the following straightforward RLT relaxation for the graph of $p(\cdot)$:
%\begin{equation*}\label{eq:cr-p}
%\Bigl\{(x,u,\theta)\Bigm| (u,\theta) \in \proj_{(u,\theta)}\bigl( \RLT_\gamma(\Theta^P) \bigr),\ u(x) \leq u,\ (x,u_{\cdot n}) \in W \Bigr\},
%\end{equation*}
%where $W$ outer-approximates $\bigl\{(x,u_{\cdot n})\bigm| u_{\cdot n} = f(x),\ x^L\leq x\leq x^U \bigr\}$. 
%As the next example shows, this RLT relaxation does not convexify $\Theta^P$ even when $\gamma = d$.
\begin{example}\label{ex:CR-vs-RLT-2}
%Consider the function $x_1^2x_2^2$ over $[0,2]^2$, and consider the following system of valid inequalities :
%\begin{equation}\label{eq:RLT-phi-f-ex}
%\begin{aligned}
%&x_i \geq 0 &&2- x_i \geq 0 && i = 1,2 \\
%&x_i^2 \geq 0 &&4 - x_i^2 \geq 0 && i =1,2 \\
%&x_i^2 - (2x_i-1) \geq 0 &&3-(2x_i - 1) \geq 0 && i =1,2.	
%\end{aligned}
%\end{equation}
Consider a bilinear term $f_1f_2$ over a polytope
$P: = \bigl\{(f,u) \bigm| 0\leq u_{i} \leq \min \{3, f_i\} \text{ and } 0 \leq f_i \leq 4,\ i = 1, 2 \bigr\}$.
Let $\phi$ denote $f_1f_2$. The second level RLT of $P$ fails to generate $\phi \geq 3u_1 +f_1 + 3u_2 + f_2-15$, which is obtained using Corollary~\ref{cor:bilinear-stair}. Moreover, minimizing $\phi- (3u_1 +f_1 + 3u_2 + f_2-15)$ over the second level RLT of $P$ yields a solution smaller than $-0.52$, while it has been shown that $\phi - (3u_1 +f_1 + 3u_2 + f_2-15) \geq 0 $ is valid for the graph of $f_1f_2$ over $P$.  \Halmos
\end{example}
Fortunately, we can adapt RLT to exploit the structure of the product of simplices $Q$ and, thereby, generate the convex hull of $\Theta^Q$, introduced in Proposition~\ref{prop:CR}, at the $d^{\text{th}}$ level. 
%Therefore, using Proposition~\ref{prop:CR}, along with this  specialized RLT, we provide an explicit description of $\conv(\Theta^P)$. 
In fact, we establish a convexification result in a more general setting. 
%Let $\Lambda: = \prod_{i=1}^d\Lambda_i$, where $\Lambda_i:=\bigl\{\lambda_i \in \R^{n+1} \bigm| 1- \sum_{j=0}^n \lambda_{ij}=0,\ \lambda_i \geq 0\bigr\}$. The next result shows that the $d^{\text{th}}$ level RLT of $\Lambda$ yields the convex hull of the graph of all degree $d$ multilinear functions over $\Lambda$, that is, $M^\Lambda: = \bigl\{(\lambda, w) \bigm| \lambda \in \Lambda,\ w_j = \prod_{i=1}^d\lambda_{ij_i},\  j \in \{0, \ldots, n \}^d \bigr\}$.
%\[
%M^\Lambda: = \Biggl\{(\lambda, w) \Biggm| \lambda \in \Lambda,\ w_j = \prod_{i=1}^d\lambda_{ij_i},\  j \in \{0, \ldots, n \}^d \Biggr\}.
%\]
\begin{theorem}~\label{them:hull-Lambda}
Let $\Lambda: = \prod_{i=1}^d\Lambda_i$, where $\Lambda_i:=\bigl\{\lambda_i \in \R^{n+1} \bigm| 1- \sum_{j=0}^n \lambda_{ij}=0,\ \lambda_i \geq 0\bigr\}$, and let  $M^\Lambda: = \bigl\{(\lambda, w) \bigm| \lambda \in \Lambda,\ w_j = \prod_{i=1}^d\lambda_{ij_i},\  j \in \{0, \ldots, n \}^d \bigr\}$. Then, $\conv(M^\Lambda) = \proj_{(\lambda,w)}\bigl(\RLT_d(\Lambda)\bigr) = R$, where 
\begin{equation*}\label{eq:hull-lambda}
\begin{aligned}
R:=\Biggl\{ (\lambda, w) \Biggm|	 &w \geq 0,\	\sum_{j \in \{0, \ldots, n \}^d} w_j = 1,\ \\  &  \lambda_{ij_i} = \sum_{p \in \{0, \ldots, n \}^d:p_i =j_i}w_p \; \text{ for }  i \in \{ 1, \ldots, d\}, \;  j \in  \{0, \ldots, n \}^d \Biggr\}. 
\end{aligned}
\end{equation*}
\end{theorem}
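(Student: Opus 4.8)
The plan is to prove the two claimed equalities through the chain $\conv(M^\Lambda) \subseteq R$, $\; R = \conv(\vertex(R)) \subseteq \conv(M^\Lambda)$, $\; R \subseteq \proj_{(\lambda,w)}(\RLT_d(\Lambda))$, and $\proj_{(\lambda,w)}(\RLT_d(\Lambda)) \subseteq R$. The first two give $\conv(M^\Lambda) = R$, and the last two then pin the RLT projection between $R$ and $\conv(M^\Lambda) = R$.

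First I would settle $\conv(M^\Lambda) = R$ by a vertex argument. The inclusion $M^\Lambda \subseteq R$ is a direct check: if $w_j = \prod_{i} \lambda_{ij_i}$ with each $\lambda_i$ in the simplex $\Lambda_i$, then $w \geq 0$, $\sum_j w_j = \prod_i\bigl(\sum_{j_i}\lambda_{ij_i}\bigr) = 1$, and for each block $i$ and index $k$, $\sum_{p: p_i = k} w_p = \lambda_{ik}\prod_{i' \neq i}\bigl(\sum_{p_{i'}}\lambda_{i'p_{i'}}\bigr) = \lambda_{ik}$; hence $\conv(M^\Lambda) \subseteq R$ by convexity of $R$. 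For the reverse, I observe that on $R$ the vector $\lambda$ is an explicit affine (marginalization) function of $w$, so $R$ is the affine image of the probability simplex $\Delta := \{w \geq 0,\ \sum_j w_j = 1\}$ under $w \mapsto (\lambda(w), w)$; since $w$ is a coordinate this map is an affine isomorphism onto $R$, so $R$ is a polytope whose extreme points are the images of the unit tensors $e_j$ (a single $1$ in position $j$), $j \in \{0, \ldots, n\}^d$. The marginal of $e_j$ in block $i$ is the unit vector $e_{j_i}$, so the corresponding extreme point is $(\lambda, w)$ with $\lambda_i = e_{j_i}$ and $w = e_j$, which is exactly the product tensor of these $\lambda_i$ and hence lies in $M^\Lambda$. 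Thus $\vertex(R) \subseteq M^\Lambda$, and $R = \conv(\vertex(R)) \subseteq \conv(M^\Lambda) \subseteq R$.

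Next I would handle the RLT projection. The inclusion $R \subseteq \proj_{(\lambda,w)}(\RLT_d(\Lambda))$ is essentially free: every point of $M^\Lambda$ lifts to an RLT-feasible point by setting each linearization variable $y_\alpha$ equal to the monomial value $\lambda^\alpha$, so $M^\Lambda \subseteq \proj_{(\lambda,w)}(\RLT_d(\Lambda))$, and convexity of the RLT projection together with the previous paragraph yields $R = \conv(M^\Lambda) \subseteq \proj_{(\lambda,w)}(\RLT_d(\Lambda))$. For the opposite inclusion I would show that each defining relation of $R$ is implied by $\RLT_d(\Lambda)$. The nonnegativity $w_j \geq 0$ comes from the degree-$d$ bound-factor product $\prod_i \lambda_{ij_i} \geq 0$, which linearizes precisely to $w_j$. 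For the marginal equalities, write $E_{i'} := \sum_{k=0}^n \lambda_{i'k} - 1 = 0$ for the block-$i'$ normalization and consider, for a fixed block $i$ and index $j_i$, the polynomial identity
\[
\lambda_{ij_i}\prod_{i' \neq i}\Bigl(\sum_{k=0}^n \lambda_{i'k}\Bigr) = \lambda_{ij_i}\prod_{i' \neq i}\bigl(E_{i'}+1\bigr) = \lambda_{ij_i} + \sum_{\emptyset \neq T \subseteq \{1,\ldots,d\}\setminus\{i\}} \lambda_{ij_i}\prod_{i' \in T}E_{i'}.
\]
Expanding the left-hand side as a sum of cross-block monomials and linearizing sends it to $\sum_{p: p_i = j_i} w_p$, while each summand $\lambda_{ij_i}\prod_{i'\in T}E_{i'}$ with $T \neq \emptyset$ is a product of one bound factor and at most $d-1$ equality factors, hence a degree-$\leq d$ RLT constraint whose linearization is enforced to vanish. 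Reading the linearized identity therefore gives $\sum_{p: p_i = j_i} w_p = \lambda_{ij_i}$. Summing this over $j_i$ and using $\sum_{j_i}\lambda_{ij_i} = 1$ produces $\sum_j w_j = 1$. Consequently every point of $\proj_{(\lambda,w)}(\RLT_d(\Lambda))$ satisfies all constraints of $R$, so $\proj_{(\lambda,w)}(\RLT_d(\Lambda)) \subseteq R$, closing the chain.

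The main obstacle is the marginal-constraint derivation. One must verify that the relevant products of constraint factors genuinely sit at level $d$ (exactly one bound factor times at most $d-1$ normalization equalities), that the top-degree monomials on the left are precisely the cross-block multilinear monomials receiving the variables $w_p$ (one factor per block, so no repeated index within a block that would invoke a different auxiliary variable), and that linearization, being a fixed linear substitution, commutes with the inclusion–exclusion expansion so that all terms carrying a factor $E_{i'}$ drop out simultaneously. This is exactly where the level $d$ of RLT, matching the number of simplices, is essential: at any lower level the full cross-block products $w_p$ are never formed, and the marginal identities cannot be recovered.
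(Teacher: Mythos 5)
Your proof is correct, and its overall skeleton (establish $\conv(M^\Lambda)=R$, then sandwich the RLT projection between $R$ and $\conv(M^\Lambda)$) matches the paper's. Where you differ is in how the two key steps are executed, and in both places your version is more self-contained. For $\conv(M^\Lambda)=R$, the paper invokes Corollary 2.7 of \cite{tawarmalani2010inclusion} (reduction of the convex hull to $\vertex(\Lambda)$) and then disjunctive programming to identify the hull of the resulting finite point set with $R$; you instead observe that the marginal equations make $R$ the image of the standard simplex $\{w\ge 0,\ \sum_j w_j=1\}$ under the injective affine map $w\mapsto(\lambda(w),w)$, so $\vertex(R)$ consists exactly of the points $\bigl((e_{j_1},\ldots,e_{j_d}),e_j\bigr)\in M^\Lambda$ --- an elementary argument that needs no external results. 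For $\proj_{(\lambda,w)}\bigl(\RLT_d(\Lambda)\bigr)\subseteq R$, the paper performs a full change of basis between the linearization variables $y_{(I,j)}$ and the complementary-product variables $w_{(I,j)}=\prod_{i\in I}\lambda_{ij_i}\prod_{i\notin I}(1-\sum_k\lambda_{ik})$ via M\"obius inversion, rewriting the entire level-$d$ RLT system and then reading off the constraints of $R$; you instead derive only the constraints of $R$ that are needed, each directly from a specific subfamily of RLT constraints, using the polynomial identity $\lambda_{ij_i}\prod_{i'\neq i}(E_{i'}+1)=\lambda_{ij_i}+\sum_{\emptyset\neq T}\lambda_{ij_i}\prod_{i'\in T}E_{i'}$ and the fact that linearizations of products containing an equality factor vanish. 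Both are inclusion--exclusion arguments at heart; the paper's buys a complete structural description of the level-$d$ RLT relaxation (it is, after an invertible substitution, exactly the system $w_{(D,j)}\ge 0$, $w_{(I,j)}=0$ plus definitions), while yours buys brevity and proves exactly the containment the theorem requires. Your checks at the end --- that the relevant products have at most $d$ factors, that the top-degree monomials are cross-block multilinear so their linearization variables are the $w_p$, and that linearization is linear and hence respects the expansion --- are precisely the points on which the argument hinges, and they all hold.
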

\begin{compositeproof}
First, we show  $\conv( M^\Lambda) = R$. Consider a set $M^{\vertex(\Lambda)}: = \bigl\{(\lambda, w)\in M^{\Lambda} \bigm| \lambda \in \vertex(\Lambda)\bigr\}$. Then, we obtain $ \conv(M^\Lambda) = \conv\bigl(M^{\vertex(\Lambda)}\bigr) = R$, where the first equality follows from Corollary 2.7 in~\cite{tawarmalani2010inclusion} and the second equality can be established by  using disjunctive programming~\cite{balas1998disjunctive}. Now, the proof is complete if we show $\proj_{(\lambda, w)}\bigl(\RLT_d(\Lambda)\bigr) \subseteq R$ since the $d^{\text{th}}$ level RLT over $\Lambda$ yields a convex relaxation for $M^\Lambda$. 
%For $I \subseteq \{1, \ldots, d\}$ and $p \in \{0, \ldots, n\}^d$, the monomial $ \prod_{i \in I}\lambda_{ip_i}$ is linearized by $y_{(I,p)}$. Aggregating $(1- \lambda_{i0} - \cdots - \lambda_{in})\prod_{i' \in I}\lambda_{ij_i}\prod_{i'\not\in I\cup\{i\}}(1-\sum_{j=0}^n \lambda_{i'j}) = 0$ over all $I\subseteq \{1,\ldots,d\}\backslash\{i\}$  and $(j_i')_{i'\ne i} \in \{0,\ldots,n\}^{d-1}$, we have $1-\sum_{j=0}^n \lambda_{ij} = 0$. Similarly, we can argue that $\lambda_{ij} \geq 0$ is implied at $d^{\text{th}}$ level RLT of $\Lambda$. 
Let $D=\{1, \ldots, d\}$ and $E=\{0, \ldots, n\}^d$, and, for $I \subseteq D$ and $j \in E$, define $E(I,j) = \{p \in E \mid p_i = j_i \; \forall i \in I \}$. Then, the $d^{\text{th}}$ level RLT constraints, with $ \prod_{i \in I}\lambda_{ij_i}$ linearized by $y_{(I,j)}$, are $y_{(D,j)}  \geq 0$ and $\sum_{I' \supseteq I, j' \in E(I,j)} (-1)^{|I'\setminus I|}y_{(I',j')} =0$, where $I \subset \{1, \ldots, d\}$ and $j \in E$. For $I \subseteq D$ and $j \in E$, let $w_{(I,p)}$ denote $ \prod_{i \in I}\lambda_{ij_i} \prod_{i \notin I}(1- \sum_{j=0}^n\lambda_{ij})$. Since, $y(I,j)$ and $w(I,p)$ each form a basis of the space of multilinear functions, it follows that there is an invertible transformation relating the two set of variables. This transformation is: $y_{(I,j)} = \sum_{I' \supseteq I, j' \in E(I,j)}w_{(I',j')}$ and $w_{(I,j)} = \sum_{I' \supseteq I, j' \in E(I,j)} (-1)^{|I'\setminus I|}y_{(I',j')}$. Then, expressing $y_{(I,j)}$ in terms of $w_{(I,j)}$ variables, the $d^{\text{th}}$ level RLT relaxation can be represented as 
\[
\begin{aligned}
	&w_{(D,j)} \geq 0 \qquad w_{(I,j)} = 0  && \text{for } I \subset D,\; j \in E, \\
	&y_{(I,j)} = \sum_{I' \supseteq I, j' \in E(I,j)}w_{(I',j')}  && \text{for }I \subseteq D,\; j \in E. 
\end{aligned}
\]
In particular, using the relation $ y_{(D,j)} = w_j$, $y_{(\{i\},j)} = \lambda_{ij_i}$ and $y_{(\emptyset,j)} = 1$, this system implies constraints in $R$. \Halmos
\end{compositeproof}
This result can be used to derive the convex hull of a set of points $M^Q$ satisfying the following multilinear monomial equations over  $Q$,
\[
\biggl\{ (s,m) \biggm|	s \in Q,\ m_{(I,j)} = \prod_{i \in I}s_{ij_i}, \;  I \subseteq\{1, \ldots, d\} \text{ with } |I| \geq 2, j \in \{1, \ldots, n \}^d \biggr\}.
\]
It can be verified that the affine mapping $\mathcal{L}: (\lambda, w) \mapsto (s, m)$ that maps $M^\Lambda$ to $M^Q$ is defined so that for $i \in \{1, \ldots, d \}$ $s_i = \sum_{j = 0}^nv_{ij} \lambda_{ij}$, where $\{v_{10}, \ldots, v_{in}\}$ is the vertex set of $Q_i$ defined in~(\ref{eq:Q-V}) so that $v_{ijk} = a_{ik\wedge j}$, where $k\wedge j$ is used to denote $\min\{k,j\}$. 
Then, for $I \subseteq \{1, \ldots, d\}$ with $|I| \geq 2$ and $j \in \{1, \ldots, n\}^d$,  $m_{(I,j)} = \prod_{i\in I}s_{ij_i}\prod_{i\notin I}\sum_{p =0}^n \lambda_{ip}= \sum_{p \in \{0, \ldots, n\}^d} \bigl( \prod_{i \in I} a_{i  p_i\wedge j_i }\bigr) w_{p}$. The inverse $\mathcal{L}^{-1}$ of $\mathcal{L}$ maps a point $(s,m) \in M^Q$ to a point $(\lambda, w) \in  M^\Lambda$ and is given by $\lambda_{i0} = 1- \frac{s_{i1} - s_{i0}}{a_{i1} - s_{i0}}$, $\lambda_{in} = \frac{s_{in}-s_{in-1}}{a_{in} - a_{in-1}}$ and $\lambda_{ij} = \frac{s_{ij} - s_{ij-1}}{a_{ij} - a_{ij-1}} - \frac{s_{ij+1} - s_{ij}}{a_{ij+1} - a_{ij}}$ otherwise. 
Then, $w_j = \prod_{i = 1}^d \lambda_{ij_i}$ can be written as a linear function of $m_{(I,j)}$ for $I\subseteq\{1,\ldots,d\}$ and $j\in \{1,\ldots,n\}^d$, after substituting $\lambda_{ij_i}$ using their definitions and expanding the resulting multilinear form. Therefore, we obtain that $\conv(M^Q) = \conv(\mathcal{L}(M^\Lambda)) =\mathcal{L}(\conv(M^\Lambda))$, where the second equality holds since affine transformation commutes with convexification. In particular, we obtain an explicit description of the convex hull of $\Theta^Q$. 

\begin{corollary}\label{cor:hull-ml-Q}
Let $\theta: \R^d \to \R^\kappa$	 be a vector of multilinear functions, \textit{i.e.}, for each $k \in \{1, \ldots, \kappa\}$, $\theta_k(s_{1n}, \ldots, s_{dn}) = \sum_{I \in \mathcal{I}_k} c^k_I\prod_{i \in I}s_{in} $, where $\mathcal{I}_k$ is a collection of subsets of $\{1, \ldots, d\}$. Then, the convex hull of $\Theta^Q$ is given by 
\begin{equation*}\label{eq:hull-Q}
\left\{ (s,\theta) \left|\; 
\begin{aligned}
 &\theta_k = \sum_{I \in \mathcal{I}_k}  c^k_I\sum_{j \in \{0, \ldots, n\}^d}\Bigl(\prod_{i \in I} a_{ij_i}\Bigr) w_j,\ w \geq 0 \text{ for } k  \in \{1, \ldots, \kappa \} \\
 & \sum_{j \in \{0, \ldots, n\}^d} w_j = 1,\  s_i = \sum_{p = 0}^{n} v_{ip} \sum_{j \in \{0, \ldots, n\}^d:j_i = p}w_j \text{  for } i \in \{ 1, \ldots, d  \}
 \end{aligned}
\right.
\right\}.
\end{equation*}
 Moreover, the convex hull of $\Theta^P$ is obtained by introducing variables $u$ and the constraints $u\in P$, $u \leq s$, and $u_{\cdot n} = s_{\cdot n}$.
\end{corollary}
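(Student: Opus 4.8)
The plan is to realize $\conv(\Theta^Q)$ as an affine image of the polytope $R=\conv(M^\Lambda)$ computed in Theorem~\ref{them:hull-Lambda}, and then to lift this description from $\Theta^Q$ to $\Theta^P$ via Proposition~\ref{prop:CR}. Most of the work has already been carried out in the paragraph preceding the corollary, where the affine bijection $\mathcal{L}$ between $M^\Lambda$ and $M^Q$ is constructed and the identity $\conv(M^Q)=\mathcal{L}(\conv(M^\Lambda))=\mathcal{L}(R)$ is established; what remains is the passage from $M^Q$ to $\Theta^Q$ and then to $\Theta^P$.

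First, I would observe that each multilinear component $\theta_k(s_{1n},\ldots,s_{dn})=\sum_{I\in\mathcal{I}_k}c^k_I\prod_{i\in I}s_{in}$ depends affinely on $(s,m)$ through the products $\prod_{i\in I}s_{in}$: for $|I|\ge 2$ these are exactly the coordinates $m_{(I,\jmath^*)}$ of $M^Q$ at the grid point $\jmath^*=(n,\ldots,n)$, while for $|I|=1$ (resp. $I=\emptyset$) they reduce to $s_{in}$ (resp. the constant $1$). Hence there is an affine map $\Gamma\colon (s,m)\mapsto (s,\theta)$ with $\Gamma(M^Q)=\Theta^Q$, since each $(s,m)\in M^Q$ maps to $\bigl(s,\theta(s_{\cdot n})\bigr)\in\Theta^Q$ and, conversely, every point of $\Theta^Q$ is attained by choosing $m_{(I,j)}=\prod_{i\in I}s_{ij_i}$. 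Because affine maps commute with convexification, $\conv(\Theta^Q)=\Gamma(\conv(M^Q))=\Gamma(\mathcal{L}(R))$.

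Substituting the formulas for $\mathcal{L}$ then reproduces the claimed set. Using $p_i\wedge n=p_i$, one has $m_{(I,\jmath^*)}=\sum_j\bigl(\prod_{i\in I}a_{i\,p_i\wedge n}\bigr)w_j=\sum_j\bigl(\prod_{i\in I}a_{ij_i}\bigr)w_j$, together with $s_i=\sum_{p=0}^n v_{ip}\sum_{j:j_i=p}w_j$ and the constraints $w\ge 0$, $\sum_j w_j=1$ inherited from $R$. For the $\Theta^P$ part I would invoke the inclusion~\eqref{eq:extended-P} of Proposition~\ref{prop:CR} with the convex relaxation there taken to be $R=\conv(\Theta^Q)$; since that proposition guarantees equality in~\eqref{eq:extended-P} exactly when $R=\conv(\Theta^Q)$, the convex hull of $\Theta^P$ is obtained from $\conv(\Theta^Q)$ by adjoining the variables $u$ and the constraints $u\in P$, $u\le s$, and $u_{\cdot n}=s_{\cdot n}$, as asserted.

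The step requiring the most care is purely bookkeeping: verifying that the single affine map $\Gamma$ simultaneously handles all $\kappa$ components and, in particular, that the low-cardinality index sets collapse correctly. Here $M^Q$ carries only the monomials with $|I|\ge 2$, so the $|I|\le 1$ terms of $\theta_k$ must be expressed directly in $s$; after the $\mathcal{L}$-substitution they nonetheless merge into the uniform expression $\sum_j\bigl(\prod_{i\in I}a_{ij_i}\bigr)w_j$, because $s_{in}=\sum_j a_{ij_i}w_j$ for $|I|=1$ and the empty product yields the constant $\sum_j w_j=1$ for $I=\emptyset$. Checking these two cases confirms that the formula is valid for every $I$ irrespective of its cardinality, so no separate treatment of the linear or constant terms is needed.
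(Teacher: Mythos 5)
Your proposal is correct and follows essentially the same route as the paper's own (very terse) proof: the paper likewise obtains $\conv(\Theta^Q)$ as the affine image of $\conv(M^\Lambda)=R$ under the map $\mathcal{L}$ from the preceding paragraph, your map $\Gamma$ merely making explicit the final affine substitution from $(s,m)$ to $(s,\theta)$ coordinates that the paper leaves implicit. The $\Theta^P$ statement is handled identically, by invoking the equality case of~\eqref{eq:extended-P} in Proposition~\ref{prop:CR} with the relaxation taken to be $\conv(\Theta^Q)$.
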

\begin{compositeproof}
The second statement follows from the discussion above while the third statement follows from Proposition~\ref{prop:CR}.\Halmos 
%by Theorem~\ref{them:hull-Lambda} and the affine transformation $\mathcal{L}$ that maps $ M^\Lambda$ to $\Theta^Q$, where the expression for $\theta_k$ can be verified by observing that $\prod_{i\in I}s_{in} = \prod_{i \in I}a_{ij_i}$ when $w_{j} = 1$. The third statement follows from Proposition~\ref{prop:CR}.\Halmos  
\end{compositeproof}

Next, we discuss how RLT relaxations of polynomial optimization problems can be strengthened using insights from Theorems~\ref{them:stair-ineq} and \ref{them:stair-lattice}. Assume that we are given the $\gamma^{\text{th}}$ level RLT relaxation of the feasible region $\mX$ in~(\ref{eq:pp}), we will derive valid inequalities for $\mX$ that involve existing variables in the $\gamma^{\text{th}}$ level RLT relaxation in two steps. First, we use staircase expansions to telescope a composition of polynomials, which defines the feasible region $\mX$, in various forms. Then, we interpret the termwise relaxation step in Theorem~\ref{them:stair-ineq} as a degree reduction procedure. More specifically, we derive over- and under- estimators for the polynomial by replacing difference terms, which appear in staircase expansions, with polynomials of degree up to $\gamma$. As a result, we obtain valid polynomial inequalities of degree up to $\gamma$, which can be linearized using existing variables in the $\gamma^{\text{th}}$ RLT relaxation. 
%
%\begin{description}
%	\item[Step 1.] Construct a composite function $( \phi \mcirc g)(x) := \phi \bigl(g_1(x), \ldots, g_\kappa(x) \bigr) $ using a polynomial function $\phi:\R^\kappa \to \R$ and polynomials $g(\cdot)$ defining the set $\mX$.
%	\item[Step 2.] Telescope $\phi \mcirc g$ into various staircase expansions defined  in~(\ref{eq:str-exp}), and equalize these expansions. Here, using distinct staircases and replacing $u$ variables in~(\ref{eq:str-exp}) with different vectors of functions $u(\cdot)$ can lead to different expansions. Note that terms in those expansions can be further expanded using staircases. 
%
%
%\item[Step 3.] Over- and under-estimate resulting difference terms from the previous step using polynomials of degree up to the degree of $\phi \mcirc g$. 
%\item[Step 4.] Linearize the resulting polynomial constraint expressions by substituting existing variable $y_\alpha$ for monomial $x^\alpha$ and by introducing new variables $z$ for remaining nonconvex expressions.  
%\end{description}
The next example illustrates how this procedure can strengthen RLT relaxations.  

\begin{example}\label{ex:RLT+}
Consider the set $M_{(2,4)}$ defined by all monomials up to degree $4$ in $2$ variables, that is, $M_{(2,4)}: = \bigl\{(x,m) \bigm| x \in [0,1]^2,\  m_{(\gamma_1, \gamma_2)} = x_1^{\gamma_1} x_2^{\gamma_2},\  2 \leq \gamma_1 + \gamma_2 \leq 4,\ \gamma_i \in \mathbb{N}\bigr\}$. We derive a valid linear inequality for the set $M_{(2,4)}$ without using additional variables and show that those inequalities are not implied by the $6^{\text{th}}$ level RLT relaxation of $x_i \geq 0$ and $1-x_i \geq 0$ where $i =1,2$. 

Consider a polynomial with a degree of $6$ defined as $q(x)= (1 - x_1)^3(1 -x_2)^3$, which will be treated as a composite function with a bilinear outer-function. We begin with two staircase expansions for the polynomial $q(\cdot)$. Then, we derive an overestimator using one expansion while the other is used to under-estimate the function. In particular, estimators are obtained by replacing difference terms of degree greater than $4$ with polynomials of degree up to $4$. As a result, the resulting inequality can be expressed in terms of variables defining the set $M_{(2,4)}$.  For the first staircase expansion, we use functions $\bigl(1,-3x_i+1,(1-x_i)^3\bigr)$ for each $x_i$ in the order they are specified. We remark that this sequence of underestimating functions does not satisfy the inequality system \eqref{eq:ordered-oa}. In particular, $1\not\le (1-x_i)^3$ although $-3x_i+1\le (1-x_i)^3$.
The staircase expansion and degree reduction on $q(\cdot)$ is then as follows:
\begin{equation*}\label{eq:hierarchy-ex-1}
\begin{aligned}
q(x) &= 1 \cdot 1 + 1 \cdot (-3x_2+1 -1) + (-3x_1 + 1 -1) \cdot  (-3x_2 + 1)  \\
& \quad \quad+ (-3x_1 + 1) \cdot (-x_2^3 + 3x_2^2) + (-x_1^3 + 3x_1^2) \cdot (1-x_2)^3 \\
&  	\leq 1-3x_2 + (-3x_1)(-3x_2 + 1) + (-3x_1 + 1) (-x_2^3 + 3x_2^2) + (-x_2^3 + 3x_2^2),
\end{aligned}
\end{equation*}
where the last difference term, the only term with a degree greater than $4$ in the expansion, is relaxed to a degree $3$ polynomial by exploiting the suerpmodularity of the bilinear term and the relation $-3x_1 + 1 \leq (1-x_1)^3$ and $(1-x_2)^3 \leq 1$. On the other hand, since the bilinear term is submodular after switching the second coordinate, using a vector of underestimators of $(1-x_i)^3$ defined as $ u_{i0}(x) = 0$, $u_{i1}(x) = -\frac{3}{4}x_1 + \frac{1}{2}$, and $u_{i2}(x) = (1-x_i)^3 $, and bounds $a_i := (0,0.5,1)$, Corollary~\ref{cor:stair-switching} leads to the following  a degree $4$ underestimator for $q(\cdot)$:
\begin{equation*}\label{eq:hierarchy-ex-2}
\begin{alignedat}{3}
&q(x)&&={}&&0\cdot 1 + \Bigl(-\frac{3}{4}x_1 + \frac{1}{2}\Bigr)\cdot 1 + \Bigl(-\frac{3}{4}x_1 + \frac{1}{2}\Bigr)\Bigl((1-x_2)^3+\frac{3}{4}x_2 -1\Bigr) +{}\\
&&&&& \Bigl((1-x_1)^3+ \frac{3}{4}x_1-\frac{1}{2}\Bigr)\Bigl((1-x_2)^3+\frac{3}{4}x_2\Bigr) + (1-x_1)^3\Bigl(-\frac{3}{4}x_2\Bigr)\\
&&&\ge && \Bigl(-\frac{3}{4}x_1 + \frac{1}{2}\Bigr)\cdot 1 + \Bigl(-\frac{3}{4}x_1 + \frac{1}{2}\Bigr)\Bigl((1-x_2)^3+\frac{3}{4}x_2 -1\Bigr) + \\
&&&&& \Bigl((1-x_1)^3+ \frac{3}{4}x_1-\frac{1}{2}\Bigr)\frac{1}{2} + (1-x_1)^3\Bigl(-\frac{3}{4}x_2\Bigr),
\end{alignedat}
\end{equation*}
%\begin{equation}\label{eq:hierarchy-ex-2}
%\begin{aligned}
%	q(x) \geq 0.5(-0.75x_1 + 0.5) + 0.5(1-x_1)^3 &+ 0.5 (-0.75x_2 + 0.5) \\
%	&+ 0.5(1-x_2)^3 - 0.75.	
%	\end{aligned}
%\end{equation}
where the second last difference is the only term with a degree greater than $4$ in the expansion and is thus relaxed to a degree $3$ polynomial by replacing $(1-x_2)^3+\frac{3}{4}x_2$ with its lower bound $\frac{1}{2}$. Therefore, by Fourier-Motzkin elimination, we obtain  a valid degree-4 inequality for $M_{(2,4)}$, namely,
\[
\begin{aligned}
1 & -3x_2 + (-3x_1)(-3x_2 + 1) + (-3x_1 + 1) (-x_2^3 + 3x_2^2) + (-x_2^3 + 3x_2^2) \geq \\
&\quad (-0.75x_1 + 0.5)\bigl((1-x_2)^3+0.75x_2 \bigr) + \bigl((1-x_1)^3+ 0.75x_1-0.5\bigr)0.5 \\
& \qquad  + (1-x_1)^3(-0.75x_2).
\end{aligned}
\]
Let $l(x,m) \geq 0 $ denote the resulting linear inequality obtained by replacing monomials in the above polynomial inequality with the corresponding $m$ variables. It follows that that $l(x,m) \geq 0$ is not implied by the $6^{\text{th}}$ level RLT relaxation of $[0,1]^2$ since $\min\bigl\{ l(x,m) \bigm| (x,m,w) \in \RLT_6([0,1]^2)\bigr\} = -0.125$.\Halmos

%We start with considering a degree $4$ polynomial $p$ defined as $p(x):=(1-2x_1 + x_1^2)(1-2x_2+x^2_2)$. For $i = 1,2$, let $u_i(x)$ be a vector of underestimators of $1-2x_i + x_i^2$ defined as $u_{i0}(x) = 0$, $ u_{i1}(x) = -x_i + 0.75$ and $u_{i2}(x) = (1-2x_i + x_i^2)$, and let $a_i:=(0,0.75, 1)$. Then, our procedure yields the following valid inequality for $M_{(2,4)}$:
%\[
%\begin{aligned}
%p(x) \geq 0.25(-x_1 + 0.75) + 0.75(1-2x_1 + x_1^2) & + 0.25(-x_2 + 0.75)  \\
% &+ 0.75(1-2x_2 + x_2^2) - 0.935.
%\end{aligned}
%\]
%After replacing monomials by their corresponding variables, we obtain a linear valid inequality $l_1(m) - l_2(m) \geq 0$ for $M_{(2,4)}$, where 
%\[
%\begin{aligned}
%l_1(z)&:=m_{2,2}-2m_{2,1}-2m_{1,2}+m_{2,0}+4m_{1,1}+m_{0,2} - 2m_{1,0}-2m_{0,1}+1, \\
%	l_2(z)&:=0.25(-m_{1,0}+0.75)+ 0.75(m_{2,0} - 2m_{1,0} +1) + 0.25(-m_{0,1}+0.75) \\
%	& \qquad \qquad \qquad + 0.75(m_{0,2} -2m_{0,1}+1) - 0.935.
%\end{aligned}
%\]
%It turns out that $l_1(m) - l_2(m) \geq 0$ is not implied by the $2^{\text{th}}$ level RLT of $M_{(2,4)}$ since $\min\bigl\{ l_1(m) - l_2(m) \bigm| (m,w) \in \RLT_2\bigl(M_{(2,4)}\bigr) \bigr\} = -0.021$.
\end{example}

\section{MIP relaxations for composite functions}\label{section:discrete-relaxations}
In this section, we derive \textit{mixed-integer programming (MIP) relaxations} for the hypograph of a composite function $\phi \mcirc f: X \to \R$. For a set $x \in S \subseteq \R^{n_1}$, we say that $E:=R \cap \bigl(\R^{n_1} \times \R^{n_2}  \times \{0,1\}^{n_3} \bigr)$, where $R$ is a set in the space of variables $(x, y, \delta)$, is an MIP relaxation of $S$ if $S\subseteq \proj_{x}(E)$, and we say that $R$ is the \textit{continuous relaxation} of $E$. In particular, we might refer to $E$ as a mixed-integer linear programming (MILP) and mixed-integer convex programming (MICP) relaxation if $R$ is a polyhedron and convex set, respectively.  Moreover, we call $E$  an MIP formulation of $S$ if $S = \proj_{x}(E)$. We will discuss the \textit{quality} of a given MIP relaxation $E$ of $S$ in terms of the size $(|y|, |\delta|)$ as well as the strength of the associated continuous relaxation $R$. We say that an MIP relaxation $E$ is \textit{ideal} if $\proj_\delta(\vertex(R)) \subseteq \{0,1\}^{n_3}$~\cite{vielma2015mixed}. An ideal formulation is desirable since solving its continuous relaxation yields an optimal solution that is integer and, therefore, optimal for the original MIP. Moreover, branching on any binary variables restricts $E$ to a face, and, thus, ideality is retained upon branching.

To construct MIP relaxations for the hypograph of $\phi \mcirc f$, we use a vector $a$ to discretize the hypercube $[f^L,f^U]$, which contains the range of the inner-function $f(\cdot)$ over  $X$. 
%This vector captures the discretization points. 
Formally, let $a := (a_1, \ldots, a_d) \in \R^{d \times (n+1)}$ so that, for  $i \in \{1, \ldots, d\}$, $f^L_i = a_{i0} < \cdots < a_{in} = f^U_i$. Then, the $i^{\text{th}}$ coordinate is discretized at a subsequence $a_{i\tau(i,0)}, \ldots, a_{i\tau(i,l_i)}$ of $a_{i0}, \ldots, a_{in}$, where $0=\tau(i,0)< \ldots < \tau(i,l_i) = n$. As a result, we obtain a subdivision $\mathcal{H}$ of $[f^L,f^U]$ as a collection of hypercubes, 
\[
\mathcal{H}: = \Biggl\{ \prod_{i = 1}^d\bigl[a_{i\tau(i,t_i-1)},a_{i\tau(i,t_i)}\bigr] \Biggm| t=(t_1, \ldots, t_d ) \in \prod_{i=1}^d \{1, \ldots, l_i  \} \Biggr\}. 
\]
Henceforth, this pair $(a, \mathcal{H})$ will be referred to as a discretization scheme for a composite function $\phi \mcirc f$. Given such a discretization $(a,\mathcal{H})$, an MIP relaxation for the hypograph of $\phi \mcirc f$ can obviously be constructed by outer-approximating the graph of the inner-function $f(\cdot)$ with a polyhedron $W$, and replacing the following disjunctive constraints by its MIP formulation 
\begin{equation}\label{eq:MIP-standard}
	 (f,\phi) \in \bigcup_{H \in \mathcal{H}} \conv\bigl( \hypo(\phi|_H)\bigr),
\end{equation}
where $\phi|_H(\cdot)$ denotes the restriction of $\phi(\cdot)$ on $H$. To derive an MIP formulation for~(\ref{eq:MIP-standard}), we can always treat each $\conv(\phi|_H)$ separately and then use disjunctive programming~\cite{balas1998disjunctive}. Using this approach, the standard formulation for~(\ref{eq:MIP-standard}) from~\cite{balas1998disjunctive,jeroslow1984modelling} requires a binary variable $\delta^H$ and a copy of $(f^H, \phi^H)$ for $\conv(\phi^H)$, thus introducing $|\mathcal{H}|$ Al. Although this formulation of~(\ref{eq:MIP-standard}) is ideal, the formulation and its continuous relaxation are typically intractable since $|\mathcal{H}|$ is exponential in $d$. In Section~\ref{section:MIP-relaxation-incremental}, we provide a new MIP relaxation framework for the hypograph of $\phi \mcirc f$ which seamlessly integrates the incremental model~\cite{markowitz1957solution,dantzig1960significance} into the composite relaxation framework given in~\cite{he2021new}. This scheme exploits underestimating functions associated with the discretization points, yielding an MIP relaxation that is tighter than the above mentioned MIP relaxation obtained using the exponentially sized disjunctive formulation. Moreover, our scheme exploits properties of the outer function $\phi(\cdot)$, and, in particular, convexification results for its hypograph. This has the advantage that when compact convex hull descriptions are available, the resulting MIP relaxations are also compact. For example, under certain conditions, we use the staircase inequalities from Theorem~\ref{them:stair-ineq} to derive an MIP formulation for~(\ref{eq:MIP-standard}) that requires $d(n+1)$ auxiliary continuous variables and $\sum_{i=1}^d(l_i-1)$ additional binary variables, and has a continuous relaxation that is tractable. To the best of our knowledge, for such setting, no ideal MIP formulation was previously known that uses polynomially many continuous variables or has a tractable continuous relaxation.  Section~\ref{section:comparision} will provide geometric insights into the quality of formulations, paving the way for tightening our MIP relaxations. Section~\ref{section:MIP-log} will be devoted to reduce the number of binary variables of our MIP relaxations to $\mathcal{O}(\sum_{i=1}^d \log l_i)$  without increasing the size of continuous variables.

\subsection{Exploiting inner-function structure}
\label{section:MIP-relaxation-incremental}
Let $\Delta_i: = \{ z_i \in \R^{n+1} \mid 1 = z_{i0} \geq z_{i1} \geq \cdots \geq z_{in} \geq 0\}$. Then, a model of selecting subcubes from $\mathcal{H}$ is given as follows:
\begin{subequations}\label{eq:Inc}
\begin{align}
		& \begin{aligned}
			z_i \in \Delta_i,\ &\delta_{it} \in \{0,1\}, \\
			 &z_{i\tau(i,t)} \geq \delta_{it} \geq z_{i\tau(i,t)+1} \qquad  i = 1, \ldots,d ,\ t = 1, \ldots,l_i-1,			
		\end{aligned} 
 \label{eq:Inc-1}  \\
		&	f_i =  a_{i0}z_{i0} + \sum_{j = 1}^{n}(a_{ij} - a_{ij-1})z_{ij} =: F_i(z_i) \qquad i =1, \ldots, d \label{eq:Inc-2}. 
\end{align}	
\end{subequations}
In order to analyze our MIP relaxations, we will use the following lemma that relates the facial structure of $\Delta:= \prod_{i=1}^d \Delta_i $ to the collection of subcubes $\mathcal{H}$ introduced above. 
%For simplicity, for each tuple of indexes $t: = (t_1, \ldots, t_d)$ so that $t_i \in \{1, \ldots, l_i\}$, we denote by $H^t$ the hypercube $\prod_{i=1}^d[a_{i\tau(i,t_i-1)},a_{i\tau(i,t_i)}] \in \mathcal{H}$, and, therefore, $\mathcal{H} = \bigl\{H^t \bigm| t_i \in \{1, \ldots,l_i \} \; \forall i  \bigr\}$. Now, 
For $H = \prod_{i=1}^d[a_{i\tau(i,t_i-1)},a_{i\tau(i,t_i)}] \in \mathcal{H}$, we denote by $\Delta_H$ the face of $\Delta$ such that, for $i \in \{1, \ldots,d \}$, $z_{ij} = 1$ for $j \leq \tau(i,t_i-1)$  and $z_{ij} = 0$ for $j> \tau(i,t_i)$. Then, $\{ \Delta_H\}_{H \in \mathcal{H}}$ represents a collection of faces of $\Delta$.
\begin{lemma}\label{lemma:Inc}
The constraints from~(\ref{eq:Inc-1}) yield an MIP formulation for $\{ \Delta_H\}_{H \in \mathcal{H}}$. Moreover, for $H \in \mathcal{H}$, the function $F(z):=\bigl(F_1(z_1), \ldots, F_d(z_d)\bigr)$ maps $\Delta_H$ to $H$.
\end{lemma}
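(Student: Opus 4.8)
The plan is to treat the two assertions separately, exploiting the fact that both the face $\Delta_H$ and the map $F$ split as products over the coordinate index $i$ (the constraints in~(\ref{eq:Inc-1}) and the defining equations of $\Delta_H$ couple only variables sharing the same $i$, and $F_i$ depends only on $z_i$), so that everything reduces to a coordinatewise analysis. For the first assertion I would begin by showing that, for each fixed $i$, a feasible binary vector $(\delta_{i1},\ldots,\delta_{i,l_i-1})$ is forced to be non-increasing. Indeed,~(\ref{eq:Inc-1}) gives $\delta_{it}\ge z_{i,\tau(i,t)+1}$ and $\delta_{i,t+1}\le z_{i,\tau(i,t+1)}$; since $\tau(i,t)+1\le\tau(i,t+1)$ and $z_{i0}\ge\cdots\ge z_{in}$ in $\Delta_i$, we have $z_{i,\tau(i,t+1)}\le z_{i,\tau(i,t)+1}$, and chaining these gives $\delta_{i,t+1}\le\delta_{it}$. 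Hence each feasible $\delta_{i\cdot}$ is pinned down by a threshold $k_i:=\sum_t\delta_{it}\in\{0,\ldots,l_i-1\}$ with $\delta_{it}=1$ exactly when $t\le k_i$.

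Next I would read off the face selected by a given threshold, writing $t_i:=k_i+1$. The implication $\delta_{it}=1\Rightarrow z_{i\tau(i,t)}\ge 1$, combined with $z_{i\tau(i,t)}\le z_{i0}=1$, forces $z_{ij}=1$ for all $j\le\tau(i,t)$; the binding case $t=k_i$ yields $z_{ij}=1$ for $j\le\tau(i,t_i-1)$. Symmetrically, $\delta_{it}=0\Rightarrow z_{i,\tau(i,t)+1}\le 0$ forces $z_{ij}=0$ for $j\ge\tau(i,t)+1$, and the binding case $t=k_i+1$ gives $z_{ij}=0$ for $j>\tau(i,t_i)$. These are exactly the defining equations of $\Delta_H$ for the subcube indexed by $t=(t_1,\ldots,t_d)$. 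Conversely, given $H$ with index $t$, setting $\delta_{it}=1$ iff $t\le t_i-1$ satisfies~(\ref{eq:Inc-1}) for every $z\in\Delta_H$. This establishes a bijection $\delta\leftrightarrow H$, shows that fixing any feasible binary $\delta$ slices out exactly one $\Delta_H$, and that the $z$-projection of the feasible set equals $\bigcup_{H\in\mathcal{H}}\Delta_H$. I would be careful with the boundary indices $t_i=1$ (i.e.\ $k_i=0$, empty lower block of ones, only $z_{i0}=1$) and $t_i=l_i$ (i.e.\ $k_i=l_i-1$, with $z_{ij}=0$ required only for $j>\tau(i,l_i)=n$, hence vacuous).

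For the second assertion, since $\Delta_H=\prod_i\Delta_{H,i}$ and $F_i$ depends only on $z_i$, it suffices to compute $F_i(\Delta_{H,i})$. Substituting $z_{i0}=1$, the fixed values $z_{ij}=1$ for $j\le\tau(i,t_i-1)$ and $z_{ij}=0$ for $j>\tau(i,t_i)$ into $F_i$, and telescoping $a_{i0}+\sum_{j=1}^{\tau(i,t_i-1)}(a_{ij}-a_{ij-1})=a_{i\tau(i,t_i-1)}$, collapses $F_i$ to $a_{i\tau(i,t_i-1)}+\sum_{j=\tau(i,t_i-1)+1}^{\tau(i,t_i)}(a_{ij}-a_{ij-1})z_{ij}$. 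Because $a_{i0}<\cdots<a_{in}$ the increments $a_{ij}-a_{ij-1}$ are strictly positive, so $F_i$ is nondecreasing in each free coordinate; its minimum $a_{i\tau(i,t_i-1)}$ is attained at all free coordinates $0$ and its maximum $a_{i\tau(i,t_i)}$ at all free coordinates $1$. As $F_i$ is continuous and $\Delta_{H,i}$ is connected, the image is the full interval $[a_{i\tau(i,t_i-1)},a_{i\tau(i,t_i)}]$, and taking the product over $i$ gives $F(\Delta_H)=H$.

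I expect the only real obstacle to be bookkeeping rather than anything conceptual: keeping the two index layers straight (the fine index $j\in\{0,\ldots,n\}$ versus the coarse index $t$ accessed through $\tau(i,\cdot)$) and handling the degenerate boundary blocks cleanly. The conceptual heart is the monotonicity-forcing step for $\delta_{i\cdot}$; once $\delta_{i\cdot}$ is pinned to a threshold, both remaining claims reduce to the standard behavior of the incremental (Markowitz--Dantzig) model restricted to a face.
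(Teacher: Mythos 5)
Your proof is correct and follows essentially the same route as the paper's: characterize the feasible binary $\delta$ as threshold (staircase) vectors, match each threshold to a face $\Delta_H$ and conversely construct $\delta$ from $H$, then establish the second claim from the fact that $F_i$ is affine, non-decreasing, and sends the extreme points of $\Delta_{H,i}$ to the endpoints $a_{i\tau(i,t_i-1)}$ and $a_{i\tau(i,t_i)}$. The only difference is one of completeness: you explicitly prove that the ordering constraints force $\delta_{i\cdot}$ to be non-increasing, a step the paper's proof asserts without justification, so your write-up is if anything slightly more careful than the original.
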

\begin{compositeproof}
See Appendix~\ref{app:Inc}.
\begin{comment}
We start with proving the first statement. Let $(z,\delta)$ be a point that satisfies~(\ref{eq:Inc-1}). Then, there exists a vector of indices $(t_1, \ldots, t_d) \in \prod_{i=1}^d\{1, \ldots, l_i\}$ such that, for each $i \in \{1, \ldots, d\}$, $\delta_{ij} = 1$ for $j < t_i$ and $\delta_{ij} = 0$ for $j \geq t_i$. Thus, for every $i \in \{1, \ldots, d\}$, $z_{i \tau(i,t_i-1)} = 1$ and $z_{ij} = 0$ for $j>  \tau(i,t_i)$. In other words, $z \in \Delta_H$. Conversely, let $z \in \Delta_H$, where $H = \prod_{i=1}^d[a_{\tau(i,t_i-1)},a_{i\tau(i,t_i)}]$. Then, for $i \in \{1, \ldots, d\}$, $z_{i\tau(i,t_i-1)} = 1$ and $z_{ij} = 0$ for $j > \tau(i,t_i)$. We can define a binary vector $\delta$ such that, for every $i \in \{1, \ldots, d\}$, $\delta_{ij}= 1$ for $j < t_i$ and $\delta_{ij} = 0$ for $j \geq t_i$ so that $(z,\delta)$ satisfies~(\ref{eq:Inc-1}). Now, we prove the second statement.  Let $\zeta_{ij} :=\sum_{j'=0}^je_{ij'} $, where $e_{ij}$ is the $j^{\text{th}}$ principal vector in the space spanned by variables $(z_{i0}, \ldots,  z_{in} )$. Then, the result follows by observing that the affine function $F_i$ is non-decreasing and maps $\zeta_{i\tau(i,t_i-1)} $ and $\zeta_{i\tau(i,t_i)}$  to $a_{i\tau(i,t_i-1)}$ and $a_{i\tau(i,t_i)}$, respectively.
\end{comment}
\Halmos
\end{compositeproof}
We remark that when $d=1$ and $\{\tau(i,t)\}_{t = 0}^{l_i} = \{0, \ldots, n\}$, the formulation~(\ref{eq:Inc}) is the  \textit{incremental formulation}~\cite{markowitz1957solution,dantzig1960significance} of selecting intervals from $\bigl\{[a_{1j-1}, a_{1j}]\bigr\}_{j = 1}^n$:
\[
f_1 = F_1(z_1),\ z_1 \in \Delta_1,\   \delta_1 \in \{0,1\}^{n-1} ,\ z_{1j} \geq \delta_{1j} \geq z_{1j+1}\quad \text{for } j = 1, \ldots, n-1 .
\]
Henceforth,~(\ref{eq:Inc}) will be referred to as the incremental formulation that selects subcubes from $\mathcal{H}$. Our usage of incremental formulation will be different in two ways. First, we will show in Theorem~\ref{them:MIP-phi-z} that, under certain technical conditions, convexification of $\phi$ over $(f,z)$ naturally yields an ideal formulation for \eqref{eq:MIP-standard}. This is useful since we do not need to consider $\delta$ variables during the convexification. Second, we will relate the $z_{ij}$ variable in the incremental formulation to the slope of line connecting $(a_{ij-1}, s_{ij-1})$ to $(a_{ij},s_{ij})$. Here, $s_{ij}$ is the evaluation, at any $x$, of the best underestimator that can be obtained by taking convex combinations of $u_i(x)$ and is bounded by $a_{ij}$. We use this insight, in Theorem~\ref{them:DCR} to tighten the relaxation by using the inner-function structure via its underestimators.

Next, we provide an ideal formulation for~(\ref{eq:MIP-standard}). Consider a function $\phi \mcirc F: \Delta \to \R$ defined as $(\phi \mcirc F)(z) = \phi\bigl(F_1(z_1), \ldots, F_d(z_d) \bigr)$, where $F_i(\cdot)$ is introduced in the incremental formulation~(\ref{eq:Inc-2}). The main idea of our derivation is to utilize the concave envelope of $\phi \mcirc F$ over $\Delta$. More specifically, we will show that
\begin{equation}\label{eq:MIP-z}
	\Bigl\{(f,\phi,z,\delta) \Bigm|  \phi \leq \conc_\Delta (\phi \mcirc F)(z)  ,\ (f,z,\delta) \in(\ref{eq:Inc}) \Bigr\}
\end{equation}
is an MIP formulation for~(\ref{eq:MIP-standard}). Moreover, if $\phi \mcirc F$ is concave-extendable from $\vertex(\Delta)$ then (\ref{eq:MIP-z}) is ideal. To prove this, we need the following lemma, regarding ideality of MIP formulations.  
\begin{lemma}\label{lemma:ideal}
Let $S$ be a polyhedron in $\R^{n+m}$ such that for $1\le j,k\le m$ and all $(x,y)$ in $\vertex(S) \cup \vertex\bigl(S \cap\{(x,y) \mid y_{j} = y_k\} \bigr)$ we have $y \in \mathbb{Z}^m$. Then, for $\hat{S} = \{(x,y,\delta)\in S \times \R\mid y_{j} \leq \delta\leq y_k \} \neq \emptyset$, we have $  \proj_{(y,\delta)}\bigl(\vertex(\hat{S})\bigr) \subseteq \mathbb{Z}^{m+1}$.
\end{lemma}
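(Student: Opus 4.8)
The plan is to argue directly about the vertices of $\hat{S}$, exploiting that the new variable $\delta$ enters only through the two appended inequalities $\delta - y_j \ge 0$ and $y_k - \delta \ge 0$. Fix a vertex $(x^*, y^*, \delta^*)$ of $\hat{S}$; the goal is to show $y^* \in \mathbb{Z}^m$ and $\delta^* \in \mathbb{Z}$. First I would record that at such a vertex at least one of the two appended inequalities must be active, since otherwise the direction that moves $\delta$ alone would be feasible in both orientations and the point could not be a vertex. Feasibility also forces $y_j^* \le \delta^* \le y_k^*$, so in particular $y_j^* \le y_k^*$, and if both appended inequalities are active then $\delta^* = y_j^* = y_k^*$.

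Next I would split into two exhaustive cases according to the active set and, in each, show that the projection $(x^*, y^*)$ is a vertex of the appropriate polyhedron appearing in the hypothesis. The key device is a lifting argument: if $(x^*, y^*)$ failed to be a vertex of the relevant set, there would be a nonzero direction $d = (d_x, d_y)$ with $(x^*, y^*) \pm \varepsilon d$ remaining in that set for small $\varepsilon > 0$, and I would lift $d$ to a direction $(d_x, d_y, d_\delta)$ feasible for $\hat{S}$ by choosing $d_\delta$ equal to the $j$-th (or $k$-th) component of $d_y$. This keeps the active appended constraint tight while leaving any slack appended constraint satisfied, so the lifted nonzero direction would be feasible in both orientations for $\hat{S}$, contradicting that $(x^*, y^*, \delta^*)$ is a vertex.

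Concretely, if $y_j^* < y_k^*$ then exactly one appended inequality is active, say $\delta^* = y_j^*$ (the case $\delta^* = y_k^*$ is symmetric), the constraint $\delta \le y_k$ is slack, and setting $d_\delta = (d_y)_j$ shows that $(x^*, y^*)$ is a vertex of $S$; the hypothesis then yields $y^* \in \mathbb{Z}^m$, and $\delta^* = y_j^* \in \mathbb{Z}$. If instead $y_j^* = y_k^* = \delta^*$, any direction keeping $(x^*, y^*)$ inside $S \cap \{(x,y) \mid y_j = y_k\}$ satisfies $(d_y)_j = (d_y)_k$, so setting $d_\delta = (d_y)_j = (d_y)_k$ keeps both appended inequalities tight and shows $(x^*, y^*)$ is a vertex of $S \cap \{(x,y) \mid y_j = y_k\}$; the hypothesis again gives $y^* \in \mathbb{Z}^m$ and $\delta^* = y_j^* \in \mathbb{Z}$. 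In either case $(y^*, \delta^*) \in \mathbb{Z}^{m+1}$, which is the desired conclusion, and since the vertex was arbitrary this proves $\proj_{(y,\delta)}(\vertex(\hat{S})) \subseteq \mathbb{Z}^{m+1}$.

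The checking of feasibility of the lifted direction is routine. The hard part will be matching the case split to the correct hypothesis set: the slack case must reduce to a vertex of $S$, whereas the doubly tight case must reduce to a vertex of $S \cap \{(x,y) \mid y_j = y_k\}$, and this is exactly why the hypothesis demands integrality of $y$ at the vertices of \emph{both} sets. I would take care to verify that the lifted direction is genuinely nonzero, that at least one appended inequality is active so $\delta^*$ is pinned down, and that the degenerate configuration with both appended inequalities active but $y_j^* < y_k^*$ cannot arise, which makes the two cases exhaustive.
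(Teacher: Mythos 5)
Your proof is correct and takes essentially the same route as the paper: the same case split on which of the two appended inequalities is tight, reducing the case $y_j^* < y_k^*$ to a vertex of $S$ and the case $y_j^* = y_k^*$ to a vertex of $S \cap \{(x,y) \mid y_j = y_k\}$. The only difference is cosmetic — you phrase the reduction via feasible directions and lift the direction by setting $d_\delta = (d_y)_j$, whereas the paper lifts the two points of a convex-combination decomposition by appending $\delta = y_j$; these are equivalent formulations of the same argument.
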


\begin{compositeproof}
Consider an extreme point $(\hat{x},\hat{y}, \hat{\delta})$ of $\hat{S}$. It is easy to see that either $\hat{\delta} = \hat{y}_j$ or $\hat{\delta} = \hat{y}_k$. Therefore, it suffices to show that $\hat{y} \in \mathbb{Z}^m$. Without loss of generality, assume that $\hat{\delta} =\hat{y}_j$. There are two cases to consider, either  $\hat{y}_j < \hat{y}_k$ or $\hat{y}_j = \hat{y}_k$. If $\hat{y}_j < \hat{y}_k$ then $(\hat{x}, \hat{y})$ must be an extreme point of $S$ since any constraint that is tight at $(\hat{x},\hat{y})$ is valid for $S$, in which case it follows from the hypothesis that $\hat{y} \in \mathbb{Z}^m$. %If not, $(\hat{x}, \hat{y})$ is expressible as a convex combination of  $( x',y')$ and $(x'',y'')$ of $S$. Then, for sufficient small $\epsilon > 0$, $(\hat{x},\hat{y}) + \epsilon(x'-x'', y' - y'')$ and $(\hat{x},\hat{y}) - \epsilon(x'-x'', y' - y'')$ are in $\proj_{(x,y)}(\hat{S})$. 
If $\hat{y}_j = \hat{y}_k$, we claim that $(\hat{x}, \hat{y})$ is an extreme point of $S \cap \{ (x,y) \mid y_j = y_k \}$, and, thus, by the hypothesis $\hat{y} \in \mathbb{Z}^m$. If not, $(\hat{x},\hat{y})$ is expressible as a convex combination of two distinct points $( x',y')$ and $(x'',y'')$ of $S \cap\{ (x,y) \mid y_j = y_k \}$. Then, $(\hat{x}, \hat{y}, \hat{\delta})$ is expressible as a convex combination of two distinct points $( x',y',y'_j)$ and $( x'',y'',y''_j)$ of $\hat{S}$, yielding a contradiction. \Halmos
\end{compositeproof}

%\begin{compositeproof}
%Let $(x,y,\delta)$ be an extreme point of $\hat{R}$. First, we observe that $\delta=y_j$ or $\delta = y_k$. Suppose that this is not the case. Then, there exists $\epsilon > 0$ such that points $(x,y,\delta -\epsilon)$ and $(x,y,\delta +\epsilon)$ belong to $\hat{R}$, a contradiction. Without loss of generality, we assume that $\delta = y_j$. Next, we show that $y \in \{0,1\}^{n_2}$ by contradiction. This implies that ${0,1}^{n_2 +1}$. Suppose that $y \notin \{0,1\}^{n_2}$. Then, $(x,y)$ is not an extreme point of $R$ since $\proj_y\bigl(\vertex(R) \bigr)$ is assumed to be in $ \{0,1\}^{n_2}$,. It follows readily that there exist convex multipliers $\{\gamma^t\}_{t \in T}$ and $\{x^t,y^t\}_{t \in T}$ such that $(x,y,\delta) = \sum_{t \in T} \gamma_t(x^t,y^t,y^t_j)$. This yields a contradiction as $\{(x^t,y^t,y^t_j)\}_{t \in T} \subseteq \hat{R}$, \Halmos
%\end{compositeproof}

\begin{theorem}\label{them:MIP-phi-z}
An MIP formulation for~(\ref{eq:MIP-standard}) is given by~(\ref{eq:MIP-z}). Moreover, if $\phi \mcirc F: \Delta \to \R $ is concave-extendable from $\vertex(\Delta)$ then~(\ref{eq:MIP-z}) is an ideal MILP formulation.  
\end{theorem}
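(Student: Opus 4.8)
The plan is to establish two assertions: first, that~(\ref{eq:MIP-z}) is an MIP formulation of~(\ref{eq:MIP-standard}), meaning its projection onto $(f,\phi)$ recovers exactly $\bigcup_{H\in\mathcal{H}}\conv(\hypo(\phi|_H))$; and second, that concave-extendability of $\phi\mcirc F$ from $\vertex(\Delta)$ upgrades this to an ideal MILP formulation. I would treat the equivalence of the feasible regions first, and then handle ideality via the vertex structure, deferring to Lemma~\ref{lemma:ideal} for the inductive step on the coupling constraints.

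For the formulation claim, I would fix a binary assignment $\delta$. By Lemma~\ref{lemma:Inc}, the constraints~(\ref{eq:Inc-1}) force $z$ to lie in a specific face $\Delta_H$ of $\Delta$, and then~(\ref{eq:Inc-2}), i.e. $f=F(z)$, maps $\Delta_H$ onto the subcube $H$. The key step is to show that projecting $\{(f,\phi,z)\mid \phi\le\conc_\Delta(\phi\mcirc F)(z),\ z\in\Delta_H,\ f=F(z)\}$ onto $(f,\phi)$ yields $\conv(\hypo(\phi|_H))$. The direction that requires care is that restricting the global concave envelope $\conc_\Delta(\phi\mcirc F)$ to the face $\Delta_H$ and pushing through the affine surjection $F$ reproduces the \emph{restricted} envelope over $H$. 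I would argue that since $F$ is affine and maps $\Delta_H$ onto $H$, the image of $\hypo(\phi\mcirc F|_{\Delta_H})$ under $(F,\mathrm{id})$ is exactly $\hypo(\phi|_H)$, so their convex hulls correspond; the only subtlety is that $\conc_\Delta$ restricted to $\Delta_H$ could a priori be strictly larger than $\conc_{\Delta_H}$, but on a face of a polytope the concave envelope of a function restricts consistently because any point of $\Delta_H$ together with its envelope value lies in $\conv(\graph(\phi\mcirc F))\cap(\Delta_H\times\R)$, and the supporting representation can be taken within the face. Taking the union over all $\delta$ (equivalently over all $H\in\mathcal{H}$) then gives~(\ref{eq:MIP-standard}).

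For ideality, assuming $\phi\mcirc F$ is concave-extendable from $\vertex(\Delta)$, the envelope $\conc_\Delta(\phi\mcirc F)$ is determined by the vertices of $\Delta$, so~(\ref{eq:MIP-z}) without the $\delta$-coupling is a polyhedron whose relevant extreme points project to $\{0,1\}$-valued $z$ at the vertices of $\Delta$ (since $\vertex(\Delta)\subseteq\{0,1\}^{d(n+1)}$). The plan is then to introduce the $\delta$ variables one coupling constraint at a time and invoke Lemma~\ref{lemma:ideal} repeatedly: each constraint $z_{i\tau(i,t)}\ge\delta_{it}\ge z_{i\tau(i,t)+1}$ has exactly the form $y_j\le\delta\le y_k$ with $y_j,y_k$ among the $z$ coordinates. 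To apply the lemma I must verify its hypothesis that the polytope $S$ (the continuous relaxation built so far) has integral $y=z$ at vertices of both $S$ and $S\cap\{z_{i\tau(i,t)}=z_{i\tau(i,t)+1}\}$; the latter is again a face of $\Delta$ intersected with the envelope epigraph, so concave-extendability keeps its vertices among the integral points of $\Delta$.

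\textbf{The main obstacle.} The hard part will be the ``envelope-restricts-to-faces'' step in the formulation argument, i.e. showing $\conc_\Delta(\phi\mcirc F)|_{\Delta_H}$ pushes forward under $F$ to the envelope of $\phi|_H$ rather than to something strictly larger coming from interaction with subcubes outside $H$. This requires that the concave envelope over the whole simplotope, when evaluated on a face corresponding to a single subcube and transported by the affine map, does not benefit from combining points in different subcubes — precisely the content that the incremental geometry, together with concave-extendability guaranteeing the envelope is an interpolation of vertex values, is designed to ensure. I would resolve it by exhibiting, for each feasible $(f,\phi)$ with $f\in H$, an explicit representation as a convex combination of vertices of $\Delta_H$ and by checking that no representation using vertices outside $\Delta_H$ can do better once $\delta$ pins down the face.
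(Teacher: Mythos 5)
Your proposal is correct and follows essentially the same route as the paper's proof: the formulation claim rests on Lemma~\ref{lemma:Inc}, the fact that the concave envelope over $\Delta$ restricted to the face $\Delta_H$ is the envelope over $\Delta_H$ (the paper phrases this as $\conv(E_\Delta)\cap(\R^{d+1}\times\Delta_H)=\conv(E_{\Delta_H})$), and the commutation of the affine map $F$ and projection with convexification; the ideality claim is proved by the same recursion on the coupling constraints via Lemma~\ref{lemma:ideal}, with the base case supplied by concave-extendability and the inductive step by the face $z_{i\tau(i,t)}=z_{i\tau(i,t)+1}$. The only cosmetic difference is that you argue subcube-by-subcube while the paper first characterizes $\conv(E_\Delta)$ globally, and in the induction you should let the integral block $y$ of Lemma~\ref{lemma:ideal} include the previously introduced $\delta$ variables along with $z$, as the paper does with its $y^t$ notation.
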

\begin{compositeproof}
First, we show that~(\ref{eq:MIP-z}) is an MIP formulation of~(\ref{eq:MIP-standard}).  For a subset $S \subseteq \Delta$, let $E_{S}:= \bigl\{ (f,\phi,z) \bigm| z \in S,\ f = F(z),\ \phi \leq \phi(f) \bigr\}$. By the linearity of $F(\cdot)$ and the definition of $\phi \mcirc F$, it follows that $\conv(E_\Delta) = \bigl\{(f,\phi,z)\bigm| z \in \Delta,\ f = F(z),\ \phi \leq \conc_{\Delta}(\phi \mcirc F )(z)\bigr\}$.
This, together with the first statement in Lemma~\ref{lemma:Inc}, implies that projecting out the binary variables from~(\ref{eq:MIP-z}) yields $\bigl\{ \conv(E_\Delta) \cap (\R^{d+1} \times \Delta_H)\bigr\}_{H \in \mathcal{H} }$. Therefore, the proof is complete since projection commutes with set union and for $H \in \mathcal{H}$
\[
\begin{aligned}
\proj_{(f,\phi)}\bigl( \conv(E_\Delta) \cap (\R^{d+1} \times \Delta_H) \bigr)& = \proj_{(f,\phi)}\bigl(\conv(E_{\Delta_H})\bigr) \\
& = \conv\bigl(\proj_{(f,\phi)}(E_{\Delta_H})\bigr) = \conv\bigl(\hypo(\phi|_H)\bigr),
\end{aligned}
\]
where the first equality holds since $\Delta_H$ is a face of $\Delta$, the second equality holds as the projection commutes with convexification, the last equality holds since, for $S \subseteq \Delta$, $\proj_{(f,\phi)}(E_S) = \hypo(\phi|_{F(S)})$, and, by Lemma~\ref{lemma:Inc}, $F(\Delta_{H}) =H$. 

Next, we prove the second statement. Suppose that $\phi \mcirc F$ is concave-extendable from $\vertex(\Delta)$. It follows readily that $\conc_\Delta(\phi \mcirc F)(\cdot)$ is a polyhedral function, and, thus, the MIP formulation~(\ref{eq:MIP-z}) is linear. Now, we apply Lemma~\ref{lemma:ideal} recursively to show that the formulation is ideal. For $t = (t_1, \ldots, t_d) \in \prod_{i=1}^d\{0, \ldots, l_i-1\}$, let
\[
\begin{aligned}
R^t: = \Bigl\{&(f, \phi,z, \delta) \Bigm|  (f, \phi, z) \in \conv(E_\Delta),\ \delta \in [0,1]^{\sum_{i=1}^d(l_i-1)},  \\
 & \qquad \qquad \qquad z_{i\tau(i,j_i)+1} \leq \delta_{ij_i} \leq z_{i\tau(i,j_i)},\  i = 1, \ldots, d,\ 1 \leq j_i \leq t_i \Bigr\},
\end{aligned} 
\]
and let $y^t: = \bigl((\delta_{11}, \ldots, \delta_{1t_1} ), \ldots, (\delta_{d1}, \ldots, \delta_{dt_d} )\bigr)$. Since  $\phi \mcirc F$ is concave-extendable from $\vertex(\Delta)$, $\proj_{z}\bigl(\vertex(R^0)\bigr) = \vertex(\Delta)$, a set consisting of binary points. Let $\iota \in \{1, \ldots, d\}$ and $t' \in \prod_{i=1}^d\{0, \ldots, l_i-1 \}$ so that $t'_\iota < l_\iota-1$, and assume that the points in $ \proj_{(z,y^{t'})}\bigl(\vertex(R^{t'})\bigr)$ are binary-valued. Since $z_{\iota\tau(\iota,t'_\iota+1)}= z_{\iota\tau(\iota,t'_\iota+1)+1}$ defines a face $F$ of $R^{t'}$, $\vertex(F) \subseteq \vertex(R^{t'})$, and thus, by hypothesis, $\proj_{(z,y^{t'})}\bigl( \vertex(F) \bigr)$ is binary. It follows from Lemma~\ref{lemma:ideal} that $\proj_{(z,y^{t''})}\bigl(\vertex(R^{t''})\bigr)$ is binary, where $t''_i = t'_i$ for $i \neq \iota$ and $t''_i = t'_i+1$ otherwise.  \Halmos
\end{compositeproof}
Notice that the proof of Theorem~\ref{them:MIP-phi-z} can be used to show that $\bigl\{(f,\phi,z,\delta) \bigm|  \phi = \conv\bigl(\text{graph}(\phi \mcirc F)\bigr),\ (f,z,\delta) \in(\ref{eq:Inc}) \bigr\}$ is an ideal formulation of $(f,\phi) \in \bigcup_{H \in \mathcal{H}} \conv\bigl(\text{graph}(\phi|_H)\bigr)$ if $\phi \mcirc F$ is convex-and concave-extendable from  $\vertex(\Delta)$.

We remark that the concave-extendability condition cannot be dropped from the statement of Theorem~\ref{them:MIP-phi-z}. Consider for example a function $\phi(\cdot)$ defined as $\phi(f) = \sqrt{f}$ for $0\leq f \leq 1$ and $\phi(f) = f$ for $1<f \leq 2$, 
%\[
%\phi(f) = \begin{cases}
%	\sqrt{f} & 0\leq f \leq 1 \\
%	f & 1<f\leq 2.
%\end{cases}
%\]
Now, with $(a_0,a_1,a_2) = (0,1,2)$, projecting $f$ out from~(\ref{eq:MIP-z}) we get 
\begin{equation}\label{eq:MIP-z-ex}
1 = z_0 \geq z_1 \geq \delta \geq z_2 \geq 0,\ \phi \leq \conc_\Delta(\psi)(z),
\end{equation}
where $\psi(z) = \phi(z_1 + z_2)$. It can be verified that $(\phi,z,\delta) = (\frac{1}{\sqrt{2}},1,\frac{1}{2},0, \frac{1}{2} )$ is extremal.  
\begin{comment}
Then, consider the point $(\phi,z,\delta) = (\frac{1}{\sqrt{2}},1,\frac{1}{2},0, \frac{1}{2} )$. Clearly, this point belongs to~(\ref{eq:MIP-z-ex}). We argue that this point is also extremal. If not, it can be expressed as a convex combination of other points, that is, $(\frac{1}{\sqrt{2}},1,\frac{1}{2},0, \frac{1}{2} ) = \sum_{k}\lambda_k (\phi^k,1,y^k,0,\delta^k)$, where $0 \leq \delta^k \leq y^k \leq 1$, $\phi^k \leq \sqrt{y^k}$, $\lambda_k\geq 0$ and $\sum_k \lambda_k = 1$. Assume $y^1 \neq \frac{1}{2}$. Then, we obtain 
\[
\begin{aligned}
\frac{1}{\sqrt{2}} = \lambda_1 \phi^1 + \sum_{k>1} \lambda_k \phi^k &\leq \lambda_1\phi^1 + (\lambda_2 + \cdots + \lambda_k)\sqrt{\frac{\lambda_2y^2+ \cdots + \lambda_k y^k}{\lambda_2 + \cdots +\lambda_k}} \\
&< \sqrt{\lambda_1 y^1 + \cdots + \lambda_k y^k} = \frac{1}{\sqrt{2}},
\end{aligned}
\]
where the first equality is by definition, the first inequality is by concavity of $\sqrt{f}$ and the strict inequality is by strict concavity of $\sqrt{f}$ and $y^1 \neq \frac{1}{2}$. Since our choice of index $k$ can be arbitrary for the above argument, it follows that all the points are of the form of $(\frac{1}{\sqrt{2}}, 1, \frac{1}{2},0, \delta^k)$. However, $\frac{1}{2} = \sum_{k} \lambda_k \frac{1}{2} \geq \sum_k \lambda_k \delta^k = \frac{1}{2}$ implies that equality holds throughout and thus $\delta^k =\frac{1}{2}$. Therefore, we have an extreme point that does no have binary value for $\delta$. 
\end{comment}
%The formulation is not ideal only if the concave envelope is not extendable from the vertices of $\Delta$. 
It is, however, possible to obtain an ideal formulation by constructing the concave envelope in $(z,\delta)$ space as shown in the next result.

\begin{proposition}\label{prop:zdeltaenv}
Let $\bar{\Delta}: = \prod_{i=1}^d \bar{\Delta}_i$, where $\bar{\Delta}_i: = \bigl\{(z_i,\delta_i) \bigm| z_i \in \Delta_i, z_{i\tau(i,t)} \geq \delta_{it} \geq z_{i\tau(i,t)+1} \text{ for } t \in\{1, \ldots, l_i\}  \bigr\}$, and let $\bar{\Delta}': =\bar{\Delta} \cap \R^{d \times (n+1)} \times \{0,1\}^{\sum_{i=1}^dl_i-1}$. Consider an extension $\bar{F}:\bar{\Delta}' \to \R$ of $F(\cdot)$ defined as  $\bar{F}(z,\delta) = F(z)$ for every $(z,\delta) \in \bar{\Delta}'$. Then, an ideal MIP formulation for~(\ref{eq:MIP-standard}) is given by
\begin{equation}\label{eq:zdeltaformulation}
\Bigl\{(f,\phi,z,\delta) \Bigm|  \phi \leq \conc_{\bar{\Delta}} (\phi \mcirc \bar{F})(z,\delta)  ,\ (f,z,\delta) \in(\ref{eq:Inc}) \Bigr\}. 
\end{equation}
\end{proposition}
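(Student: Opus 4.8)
The plan is to convexify in the \emph{extended} space $(z,\delta)$ rather than in the $z$-space alone as in Theorem~\ref{them:MIP-phi-z}; this both removes the concave-extendability hypothesis and makes ideality transparent. Write $g:=\phi\mcirc\bar{F}$, which is defined on $\bar{\Delta}'$, and, for $H=\prod_{i=1}^d[a_{i\tau(i,t_i-1)},a_{i\tau(i,t_i)}]\in\mathcal{H}$, let $\delta^H$ be the binary vector with $\delta^H_{it}=1$ for $t<t_i$ and $\delta^H_{it}=0$ for $t\ge t_i$, and set
\[
C_H:=\bigl\{(f,\phi,z,\delta^H)\bigm| z\in\Delta_H,\ f=F(z),\ \phi\le(\phi\mcirc F)(z)\bigr\}.
\]
By Lemma~\ref{lemma:Inc}, the binary-feasible points of~(\ref{eq:Inc-1}) are exactly $\bigcup_{H\in\mathcal{H}}\Delta_H\times\{\delta^H\}=\bar{\Delta}'$, the map $H\mapsto\delta^H$ is injective, and $F$ maps $\Delta_H$ onto $H$.

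First I would record the analogue of the opening step of Theorem~\ref{them:MIP-phi-z}: since the hypograph of a concave envelope is the convex hull of the hypograph and $\bar{F}(z,\delta)=F(z)$ is affine, the continuous relaxation $R$ of~(\ref{eq:zdeltaformulation}) satisfies $R=\conv\bigl(\bigcup_{H\in\mathcal{H}}C_H\bigr)$. Indeed, $\phi\le\conc_{\bar{\Delta}}(g)(z,\delta)$ places $(z,\delta,\phi)$ in the convex hull of $\{(z,\delta,\phi)\mid(z,\delta)\in\bar{\Delta}',\ \phi\le g(z,\delta)\}$, and appending $f=F(z)$ through the affine map $\bar{F}$ turns each generator into a point of some $C_H$; the reverse inclusion is immediate because each $C_H\subseteq R$ and $R$ is convex. (Here I would note that continuity of $\phi$ on the compact $[f^L,f^U]$ guarantees that the relevant hypographs are closed and the supremum defining the envelope is attained.)

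For the formulation claim, fix a binary feasible $\delta$; by Lemma~\ref{lemma:Inc} it equals $\delta^H$ for a unique $H$. Because $\delta^H$ is a vertex of $\proj_\delta(\bar{\Delta})$ (the order polytope $\{\delta\mid\delta_{i1}\ge\cdots\ge\delta_{i,l_i-1},\ 0\le\delta_i\le 1\}$), the slice $R\cap\{\delta=\delta^H\}$ is a face of $R$, and in the representation $R=\conv(\bigcup_{H'}C_{H'})$ only the sets $C_{H'}$ with $\delta^{H'}=\delta^H$ contribute to it; by injectivity this forces $R\cap\{\delta=\delta^H\}=\conv(C_H)$. Projecting out $(z,\delta)$ and using $\proj_{(f,\phi)}(\conv(C_H))=\conv(\proj_{(f,\phi)}(C_H))=\conv(\hypo(\phi|_{F(\Delta_H)}))=\conv(\hypo(\phi|_H))$ (projection commutes with convexification and $F(\Delta_H)=H$ by Lemma~\ref{lemma:Inc}) gives exactly the $H$-th disjunct. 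Since projection commutes with the union over $H$, the formulation~(\ref{eq:zdeltaformulation}) projects onto $\bigcup_{H\in\mathcal{H}}\conv(\hypo(\phi|_H))$, i.e.,~(\ref{eq:MIP-standard}).

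Ideality is then immediate from $R=\conv(\bigcup_H C_H)$: any extreme point of a convex hull of a finite union of convex sets belongs to one of those sets, so every extreme point of $R$ lies in some $C_H$ and hence has $\delta=\delta^H\in\{0,1\}^{\sum_i(l_i-1)}$; thus $\proj_\delta(\vertex(R))\subseteq\{0,1\}^{\sum_i(l_i-1)}$. The main obstacle is the identity $R=\conv(\bigcup_H C_H)$ together with the accompanying face/vertex bookkeeping: one must verify that fixing $\delta$ at a binary value $\delta^H$ carves out the face $\Delta_H\times\{\delta^H\}$ of $\bar{\Delta}$ (equivalently, that $\delta^H\in\vertex(\proj_\delta\bar{\Delta})$ and that only the generators of $C_H$ survive in the slice), and that the finite union of the closed sets $C_H$—which share the single recession direction of decreasing $\phi$—has a closed convex hull so that the extreme-point argument applies cleanly. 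Everything else parallels the proof of Theorem~\ref{them:MIP-phi-z}, the crucial difference being that carrying $\delta$ inside the envelope confines all hypograph data to the binary-$\delta$ slices, which is exactly what delivers ideality without assuming concave-extendability.
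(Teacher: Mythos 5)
Your proof is correct, but it takes a different route from the paper's. The paper proves validity by \emph{reduction}: it observes that $\bar{\Delta}=\conv(\bar{\Delta}')$ and that the binary-$\delta$ points of~(\ref{eq:Inc-1}) are exactly the faces $\bar{\Delta}_H$, and then establishes the envelope identity $\conc_{\bar{\Delta}}(\phi\mcirc\bar{F})(z,\delta)=\conc_{\bar{\Delta}_H}(\phi\mcirc\bar{F})(z,\delta)=\conc_{\Delta_H}(\phi\mcirc F)(z)=\conc_{\Delta}(\phi\mcirc F)(z)$ for $(z,\delta)\in\bar{\Delta}_H$ (using twice that envelopes restrict to faces, and that $\delta$ is constant on $\bar{\Delta}_H$ where $\bar F = F$); validity of \eqref{eq:zdeltaformulation} then follows from the already-proven Theorem~\ref{them:MIP-phi-z}, since the two formulations have the same integer-feasible projections. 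You instead re-derive, in the extended $(z,\delta)$-space, the machinery that the proof of Theorem~\ref{them:MIP-phi-z} uses in $z$-space: the hull identity $R=\conv\bigl(\bigcup_{H}C_H\bigr)$, the face-slicing at $\delta=\delta^H$, and the projection step. What your route buys is self-containment and a single geometric identity from which both the formulation property and ideality fall out; what the paper's route buys is brevity, since the envelope-equality chain replaces your order-polytope and face bookkeeping. For ideality the two arguments essentially coincide: the paper shows that a vertex with fractional $\delta$ has $(z,\delta)\notin\bar{\Delta}'$ and hence decomposes as a proper convex combination of the envelope's binary-$\delta$ generators lifted through $F$, which is exactly the mechanism behind your claim that extreme points of $\conv\bigl(\bigcup_H C_H\bigr)$ lie in $\bigcup_H C_H$.

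Two small repairs. First, your sets $C_H$ are generally \emph{not} convex, since $\phi\mcirc F$ need not be concave on $\Delta_H$; fortunately the facts you invoke---that a face $G$ of $\conv(A)$ satisfies $G=\conv(A\cap G)$, and that extreme points of $\conv(A)$ belong to $A$---hold for arbitrary sets $A$ (any point of the hull is a finite convex combination of points of $A$, and the face or extremality property forces those points into $G$ or to coincide with the point), so nothing breaks, but the phrase ``finite union of convex sets'' should be removed. Second, since $R$ has the recession direction of decreasing $\phi$, a point of $R$ is a priori a convex combination of generators plus a multiple of that direction; to apply the face and extreme-point arguments cleanly you should note that each $C_H$ is itself closed under decreasing $\phi$, so the recession term can be absorbed into the generators. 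You gesture at both issues, but they are the steps that make the argument airtight.
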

\begin{compositeproof}
Clearly, $\bar{\Delta}  = \conv(\bar{\Delta}')$ and $\bar{\Delta}' = \{(z,\delta) \in \bar{\Delta}_H\mid H \in \mathcal{H}\}$, where for each hypercube $H = \prod_{i=1}^d[a_{i\tau(i,t_i-1)},a_{i\tau(i,t_i)}] \in \mathcal{H}$, $\bar{\Delta}_H$ is the face of $\bar{\Delta}$ such that, for $i \in \{1, \ldots,d \}$,  $\delta_{ij} = 1$ for $j < t_i$ and $\delta_{ij} = 0$ for $j \geq t_i$ and  $z_{ij} = 1$ for $j \leq \tau(i,t_i-1)$  and $z_{ij} = 0$ for $j> \tau(i,t_i)$. Then, the validity of this formulation follows from that of~(\ref{eq:MIP-z}) by observing that $(z,\delta)$ satisfies~(\ref{eq:Inc-1}) if only if there exists a hypercube $H \in \mathcal{H}$ so that $(z,\delta) \in \bar{\Delta}_{H}$ and, for any $H \in \mathcal{H}$ and $(z,\delta) \in \bar{\Delta}_{H}$,
\[
\conc_{\bar{\Delta}}( \phi \mcirc \bar{F})(z,\delta) = \conc_{\bar{\Delta}_H}( \phi \mcirc \bar{F})(z,\delta) = \conc_{\Delta_H}( \phi \mcirc F)(z) = \conc_{\Delta}( \phi \mcirc F)(z),
\]
where the first and third equalities hold because $\bar{\Delta}_H$ (resp. $\Delta_H$) is a face of $\bar{\Delta}$ (resp. $\Delta$), and the second equality follows since, given $H$, $\delta$ is fixed and $\bar{F}(z,\delta) = F(z)$.

To show the ideality of \eqref{eq:zdeltaformulation}, consider a vertex $(f,\phi,z,\delta)$ of its LP relaxation, and assume that $\delta$ is not binary. Without loss of generality, assume $\phi =\conc_{\bar{\Delta}} (\phi \mcirc \bar{F})(z,\delta)$.  Notice that $(z,\delta) \notin \bar{\Delta}'$ and thus $(z,\delta, \phi)$ is expressible as a convex combination of distinct points $\bigl(z^k,\delta^k, \phi^k\bigr)$, where $(z^k,\delta^k) \subseteq \Delta'$ and $\phi^k = \conc_{\bar{\Delta}}(\phi\mcirc \bar{F})(z^k,\delta^k)$. Therefore, $(f,\phi,z,\delta)$ can be expressed as a convex combination of distinct points $\bigl(F(z^k),z^k, \delta^k,\phi^k \bigr)$ feasible to \eqref{eq:zdeltaformulation}, a contradiction to extremality of $(f,\phi,z,\delta)$. \Halmos
\end{compositeproof}
As a result, Proposition~\ref{prop:zdeltaenv} yields an ideal formulation for the previous example: 
\[
\phi \leq \sqrt{(1-\delta)(z_1 - \delta)} +z_2,\ 1 = z_0 \geq z_1 \geq \delta \geq z_2 \geq 0,\ \delta \in \{0,1\}, \ f = z_1 +z_2.
%,
\]
%where the envelope description is obtained using disjunctive programming~\cite{balas1998disjunctive}.
It can be verified that $(\phi,z,\delta,f) = (\frac{1}{\sqrt{2}},1,\frac{1}{2},0, \frac{1}{2}, \frac{1}{2})$ is not feasible to the above constraints, certifying that formulation~(\ref{eq:MIP-z-ex}) is not ideal. For the remainder of this paper, we focus on functions $\phi \mcirc F$ that are concave-extendable, and therefore, it will suffice to construct the concave envelope description in $z$-space.  

The discretization points were chosen to be a subset of the grid partition $\prod_{i=1}^d\{a_{i0}, \ldots, a_{in}\}$. The remaining grid points will be exploited using ideas developed in Section~\ref{section:exp}. More specifically, these auxiliary variables $z \in \Delta$ can be related, via an invertible affine transformation, to $s \in Q:=\prod_{i=1}^dQ_i$, and thereby to the underestimators of the inner-functions $f(\cdot)$. More specifically, the affine transformation $Z(s)= \bigl(Z_1(s_1), \ldots, Z_d(s_i)  \bigr)$, where $Z_i : \R^{n_i+1} \to \R^{n_i+1}$ relates $z_i$ to $s_i$ so that $Z_i(s_i) = z_i$, where
\begin{equation}\label{eq:Z_trans}
	\begin{aligned}
	 z_{i0} = 1 \qquad	\text{and} \qquad z_{ij} = \frac{s_{i j} - s_{i j-1}}{a_{ij} - a_{ij-1}} \quad \text{for } j = 1, \ldots , n.
	\end{aligned}
\end{equation}
The inverse of $Z$ is then defined as $Z^{-1}(z):= \bigl(Z_1^{-1}(z_1), \ldots, Z_d^{-1}(z_d) \bigr)$, where $Z_i^{-1}$ recovers $s_i$ given $z_i$ as follows:
\begin{equation}\label{eq:Z_inv}
s_{ij} = a_{i0}z_{i0}  + \sum_{k = 1}^j (a_{ ik} - a_{i k-1})z_{ik} \quad \text{for } j = 0, \ldots, n. 
\end{equation}
Recall that $\vertex(Q_i): = \{v_{i0}, \ldots, v_{in}\}$, where $v_{ij} = (a_{i0}, \ldots, a_{ij-1}, a_{ij} \ldots, a_{ij})$. It is easy to verify that $Z_i(v_{ij}) = \zeta_{ij}$, where $\zeta_{ij} = \sum_{j'=0}^j e_{ij}$ and $e_{ij}$ is the $j^{\text{th}}$ principal vector in the space of variables $(z_{i0}, \ldots,  z_{in} )$. Conversely, $Z_i^{-1}(\zeta_{ij}) = v_{ij}$. More generally, for each $H : = \prod_{i=1}^d[a_{i\tau(i,t_i-1)}, a_{i\tau(i,t_i)}] \in \mathcal{H}$, we obtain that 
\begin{equation*}\label{eq:Q-face}
\begin{aligned}
Z^{-1}(\Delta_{H}) &= Z^{-1}\biggl(  \conv \Bigl( \prod_{i=1}^d  \{\zeta_{i\tau(i,t_i-1)}, \ldots, \zeta_{i\tau(i,t_i)} \} \Bigr)\biggr)\\ 
&= \conv\biggl( Z^{-1}  \Bigl( \prod_{i=1}^d \{\zeta_{i\tau(i,t_{i}-1)}, \ldots, \zeta_{i\tau(i,t_i)} \} \Bigr)\biggr)	\\
& = \conv\biggl( \prod_{i=1}^d \{v_{i \tau(i,t_i-1)}, \ldots, v_{i \tau(i,t_i)} \} \biggr) =:Q_{H},
\end{aligned}
\end{equation*}
where the first equality holds by the definition of $\Delta_H$, the second equality holds because convexification commutes with affine maps, and the third equality holds because $Z_i^{-1}$ maps $\zeta_{ij}$ to $v_{ij}$. Conversely, $Z(Q_H) = \Delta_H$. 
%As a result, we obtain that 
%\begin{equation}\label{eq:Inc-proj-Q}
%s \in \bigcup_{H \in \mathcal{H}}Q_{H} \; \Leftrightarrow \; Z(s) \in \bigcup_{H \in \mathcal{H}}\Delta_{H} \; \Leftrightarrow	 \; \exists \delta \text{ such that } \bigl(Z(s),\delta\bigr) \in(\ref{eq:Inc-1}) .  
%\end{equation}
\begin{theorem}\label{them:DCR}
Consider a discretization scheme $(\mathcal{H},a)$, and a vector of convex function $u(\cdot)$ such that the pair $\bigl(u(\cdot),a\bigr)$ satisfies~(\ref{eq:ordered-oa}). An MICP relaxation for the hypograph of $\phi \mcirc f$ is given by:
\begin{equation*}\label{eq:DCR}
\Bigl\{ (x, \phi, s, \delta) \Bigm|
\phi \leq \conc_Q(\ephi)(s), s=Z^{-1}(z), \bigl(z,\delta \bigr) \in (\ref{eq:Inc-1}),  u(x) \leq s, (x,s_{\cdot n}) \in W
\Bigr\}
\end{equation*}
where $\ephi(s) = \phi(s_{1n}, \ldots, s_{dn})$ and $W$ is a convex outer-approximation of $\bigl\{(x,s_{\cdot n}) \bigm| s_{\cdot n} = f(x),\ x\in X\bigr\}$.
\end{theorem}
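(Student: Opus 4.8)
The plan is to show that Theorem~\ref{them:DCR} follows by combining the MIP formulation result of Theorem~\ref{them:MIP-phi-z} (or Proposition~\ref{prop:zdeltaenv}) with the composite relaxation machinery of Proposition~\ref{prop:CR}, using the affine change of variables $Z^{-1}$ that links the incremental $z$-variables to the underestimator variables $s$. Concretely, I would establish two facts: first, that the $(f,\phi,z,\delta)$ piece of the proposed set is exactly the MIP relaxation~\eqref{eq:MIP-z} (equivalently its $s$-space image), so that it correctly captures the disjunction~\eqref{eq:MIP-standard}; and second, that appending the constraints $u(x)\le s$ and $(x,s_{\cdot n})\in W$ correctly lifts the relaxation from the outer-function space $[f^L,f^U]$ back to the domain $X$ of the composite function, via the inner-function underestimators, exactly as in Proposition~\ref{prop:CR}.

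First I would unwind the change of variables. By the discussion preceding the theorem, $Z(Q_H)=\Delta_H$ and $Z^{-1}(\Delta_H)=Q_H$ for every $H\in\mathcal{H}$, and $Z_i$ maps $v_{ij}$ to $\zeta_{ij}$; hence $Z^{-1}$ is an invertible affine bijection between $\Delta$ and $Q$ carrying faces to faces. Under this map, $\conc_\Delta(\phi\mcirc F)(z)=\conc_Q(\ephi)(s)$ when $s=Z^{-1}(z)$, since $\ephi(s)=\phi(s_{1n},\dots,s_{dn})=\phi(F(z))=(\phi\mcirc F)(z)$ (using $s_{in}=a_{i0}+\sum_{k=1}^n(a_{ik}-a_{ik-1})z_{ik}=F_i(z_i)=f_i$) and concave envelopes commute with invertible affine transformations. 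Thus the constraints $\phi\le\conc_Q(\ephi)(s)$, $s=Z^{-1}(z)$, $(z,\delta)\in\eqref{eq:Inc-1}$, together with $f=F(z)$ (which is implicit via $s_{\cdot n}$), describe precisely the image under $Z^{-1}$ of the formulation~\eqref{eq:MIP-z}. By Theorem~\ref{them:MIP-phi-z}, projecting out $\delta$ from~\eqref{eq:MIP-z} yields exactly the disjunction~\eqref{eq:MIP-standard}, i.e.\ $(f,\phi)\in\bigcup_{H\in\mathcal{H}}\conv(\hypo(\phi|_H))$, so the $s$-variable version captures the same hypograph relaxation of $\phi$ over the discretized box.

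Next I would attach the inner-function layer. The remaining constraints $u(x)\le s$ and $(x,s_{\cdot n})\in W$ are precisely the constraints appearing in the outer description of $\conv(\Theta^P)$ in Proposition~\ref{prop:CR}, namely $u\le s$ with $u\in P$ and $(x,u_{\cdot n})\in W$, specialized so that $u=u(x)$ is the actual underestimator evaluated at $x$. I would argue validity pointwise: for $x\in X$, set $s_{in}=f_i(x)$, so $(x,s_{\cdot n})\in W$ since $W$ outer-approximates the graph of $f$; because $\bigl(u(\cdot),a\bigr)$ satisfies~\eqref{eq:ordered-oa}, there exists $s\in Q$ with $u(x)\le s$ and $s_{\cdot n}=f(x)$, and then $\phi\le\ephi(s)\le\conc_Q(\ephi)(s)$ together with $\phi\le(\phi\mcirc f)(x)$ shows every point of $\hypo(\phi\mcirc f)$ has a feasible extension. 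Convexity of $u(\cdot)$ and of $W$, together with concavity of $\conc_Q(\ephi)$ and the fact that $s=Z^{-1}(z)$ is affine, ensure the continuous relaxation is convex, so the set is a genuine MICP relaxation once the integrality of $\delta$ is imposed.

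The main obstacle I anticipate is making the projection/lifting argument fully rigorous rather than merely pointwise: one must verify that the MIP formulation property of Theorem~\ref{them:MIP-phi-z} (which is stated for $\phi$ over the pure discretization in $(f,z,\delta)$ space) survives the addition of the $u(x)\le s$ and $(x,s_{\cdot n})\in W$ constraints, i.e.\ that intersecting with these inner-function constraints and projecting to $x$-space still contains $\hypo(\phi\mcirc f)$. This is exactly where Proposition~\ref{prop:CR} does the heavy lifting: the inclusion $\graph(\phi\mcirc f)\subseteq\{(x,\phi)\mid (u,\theta)\in\conv(\Theta^P),\,u(x)\le u,\,(x,u_{\cdot n})\in W\}$ combined with~\eqref{eq:extended-P} reduces the inner-function part to the $Q$-space convexification, which the first two paragraphs have identified with the incremental model. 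I would therefore organize the proof so that the change-of-variables identity and Theorem~\ref{them:MIP-phi-z} handle the outer disjunction, while Proposition~\ref{prop:CR} handles the passage from $Q$ to $X$, and only the routine check that the discretized $s$ with $u(x)\le s\in Q$ exists needs~\eqref{eq:ordered-oa} directly.
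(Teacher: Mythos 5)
Your overall architecture (the affine change of variables $Z$, the incremental model for the disjunction, and a pointwise validity argument for the inner-function layer) is close in spirit to the paper's proof, which is a direct pointwise construction. However, there is a genuine gap at exactly the step you dismiss as routine. To exhibit a feasible extension of a point $(x,\phi)\in\hypo(\phi\mcirc f)$ you need an $s$ that \emph{simultaneously} satisfies (i) $u(x)\le s$, (ii) $s_{\cdot n}=f(x)$, and (iii) $\bigl(Z(s),\delta\bigr)\in\eqref{eq:Inc-1}$ for some \emph{binary} $\delta$. By Lemma~\ref{lemma:Inc}, condition (iii) forces $Z(s)\in\Delta_{H}$ for some $H\in\mathcal{H}$, i.e.\ $s$ must lie in the face $Q_H$ of $Q$, where $H$ is a subcube containing $f(x)$. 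Your argument only asserts the existence of some $s\in Q$ with (i) and (ii), and that is not enough: for instance, with the (legitimate) underestimators $u_{ij}(x)=a_{i0}$ for $j<n$, the point $s_i=\lambda v_{i0}+(1-\lambda)v_{in}$ with $\lambda$ chosen so that $s_{in}=f_i(x)$ satisfies (i) and (ii) but has $s_{ij}<a_{ij}$ for $j\le\tau(i,t_i-1)$, hence $Z(s)_{ij}<1$ and $Z(s)$ lies in no $\Delta_{H'}$ when $f(x)$ is interior to its subcube, so no binary $\delta$ is compatible with it. Symmetrically, the $(z,\delta)$ whose existence Theorem~\ref{them:MIP-phi-z} guarantees for the outer disjunction need not satisfy $u(x)\le Z^{-1}(z)$, so the two layers of your argument cannot simply be concatenated; a single $s$ meeting all constraints must be constructed.

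The paper closes this gap with an explicit choice: $s_{ij}:=\min\{f_i(x),a_{ij}\}$. Then \eqref{eq:ordered-oa} gives $u_{ij}(x)\le\min\{f_i(x),a_{ij}\}=s_{ij}$, so (i) holds; $s_{in}=f_i(x)$ gives (ii) and $(x,s_{\cdot n})\in W$; and for (iii) one verifies $s\in Q_H$ directly by writing each $s_i$ as a convex combination of the two \emph{adjacent} vertices $v_{ij'_i-1}$ and $v_{ij'_i}$ of $Q_H$, where $j'_i$ satisfies $f_i(x)\in[a_{ij'_i-1},a_{ij'_i}]$ and $\tau(i,t_i-1)<j'_i\le\tau(i,t_i)$, after which Lemma~\ref{lemma:Inc} supplies the binary $\delta$, and $\phi\le\phi\bigl(f(x)\bigr)=\ephi(s)\le\conc_Q(\ephi)(s)$ finishes validity. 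Note also that Proposition~\ref{prop:CR} and Theorem~\ref{them:MIP-phi-z} are not actually needed for this theorem, which claims only a relaxation (set containment), not a formulation; your envelope identity $\conc_\Delta(\phi\mcirc F)(z)=\conc_Q(\ephi)\bigl(Z^{-1}(z)\bigr)$ is correct but dispensable once the pointwise construction is carried out in $s$-space.
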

%\begin{compositeproof}
%By Lemma~\ref{them:MIP-phi-z} and the affine transformation $Z$, constraints $\phi\leq \conc_Q(\phi)(s)$ and $(Z(s), \delta) \in(\ref{eq:Inc-1})$ model 
%\end{compositeproof}

\begin{compositeproof}
Let $(x, \phi) \in\hypo(\phi \mcirc f )$. To establish the validity of formulation~(\ref{eq:DCR}), it suffices to construct a pair $(s,\delta)$ so that $(x,\phi,s,\delta)$ belongs to the formulation. For all $i$ and $j$, let $s_{ij}: = \min\bigl\{f_i(x), a_{ij} \bigr\}$. Since $s_{in} = f_i(x)$, it follows from the hypothesis regarding $W$ that $(x,s_{\cdot n}) \in W$. Our construction of underestimators requires, via (\ref{eq:ordered-oa}), that $u(x) \leq s$. It remains to show that there exists a $\delta$ so that $\bigl(Z(s),\delta\bigr)\in (\ref{eq:Inc-1})$ and that $\phi \leq \conc_Q(\ephi)(s)$. Choose $t$, and thereby $H$, so that $s_{\cdot n} = \bigl(f_1(x),\ldots,f_d(x)\bigr) \in H = \prod_{i=1}^d [a_{i \tau(i,t_i-1)},a_{i \tau(i,t_i)}]$. We will show that $s\in Q_H$. This directly implies that $Z(s)\in \Delta_H$. Then, by Lemma~\ref{lemma:Inc}, there is a binary vector $\delta$ so that $\bigl(Z(s),\delta\bigr)\in (\ref{eq:Inc-1})$. This also shows that $\phi \leq \phi(s_{1n}, \ldots, s_{dn}) = \ephi(s) \leq \conc_Q(\ephi)(s)$, where the first inequality holds since $(x,\phi) \in \hypo(\phi \mcirc f)$ and $s_{in} = f_i(x)$, and the second inequality holds as $s \in Q$. It only remains to show that $s\in Q_H$. Let  $j'_i$ be such that $s_{in} \in [a_{ij'_i-1},a_{ij'_i}]$. Then, it is easy to see that $\tau(i,t_i-1)<j'_i \leq \tau(i,t_i)$ and $s_i = (a_{i1},\ldots,a_{ij'_i-1},s_{in},\ldots,s_{in})$. We can write $s$ as a convex combination of two vertices, $v_{ij'_i-1}$ and $v_{ij'_i}$, of $Q_H$. More specifically, recall that $v_{ ij'_i-1} = (a_{i 0}, \ldots, a_{ ij'_i-1}, a_{ j'_i-1}, \ldots, a_{ ij'_i-1})$ and $ v_{i j'_i} = (a_{ i0}, \ldots, a_{ ij'_i-1}, a_{i j'_i}, \ldots, a_{i j'_i})$. Then, $s_i = \lambda  v_{ ij'_i-1} + (1-\lambda)  v_{i j'_i}$, where $\lambda =  \frac{ a_{i j'_i} - s_{in}}{a_{i j'_i} - a_{i j'_i-1}}$ and, so, $s \in \prod_{i} \conv\bigl(\{v_{ ij'_i-1}, v_{i j'_i}\}\bigr)\subseteq Q_H$.
%Since $a_{i0} \leq f_i(x) \leq a_{in}$, there exists a $t = (t_1, \ldots, t_d)$, where $1 \leq t_i \leq l_i$ for all $i$, so that   $s_{\cdot n} \in H = \prod_{i=1}^d [a_{i \tau(i,t_i-1)},a_{i \tau(i,t_i)}]$. Next, we show that our constructed $s$ belongs to $Q_H$. This would then imply that $Z(s) \in \Delta_H$ and, by Lemma~\ref{lemma:Inc}, that there exists a binary vector $\delta$ so that $\bigl(Z(s),\delta\bigr)$  satisfies~(\ref{eq:Inc-1}). Moreover, $\phi \leq \phi(s_{1n}, \ldots, s_{dn}) = \ephi(s) \leq \conc_Q(\ephi)(s)$, where the first inequality holds since $s_{in} = f_i(x)$ and $(x,\phi) \in \hypo(\phi \mcirc f)$, and the second inequality holds as $s \in Q_H \subseteq Q$. To show that $s \in Q_H$, observe that when $s_{in} \in [a_{i\tau(i,t_i-1)}, a_{i\tau(i,t_i)}]$, there exists a $j'_i$, where $\tau(i,t_i-1)<j'_i \leq \tau(i,t_i)$, such that $s_{in} \in [a_{ij'_i-1},a_{ij'_i}]$. By definition of $s_{ij}$, we have $s_{ij} = a_{ij}$ if $j < j'_i$ and $s_{ij} = s_{in}$ otherwise. Consider two adjacent extremes of $Q_i$, \textit{i.e.}, $v_{ ij'_i-1} = (a_{i 0}, \ldots, a_{ ij'_i-1}, a_{ j'_i-1}, \ldots, a_{ ij'_i-1})$ and $ v_{i j'_i} = (a_{ i0}, \ldots, a_{ ij'_i-1}, a_{i j'_i}, \ldots, a_{i j'_i})$. Then, $s_i = \lambda  v_{ ij'_i-1} + (1-\lambda)  v_{i j'_i}$, where $\lambda =  \frac{ a_{i j'_i} - s_{in}}{a_{i j'_i} - a_{i j'_i-1}}$. Observe that $(s_1,\cdots,s_d) \in Q_H$ as long as $s_i\in \conv( \{ v_{ij'_i-1},v_{ij'_i}\})$. Therefore, $s \in Q_H$.
\Halmos
\end{compositeproof}
\begin{remark}\label{rmk:hull-Q}
As mentioned above, one advantage of this approach is that we can take advantage of existing descriptions of $\conc_Q(\ephi)(\cdot)$. Here, we summarize a few such cases. Recall that Proposition~\ref{prop:stair-Q} provides an explicit description for $\conc_Q(\ephi)(\cdot)$ in the space of $s$ variables if $\phi(s_{1 n}, \ldots, s_{dn})$ is supermodular over $[f^L,f^U]$ and $\ephi(\cdot)$ is concave-extendable from $\vertex(Q)$. In particular, Corollary~\ref{cor:bilinear-stair} yields both convex and concave envelopes of a bilinear term over $Q$. More generally, Theorem 5 in~\cite{he2021tractable} characterizes a family of bilinear functions for which both convex and concave envelopes over $Q$ can be obtained by convexifying each term in the bilinear function separately. We remark that the number of facet-defining inequalities of the hypograph of $\conc_Q(\ephi)(\cdot)$, as described in Proposition~\ref{prop:stair-Q}, is exponential in $d$ and $n$. Nevertheless, for any point $\s \in Q$, the Algorithm 1 in~\cite{he2021tractable} finds a facet-defining inequality valid for $\hypo\bigl(\conc_Q(\ephi)\bigr)$ and tight at $\s$ in $\mathcal{O}(dn \log d)$ time. 

For general $\phi(\cdot)$, describing $\conc_Q(\ephi)(\cdot)$ in the space of $s$ variables is NP-hard. In such a case, the envelope can often be formulated as the projection of a higher dimensional set, if $\ephi(\cdot)$ is concave-extendable from a collection of convex subsets of $Q$ over which $\ephi(\cdot)$ is concave. More concretely, Corollary~\ref{cor:hull-ml-Q} gives an exponential size formulation for the convex hull of the graph of a multilinear function over $Q$. Although this formulation requires exponentially many variables in $d$,  it is polynomial in size when $d$ is a fixed. %This formulation is also used in Theorem 3 of~\cite{he2021new} to generate a facet-defining inequality of $\hypo(\conc_Q(\ephi))$. 
\Halmos
\end{remark}

Last, we remark that constructions in Theorems~\ref{them:MIP-phi-z} and~\ref{them:DCR} can be generalized to the context of a vector of composite functions $\theta \mcirc f:X   \to \R^\kappa$ defined as $(\theta \mcirc f)(x) = \bigl((\theta_1 \mcirc f)(x), \ldots, (\theta_\kappa \mcirc f)(x)\bigr)$, where $\theta: \R^d \to \R^\kappa$ is defined as $\theta(f) = \bigl( \theta_1(f), \ldots, \theta_\kappa(f)\bigr)$. More precisely, the formulation in Theorem~\ref{them:MIP-phi-z} can be easily generalized to model the following disjunctive constraints: 
\begin{equation}\label{eq:MIP-standard-simu}
	\bigcup_{H \in \mathcal{H}} \conv\bigl( \hypo(\theta|_H)\bigr),
\end{equation}
where $\hypo(\theta|_H): = \bigl\{(f,\theta) \mid \theta \leq \theta(f), f \in H\bigr\}$. 
\begin{proposition}\label{prop:MIP-phi-z-simu}
Let $\theta \mcirc F:\Delta \to \R^\kappa$ be a vector of functions so that $(\theta \mcirc F)(f) = \bigl( (\theta_1\mcirc F)(f), \ldots, (\theta_\kappa \mcirc F)(f)\bigr)$. Then, an extended formulation for~(\ref{eq:MIP-standard-simu}) is given by
\begin{equation}\label{eq:MIP-z-simu}
	\Bigl\{(f,\theta,z,\delta) \Bigm|  (z,\theta) \in  \conv\bigl( \hypo( \theta \mcirc F)\bigr)
 ,\ (f,z,\delta) \in(\ref{eq:Inc}) \Bigr\}.
\end{equation}
Moreover, if $\conv\bigl(\hypo(\theta \mcirc F|_{\vertex(\Delta)})\bigr) = \conv\bigl(\hypo(\theta \mcirc F)\bigr) $ then~(\ref{eq:MIP-z-simu}) is an ideal MILP formulation. 
\end{proposition}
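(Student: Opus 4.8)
The plan is to lift the two-part argument behind Theorem~\ref{them:MIP-phi-z} to the vector-valued setting, systematically replacing the scalar hypograph constraint $\phi \leq \phi(f)$ by the componentwise constraint $\theta \leq \theta(f)$. For $S \subseteq \Delta$, I would set $E_S := \bigl\{(f,\theta,z) \bigm| z \in S,\ f = F(z),\ \theta \leq \theta(F(z)) \bigr\}$, the exact analogue of the set used there. The first step is to establish the identity $\conv(E_\Delta) = \bigl\{(f,\theta,z) \bigm| f = F(z),\ (z,\theta) \in \conv(\hypo(\theta \mcirc F)) \bigr\}$. This holds because $f = F(z)$ is an affine relation in free variables and $\proj_{(z,\theta)}(E_\Delta) = \hypo(\theta \mcirc F)$, so that convexification commutes both with the affine image and with appending the equation $f = F(z)$; the only structural fact needed is that the recession cone of $\hypo(\theta \mcirc F)$ contains the downward $\theta$-directions, which remains true componentwise.

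Given this identity, formulation validity for~(\ref{eq:MIP-standard-simu}) follows exactly as in Theorem~\ref{them:MIP-phi-z}. Invoking the first statement of Lemma~\ref{lemma:Inc}, projecting the binary variables $\delta$ out of~(\ref{eq:MIP-z-simu}) yields $\bigl\{\conv(E_\Delta) \cap (\R^{d+\kappa} \times \Delta_H)\bigr\}_{H \in \mathcal{H}}$. Since projection commutes with set union, it then suffices to verify, for each $H \in \mathcal{H}$, the chain
\[
\proj_{(f,\theta)}\bigl(\conv(E_\Delta) \cap (\R^{d+\kappa}\times \Delta_H)\bigr) = \proj_{(f,\theta)}\bigl(\conv(E_{\Delta_H})\bigr) = \conv\bigl(\proj_{(f,\theta)}(E_{\Delta_H})\bigr) = \conv\bigl(\hypo(\theta|_H)\bigr),
\]
where the first equality uses that $\Delta_H$ is a face of $\Delta$, the second that projection commutes with convexification, and the third that $\proj_{(f,\theta)}(E_S) = \hypo(\theta|_{F(S)})$ together with $F(\Delta_H) = H$ from Lemma~\ref{lemma:Inc}.

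For ideality, I would use the hypothesis $\conv(\hypo(\theta \mcirc F|_{\vertex(\Delta)})) = \conv(\hypo(\theta \mcirc F))$. This makes the right-hand side the convex hull of finitely many downward hypographs anchored at the vertices of $\Delta$, hence a polyhedron, so~(\ref{eq:MIP-z-simu}) is an MILP; moreover every extreme point of this polyhedron lies over some $v \in \vertex(\Delta)$ (the $\theta$-hypograph directions span the recession cone), so its $z$-projection is binary. I would then replay the recursive application of Lemma~\ref{lemma:ideal} from Theorem~\ref{them:MIP-phi-z}: starting from the base relaxation whose $z$-vertices are binary, the incremental inequalities $z_{i\tau(i,j_i)+1} \leq \delta_{ij_i} \leq z_{i\tau(i,j_i)}$ are adjoined one at a time, each step applying Lemma~\ref{lemma:ideal} with the sandwiched pair $\bigl(z_{i\tau(i,j_i)+1}, z_{i\tau(i,j_i)}\bigr)$ and with the continuous block enlarged to include the $\theta$-coordinates. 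The inductive hypothesis of Lemma~\ref{lemma:ideal} — integrality of $z$ at the vertices of the current relaxation and of its face $\{z_{i\tau(i,j_i)+1} = z_{i\tau(i,j_i)}\}$ — is preserved verbatim.

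The main obstacle is one of bookkeeping rather than new ideas: one must confirm that the extra $\theta$-coordinates genuinely behave as ordinary continuous variables both in the commutation identity of the first paragraph and in Lemma~\ref{lemma:ideal}, so that they neither disturb the recession structure that places extreme points over $\vertex(\Delta)$ nor couple the single-pair sandwich inequalities that drive the integrality recursion. Once this is checked, the scalar proof transfers essentially word for word, which is precisely why the statement reduces to the extendability hypothesis $\conv(\hypo(\theta \mcirc F|_{\vertex(\Delta)})) = \conv(\hypo(\theta \mcirc F))$ — the natural vector analogue of concave-extendability from $\vertex(\Delta)$.
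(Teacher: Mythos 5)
Your proposal is correct and follows essentially the same route as the paper: the paper's own proof is a one-line reduction that redefines $E_S$ with the vector hypograph constraint $(z,\theta) \in \hypo(\theta \circ F)$, $f = F(z)$, $z \in S$, and then replays the proof of Theorem~\ref{them:MIP-phi-z}, which is exactly what you carry out in detail. The two points you flag as bookkeeping --- the affine commutation with $f = F(z)$ and treating the $\theta$-block as ordinary continuous coordinates in the recursive application of Lemma~\ref{lemma:ideal} --- are indeed the only checks required, and they go through as you describe.
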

\begin{compositeproof}
This result follows from the proof of Theorem~\ref{them:MIP-phi-z} by letting $E_S:=\{(f,\theta,z,\delta) \mid (z,\theta) \in \hypo(\theta\mcirc F),\ f = F(z),\ z \in S\}$ for $S \subseteq \Delta$.\Halmos 
\end{compositeproof}
In particular, Proposition~\ref{prop:MIP-phi-z-simu} yields an ideal extended formulation when  $\theta\mcirc F$ is either a collection (i) of arbitrary multilinear functions, or (ii) of concave-extendable supermodular functions. It can be easily shown that $\conv\bigl(\hypo(\theta \mcirc F|_{\vertex(\Delta)})\bigr) = \conv\bigl(\hypo(\theta \mcirc F)\bigr)~$\cite{tawarmalani2010inclusion}. In the former case, the formulation is exponential in size (see~Corollary~\ref{cor:hull-ml-Q}). In the latter case, by Corollary 7 in~\cite{he2021tractable}, $\conv\bigl(\hypo(\theta\mcirc F) \bigr) = \cap_{k=1}^\kappa \conv\bigl(\hypo(\theta_k\mcirc F ) \bigr)$, where $\conv\bigl(\hypo(\theta_k\mcirc F) \bigr)$ can be obtained as an affine transformation of $\conv\bigl(\hypo(\theta_k\mcirc F\mcirc Z^{-1}) \bigr)$ using Proposition~\ref{prop:stair-Q}. Then, for
a vector of convex function $u(x)$ such that the pair $\bigl(u(x),a\bigr)$ satisfies~(\ref{eq:ordered-oa}), we obtain an MICP relaxation for the hypograph of $\theta \mcirc f$ given by
\begin{equation*}
\Bigl\{ (x, \theta, s, \delta) \Bigm|
(s, \theta) \in  \conv\bigl(\hypo(\bar{\theta}|_Q)\bigr), \bigl(Z(s),\delta \bigr) \in (\ref{eq:Inc-1}),  u(x) \leq s, (x,s_{\cdot n}) \in W
\Bigr\},
\end{equation*}
where $\bar{\theta}(s) = (\theta\mcirc F \mcirc Z^{-1})(s)$, $\hypo(\bar{\theta}|_{Q}) := \{(s,\theta) \mid \theta \leq \bar{\theta}(s),\ s\in Q\}$, and $W$ outer-approximates $\{(x,s_{\cdot}) \mid s_{\cdot n} = f(x),x \in X\}$. A similar result can be used to relax the graph of $\phi \mcirc f$. 
%
%
%the set $\{(f,\theta)\mid \theta = \theta(f), f\in H\}$.
%Here, we remark that by Corollary 7 in~\cite{he2021tractable}, if, for $k \in \{1, \ldots, \kappa\}$, $\theta_k$ is supermodular over $[f^L, f^U]$ then $\conv\bigl(\hypo(\bar{\theta}|_Q) \bigr) = \cap_{k=1}^\kappa \conv\bigl(\hypo(\bar{\theta}_k|_Q) \bigr)$, where $\conv\bigl(\hypo(\bar{\theta}_k|_Q) \bigr)$ can be described using Proposition~\ref{prop:stair-Q}. 
\subsection{Geometric insights and strengthening the relaxation}\label{section:comparision}
We start with comparing the strength of relaxations given by Proposition~\ref{prop:CR}, the disjunctive constraint in~(\ref{eq:MIP-standard}), and Theorem~\ref{them:DCR}. Since they are not defined in the same space, we consider three functions $\varphi_{\mathcal{H}}(\cdot)$, $\varphi_{\mathcal{H}-}(\cdot)$, and $\varphi(\cdot)$ defined as follows:
\[
\begin{aligned}
	\varphi(x,s_{\cdot n}) &: = \max \Bigl\{ \conc_Q(\ephi)(s) \Bigm|    u(x) \leq s,\ (x,s_{\cdot n}) \in W \Bigr\},\\
		\varphi_{\mathcal{H}-}(x,s_{\cdot n}) &: = \max\Bigl\{ \conc_Q(\ephi)(s) \Bigm|  \bigl(Z(s), \delta \bigr) \in(\ref{eq:Inc-1}),\ (x,s_{\cdot n}) \in W \Bigr\}, \\
	\varphi_{\mathcal{H}}(x,s_{\cdot n}) &: = \max \Bigl\{ \conc_Q(\ephi)(s) \Bigm|
\bigl(Z(s),\delta \bigr) \in (\ref{eq:Inc-1}) ,\  u(x) \leq s ,\ (x,s_{\cdot n}) \in W
\Bigr\}. 
\end{aligned}
\]
Clearly, for every $(x,s_{\cdot n}) \in W$, $\varphi_{\mathcal{H}}(x,s_{\cdot n}) \leq \varphi_{\mathcal{H}-}(x, s_{\cdot n})$ and $\varphi_{\mathcal{H}}(x,s_{\cdot n}) \leq \varphi(x,s_{\cdot n})$. More specifically, $\varphi_{\mathcal{H}-}(x,s_{\cdot n})$ is derived by using the reformulation \eqref{eq:MIP-z} of \eqref{eq:MIP-standard} and outer-approximating the inner functions using $(x,s_{\cdot n})\in W$. On the other hand, $\varphi_{\mathcal{H}}(x,s_{\cdot n})$ tightens this relaxation by, additionally appending the inequalities $u(x)\le s$ implicit in our construction~\eqref{eq:ordered-oa}.
%In other words, the MIP relaxation in Theorem~\ref{them:DCR} is tighter than the MIP relaxation given by the disjunctive constraints~(\ref{eq:MIP-standard}) and the composite relaxation in Proposition~\ref{prop:CR}. 
Next, we show that the three formulations evaluate a non-increasing function at three different points which yields geometric insights into the quality of relaxation. To do so, we introduce a discrete  function $\xi_{i,a_i}$ of $u_i$ defined as:
\begin{equation*}\label{eq:2-d-rep}
	\xi_{i,a_i}(a;u_i) = \left\{ \begin{aligned}
		&u_{ij} && a=a_{ij} \quad \text{for } j \in \{0, \ldots, n \} \\
		&-\infty && \text{otherwise}. 
	\end{aligned}	
	\right.
\end{equation*}
For a given $\bar{x}$, assume that $u_{ij}(\bar{x})$ underestimates $\min\{a_{ij},f_i(x)\}$ and let $s^{u}_i := \bigl(\conc(\xi_{i,a_i})(a_{i0};u_i(\bar{x})), \ldots, \conc(\xi_{i,a_i})(a_{in};u_i(\bar{x})) \bigr)$, where each argument denotes the concave envelope of $\xi_{i,a_i}\bigl(\cdot;u_i(\bar{x})\bigr)$ over $[a_{i0}, a_{in}]$. Since $u_{ij}(\bar{x})$ underestimates $\min\{a_{ij},f_i(x)\}$, the constructed $s^{u}_i$ belongs to $Q_i$ (see Proposition 4 in~\cite{he2021new}). We show next that the three relaxations $\varphi$, $\varphi_{\mathcal{H}-}$, and $\varphi_{\mathcal{H}}$ use different $u(\cdot)$ to construct $s^{u}$ which is then used to evaluate $\conc_Q(\bar{\phi})(s^{u})$. Since $\xi_{i,a_i}$ is non-decreasing with $u_i(\bar{x})$, $s^{u}_i$ non-decreasing with $u_i(\bar{x})$. Moreover, it was shown in \cite{he2021new} that $\conc_Q(\bar{\phi})$ is  a non-increasing function. Then, as the result shows, the $u(\cdot)$ used by $\varphi_{\mathcal{H}}$ is the largest, which reveals why it produces the best bound.
\begin{proposition}\label{prop:eval}
Assume the same setup as in Theorem~\ref{them:DCR}. Let $(\bar{x},\bar{f}) \in W$, and let $\bar{t}:=(\bar{t}_1, \ldots, \bar{t}_d)$ is a vector of indexes so that $ \bar{f} \in \bar{H}:=\prod_{i=1}^d[a_{i \tau(i,\bar{t}_i-1) }, a_{i \tau(i,\bar{t}_i) } ]$. For $i \in \{1, \ldots, d\}$, consider vectors $\u_i$, $\hat{u}_i$ and $u^*_i$ in $\R^{n+1}$ defined as 
\begin{equation*}~\label{eq:updating-underestimation}
\u_{ij} = \begin{cases}
\bar{f}_i & j = n \\
	u_{ij}(\bar{x}) & j <n 
\end{cases},\ \hat{u}_{ij} = \begin{cases}
	a_{ij} & j \leq \tau(i,\bar{t}_i-1) \\
	\bar{f}_i& j \geq \tau(i,\bar{t}_i) \\
   -\infty & \text{otherwise}
\end{cases},\ \text{and } u^*_i =  \u_i \vee \hat{u}_i, 
\end{equation*}
where $\vee$ is the component-wise maximum of two vectors. Then,  $\varphi(\bar{x},\bar{f})$, $\varphi_{\mathcal{H}-}(\bar{x},\bar{f})$, and $\varphi_{\mathcal{H}}(\bar{x},\bar{f})$ equal to $\conc_Q(\ephi)(\s)$, $\conc_Q(\ephi)(\hat{s})$, and $\conc_Q(\ephi)(s^*)$, respectively,
%\[
%\begin{aligned}
%	\varphi(\bar{x},\s_{\cdot n}) &= \conc_Q(\ephi)(\s) \\
%	\varphi_{\mathcal{H}-}(\bar{x},\s_{\cdot n}) &= \conc_Q(\ephi)(\hat{s}) \\
%	\varphi_{\mathcal{H}}(\bar{x},\s_{\cdot n}) &= \conc_Q(\ephi)(s^*)
%\end{aligned}
%\]
where, for all $i$ and $j$, $\s_{ij} = \conc(\xi_{i,a_i})(a_{ij};\u_i)$, $\hat{s}_{ij}= \conc(\xi_{i,a_i})(a_{ij};\hat{u}_i)$ and $s^*_{ij}= \conc(\xi_{i,a_i})(a_{ij};u_i^*)$. Moreover, $\varphi(\bar{x},\bar{f}) \leq \varphi_{\mathcal{H}}(\bar{x},\bar{f})$ and $\varphi_{\mathcal{H}-}(\bar{x},\bar{f}) \leq \varphi_{\mathcal{H}}(\bar{x},\bar{f})$. 
\end{proposition}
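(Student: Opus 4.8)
The plan is to solve each of the three maximization problems in closed form, identify its optimal value with $\conc_Q(\ephi)$ evaluated at the points $\s$, $\hat{s}$, and $s^*$ named in the statement, and then read off the two inequalities from a single dominance comparison between these points. First I would record the effect of fixing the evaluation point $(\bar x,\bar f)$: the membership $(\bar x,\bar f)\in W$ pins the top layer $s_{\cdot n}=\bar f$, so in every one of the three problems only the intermediate coordinates $s_{ij}$ with $0<j<n$ remain free. Because $\conc_Q(\ephi)$ is monotone in these coordinates (as shown in~\cite{he2021new}), each maximizer is the extreme feasible point of $Q$ (respectively, of the face of $Q$ selected by the incremental constraints) that is compatible with the data available to that particular formulation.

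Next I would invoke Proposition~4 of~\cite{he2021new}: for data $u_i$ underestimating $\min\{a_{ij},f_i\}$, the envelope vector $\bigl(\conc(\xi_{i,a_i})(a_{i0};u_i),\ldots,\conc(\xi_{i,a_i})(a_{in};u_i)\bigr)$ lies in $Q_i$ and is the extreme such point. For $\varphi$ the only active data are the underestimators together with $f_i(\bar x)=\bar f_i$, i.e. $\u$, so that $\varphi(\bar x,\bar f)=\conc_Q(\ephi)(\s)$. For $\varphi_{\mathcal{H}-}$ the incremental constraint forces $s\in Q_{\bar H}$ with $\bar H=\prod_i[a_{i\tau(i,\bar t_i-1)},a_{i\tau(i,\bar t_i)}]$; using the facial identity $Z^{-1}(\Delta_{\bar H})=Q_{\bar H}$ established before Theorem~\ref{them:DCR}, membership in $Q_{\bar H}$ pins $s_{ij}=a_{ij}$ for $j\le\tau(i,\bar t_i-1)$ and $s_{ij}=\bar f_i$ for $j\ge\tau(i,\bar t_i)$, which is exactly the data $\hat u$, giving $\varphi_{\mathcal{H}-}(\bar x,\bar f)=\conc_Q(\ephi)(\hat s)$. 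For $\varphi_{\mathcal{H}}$ both constraint sets are active at once, so its data is the pointwise combination $u^*=\u\vee\hat u$, giving $\varphi_{\mathcal{H}}(\bar x,\bar f)=\conc_Q(\ephi)(s^*)$.

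The comparison then reduces to a single dominance step. Since $u^*=\u\vee\hat u$ dominates both $\u$ and $\hat u$ coordinatewise, and the map $u\mapsto s^{u}$ built from $\conc(\xi_{i,a_i})$ is non-decreasing, we obtain $s^*\succeq\s$ and $s^*\succeq\hat s$; thus the formulation $\varphi_{\mathcal{H}}$ is the one that evaluates $\conc_Q(\ephi)$ at the dominating point, exploiting all of the inner-function information simultaneously. Comparing the single function $\conc_Q(\ephi)$ along the chain $\s,\hat s\preceq s^*$ via its monotonicity yields the stated inequalities $\varphi(\bar x,\bar f)\le\varphi_{\mathcal{H}}(\bar x,\bar f)$ and $\varphi_{\mathcal{H}-}(\bar x,\bar f)\le\varphi_{\mathcal{H}}(\bar x,\bar f)$.

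The main obstacle is the second step, namely certifying that the three maximizers are precisely these envelope points. This demands (i) verifying that fixing $s_{\cdot n}=\bar f$ together with the monotonicity of $\conc_Q(\ephi)$ reduces each maximization to driving the intermediate coordinates to their extreme admissible values; (ii) the characterization of $Q_i$ as the vectors arising from concave majorants, so that $\conc(\xi_{i,a_i})$ indeed returns the extreme feasible point dominating the data; and (iii) the precise reading of the incremental face $Q_{\bar H}$ that pins the boundary coordinates of $s$ to $a_{ij}$ and $\bar f_i$. Once these closed forms are secured, the dominance of $u^*=\u\vee\hat u$ and the resulting comparison are routine.
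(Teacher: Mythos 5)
Your skeleton coincides with the paper's proof: fix $s_{\cdot n}=\bar{f}$, identify each optimal value with $\conc_Q(\ephi)$ evaluated at the least feasible point, recognize that point as the envelope lift of the relevant data ($\u$, $\hat{u}$, and $u^*=\u\vee\hat{u}$), and finish by a dominance comparison. However, your final step contains a genuine logical inversion: the monotonicity you invoke must point in opposite directions in your two uses of it, and it cannot. Lemma 8 of \cite{he2021new} states that $\conc_Q(\ephi)$ is \emph{non-increasing} in each coordinate $s_{ij}$ with $j\neq n$. It is precisely this direction that makes the maximizer of each problem the \emph{least} feasible point, which is how you (and the paper) identify the three values with $\conc_Q(\ephi)(\s)$, $\conc_Q(\ephi)(\hat{s})$, and $\conc_Q(\ephi)(s^*)$. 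But your closing move---``$s^*\succeq\s,\hat{s}$ and the envelope is monotone, so the value at the dominating point is largest''---needs the envelope to be non-\emph{decreasing}. With the correct direction, the dominance $s^*\geq\s$ and $s^*\geq\hat{s}$ (with equal last components) yields $\conc_Q(\ephi)(s^*)\leq\conc_Q(\ephi)(\s)$ and $\conc_Q(\ephi)(s^*)\leq\conc_Q(\ephi)(\hat{s})$, i.e.\ $\varphi_{\mathcal{H}}\leq\varphi$ and $\varphi_{\mathcal{H}}\leq\varphi_{\mathcal{H}-}$, which is exactly what the appendix proof concludes and what also follows trivially from containment of feasible regions, as the paper notes just before the proposition. (The signs printed in the proposition's ``Moreover'' sentence are transposed relative to that ``Clearly'' remark and to the appendix argument; you reproduce the printed signs, but by a derivation that is internally inconsistent---if the envelope were non-decreasing, each of your three maxima would instead be attained at the componentwise-largest feasible point $s_{ij}=\min\{a_{ij},\bar{f}_i\}$ independently of $u$, and all three values would coincide.)

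A second, smaller gap: for $\varphi_{\mathcal{H}}$ the feasible set $\mathcal{L}=\bigl\{s \bigm| \u\leq s,\ s_{\cdot n}=\bar{f},\ s\in\cup_{H\in\mathcal{H}}Q_H\bigr\}$ is a nonconvex union of faces, so asserting that ``both constraint sets are active at once, so the data is $u^*$'' does not by itself establish the closed form; you must show $\mathcal{L}$ has a least element equal to $s^*$. The paper does this in two steps that your sketch lists as obstacles but never discharges: (i) every $s'\in\mathcal{L}$ lies in the single face $Q_{\bar{H}}$, which uses the second statement of Lemma~\ref{lemma:Inc} (the map $F$ sends $\Delta_H$ into $H$, so $s'_{\cdot n}=\bar{f}$ together with $Z(s')\in\cup_H\Delta_H$ forces $Z(s')\in\Delta_{\bar{H}}$)---the identity $Z^{-1}(\Delta_{\bar{H}})=Q_{\bar{H}}$ that you cite is not sufficient for this; and (ii) once $s'\in Q_{\bar{H}}$, the pinned coordinates together with $\u\leq s'$ give $u^*\leq s'$, whence $s^*\wedge s'=s^*$ via Lemma 5 of \cite{he2021new} (every $s'_i\in Q_i$ equals its own envelope lift), with $s^*\in\mathcal{L}$ supplied by Proposition 4 of \cite{he2021new}. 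With (i)--(ii) carried out and the monotonicity direction corrected, your argument becomes the paper's proof.
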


\begin{compositeproof}
See Appendix~\ref{app:eval}. \Halmos
\end{compositeproof}

\begin{example}\label{ex:MIP-ex-1}
Consider the monomial $x_1^2x_2^2$ over $[0,3] \times [0,2]$. Let $\phi(f_1, f_2) = f_1f_2$. Let $a_{1} = (0,5,8,9)$ and $a_2 = (0,4)$, and consider a vector of functions $u:[0,3] \times [0,2] \to \R^{4 + 2}$ defined as $u_1(x) := \bigl(0, 2x_1-1,4x_1-4, x_1^2\bigr)$ and $u_2(x):= (0, x_2^2)$. Here, $\conv_Q(\ephi)(s)$ can be obtained using Proposition~\ref{prop:stair-Q}, that is, $\conv_Q(\ephi)(s) = \max\{0,\ 4s_{11} + 5s_{21} -20,\ 4s_{12} + 8 s_{21} - 32,\ 4s_{13} + 9s_{21} - 36\}$. Thus, we obtain a convex underestimator $\varphi(x): = \conc_Q(\ephi)\bigl(u(x)\bigr)$ for $x_1^2x_2^2$ over $[0,3] \times [0,2]$. We now discretize the range of $x_1^2$ as $[0,5] \cup [5,9]$. Observe that $5$ was one of the points in $a_1$ and the associated underestimator of $x_1^2$ was $2x_1-1$. With this discretization, we can underestimate $x_1^2x_2^2$ over $[0,3] \times [0,2]$ by $\varphi_{\mathcal{H}}(x)$, where
\[
\varphi_{\mathcal{H}}(x) : = \min \left\{ \conv_Q(\ephi)(s) \left| \begin{aligned}
	&	1 \geq \frac{s_{11} - 0}{5-0} \geq \delta \geq \frac{s_{12}-s_{11}}{8-5}  \geq \frac{s_{13}-s_{12}}{9-8} \geq 0  \\
& \delta \in \{0,1\},\ u(x) \leq s ,\ x \in [0,3] \times [0,2] 
\end{aligned}
\right.
\right\}.
\]
It can be verified that $\varphi(2.5,1.5) = \conv_Q(\ephi)\bigl((0,4,6,6.25),(0,2.25)\bigr) = 10$. In contrast, since $u_{13}(2.5) \in [5,9]$, it follows that $s_1 = (0,4,6,6.25)$ is not feasible and  $\varphi_{\mathcal{H}}(2.5,1.5)= \conv_Q(\phi)\bigl((0,5,6,6.25),(0,2.25)\bigr) = 11.25$. In other words, for $x = (2.5,1.5)$ we obtain that $\varphi(x) < \varphi_{\mathcal{H}}(x) < x_1^2x_2^2 =14.0625$. \Halmos
\end{example}

\newcommand{\BoundFunc}[1]{\setsepchar{:}\readlist*\ZZ{#1}{\cal L}_{\ZZ[1]\ZZ[2]}(\ZZ[3])}
\newcommand{\bb}[1]{\setsepchar{:}\readlist*\ZZ{#1}b_{\ZZ[1]\ZZ[2]\ZZ[3]}}
\newcommand{\bd}[1]{\setsepchar{:}\readlist*\ZZ{#1}\mathop{\text{bd}}_{\ZZ[1]\ZZ[2]\ZZ[3]}}
\newcommand{\idx}{k}
\newcommand{\jtoidx}[1]{\setsepchar{:}\readlist*\ZZ{#1}\theta(\ZZ[1],\ZZ[2])}
The remainder of this subsection is focused on deriving Theorem~\ref{them:DCR+} which tightens the relaxation of Theorem~\ref{them:DCR}. Recall that
Theorem~\ref{them:DCR} yields a valid relaxation as long as $(u,a,f)$ satisfy \eqref{eq:ordered-oa}. To tighten the relaxation, we will use, for each underestimator, the best local bounds available when the $\delta$ variable are fixed. Since these bounds can be tighter than the global bounds, $a$, we will show that the resulting relaxation is also tighter than that of Theorem~\ref{them:DCR}. Towards this end, we define $a'_{ij} = \BoundFunc{i:j:\delta_i} = a_{i0} + \sum_{\idx=0}^{l_i} \bb{i:j:\idx}\delta_{i\idx}$. In particular, we require that $b_{ink}=a_{i\tau(i,k+1)}- a_{i\tau(i,k)}$ and $b_{i0k} = a_{i\tau(i,k)} - a_{i\tau(i,k-1)}$. For $0\le j' \le j\le n$, we assume that for all $\idx\in\{1,\ldots,l_i\}$, $\sum_{\idx'=0}^\idx \bb{i:j:\idx'} \ge  \sum_{\idx'=0}^\idx \bb{i:j':\idx'}$. The second condition shows that for $\delta_i$ satisfying $1=\delta_{i0}\ge\cdots \ge\delta_{il_i}\ge \delta_{il_i+1}=0$, we have \begin{equation*}
    \BoundFunc{i:j:\delta_i} - \BoundFunc{i:j':\delta_i} = \sum_{\idx'=0}^\idx  \bigl(\delta_{i\idx'}-\delta_{i\idx'+1}\bigr)\biggl(\sum_{\idx''=0}^{\idx'} \bigl(\bb{i:j:\idx''}-\bb{i:j':\idx''}\bigr)\biggr) \ge 0.
\end{equation*}
It follows that $a'_{ij}\ge a'_{ij-1}$ for all $j\in \{1,\ldots,n\}$. Moreover, we require that $b_{ijk}$ are chosen so that $u_{ij}(x)\le a_{i0} +\sum_{\idx'=0}^{\idx-1} b_{ijk'}$ whenever, $1=\delta_{ik-1}>\delta_{ik}=0$, {\it i.e.}, $f(x)\in (a_{i\tau(i,k-1)},a_{i\tau(i,k)}]$. The setting of Theorem~\ref{them:DCR} is obtained when $\bb{i:j:0}=a_{ij}-a_{i0}$ and $\bb{i:j:k}=0$ for $k\ge 1$. Let $\jtoidx{i:j} = \arg\min\{\idx\mid \tau(i,\idx)\ge j\}$. Now, if $(z_i,\delta_i)$ satisfies~(\ref{eq:Inc-1}) then we can rewrite \eqref{eq:Z_trans} as:
\begin{equation*}
\begin{split}
    s_{ij}-s_{ij-1} &=  z_{ij} \bigl(\BoundFunc{i:j:\delta_i}-\BoundFunc{i:j-1:\delta_i}\bigr)\\
    &=\sum_{\idx=0}^{l_i} (\bb{i:j:\idx}-\bb{i:,j-1:,\idx})\delta_{i\idx}z_{ij} \\
    &= 
    z_{ij}\sum_{\idx=0}^{\jtoidx{i:j}-1} (\bb{i:j:\idx}-\bb{i:,j-1:,\idx}) + \sum_{\idx=\jtoidx{i:j}}^{l_i} (\bb{i:j:\idx}-\bb{i:,j-1:,\idx})\delta_{i\idx},
\end{split}
\end{equation*}
where the last equality holds since $\delta_{i\idx}z_{ij} = z_{ij}$ if $\idx < \jtoidx{i:j}$ and $\delta_{i\idx}$ otherwise. Moreover, as before, we define $s_{i0} = a'_{i0} = a_{i0} + \sum_{\idx=0}^{l_i} b_{i0\idx}\delta_{i\idx}$. Therefore, 
\begin{equation*}
  \begin{split}
  s_{ij} &= a_{i0} + \sum_{\idx=0}^{l_i} \bb{i:0:\idx}\delta_{i\idx} + \sum_{j'=1}^j \biggl\{ z_{ij'}\sum_{\idx=0}^{\jtoidx{i:j'}-1} (\bb{i:j':\idx}-\bb{i:,j'-1:,\idx}) \\
  &\qquad\qquad + \sum_{\idx=\jtoidx{i:j'}}^{l_i} (\bb{i:j':\idx}-\bb{i:,j'-1:,\idx})\delta_{i\idx}\biggr\}\\
  &=a_{i0} + \sum_{j'=1}^j z_{ij'}\sum_{\idx=0}^{\jtoidx{i:j'}-1} (\bb{i:j':\idx}-\bb{i:j'-1:\idx})
  +\sum_{\idx=0}^{l_i}\delta_{i\idx}\bb{i:,\min\{\tau(i,\idx),j\}:,\idx}\\
  &=a_{i0} + \sum_{j'=1}^j z_{ij'}\sum_{\idx=0}^{\jtoidx{i:j'}-1} (\bb{i:j':\idx}-\bb{i:j'-1:\idx})\\
  &\qquad\qquad+\sum_{\idx=0}^{\jtoidx{i:j}-1}\delta_{i\idx}\bb{i:,\tau(i,\idx):,\idx} +\sum_{\idx=\jtoidx{i:j}}^{l_i}\delta_{i\idx}\bb{i:,j:,\idx}:=G_{ij}(z,\delta)
  \end{split}
\end{equation*}
Observe that, in the above expression, the coefficients of $z_{ij'}$ for all $j'$ are non-negative because we have assumed that $\sum_{\idx=0}^{\jtoidx{i:j'}-1} (\bb{i:j':\idx}-\bb{i:j'-1:\idx})\ge 0$.
If we additionally assume that $\bb{i:j:\idx}\ge 0$ for all $i\in \{1,\ldots,d\}$, $j\in \{1,\ldots,n\}$ and $\idx\in \{-1,\ldots,l_i\}$ all the coefficients in the above definition of $s_{ij}$ are non-negative. Let $\delta$ be binary so that for each $i$, there is a $\idx_i$ so that $1 = \delta_{i\idx_i-1} > \delta_{i\idx_i}=0$. Then, it follows that $a'_{ij}=\BoundFunc{i:j:\delta_i} = a_{i0} +\sum_{\idx=0}^{{\idx_i}-1}\bb{i:j:\idx}$ and the above calculation shows that $s_{ij} = a'_{i0} + \sum_{j'=1}^j z_{ij'}(a'_{ij'}-a'_{ij'-1})$. 
%Therefore, Theorem~\ref{them:DCR+} follows directly from Theorem~\ref{them:DCR}, Proposition~\ref{prop:eval} and Lemma~\ref{lemma:improvement-bdd}. 

%\begin{compositeproof}
%See Appendix~\ref{app:improvement-bdd}. \Halmos
%\end{compositeproof}

\begin{theorem}~\label{them:DCR+}
 Assume the same setup as in Theorem~\ref{them:DCR}. Then, an MICP relaxation for the hypograph of $\phi \mcirc f$ is given by
\begin{equation}\label{eq:DCR+}
\left\{ (x, \phi, s, z, \delta) \left| \;
\begin{aligned}
&\phi \leq \conc_{\bar{\Delta}}(\ephi \mcirc G )(z,\delta),\ s= G(z,\delta),\ (z,\delta ) \in (\ref{eq:Inc-1}) \\
&u(x) \leq s,\ (x,s_{\cdot n}) \in W	
\end{aligned}
\right.
\right\}.
\end{equation}
Let $\phi_{\mathcal{H}_+}(x,s_{\cdot n}): = \max\bigl\{ \phi \bigm| (x,\phi,s,z,\delta) \in(\ref{eq:DCR+}) \bigr\}$. For $(\bar{x},\bar{f}) \in W$, $\phi_{{\cal H}_+}(\bar{x},\bar{f})=\conc_{Q'}(\bar{\phi})(s')$ and $\phi_{{\cal H}_+}(\bar{x},\bar{f})\leq \phi_{{\cal H}}(\bar{x},\bar{f})$, where $Q'$ is formed using $a'$ and $s'_{ij} = \conc(\xi_{i,a'_i})(a'_{ij};\tilde{u}_i)$, where $\tilde{u}_i$ is the same as $u^*_i$ defined in Proposition~\ref{prop:eval} except that $a_{i}$ is replaced with $a'_{i}$. 
\end{theorem}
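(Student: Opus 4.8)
The plan is to derive all three assertions by transporting the analysis of Theorem~\ref{them:DCR} and Proposition~\ref{prop:eval} to the local bounds $a'$. The algebra preceding the statement already supplies the key fact: if $(z,\delta)$ satisfies~(\ref{eq:Inc-1}) then $s=G(z,\delta)$, and whenever $\delta$ is binary with cut-off $k_i$ in coordinate $i$ (so that $1=\delta_{ik_i-1}>\delta_{ik_i}=0$), the local bounds collapse to concrete numbers $a'_{ij}=a_{i0}+\sum_{k=0}^{k_i-1}b_{ijk}$ and $G$ reduces to the incremental map $s_{ij}=a'_{i0}+\sum_{j'=1}^{j}z_{ij'}(a'_{ij'}-a'_{ij'-1})$, i.e. to the map $Z^{-1}$ of~(\ref{eq:Z_inv}) written with $a'$ in place of $a$. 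Everything then proceeds as in the globally-bounded case, but over the simplex $Q'$ built from $a'$.

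First I would establish validity. Fixing $(\bar x,\bar\phi)\in\hypo(\phi\mcirc f)$, I take $\delta$ to be the binary indicator of the subcube $\bar H\in\mathcal H$ containing $\bar f=f(\bar x)$. The standing hypotheses on $b_{ijk}$, namely that $u_{ij}(x)\le a_{i0}+\sum_{k'=0}^{k-1}b_{ijk'}$ whenever $f(x)\in(a_{i\tau(i,k-1)},a_{i\tau(i,k)}]$ together with the monotonicity $a'_{ij}\ge a'_{ij-1}$, guarantee that the pair $\bigl(u(\cdot),a'\bigr)$ satisfies~(\ref{eq:ordered-oa}) on $\bar H$. Hence the construction in the proof of Theorem~\ref{them:DCR}, applied verbatim with $a$ replaced by $a'$, yields a feasible $(s,z,\delta)$ certifying that $(\bar x,\bar\phi)$ lies in the projection of~(\ref{eq:DCR+}). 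Assertion $\phi_{\mathcal H_+}(\bar x,\bar f)=\conc_{Q'}(\bar\phi)(s')$ then follows by running Proposition~\ref{prop:eval} with local bounds: for fixed $(\bar x,\bar f)$ the binary $\delta$ is forced to the indicator of $\bar H$, the map $G$ becomes $Z^{-1}$ built from $a'$, and the best underestimator available is precisely $\tilde u_i$ (that is, $u^*_i$ with $a$ replaced by $a'$), so the optimal value is the non-increasing envelope $\conc_{Q'}(\bar\phi)$ evaluated at $s'_{ij}=\conc(\xi_{i,a'_i})(a'_{ij};\tilde u_i)$.

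The substance of the theorem is the comparison $\phi_{\mathcal H_+}\le\phi_{\mathcal H}$. I would exploit that Theorem~\ref{them:DCR} is itself the instance $a'=a$ of~(\ref{eq:DCR+}), obtained by $b_{ij0}=a_{ij}-a_{i0}$ and $b_{ijk}=0$ for $k\ge1$; thus both bounds are evaluations of the same non-increasing envelope construction and the claim reduces to showing that passing to the tighter local bounds can only lower the value. The route I favor is to take the $\mathcal H_+$-optimal triple $(s',z',\delta^{\bar H})$ and build from it a point feasible for the optimization defining $\phi_{\mathcal H}$ whose objective is at least $\conc_{Q'}(\bar\phi)(s')$: one embeds the active, subcube-range coordinates of $s'$ into the face $Q_{\bar H}$ of $Q$ and fills the remaining coordinates with $a_{ik}$, using $u_{ik}(\bar x)\le a_{ik}$ and $\bar f_i\ge a_{i\tau(i,\bar t_i-1)}$ to preserve both $u(\bar x)\le s$ and $s_{\cdot n}=\bar f$. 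Combining the monotonicity of the envelope recorded after Proposition~\ref{prop:eval} with the fact that a concave envelope taken over the smaller subcube region underestimates the one over the full box at common arguments then gives $\conc_Q(\bar\phi)(s)\ge\conc_{Q'}(\bar\phi)(s')$, and feasibility of $s$ yields $\phi_{\mathcal H}(\bar x,\bar f)\ge\phi_{\mathcal H_+}(\bar x,\bar f)$.

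The hard part will be making this last comparison rigorous, because $Q'$ and $Q$ are parametrized in genuinely different coordinates: every point of $Q_i$ has first coordinate $a_{i0}$, whereas points of $Q'_i$ have first coordinate $a'_{i0}=a_{i\tau(i,\bar t_i-1)}$, so $Q'\not\subseteq Q$ and the domination cannot be read off from a naive inclusion. I expect to resolve this by restricting attention to the face $Q_{\bar H}$, on which the low-index coordinates are frozen at $a_{i\cdot}$ and only the subcube-range coordinates vary, and by exhibiting an explicit affine identification of $Q_{\bar H}$ with the corresponding sub-simplex of $Q'$ under which the concavified underestimator data used by $\phi_{\mathcal H_+}$ dominates, coordinatewise, that used by $\phi_{\mathcal H}$. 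Once this pointwise domination is verified, the monotonicity of $\conc_Q(\bar\phi)$ delivers the desired inequality and completes the proof.
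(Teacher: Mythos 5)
Your validity argument and your identification of $\phi_{\mathcal H_+}(\bar x,\bar f)$ with $\conc_{Q'}(\ephi)(s')$ follow the paper's route (re-run Theorem~\ref{them:DCR} and Proposition~\ref{prop:eval} with $a$ replaced by $a'$), and that part is sound. The genuine gap is in the comparison $\phi_{\mathcal H_+}\le\phi_{\mathcal H}$, which is the substance of the theorem and which your sketch defers rather than proves. Your proposed mechanism---restrict to the face $Q_{\bar H}$ and exhibit ``an explicit affine identification of $Q_{\bar H}$ with the corresponding sub-simplex of $Q'$''---cannot work as stated: within the subcube, the breakpoint vector of $Q$ is $\bigl(a_{i\tau(i,\bar t_i-1)},\ldots,a_{i\tau(i,\bar t_i)}\bigr)$ while that of $Q'$ is $\bigl(a'_{i0},\ldots,a'_{in}\bigr)$ with $a'_{ij}\le a_{ij}$ strictly in general; these are value sequences of different lengths with different interior entries, so neither simplotope arises as a face of the other, and there is no coordinate correspondence under which ``the data used by $\phi_{\mathcal H_+}$ dominates that used by $\phi_{\mathcal H}$'' is even a meaningful coordinatewise statement. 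The paper resolves exactly this obstruction with dedicated machinery: it merge-sorts $a$ and $a'$ into a common refinement $\breve a$, proves a lifting lemma (using a face-isomorphism lemma for $\conc_{F_K}(\ephi)$ versus the envelope over the lower-dimensional simplotope $Q_K$) showing that each system $(a,u^*)$ and $(a',\tilde u)$ can be lifted to $(\breve a,\cdot)$ \emph{without changing the envelope value}, and then proves the key ordering $\breve s\le\breve s'$ with equal last components by transforming $(a,u^*)$ into $(a',\tilde u)$ one coordinate at a time and checking at each step that the univariate concave envelope $\conc(\xi_{i,\breve a_i})(\cdot\,;\cdot)$ only increases while membership in the polytope $\PPolytope$ of admissible bound/underestimator pairs is preserved. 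Only after this does the monotonicity of $\conc_{Q_{\breve a}}(\ephi)$ (non-increasing in all but the last coordinates) yield the inequality. This stepwise ordering argument is precisely the ``pointwise domination'' you hope to verify, and nothing in your sketch supplies it.

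Two further points you omit. First, your intuition ``the envelope over the smaller subcube region underestimates the one over the full box at common arguments'' is not what is being compared: the two envelopes are over different simplotopes evaluated at \emph{different} points ($s^*$ versus $s'$), so nested-domain monotonicity of envelopes is not applicable; the common-refinement lifting is what turns the comparison into one of a single envelope at two ordered points. Second, when local bounds collapse (several $a'_{ij}$ equal to $a'_{i0}$), $a'$ has repeated entries and the simplotope $Q'$ is degenerate; the paper handles this with a preliminary reduction that merges repeated bound entries and replaces the corresponding underestimators by their maximum, showing the projected relaxation is unchanged. Without that reduction, the envelope characterization $\phi_{\mathcal H_+}=\conc_{Q'}(\ephi)(s')$ you assert in your second paragraph is not well-posed.
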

\begin{compositeproof}
See Appendix~\ref{app:DCR+}. \Halmos
\end{compositeproof}
We now specify a particular choice of $\bb{i:j:k}$ that satisfies the requirements. As before, to have the same breakpoints as in Theorem~\ref{them:DCR}, recall that $\bb{i:n:\idx} := a_{i\tau(i,\idx+1)}-a_{i\tau(i,\idx)}$ where $\idx\in \{0,\ldots,l_i\}$. With this notation, $\delta_{ik-1}-\delta_{ik}$ can be interpreted as an indicator of whether $f_{i}\in (a_{i\tau(i,\idx-1)},a_{i\tau(i,\idx)}]$ or not. Since $u_{ij}(x)\le a'_{ij}$, we must choose $\bb{i:j:k}$ so that they satisfy $u_{ij}(x)\le a_{i0}+\sum_{\idx'=0}^{\idx-1} \bb{i:j:\idx'}$ whenever $f_{i}\in (a_{i\tau(i,\idx-1)},a_{i\tau(i,\idx)}]$. Let $\bd{i:j:\idx} := \inf\bigl\{u\bigm|  u\ge u_{ij'}(x)\forall j'\le j, u\ge a_{i\tau(i,\idx)}, f(x)\in (a_{i\tau(i,\idx)},a_{i\tau(i,\idx+1)}]\bigr\}$ and $\bb{i:j:\idx}=\bd{i:j:\idx}-\bd{i:j:\idx-1}$. Observe that the definition of $\bb{i:n:\idx}$  given above is consistent with this definition. Since, for $j>j'$, $\bd{i:j:\idx}\ge \bd{i:j':\idx}$, it follows that $\sum_{\idx'=0}^{\idx}\bb{i:j:\idx'} = \bd{i:j:\idx}-a_{i0}\ge \bd{i:j':\idx}-a_{i0} = \sum_{\idx'=0}^{\idx}\bb{i:j':\idx'}$. If we further require that $\bb{i:j:\idx}$ is non-negative, we may instead define $\bb{i:j:\idx}=\max_{\idx'\le \idx}\bd{i:j:\idx'} - \max_{\idx'\le \idx-1}\bd{i:j:\idx'}$.

\begin{remark}
Theorem~\ref{them:DCR+} requires the concave envelope of $\ephi\mcirc G$ over $\bar{\Delta}$. In this remark, we show that this envelope is readily available under certain conditions. Assume $\phi(\cdot)$ is supermodular and $\bar{\phi}\circ G$ is concave extendable from $\vertex(\bar{\Delta})$. We provide an explicit description of the concave envelope  $\ephi \mcirc G$ over $\bar{\Delta}$ for this case.  First, observe that $\vertex(\bar{\Delta}) = \prod_{i=1}^d \vertex(\bar{\Delta}_i)$, and $\vertex(\bar{\Delta}_i)$ forms a chain with joint (resp. meet) defined as component-wise maximum (resp. minimum). Moreover, if $\phi(\cdot)$ is supermodular over $[f^L,f^U]$ then $\ephi \mcirc G$ is supermodular when restricted to $\vertex(\bar{\Delta})$, \textit{i.e.}, for $y':=(z',\delta')$ and $y'':=(z'',\delta'')$ in $\vertex(\bar{\Delta})$, 
\[
\begin{aligned}
(\ephi \mcirc G)(y' \vee y'')  &+(\ephi \mcirc G)(y' \wedge y'')  \\
&=  \phi \bigl( G_{1n}(y'_1) \vee G_{1n}(y''_1), \ldots,G_{dn}(y'_d) \vee G_{dn}(y''_d) \bigr) + \\
& \qquad \qquad \phi \bigl( G_{1n}(y'_1) \wedge G_{1n}(y''_1), \ldots,G_{dn}(y'_d) \wedge G_{dn}(y''_d) \bigr) \\
&\geq \phi \bigl(G_{\cdot n}(y') \bigr) + \phi \bigl(G_{\cdot n}(y'') \bigr) = (\ephi \mcirc G)(y') + (\ephi \mcirc G)(y''),
\end{aligned}
\]
where the first equality holds since $G_{in}$ is non-decreasing and $\vertex(\bar{\Delta}_i)$ forms a chain, the inequality holds by supermodularity of $\phi(\cdot)$, and the last equality holds by definition. Therefore, if $ \ephi \mcirc G$ is concave-extendable from $\vertex(\bar{\Delta})$ and $\phi(\cdot)$ is supermodular on $[f^L,f^U]$, the concave envelope of $\ephi \mcirc G$ over $\bar{\Delta}$ is explicitly described by Proposition 3 in~\cite{he2021tractable}--a consequence of Corollary 3.4 in~\cite{tawarmalani2013explicit}. \Halmos
\end{remark}

\subsection{Small MIP formulations for a vector of functions}\label{section:MIP-log}

We have shown that the number of continuous variables in our formulation can be significantly smaller than the disjunctive prsformogramming counterparts. Here, we show that our formulations can continue to exploit inner function structure while also leveraging recent work on reducing the number of binary variables.
In particular, significant work has been done in the literature on modeling functions that are expressible as a disjunction of linear pieces, using binary variables that are lograthmic in the number of disjunctions (see, for example, ~\cite{ibaraki1976integer,vielma2010mixed,vielma2011modeling,misener2011apogee,misener2012global,gupte2013solving,huchette2019combinatorial,nagarajan2019adaptive}). More precisely, for a $d$-dimensional multilinear function, with $n$ discretization points along each dimension, the number of binary variables in these formulations grows at the rate of $\mathcal{O}\bigl(d\log(n)\bigr)$. Here, we show that  our formulations can be readily adapted so as to require the same number of binary variables. More importantly, a key insight is that, in certain cases, we  require  $\mathcal{O}(dn)$ continuous variables, while the previously mentioned  schemes required $\mathcal{O}(n^d)$ continuous variables. Although we focus on new formulations with fewer continuous variables, we remark that both formulations (existing and new ones with $\mathcal{O}(n^d)$ and $\mathcal{O}(dn)$ continuous variables, respectively) can utilize Theorem~\ref{them:DCR} to improve the quality of their relaxation by exploiting the structure of the inner functions via underestimating functions. 

Instead of using the incremental formulation, we select an interval using SOS2 constraints~\cite{beale1970special} on continuous variables $\lambda \in \Lambda_i:= \bigl\{\lambda \geq 0 \bigm| \sum_{j=0}^n \lambda_{ij}=1\bigr\}$, which allows at most two variables, $\lambda_{ij}$ and $\lambda_{ij'}$, to be non-zero and requires that $j$ and $j'$ are adjacent. For a Gray code~\cite{savage1997survey} or a sequence of distinct binary vectors $\{\eta^t\}_{t=1}^n \subseteq \{0,1\}^{\log_2(n)}$, where each adjacent pair $(\eta^{(t)}, \eta^{(t+1)})$ differ in at most one component,~\cite{vielma2011modeling} proposes the following ideal MILP formulation for $\lambda_i\in \Lambda_i$ satisfying SOS2 constraints, using binary variables, $\delta$, that are logarithmically many in the number of continuous variables $\lambda_i$:
\begin{equation}\label{eq:SOS2-log}
\begin{aligned}
\lambda_i \in \Lambda_i,\ \delta_{i} \in \{0,1\}^{\lceil \log_2(n) \rceil},\ &\sum_{j \notin  L_{ik}} \lambda_{ij} \leq \delta_{ik} \leq 1-  \sum_{j \notin R_{ik}} \lambda_{ij},\ \\
&  \qquad \qquad \qquad    \text{ for } k =1, \ldots, \lceil\log_2(n) \rceil,
\end{aligned}
\end{equation}
where $\eta^{(0)}:= \eta^{(1)}$ and $\eta^{(n+1)}: = \eta^{n}$, $L_{ik}: = \bigl\{j \in \{0, \ldots, n\} \bigm| \eta^{(j)}_k = 1 \text{ or } \eta^{(j+1)}_k = 1  \bigr\}$, and $R_{ik}: = \bigl\{j \in \{0, \ldots, n\} \bigm| \eta^{(j)}_k = 0 \text{ or } \eta^{(j+1)}_k = 0  \bigr\}$. Let $\Lambda: = \prod_{i=1}^d \Lambda_i$, and let $A: \Lambda \to \R^d$ be a function defined as $A(\lambda): = \bigl(A_1(\lambda), \ldots, A_d(\lambda_d)\bigr)$, where $A_i(\lambda_i) := \sum_{j = 0}^n a_{ij} \lambda_{ij}$. As in Proposition~\ref{prop:MIP-phi-z-simu}, we  obtain an ideal MILP formulation for~(\ref{eq:MIP-standard-simu}) by convexifying the hypograph of a vector of composite functions $(\theta \mcirc A): \Lambda \to \R^k$, which is defined as $ (\theta \mcirc A) (\lambda) = \bigl((\theta_1 \mcirc A)(\lambda), \ldots, (\theta_k \mcirc A)(\lambda)\bigr)$. 
\begin{proposition}\label{prop:MIP-phi-lambda-log}
Assume that $\conv\bigl(\hypo(\theta \mcirc A)\bigr) =\conv\bigl( \hypo(\theta \mcirc A|_{\vertex(\Lambda)})\bigr)$. Then, an ideal MIP formulation for~(\ref{eq:MIP-standard-simu}) is given by 
\begin{equation*}\label{eq:MIP-phi-z}
\Bigl\{(f,\theta,\lambda,\delta) \Bigm|(\lambda,\theta) \in \conv\bigl( \hypo(\theta \mcirc A) \bigr),\ f = A(\lambda),\ (\lambda_i,\delta_i) \in(\ref{eq:SOS2-log}),\; i  = 1, \ldots,d  \Bigr\}. 	
\end{equation*}
\end{proposition}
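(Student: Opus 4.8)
The plan is to mirror the proof of Theorem~\ref{them:MIP-phi-z}, replacing the incremental-formulation machinery (the $z,\delta$ variables constrained by~(\ref{eq:Inc-1})) with the logarithmic SOS2 model~(\ref{eq:SOS2-log}). The two ingredients I would establish are (i) that~(\ref{eq:SOS2-log}) is an MIP formulation for the collection of faces $\{\Lambda_{H}\}_{H \in \mathcal{H}}$ of $\Lambda$, analogous to the role of Lemma~\ref{lemma:Inc}, and (ii) that convexifying $\hypo(\theta \mcirc A)$ over $\Lambda$ and then intersecting with these faces recovers exactly the disjunction~(\ref{eq:MIP-standard-simu}). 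For the discretization here each $\Lambda_i$ corresponds to the full grid $\{0,\ldots,n\}$ along coordinate $i$, so the relevant faces are edges: for an interval $[a_{ij-1},a_{ij}]$ the associated face $\Lambda_{ij}:=\conv(\{e_{ij-1},e_{ij}\})$ is the set of $\lambda_i \in \Lambda_i$ supported on two adjacent indices. The first step is therefore to recall from~\cite{vielma2011modeling} that~(\ref{eq:SOS2-log}) is an ideal formulation of the SOS2 condition, and to observe that each binary setting of $\delta_i$ selects exactly one adjacent pair $\{j-1,j\}$ and hence one edge $\Lambda_{ij}$, so that $\proj_{\lambda_i}$ of the integer points of~(\ref{eq:SOS2-log}) equals $\bigcup_j \Lambda_{ij}$, with $A_i(\Lambda_{ij})=[a_{ij-1},a_{ij}]$ by linearity of $A_i$.

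Next I would carry out the projection argument exactly as in Theorem~\ref{them:MIP-phi-z}. Define, for a subset $S\subseteq \Lambda$, the set $E_S:=\{(f,\theta,\lambda)\mid \lambda \in S,\ f=A(\lambda),\ \theta \le \theta(f)\}$, where $\theta \le \theta(f)$ abbreviates $\theta_k \le (\theta_k\mcirc A)(\lambda)$ for all $k$. By linearity of $A(\cdot)$ and the definition of $\theta \mcirc A$, one has $\conv(E_\Lambda)=\{(f,\theta,\lambda)\mid \lambda \in \Lambda,\ f=A(\lambda),\ (\lambda,\theta)\in \conv(\hypo(\theta\mcirc A))\}$. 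Since each edge $\Lambda_H$ (writing $H=\prod_i[a_{i\tau_i-1},a_{i\tau_i}]$ and $\Lambda_H=\prod_i \Lambda_{i\tau_i}$) is a face of $\Lambda$, projecting the formulation after fixing the binaries gives, for each $H$,
\[
\proj_{(f,\theta)}\bigl(\conv(E_\Lambda)\cap(\R^{d+\kappa}\times \Lambda_H)\bigr)
=\proj_{(f,\theta)}\bigl(\conv(E_{\Lambda_H})\bigr)
=\conv\bigl(\proj_{(f,\theta)}(E_{\Lambda_H})\bigr)
=\conv\bigl(\hypo(\theta|_H)\bigr),
\]
where the first equality uses that $\Lambda_H$ is a face, the second that projection commutes with convexification, and the last that $\proj_{(f,\theta)}(E_S)=\hypo(\theta|_{A(S)})$ together with $A(\Lambda_H)=H$. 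Because projection commutes with the set union over $H\in\mathcal{H}$, this establishes that the proposed set is an MIP formulation of~(\ref{eq:MIP-standard-simu}).

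Finally I would prove ideality. Under the hypothesis $\conv(\hypo(\theta\mcirc A))=\conv(\hypo(\theta\mcirc A|_{\vertex(\Lambda)}))$, the function $\conv(\hypo(\theta\mcirc A))$ is polyhedral, so the formulation is an MILP. The cleanest route to ideality is to invoke the ideality of the underlying SOS2 model~(\ref{eq:SOS2-log}) from~\cite{vielma2011modeling} and the fact that the relaxation is \emph{non-extended} in the $(\lambda,\delta)$ variables: by the concave-extendability hypothesis every vertex of the LP relaxation projects to a vertex of $\Lambda$ (a point of $\vertex(\Lambda)=\prod_i\{e_{ij}\}$) paired with the value of $\conv(\hypo(\theta\mcirc A))$ there, and at such $\lambda$ the SOS2 constraints~(\ref{eq:SOS2-log}) force $\delta$ to be binary. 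Here I expect the main obstacle to be the careful verification that no fractional $\delta$ can occur at an LP vertex; this is where I would lean on Proposition 3.1 of~\cite{vielma2015mixed}, which states that a non-extended formulation whose LP relaxation projects onto $\conv$ of the union of the pieces is ideal precisely when the governing combinatorial formulation (here the SOS2 model) is ideal. Combining this with the extendability hypothesis, which guarantees that the $\theta$-block contributes no extraneous vertices beyond those over $\vertex(\Lambda)$, completes the argument.
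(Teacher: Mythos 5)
Your validity argument is sound and is in substance what the paper does: the paper simply routes it through the affine bijection between $\Delta$ and $\Lambda$ (via $\lambda_{ij}=z_{ij}-z_{ij+1}$, $\lambda_{in}=z_{in}$) and invokes Proposition~\ref{prop:MIP-phi-z-simu}, rather than redoing the $E_S$/projection argument of Theorem~\ref{them:MIP-phi-z} directly in $\Lambda$-space as you do; both yield the same conclusion.

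The ideality half, however, has a genuine gap. The assertion that ``every vertex of the LP relaxation projects to a vertex of $\Lambda$'' is not a consequence of concave-extendability alone: the constraints of (\ref{eq:SOS2-log}) couple $\delta$ to $\lambda$, and intersecting $\conv\bigl(\hypo(\theta\mcirc A)\bigr)$ with them could a priori create new vertices with fractional $\lambda$ and fractional $\delta$. This is precisely what must be proved, and the black box you invoke does not supply it: the statement you attribute to Proposition 3.1 of \cite{vielma2015mixed} does not apply here because the formulation is \emph{extended} --- it carries the auxiliary continuous variables $f$, $\lambda$ and, crucially, $\theta$, none of which appear in the SOS2 model --- and ideality of a combinatorial formulation is not in general preserved when its LP relaxation is intersected with constraints involving new continuous variables (this is exactly why the hypothesis of the proposition can fail, cf.\ the $\sqrt{f}$ example after Theorem~\ref{them:MIP-phi-z}).

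What rescues the argument is structural, and it is how the paper closes it. First, by the extendability hypothesis, every vertex of $\bigl\{(f,\theta,\lambda) \bigm| (\lambda,\theta)\in\conv\bigl(\hypo(\theta\mcirc A)\bigr),\ f=A(\lambda)\bigr\}$ has $\lambda\in\vertex(\Lambda)\subseteq\{0,1\}^{d\times(n+1)}$. Second, each inequality $\sum_{j\notin L_{ik}}\lambda_{ij}\le 1-\sum_{j\notin R_{ik}}\lambda_{ij}$ is valid for $\Lambda$, since the index sets $\{0,\ldots,n\}\setminus L_{ik}$ and $\{0,\ldots,n\}\setminus R_{ik}$ are disjoint; hence its equality set is a face of $\Lambda$, and therefore a face of the polytope above, so intersecting with it creates no new fractional-$\lambda$ vertices. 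With these two facts one introduces the binary variables $\delta_{ik}$ one at a time and applies Lemma~\ref{lemma:ideal} recursively, exactly as in the ideality proof of Theorem~\ref{them:MIP-phi-z}: at a vertex, each $\delta_{ik}$ sits either at a bound that is strictly between its partner (forcing the remaining point to be a vertex of the smaller system) or at coinciding bounds (forcing it to be a vertex of a face), and in either case $\lambda$, and hence $\delta_{ik}$, is binary. Your proof would be complete if the appeal to \cite{vielma2015mixed} were replaced by this recursion.
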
 
%\todo{Move proof to appendix}
\begin{compositeproof}
See Appendix~\ref{app:MIP-phi-lambda-log}. \Halmos
\end{compositeproof}

Clearly, the formulation in Proposition~\ref{prop:MIP-phi-lambda-log} requires $d (n+1)$ additional continuous variables and $d \lceil \log_2(n) \rceil $ binary variables, provided that an explicit description of $\conv\bigl(\hypo(\theta \mcirc A) \bigr)$ is available in the space of $(\theta,\lambda)$ variables. Here, we remark that $\conv\bigl(\hypo(\theta \mcirc A) \bigr)$ can be described using existing descriptions of $\conc_Q(\ephi)(\cdot)$ in Remark~\ref{rmk:hull-Q}. Consider an invertible affine mapping $V = Z^{-1} \mcirc T^{-1}$ and $V^{-1} = T\mcirc Z$, where $T$ is the transformation introduced in the proof of Proposition~\ref{prop:MIP-phi-lambda-log}. Here, $V$ (resp. $V^{-1}$) maps $\Lambda$ to $Q$ (resp. $Q$ to $\Lambda$). Now, observe that, for every $s \in Q$, $(\theta \mcirc A)\bigl(V^{-1}(s)\bigr) = \theta(s_{1n}, \ldots, s_{dn})$. Since convexification commutes with affine transformation, it follows readily that $\conv\bigl(\hypo(\theta \mcirc A)\bigr) = \bigl\{(\lambda, \theta) \bigm| \bigl(V(\lambda),\theta\bigr) \in \conv\bigl(\hypo(\theta'|_Q)\bigr) \bigr\}$,
%\[
%\conv\bigl(\hypo(\theta \mcirc A)\bigr) = \Bigl\{(\lambda, \theta) \Bigm| \bigl(V(\lambda),\theta\bigr) \in \conv\bigl(\hypo(\theta'|_Q)\bigr) \Bigr\},
%\]
where $\theta'(s) = \theta(s_{1n}, \ldots, s_{dn})$. In particular, $\conc_\Lambda(\phi \mcirc A)(\lambda) = \conc_Q(\ephi)\bigl(V(\lambda)\bigr)$. By Proposition~\ref{prop:stair-Q}, if $\phi(\cdot)$ is supermodular over $[f^L,f^U]$ and $\ephi(\cdot)$ is concave extendable from $\vertex(Q)$ then we obtain that
\begin{equation}\label{eq:envelope-lambda}
\begin{aligned}
\conc_{\Lambda}(\phi \mcirc A)(\lambda)&: =  \min_{\omega \in \Omega}\Biggl\{ \phi\bigl(\Pi(a;p^0) \bigr)  + \sum_{i=1}^d\sum_{t:\omega(t)=i} \Bigr[ \phi\bigl(\Pi(a;p^t)\bigr) \\ 
& \qquad  - \phi\bigl(\Pi(a;p^{t-1})\bigr)\Bigr](\lambda_{ip^t_i}+ \cdots + \lambda_{in} ) \Biggr\},
\end{aligned}
\end{equation}
where $\Omega$ the set of all direction vectors in $\{0, \ldots, n \}^d$. 
\begin{corollary}\label{cor:MIP-sup-simu}
Assume that, for every $k \in \{1, \ldots, \kappa \}$, function $\theta_k(s_{1n}, \ldots, s_{dn})$ is supermodular over $[f^L,f^U]$ and is concave-extendable from $\vertex(Q)$. Then, an ideal MILP formulation for~(\ref{eq:MIP-standard-simu}) is given by 
\[
\begin{aligned}\Bigl\{(f,\theta,\lambda,\delta) \Bigm| 
	 \theta_k &\leq \conc_\Lambda(\theta_k \mcirc A)(\lambda),\; k=1, \ldots,\kappa, \\
	 & \qquad \qquad\qquad f = A(\lambda),\ (\lambda_i, \delta_i) \in (\ref{eq:SOS2-log}),\;  i=1, \ldots,d\Bigr\},
\end{aligned}
\]
where, for each $k$, $\conc_{\Lambda}(\theta_k \mcirc A)(\cdot)$ can be described by~(\ref{eq:envelope-lambda}).
\end{corollary}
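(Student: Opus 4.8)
The plan is to obtain the corollary as a specialization of Proposition~\ref{prop:MIP-phi-lambda-log}, reusing the affine correspondence $V$ set up in the discussion preceding the statement. There are three things to establish: (i) that the vector $\theta \mcirc A$ is concave-extendable from $\vertex(\Lambda)$, so that Proposition~\ref{prop:MIP-phi-lambda-log} is applicable; (ii) that the simultaneous convex hull $\conv\bigl(\hypo(\theta \mcirc A)\bigr)$ decomposes as the intersection $\bigcap_{k=1}^\kappa \bigl\{(\lambda,\theta_k) \bigm| \theta_k \leq \conc_\Lambda(\theta_k \mcirc A)(\lambda)\bigr\}$; and (iii) that each envelope $\conc_\Lambda(\theta_k \mcirc A)(\cdot)$ is the polyhedral function given explicitly by~\eqref{eq:envelope-lambda}. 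Granting these, substituting the intersection description of $\conv\bigl(\hypo(\theta \mcirc A)\bigr)$ into the formulation of Proposition~\ref{prop:MIP-phi-lambda-log} yields exactly the stated constraint system, and its ideality is inherited from that proposition.

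For step (i), I would use the fact that $V$ is an invertible affine map sending $\Lambda$ onto $Q$ and hence $\vertex(\Lambda)$ onto $\vertex(Q)$, with $(\theta_k \mcirc A)(\lambda) = \theta_k'\bigl(V(\lambda)\bigr)$ for each $k$, where $\theta_k'(s) := \theta_k(s_{1n},\ldots,s_{dn})$. Since concave-extendability and the convex hull of a hypograph are both preserved under an invertible affine change of variables, the hypothesis that each $\theta_k'$ is concave-extendable from $\vertex(Q)$ transfers to concave-extendability of each $\theta_k \mcirc A$ from $\vertex(\Lambda)$. The vector-valued statement $\conv\bigl(\hypo(\theta \mcirc A)\bigr) = \conv\bigl(\hypo(\theta \mcirc A|_{\vertex(\Lambda)})\bigr)$ then follows by the same argument used for the $\Delta$-based formulation in Proposition~\ref{prop:MIP-phi-z-simu}, namely Corollary 2.7 of~\cite{tawarmalani2010inclusion}.

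For step (ii), I would invoke supermodularity. By the identity $\conv\bigl(\hypo(\theta \mcirc A)\bigr) = \bigl\{(\lambda,\theta) \bigm| (V(\lambda),\theta) \in \conv\bigl(\hypo(\theta'|_Q)\bigr)\bigr\}$ established before the statement, it suffices to decompose $\conv\bigl(\hypo(\theta'|_Q)\bigr)$ in $s$-space. Because each $\theta_k'$ is supermodular over $[f^L,f^U]$ and concave-extendable from $\vertex(Q)$, Corollary 7 of~\cite{he2021tractable} gives $\conv\bigl(\hypo(\theta'|_Q)\bigr) = \bigcap_{k=1}^\kappa \conv\bigl(\hypo(\theta_k'|_Q)\bigr)$, and each individual hull is the hypograph $\bigl\{(s,\theta_k) \bigm| \theta_k \leq \conc_Q(\theta_k')(s)\bigr\}$ of a concave envelope. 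Transporting back through $V$ and using $\conc_\Lambda(\theta_k \mcirc A)(\lambda) = \conc_Q(\theta_k')\bigl(V(\lambda)\bigr)$ then gives the desired intersection in $\lambda$-space.

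For step (iii), each $\theta_k'$ is supermodular and concave-extendable from $\vertex(Q)$, so Proposition~\ref{prop:stair-Q} yields $\conc_Q(\theta_k')(s) = \min_{\omega \in \Omega}\hat{\BO}^\omega(\theta_k)(s;a)$; substituting $s = V(\lambda)$ and using the convex-combination-to-slope relation $z_{ij} = \lambda_{ij} + \cdots + \lambda_{in}$ together with $s_{ip^t_i} - s_{ip^{t-1}_i} = (a_{ip^t_i}-a_{ip^{t-1}_i})\,z_{ip^t_i}$ collapses the quotient in~\eqref{eq:B-hat} and reproduces~\eqref{eq:envelope-lambda}, exactly as carried out for a single function just above. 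Since each such envelope is a finite minimum of affine functions, the formulation is linear, i.e.\ an MILP. I expect the main obstacle to be step (ii): the reduction of the simultaneous hull to an intersection of individual hulls is the genuinely structural fact, and it is exactly here that supermodularity (through Corollary 7 of~\cite{he2021tractable}) is indispensable; the rest is bookkeeping through the affine map $V$, which must be tracked carefully so that the vertex sets, the envelope formulas, and the SOS2 constraints~\eqref{eq:SOS2-log} all line up in the same coordinates.
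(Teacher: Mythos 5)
Your proposal is correct and follows essentially the same route as the paper's proof: the paper likewise invokes Corollary 7 of~\cite{he2021tractable} to decompose $\conv\bigl(\hypo(\theta'|_Q)\bigr)$ into the intersection of the individual hypograph hulls, and then applies Proposition~\ref{prop:MIP-phi-lambda-log} through the invertible affine map $V$, with the explicit envelope description~(\ref{eq:envelope-lambda}) coming from Proposition~\ref{prop:stair-Q} as derived just before the statement. Your write-up merely makes explicit the bookkeeping (transfer of concave-extendability and of the envelope formula under $V$) that the paper leaves implicit in its two-sentence proof.
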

\begin{compositeproof}
By Corollary 7 of~\cite{he2021tractable}, $\conv\bigl(\hypo( \theta'|_Q)\bigr) = \cap_{k=1}^\kappa \conv\bigl(\hypo(\theta'_{k}|_Q)\bigr)$. Thus, this result follows from  Proposition~\ref{prop:MIP-phi-lambda-log} and the invertible affine transformation $V$ defined above. \Halmos
\end{compositeproof}

Next, we specialize our result to the bilinear case. Discretization of the bilinear term is a common technique used to MINLPs. By recursively applying this technique, factorable MINLPs can be approximated arbitrarily closely~\cite{misener2011apogee,misener2012global,nagarajan2019adaptive}. In other words, the following result is useful in this context.
\begin{corollary}\label{cor:MIP-phi-bilinear}
let $\phi(f_1, f_2):= f_1f_2$, and let $\check{b}$ and $\hat{b}$ be functions defined as in~(\ref{eq:bilinear-stair-1}) and~(\ref{eq:bilinear-stair-2}), respectively. Then, an ideal  formulation for $\{ \conv(\graph(\phi|_H))\}_{H \in \mathcal{H}}$ is  
\[
\Bigl\{(f,\phi,\lambda,\delta) \Bigm| \check{b}\bigl(V(\lambda)\bigr) \leq	\phi \leq \hat{b}\bigl(V(\lambda)\bigr),\ f  = A(\lambda),\ (\lambda_i, \delta_i) \in (\ref{eq:SOS2-log}), \;  i = 1,2\Bigr\}.
\]
\end{corollary}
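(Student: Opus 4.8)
The plan is to derive this result as a direct specialization of Corollary~\ref{cor:MIP-sup-simu} applied to the vector of functions consisting of two copies of the bilinear outer-function, namely $\theta = (\phi, -\phi)$ with $\phi(f_1,f_2) = f_1 f_2$. The reason for using both $\phi$ and $-\phi$ is that Corollary~\ref{cor:MIP-sup-simu} produces concave overestimators, and the graph constraint $\theta_k = \theta_k(f)$ is recovered by simultaneously over- and under-estimating $\phi$. Since we want the graph of $\phi$ (not merely its hypograph), the upper bound $\phi \le \hat b(V(\lambda))$ comes from the concave-envelope overestimator of $f_1 f_2$, while the lower bound $\check b(V(\lambda)) \le \phi$ comes from applying the same machinery to the concave envelope of $-f_1f_2$, equivalently the convex envelope of $f_1f_2$.

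First I would verify the structural hypotheses needed to invoke the earlier results. The bilinear term $s_{1n}s_{2n}$ is supermodular over $[f_1^L,f_1^U]\times[f_2^L,f_2^U]$ and concave-extendable from $\vertex(Q)$, so Proposition~\ref{prop:stair-Q} applies and yields $\conc_Q(\ephi)(s) = \hat b(s)$ exactly as written in Corollary~\ref{cor:bilinear-stair}, equation~(\ref{eq:bilinear-stair-2}). For the lower bound, I would invoke the switching construction: since $s_{1n}(f_2^U - s_{2n})$ is submodular over the appropriate box (equivalently $-\phi$ becomes supermodular after switching the second coordinate via the maps $U(T),A(T)$ with $T=\{2\}$), Corollary~\ref{cor:bilinear-stair} gives $\conv_Q(\ephi)(s) = \check b(s)$, equation~(\ref{eq:bilinear-stair-1}). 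Next I would translate these envelope descriptions from $s$-space ($Q$) into $\lambda$-space ($\Lambda$) using the invertible affine map $V = Z^{-1}\mcirc T^{-1}$ introduced just before the corollary: because convexification commutes with affine transformations and $V$ carries $\Lambda$ onto $Q$ with $(\theta\mcirc A)(\lambda) = \theta(s_{1n},s_{2n})$ when $s = V(\lambda)$, we get $\conc_\Lambda(\phi\mcirc A)(\lambda) = \hat b(V(\lambda))$ and $\conv_\Lambda(\phi\mcirc A)(\lambda) = \check b(V(\lambda))$.

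Finally I would assemble the formulation. Corollary~\ref{cor:MIP-sup-simu} guarantees that the constraints $\theta_1 \le \conc_\Lambda(\phi\mcirc A)(\lambda)$ and $\theta_2 \le \conc_\Lambda(-\phi\mcirc A)(\lambda)$, together with $f = A(\lambda)$ and the logarithmic SOS2 formulation $(\lambda_i,\delta_i)\in(\ref{eq:SOS2-log})$, give an ideal MILP formulation of the disjunction $\bigcup_{H\in\mathcal{H}}\conv(\hypo(\theta|_H))$. Setting $\theta_1 = \phi$, $\theta_2 = -\phi$ collapses this hypograph-of-a-vector into the graph of the single bilinear term, so the two inequalities become exactly $\check b(V(\lambda)) \le \phi \le \hat b(V(\lambda))$, and the disjunction becomes $\{\conv(\graph(\phi|_H))\}_{H\in\mathcal{H}}$, as desired. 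The main obstacle I anticipate is not the algebra but justifying the ideality claim carefully: one must confirm that the hypotheses of Proposition~\ref{prop:MIP-phi-z-simu}/Corollary~\ref{cor:MIP-sup-simu} — that $\conv(\hypo(\theta\mcirc A)) = \conv(\hypo(\theta\mcirc A|_{\vertex(\Lambda)}))$ — hold for the \emph{vector} $(\phi,-\phi)$, i.e.\ that the bilinear term is simultaneously convex- and concave-extendable from the vertices, and that intersecting the two envelope constraints does not destroy the vertex structure exploited by Lemma~\ref{lemma:ideal} in the ideality argument. I would address this by appealing to the fact, cited via \cite{tawarmalani2010inclusion} and Corollary 7 of \cite{he2021tractable}, that the hypograph convex hull of a vector decomposes as the intersection of the individual hypograph convex hulls, each of which is vertex-determined for the bilinear term.
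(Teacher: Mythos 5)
Your overall ingredients (Corollary~\ref{cor:bilinear-stair} for the envelopes $\check b,\hat b$, the affine map $V$ between $\Lambda$ and $Q$, and the formulation machinery of Section~\ref{section:MIP-log}) are the right ones, and they are exactly what the paper's one-line proof invokes. However, your specific route through the vector $\theta=(\phi,-\phi)$ has a genuine gap, in fact three related ones. First, Corollary~\ref{cor:MIP-sup-simu} requires \emph{every} component $\theta_k(s_{1n},\ldots,s_{dn})$ to be supermodular and concave-extendable; $-f_1f_2$ is submodular, so that corollary cannot be invoked for $(\phi,-\phi)$ (this part is fixable by appealing to Proposition~\ref{prop:MIP-phi-lambda-log} directly, whose hypothesis holds for any vector of multilinear functions by \cite{tawarmalani2010inclusion}). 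Second, and more seriously, the decomposition you appeal to at the end --- $\conv\bigl(\hypo(\theta\mcirc A)\bigr)=\bigcap_k\conv\bigl(\hypo(\theta_k\mcirc A)\bigr)$ --- is \emph{false} for $\theta=(\phi,-\phi)$: every point of $\hypo\bigl((\phi,-\phi)\mcirc A\bigr)$ satisfies the valid inequality $\theta_1+\theta_2\le 0$, which is not implied by the two envelope constraints $\theta_1\le\conc_\Lambda(\phi\mcirc A)(\lambda)$ and $\theta_2\le\conc_\Lambda(-\phi\mcirc A)(\lambda)$; for instance $\theta_1=\conc_\Lambda(\phi\mcirc A)(\lambda)$, $\theta_2=-\conv_\Lambda(\phi\mcirc A)(\lambda)$ satisfies both constraints but has $\theta_1+\theta_2>0$ wherever the envelopes differ. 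Corollary 7 of \cite{he2021tractable} gives that decomposition only when all components are supermodular and concave-extendable, which $(\phi,-\phi)$ is not. Third, even granting an ideal formulation of $\bigcup_{H\in\mathcal{H}}\conv\bigl(\hypo(\theta|_H)\bigr)$, your final step restricts it to the affine slice $\theta_2=-\theta_1$; vertices of a slice of a polyhedron need not be vertices of the polyhedron, so ideality does not transfer automatically --- and ideality is precisely what the corollary asserts.

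The repair is the paper's route, which treats the graph directly rather than encoding it as the hypograph of a vector: since the bilinear term is both convex- and concave-extendable from $\vertex(Q)$ (equivalently from $\vertex(\Lambda)$), one has $\conv\bigl(\graph(\phi\mcirc A)\bigr)=\bigl\{(\lambda,\phi)\bigm|\check b\bigl(V(\lambda)\bigr)\le\phi\le\hat b\bigl(V(\lambda)\bigr)\bigr\}$ by Corollary~\ref{cor:bilinear-stair} together with commutation of convexification with the affine map $V$; then the proof of Proposition~\ref{prop:MIP-phi-lambda-log}, i.e., the recursion through Lemma~\ref{lemma:ideal}, applies verbatim with graphs in place of hypographs --- this is the graph variant announced after Theorem~\ref{them:MIP-phi-z} and after Proposition~\ref{prop:MIP-phi-z-simu} --- yielding validity and ideality in one pass. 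Your verification of the structural hypotheses (supermodularity of $f_1f_2$, switching for the convex envelope, extendability from the vertices) is correct and is needed there as well; what must be discarded is the $(\phi,-\phi)$ reduction.
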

\begin{compositeproof}
	This result follows from Corollary~\ref{cor:bilinear-stair} and Proposition~\ref{prop:MIP-phi-lambda-log}. \Halmos
\end{compositeproof}
Observe that if there are $n-1$ discretization points along each axis, formulations from Corollary~\ref{cor:MIP-phi-bilinear} require $2(n+1)$ continuous variables and $2 \lceil \log n \rceil$ (resp. $2n$) binary variables if logarithmic (resp. incremental) formulation is used. 
%These continuous variables can also be projected out using Proposition~\ref{prop:eval}, if so desired. 
This is in contrast to existing ideal formulations from~\cite{huchette2019combinatorial} which require $(n+1)^2$ continuous variables. The following example illustrates this difference. We remark that the formulation in Corollary~\ref{cor:MIP-phi-bilinear} yields tighter relaxations than those in \cite{misener2011apogee,misener2012global,nagarajan2019adaptive}  because of the presence of constraints $u(x)\le s$ (see Proposition~\ref{prop:eval}).

\begin{example}
Consider a bilinear term $f_1f_2$ over $[0,4]^2$ and a partition of the domain $\mathcal{H}:=\bigl\{(f_1,f_2) \bigm| f_i \in [0,3] \cup [3,4],\; i = 1, 2 \bigr\}$.  Let $(a_{i0}, a_{i1}, a_{i2}) = (0,3,4)$. Here, the number of discretization points, $n-1$, equals $1$. The formulation from~\cite{huchette2019combinatorial} or Corollary~\ref{cor:MIP-phi-mul-log} yields the following formulation that introduces $9$ ({\it{i.e.}}, $(n+1)^2$) continuous variables to convexify $\bigl\{(f_1, f_2, f_1f_2) \bigm| f_i \in \{a_{i0},a_{i1},a_{i2}\},\; i =1,2 \bigr\}$
 \[
\begin{aligned}
&(f_1,f_2,\phi) = \sum_{j=0}^2 \sum_{k = 0}^2 w_{jk}(a_{1j}, a_{2k}, a_{1j}a_{2k}),\ \sum_{j = 0}^2 \sum_{k=0}^2 w_{jk}  = 1,\ w \geq 0, \\
	&\sum_{j=0}^2w_{j0} \leq \delta_1 ,\ \sum_{j=0}^2w_{j2} \leq 1-\delta_1,\ \delta_1 \in \{0,1\}, \\
	&\sum_{k=0}^2w_{0k} \leq \delta_2 ,\ \sum_{k=0}^2w_{2k} \leq 1-\delta_2,\ \delta_2 \in \{0,1\}.
\end{aligned}
\]
In contrast, the formulation from Corollary~\ref{cor:MIP-phi-bilinear} introduces $6$ ({\it{i.e.}}, $2(n+1)$) continuous variables $\lambda$  and yields: 
\[
\begin{aligned}
&\max\left\{\!
\begin{aligned}
&12\lambda_{11}+16\lambda_{12} + 12\lambda_{21} 
%\\
%&\mskip 80mu 
+16\lambda_{22} - 16\\
& 9 \lambda_{11} + 9\lambda_{12} + 9 \lambda_{21} + 9\lambda_{22} - 9 \\
&12\lambda_{11} + 12\lambda_{12} + 9 \lambda_{21} + 12\lambda_{22} - 12 \\ 
&9\lambda_{11} +  12 \lambda_{12} + 12 \lambda_{21} + 12 \lambda_{22} - 12 \\ 
& 12 \lambda_{11} + 15\lambda_{12}  + 12 \lambda_{21} %\\
%&\mskip 80mu 
+ 15\lambda_{22} -15\\
&0 
\end{aligned}
\right\} \leq \phi \leq \min\left\{\!
\begin{aligned}
&12 \lambda_{21} + 16\lambda_{22}\\
& 3\lambda_{12} + 9 \lambda_{21} +13\lambda_{22} \\
&  4\lambda_{12} + 9 \lambda_{21} + 9\lambda_{22}\\
&12 \lambda_{11} + 16\lambda_{12}\\
& 9 \lambda_{11} +13\lambda_{12} + 3\lambda_{22} \\
&  9 \lambda_{11} + 9\lambda_{12}  4\lambda_{22}
\end{aligned}
\right\} \\
	& \lambda_i \in \Lambda_i,\ \lambda_{i0} \leq \delta_i ,\ \lambda_{i2} \leq 1-\delta_i,\ \delta_i \in \{0,1\} \quad \text{for } i = 1,2 ,
\end{aligned}
\]
where the $12$ inequalities are derived using Corollary~\ref{cor:bilinear-stair} and  the affine map $V$. 
Although both formulations model SOS2 constraints in the same way, we highlight that earlier discretization schemes were obtained by using disjunctive programming while our construction relies on convex hull construction over $Q$. When the convex hull is constructed using disjunctive programming, our scheme recovers the earlier models. However, the new formulation can directly use convex hull formulations when they are available in a lower-dimensional setting as in Corollary~\ref{cor:MIP-phi-bilinear}. In this case, our formulation requires significantly fewer variables in comparison to disjunctive programming techniques. In fact, our formulation can be seen as an affine transformation of the earlier formulation.

Moreover, if $f_1=x_1^2$, we have shown in Theorem~\ref{them:DCR} that we may additionally require that $3\lambda_{11}\ge \max\{2x_1-1, \frac{3}{4}x_1^2\}$ because $2x-1$ and $\frac{3}{4}x_1^2$ are convex underestimators of $x_1^2$ bounded by $3$ and the transformation $V$ shows that $s_{11}=3\lambda_{11}$. One of the key advantages of Theorem~\ref{them:DCR} is that it provides a mechanism for the discretized formulation to exploit the structure of the inner functions $f_i$.\Halmos
\end{example}

For the general case when $\theta(\cdot)$ is a  vector of multilinear functions, describing the convex hull of the graph of $\theta \mcirc A$ in the space of $(\lambda, \theta)$ variables is NP-hard. In this case, additional variables are introduced to describe the convex hull. 
\begin{corollary}\label{cor:MIP-phi-mul-log}
Let $\theta: \R^d \to \R^\kappa$	 be a vector of multilinear functions, \textit{i.e.}, for  $k \in \{1, \ldots, \kappa\}$, $\theta_k(s_{1n}, \ldots, s_{dn}) = \sum_{I \in \mathcal{I}_k} c^k_I\prod_{i \in I}s_{in} $, where $\mathcal{I}_k$ is a collection of subsets of $\{1, \ldots, d\}$. Then, an ideal MILP formulation for $\{\conv(\graph(\theta|_H))\}_{H \in \mathcal{H}}$ is given by 
\begin{subequations}\label{eq:MIP-mul-log}
\begin{align}
	 	&\theta_k = \sum_{I \in \mathcal{I}_k}  c^k_I\biggl(\sum_{e \in E}\Bigl(\prod_{i \in I} a_{ie_i}\Bigr) w_e \biggr),& &k = 1, \ldots, \kappa, \label{eq:MIP-mul-log-1} \\
	& w \geq 0,\ \sum_{e \in E}w_e = 1,\ 
		 \lambda_{ij} = \sum_{e \in E:e_i = j}w_e,& & i =1, \ldots, d ,\; j  =0, \ldots, n, \label{eq:MIP-mul-log-2} \\
 &f = A(\lambda),\ (\lambda_i, \delta_i) \in(\ref{eq:SOS2-log}),& &  i  = 1, \ldots, d.  \label{eq:MIP-mul-log-3}
\end{align}
\end{subequations}

\end{corollary}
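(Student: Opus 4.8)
The plan is to derive this corollary as a direct consequence of Proposition~\ref{prop:MIP-phi-lambda-log}, exactly as Corollary~\ref{cor:MIP-sup-simu} was derived, the only difference being that here the convex hull of $\hypo(\theta\mcirc A)$ requires auxiliary variables $w$ rather than admitting a closed-form description in $(\lambda,\theta)$-space. First I would verify the hypothesis of Proposition~\ref{prop:MIP-phi-lambda-log}, namely that $\conv\bigl(\hypo(\theta\mcirc A)\bigr)=\conv\bigl(\hypo(\theta\mcirc A|_{\vertex(\Lambda)})\bigr)$. This holds because each $\theta_k\mcirc A$ is multilinear in $\lambda$ (a composition of a multilinear $\theta_k$ with the linear maps $A_i$), and multilinear functions are both convex- and concave-extendable from the vertices of a product of simplices by Corollary~2.7 in~\cite{tawarmalani2010inclusion}; thus the extendability condition is met and Proposition~\ref{prop:MIP-phi-lambda-log} guarantees that the resulting formulation is ideal provided we substitute a correct description of $\conv\bigl(\hypo(\theta\mcirc A)\bigr)$.

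Next I would supply that description in lifted form. The key step is to recognize that the variables $w_e$, indexed by $e\in E:=\{0,\ldots,n\}^d$, linearize the products $\prod_{i=1}^d \lambda_{ie_i}$ over the product of simplices $\Lambda=\prod_{i=1}^d\Lambda_i$. By Theorem~\ref{them:hull-Lambda}, the convex hull of $M^\Lambda=\bigl\{(\lambda,w)\mid \lambda\in\Lambda,\ w_e=\prod_{i=1}^d\lambda_{ie_i}\bigr\}$ is precisely the set $R$ given by $w\ge 0$, $\sum_{e\in E}w_e=1$, and $\lambda_{ij}=\sum_{e\in E:e_i=j}w_e$, which is exactly~(\ref{eq:MIP-mul-log-2}). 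It then remains to express each $\theta_k$ as a linear function of $w$. Since $A_i(\lambda_i)=\sum_{j}a_{ij}\lambda_{ij}$, we have $\prod_{i\in I}A_i(\lambda_i)=\sum_{e\in E}\bigl(\prod_{i\in I}a_{ie_i}\bigr)w_e$ after expanding the product and collecting monomials, which yields~(\ref{eq:MIP-mul-log-1}) upon summing against the coefficients $c^k_I$. Hence $\conv\bigl(\hypo(\theta\mcirc A)\bigr)$ is the projection onto $(\lambda,\theta)$-space of the polyhedron cut out by~(\ref{eq:MIP-mul-log-1}) and~(\ref{eq:MIP-mul-log-2}) together with the hypograph inequalities $\theta_k\le$ the displayed affine expressions.

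Finally, I would assemble the formulation by composing this lifted convex-hull description with the logarithmic SOS2 selection constraints~(\ref{eq:SOS2-log}) through the linear map $f=A(\lambda)$, giving~(\ref{eq:MIP-mul-log-1})--(\ref{eq:MIP-mul-log-3}). Ideality follows from Proposition~\ref{prop:MIP-phi-lambda-log}: since the $w$-description together with~(\ref{eq:SOS2-log}) restricts $\lambda$ to a face $\Lambda_H$ whenever $\delta$ is binary, and $\theta\mcirc A$ is vertex-extendable, projecting out the binaries recovers exactly $\bigcup_{H\in\mathcal{H}}\conv\bigl(\graph(\theta|_H)\bigr)$. I expect the main obstacle to be bookkeeping rather than conceptual: one must confirm that the affine lifting $\mathcal{L}:(\lambda,w)\mapsto(\lambda,\theta)$ carries $\conv(M^\Lambda)$ to $\conv\bigl(\hypo(\theta\mcirc A)\bigr)$ and commutes with the face-selection induced by $\delta$, so that ideality is genuinely inherited and not merely validity. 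This amounts to checking that the face of the $w$-polytope selected by each binary assignment corresponds under $\mathcal{L}$ to the face $\Lambda_H$, which is where the argument from the proof of Theorem~\ref{them:MIP-phi-z} (faces map to faces, convexification commutes with affine maps) must be invoked carefully.
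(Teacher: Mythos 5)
Your proposal is correct and takes essentially the same approach as the paper: the paper's proof likewise establishes that constraints~(\ref{eq:MIP-mul-log-1}) and~(\ref{eq:MIP-mul-log-2}) describe $\conv\bigl(\graph(\theta \mcirc A)\bigr)$ --- citing Corollary~\ref{cor:hull-ml-Q} together with the invertible affine map $V$ between $Q$ and $\Lambda$, which is precisely the content you re-derive directly from Theorem~\ref{them:hull-Lambda} in $\Lambda$-space --- and then concludes by Proposition~\ref{prop:MIP-phi-lambda-log}. The only point to tidy is your reference to appending ``hypograph inequalities $\theta_k \le \cdots$'': since the target set is $\{\conv(\graph(\theta|_H))\}_{H \in \mathcal{H}}$, the equalities in~(\ref{eq:MIP-mul-log-1}) are what is needed, and this is licensed by the convex- \emph{and} concave-extendability of the multilinear functions that you correctly verify.
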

\begin{compositeproof}
By Corollary~\ref{cor:hull-ml-Q} and the invertible affine mapping $V$ between $Q$ and $\Lambda$, constraints in~(\ref{eq:MIP-mul-log-1}) and~(\ref{eq:MIP-mul-log-2}) describe the convex hull of the graph of $\theta \mcirc A$. Therefore, this result follows from Proposition~\ref{prop:MIP-phi-lambda-log}.  \Halmos
\end{compositeproof}
Note that when $\theta(\cdot)$ is a multilinear monomial, \textit{i.e.}, $\theta(f) = \prod_{i=1}^df_i$, formulation~(\ref{eq:MIP-mul-log}) reduces to the formulation from Corollary 3 in~\cite{huchette2019combinatorial}. Instead of utilizing the convex hull the graph of $\theta \mcirc A$,~\cite{huchette2019combinatorial} constructs the formulation using independent branching constraints. 

%In the next example, we compare the size of formulations derived using Corollaries~\ref{cor:MIP-phi-bilinear} and~\ref{cor:MIP-phi-mul-log}, where the former uses the description of the envelope in the space of $s$ variables, while the latter uses the RLT based construction of Corollary~\ref{cor:hull-ml-Q}. 

\section{Conclusion}
In this paper, we showed how to exploit inner-function structure to derive improved continuous and discrete relaxations of composite functions. These relaxations were obtained using staircase expansions, which are relaxed to give inequalities for the hypograph of the composite function. The inequalities exploit knowledge of underestimators and their bounds for each inner-function and assume that the outer-function is supermodular. The paper made several advances. First, staircase inequalities led to an alternative and much simpler derivation of cuts obtained in~\cite{he2021new,he2021tractable}. Second, this new interpretation led to relaxations for a wider class of functions than those treated in prior literature~\cite{tawarmalani2013explicit,he2021tractable} and to tighter relaxations when prior techniques are applicable.
%interpreting staircase inequalities as a decomposition of a composite function into a sum of lower-dimensional functions, we derived convex relaxations that are valid for a larger class of composite function than those treated in~\cite{tawarmalani2013explicit,he2021tractable}, and can be shown to be tighter than ones given by~\cite{he2021tractable}. 
Third, the staircase expansion allowed for generalizing our results to the case where inner-function underestimators form a partial order and the outer-function has increasing differences thereby allowing the derivation of new inequalities that tighten RLT relaxations.
%the construction of staircase inequalities was naturally generalized to treat 1) the product of two matrices of functions and 2) a composite function with underestimating functions of each inner-function forming a partial order and the outer-function  having increasing differences. 
Finally, we developed a series of discretization schemes for composite functions that are tighter than prevalent schemes and, in certain cases, require exponentially fewer continuous variables. For example, we give a tractable discretized ideal formulation for the hypograph of a supermodular multilinear function where no such formulation was available before.
%reinterpreting the incremental formulation and combining it with our continuous relaxations, we obtained MIP relaxations for composite functions that are tighter than those obtained using prevalent discretization schemes and, in certain cases, use exponentially fewer continuous variables. 

%
%
%We illustrate that this formulation leads to unbalanced branching in the next example.
%\begin{example}
%For $i \in \{1,2 \}$, let $a_i = (0, 1, 2,3,4)$. Consider $\mJ = \bigl\{ (J_1, J_2 ) \bigm|  J_i = \{j_i-1,j_i\} \forall i \in \{1,2\}\; \forall j_i \in \{1, 2, 3, 4\} \bigr\}$. An ideal formulation for~(\ref{eq:disj-cons}) is the composition of envelope constraints  $\conv_Q(\phi)(s) \leq \phi \leq \conc_Q(\phi)(s)$ and logarithmic formulation of $Q^\mJ$
%\[
%\begin{aligned}
%	V^{-1}(s)_{i2} &\leq \delta_{i1}, \quad V^{-1}(s)_{i0} + V^{-1}(s)_{i4} \leq 1- \delta_{i1} && \forall i \in \{1, 2\} \\
%	V^{-1}(s)_{i0} + V^{-1}(s)_{i1} &\leq \delta_{i2}, \quad V^{-1}(s)_{i3} + V^{-1}(s)_{i4}  \leq 1 - \delta_{i2} && \forall i \in \{1, 2\}.
%\end{aligned}
%\]
%This formulation leads to unbalanced branching. More specifically, at the root node, if the B\&B algorithm selects $\delta_{11}$ and branches on it, the LP relaxation of the subproblem associated with $\delta_{11} = 0$ is given as follows:
%\[
%\conv_Q(\phi)(s) \leq \phi \leq \conc_Q(\phi)(s),\ s \in Q^{\{0,1,3,4\}}_1 \times Q_2. 
%\] 
%Observe that this is a relaxation of the LP relaxation of the root problem {\color{red} since }. \Halmos
%\end{example}
%
\newpage
\appendix
\section{Appendix}~\label{app:proof}

\subsection{Proof of Theorem~\ref{them:stair-ineq}}\label{app:stair-ineq}
	Consider a vector of functions $s: X \to \R^{d \times (n+1)} $ so that, for $i \in \{1, \ldots, d\}$ and $j \in \{0, \ldots, n \}$, $s_{ij}(x):= \max\{ u_{ij'}(x) \mid j'\leq j \}$. Let $x \in X$, and define $u:=u(x)$, $a := a(x)$ and $s:=s(x)$. Clearly, $(\phi \mcirc f)(x) = \DO^{\omega}(\phi)(s)$, and, thus, the proof is complete if we show that $\DO^{\omega}(\phi)(s) \leq \BO^{\omega}(\phi)(s,a) \leq \BO^{\omega}(\phi)(u,a)$.  To establish the first inequality, we observe that $\phi(s_{10}, \ldots, s_{d0}) = \phi(a_{10}, \ldots, a_{dn})$, and for $t \in \{1, \ldots, dn\}$ and $\iota := \omega_t$
\begin{equation}~\label{eq:them1-1}
\begin{aligned}
\phi\bigl(\Pi(s;p^t) \bigr) - \phi\bigl(\Pi(s;p^{t-1}) \bigr) &= \phi\bigl(\Pi(s;p^{t-1}) + e_\iota (s_{\iota p^t_{\iota}} - s_{\iota p^{t-1}_{\iota}} ) \bigr) - \phi\bigl(\Pi(s;p^{t-1}) \bigr)\\ 
& \leq \phi\Bigl(\Pi\bigl((a_{-\iota}, s_{\iota});p^{t-1} \bigr) + e_\iota (s_{\iota p^t_{\iota}} - s_{\iota p^{t-1}_{\iota}} ) \Bigr) \\
& \qquad \qquad - \phi\Bigl(\Pi\bigl((a_{-\iota}, s_{\iota} );p^{t-1}\bigr) \Bigr) \\
& = \phi\Bigl(\Pi\bigl((a_{-\iota}, s_{\iota});p^{t} \bigr)  \Bigr) - \phi\Bigl(\Pi\bigl((a_{-\iota}, s_{\iota} );p^{t-1}\bigr) \Bigr),
 \end{aligned}
\end{equation}
where the two equalities hold since, for $i \neq \iota$, $p^t_{i} = p^{t-1}_i$, and the inequality follows from the supermodularity of $\phi(\cdot)$ as $s_{-\iota} \leq a_{-\iota}$ and, by construction, $s_{\iota p^{t-1}_{\iota}} \leq s_{\iota p^{t}_{\iota}}$. Next, we show  that $\BO^\omega(\phi)(s,a) \leq \BO^\omega(\phi)(u,a)$. For $i \in \{1, \ldots, d\}$, let $\{\tau_{i1} ,\ldots, \tau_{in} \}:= \{ t \mid \omega_t = i \}$ so that $\tau_{i1} < \ldots < \tau_{in}$. Then, it follows that for $v \in \{u,s\}$ 
\begin{equation*}
\begin{aligned}
   \BO^\omega(\phi)(v,a) &= \phi\bigl(\Pi(v;p^0)\bigr) + \sum_{i = 1}^d \sum_{j = 1}^{n}\biggl[ \phi\Bigl( \Pi\bigl((a_{-i},v_i); p^{\tau_{ij}} \bigr) \Bigr) - \phi\Bigl( \Pi\bigl((a_{-i}, v_i);p^{\tau_{ij}-1}  \bigr) \Bigr) \biggr] \\
	&  = \phi\bigl(\Pi(a;p^0)\bigr)  + \sum_{i = 1}^d \Biggl[ - \phi\Bigl( \Pi\bigl((a_{-i},s_i); p^{\tau_{i1}-1}\bigr) \Bigr)   \\
	& \qquad \qquad + \sum_{j = 1}^{n-1} \biggl(  \phi\Bigl( \Pi\bigl((a_{-i},v_i); p^{\tau_{ij}}\bigr) \Bigr)   -  \phi\Bigl( \Pi\bigl((a_{-i},v_i); p^{\tau_{ij+1}-1}  \bigr) \Bigr) \biggr) \\
	& \qquad \qquad +\phi\bigl( \Pi\bigl((a_{-i},s_i); p^{\tau_{in}}\bigr) \bigr) \Biggr],
\end{aligned} 
\end{equation*}
where the first equality holds by definition of $\tau$, and the second equality holds by rearranging terms and observing that, for $i \in \{1, \ldots, d\}$, $u_{i0} = s_{i0} = a_{i0}$, $(p^{\tau_{i1}-1})_i=0$, $u_{in} = s_{in}$, and $(p^{\tau_{in}})_i=n$. Therefore, the proof is complete by noticing that, for $i \in \{1, \ldots, d\}$ and for $j \in \{1, \ldots, n-1\}$, the supermodularity of $\phi(\cdot)$ implies that 
\begin{equation}~\label{eq:them1-2}
 \begin{aligned}
  \phi\Bigl( \Pi\bigl((a_{-i},s_i); p^{\tau_{ij}}\bigr) \Bigr)   &- \phi\Bigl( \Pi\bigl((a_{-i},u_i); p^{\tau_{ij}}\bigr) \Bigr) \leq  \phi\Bigl( \Pi\bigl((a_{-i},s_i); p^{\tau_{ij+1}-1}\bigr) \Bigr) \\
 & \qquad \qquad   - \phi\Bigl( \Pi\bigl((a_{-i},u_i); p^{\tau_{ij+1}-1}\bigr) \Bigr), 
\end{aligned}
\end{equation}
because $u_i \leq s_i$, $a_{i0} \leq \cdots \leq a_{in}$, $p^{\tau_{ij}} \leq p^{\tau_{ij+1}-1}$ and $(p^{\tau_{ij}})_i = (p^{\tau_{ij+1}-1})_i$. \Halmos

\subsection{Proof of Corollary~\ref{cor:stair-switching}}\label{app:stair-switching}
Clearly, $ \BO^{\omega}(\phi)\bigl(\tilde{u}(x),\tilde{a}(x)\bigr) =  \BO^{\omega}\bigl(\phi(T)\bigr)\bigl(\tilde{u}'(x),\tilde{a}'(x)\bigr)$, where for each $i \notin T$ $\tilde{u}'_i(x) := \tilde{u}_i(x)$ and $\tilde{a}'_i(x) := \tilde{a}_i(x)$ otherwise $\tilde{u}'_i(x) := f_i^U \mathbf{1} - \tilde{u}_i(x)$ and $\tilde{a}'_i(x) := f_i^U  \mathbf{1} -  \tilde{a}_i(x)$, where $\mathbf{1}$ denotes the all-ones vector. Moreover, for $x \in X$, $(\phi \mcirc f)(x) = \phi(T)\bigl(\tilde{u}'_{1n}(x), \ldots, \tilde{u}'_{dn}(x)  \bigr) \leq \BO^{\omega}\bigl(\phi(T)\bigr)\bigl(\tilde{u}'(x),\tilde{a}'(x)\bigr)$, where the equality holds by definition, and the inequality follows from Theorem~\ref{them:stair-ineq} because the transformed pair $\bigl(\tilde{u}'(x), \tilde{a}'(x)\bigr)$ satisfies the requirement in~(\ref{eq:ordered-oa}) and $\phi(T)(\cdot)$ is supermodular. \Halmos

\subsection{Convex underestimators for the root node in Example~\ref{ex:recursive}}\label{app:inequalities}
\[
x_1^2x_2^2x_3^3 \geq \max \left\{
\begin{aligned}
&e_6(x)  +   u_{32}(x) - 1\\
&2e_1(x)+ e_6(x)  +  6u_{31}(x)  +  u_{32}(x) - 21\\
&3 e_1(x) +e_6(x)  +   7u_{32}(x) - 28\\
&e_2(x)  + e_6(x)  +  10u_{31}(x)  +  u_{32}(x) - 33\\
&\max\{e_3(x),e_4(x)\}  +   e_6(x)  +  12u_{31}(x)  +  u_{32}(x) - 39\\
&e_1(x) +  2e_2(x)  +e_6(x)  +  4u_{31}(x)  +  7u_{32}(x) - 40\\
&3e_2(x)  +  e_6(x)  +  11u_{32}(x) - 44\\ 
&2e_5(x)  +  e_6(x)  +  14u_{31}(x)  +  u_{32}(x) - 45\\
&e_1(x) +   2\max\{e_3(x),e_4(x)\}  +    e_6(x)  +  6u_{31}(x)  +  7u_{32}(x) - 46\\
&3e_6(x)  + 15 u_{31}(x)  +  u_{32}(x)-48\\
&e_2(x)  + 2\max\{e_3(x),e_4(x)\}   +  e_6(x)  +  2u_{31}(x)  +  11u_{32}(x)-50\\
&e_1(x)  +  2e_5(x)  +  e_6(x)  +  8u_{31}(x)  +  7u_{32}(x) -52 \\
&3\max\{e_3(x),e_4(x)\}   +  e_6(x)  +   13u_{32}(x) - 52 \\
&e_1(x) + 3e_6(x)  +  9u_{31}(x)  +  7u_{32}(x) -55 \\
&e_2(x)   +  e_5(x)  +  e_6(x)  +  4u_{31}(x)  +  11u_{32}(x) - 56 \\
&\max\{e_3(x),e_4(x)\}  +  2e_5(x)  +  e_6(x)  +  2u_{31}(x)  +  13u_{32}(x) - 58 \\
&    e_2(x) + 3e_6(x)  +  5u_{31}(x)  + 11u_{32}(x) - 59 \\
&  3e_5(x)  +  e_6(x)  +  15u_{32}(x)-60\\
&    \max\{e_3(x),e_4(x)\}  +  3 e_6(x)  +  3u_{31}(x)  +  13u_{32}(x) -61 \\
&	e_5(x) + 3e_6(x) + u_{31}(x) + 15u_{32}(x)  - 63 \\
&  4e_6(x)  + 16u_{32}(x) -64
\end{aligned}
\right\} .
\]

\subsection{Additional convex underestimators for inner-functions tighten the relaxation}\label{app:example-more-underestimators}
%\begin{example}
Consider $(x_1-\frac{1}{x_2})^2 - \frac{x_1}{x_2}$ over $[1,2]\times [\frac{1}{4},4]$, and consider $(f_1-f_2)^2-f_1f_2$ and $(x_1, \frac{1}{x_2})$ as its outer-function and inner-functions, respectively. Notice that the outer-function is submodular over but not convex-extendable from $[1,2] \times [0.25,4]$. Consider a convex underestimator $l(x):=\max\{-x_2+2,0.15+\frac{0.4}{x_2} \}$ of $\frac{1}{x_2}$, which is bounded from above by $1.75$ over $[0.25,4]$. By Theorem~\ref{them:stair-ineq}, we obtain the following underestimators for $(x_1-\frac{1}{x_2})^2-\frac{x_1}{x_2}$:
\begin{equation*}
	\begin{aligned}
		\varphi_1(x)&:=\Bigl(x_1 - \frac{1}{4}\Bigr)^2 - \frac{1}{4}x_1 +\Bigl(2 - \frac{1}{x_2}\Bigr)^2 -\frac{2}{x_2} - \Bigl(\frac{7}{4}\Bigr)^2 +\frac{1}{2}, \\
		\varphi_2(x)&:= (x_1-4)^2 - 4x_1 + \Bigl(1-\frac{1}{x_2}\Bigr)^2 - \frac{1}{x_2} -5, \\
		\varphi_3(x)&:=  \Bigl(x_1 - \frac{7}{4}\Bigr)^2 -\frac{7}{4}x_1  + \Bigl(2 - \frac{1}{x_2}\Bigr)^2 - \frac{2}{x_2} + 3l(x) - \Bigl(\frac{3}{4}\Bigr)^2 - \frac{5}{4}.
	\end{aligned}
\end{equation*}
%\[
% \Bigl(x_1 - \frac{7}{4}\Bigr)^2  + \Bigl(2 - \frac{1}{x_2}\Bigr)^2 + \bigl(1-l(x)\bigr)^2 - \bigl(2-l(x)\bigr)^2 - \Bigl(\frac{3}{4}\Bigr)^2
%\]
Observe that only $\varphi_3(\cdot)$ exploits the structure of the underestimator $l(\cdot)$, and its convex envelope is not dominated by that of $\varphi_1(\cdot)$ and $\varphi_2(\cdot)$ since, at the point $(x_1,x_2)=(1.8,0.3)$, $\conv(\varphi_3)(x) >-4.75$ while $\conv(\varphi_1)(x)< \varphi_1(x) < -5.49$ and $\conv(\varphi_2)(x)< \varphi_2(x) < -5.24$. In particular, it can be easily verified that the convex function $g(x)$ underestimates $\varphi_3(x)$ with $g(1.8,0.3)>-4.75$, where
$
g(x):=\bigl(x_1 - \frac{7}{4}\bigr)^2 -\frac{7}{4}x_1 + h(x_2) +3(-x_2+2)- \left(\frac{3}{4}\right)^2 -\frac{5}{4}$,
and $h(x_2)= 4-\frac{6}{x_2}+\frac{1}{x_2^2}$ when $x_2 < \frac{1}{3}$ and $-5$ otherwise.
% \[
% \begin{aligned}
% 	(x_1-\frac{1}{x_2})^2 &= (1-\frac{1}{4})^2+ (x_1 - \frac{1}{4})^2 - (1-\frac{1}{4})^2 + (x_1 - \frac{1}{x_2})^2 - (x_1-\frac{1}{4})^2 \\
% 	&\geq (x_1 - \frac{1}{4})^2 +(2 - \frac{1}{x_2})^2 - (2-\frac{1}{4})^2 \end{aligned}
% \]
%\[
%\begin{aligned}
%	(x_1-\frac{1}{x_2})^2 &= (1-\frac{1}{4})^2 + (1-\frac{1}{x_2})^2 - (1-\frac{1}{4})^2 + (x_1 - \frac{1}{x_2})^2 - (1-\frac{1}{x_2})^2\\
%	&\geq (1-\frac{1}{x_2})^2+(x_1-4)^2 -9
%	\end{aligned}
%\]
%\[
%\begin{aligned}
%	(x_1-\frac{1}{x_2})^2 &= (1-\frac{1}{4})^2 + (1-l(x))^2 - (1-\frac{1}{4})^2 + (x_1 - l(x))^2 - (1-l(x))\\
%	&(x_1 - \frac{1}{x_2})^2 - (x_1-l(x)) \\
%	&\geq (1-l(x))^2 + (x_1 - \frac{7}{4})^2 - (1-\frac{7}{4})^2 + (2 - \frac{1}{x_2})^2 - (2-l(x)) 
%	\end{aligned}
%\]
\Halmos
%\end{example}

\subsection{Proof of Proposition~\ref{prop:stair-Q}}\label{app:stair-Q}
%\begin{compositeproof}
We start with showing the second statement. First, we prove the validity of $\hat{\BO}^\omega(\phi)(s;a)$. Observe that for every $s \in \vertex(Q)$ we have $\ephi(s) = \phi(s_{1n}, \ldots, s_{dn} )  \leq \BO^\omega(\phi)(s;a) = \hat{\BO}^\omega(\phi)(s;a)$, where the first inequality follows from Theorem~\ref{them:stair-ineq}, and the second equality holds because $\hat{\BO}^\omega(\phi)(s;a)$ is obtained by interpolating $B^\omega_i(\phi)(s;a)$ over $\vertex(Q_i)$. Therefore, we obtain that $\conc_Q(\ephi)(s) = \conc_Q(\ephi|_{\vertex(Q)})(s) \leq \hat{\BO}^\omega(\phi)(s;a)$, where the equality holds by the concave-extendability of $\phi(\cdot)$, and the inequality holds because $\hat{\BO}^\omega(\phi)(s;a)$ is a concave overestimator of $\ephi|_{\vertex(Q)}(s)$. Next, we show that $\min_{\omega \in \Omega}\hat{\BO}^\omega(\phi)(s;a) \leq \conc_{Q}(\ephi)(s)$ for $s \in Q$. For $\omega \in \Omega$, let $\Upsilon_\omega$ be the simplex defined as the convex hull of $\bigl\{ (v_{1p^{t}_1}, \ldots, v_{dp^t_d} )\bigr\}_{t=0}^{dn}$. It follows from Section 2.2 in~\cite{he2021tractable} or Theorem 6.2.13 in~\cite{de2010triangulations} that $\{\Upsilon_\omega\}_{\omega \in \Omega}$ is a triangulation of $Q$, \textit{e.g.}, $Q = \cup_{\omega\in \Omega} \Upsilon_{\omega}$ and for $\omega', \omega'' \in \Omega$ $\Upsilon_{\omega'} \cap \Upsilon_{\omega''}$  is a face of both $\Upsilon_{\omega'}$ and $\Upsilon_{\omega''}$. Given any $\s \in Q$, there exists $\bar{\omega} \in \Omega$ and a convex multiplier $\lambda$ so that $\s= \sum_{t=0}^{dn} \lambda_t\bigl(v_{1p^t_1}, \ldots, v_{dp^t_d}\bigr)$, where $p^0 = 0$ and $p^t = p^{t-1} + e_{\bar{\omega}_t}$. Moreover, we obtain that 
\[
\begin{aligned}
\min_{\omega \in \Omega}\hat{\BO}^\omega(\phi)(\s;a) \leq \hat{\BO}^{\bar{\omega}}(\phi)(\s;a)  &=  \sum_{t = 0}^{dn}\lambda_t\hat{\BO}^{\bar{\omega}}(\phi)\bigl(v_{1p^t_1}, \ldots, v_{dp^t_d};a\bigr)  \\
& = \sum_{t = 0}^{dn}\lambda_t \phi\bigl(\Pi(a;p^t)\bigr) \leq \conc_Q(\ephi)(\s),
\end{aligned}
\]
where the first equality holds as $\bar{\omega} \in \Omega$,  the first equality holds by the linearity of $\hat{\BO}^{\bar{\omega}}(\phi)(\cdot;a)$ and the convex combination representation of $\s$, and the last inequality follows since $\conc_Q(\ephi)$ is concave and, for $t \in \{0, \ldots, dn\}$, $\phi\bigl(\Pi(a;p^t)\bigr) = \conc_Q(\ephi)(v_{1p^t_1}, \ldots, v_{dp^t_d})$. To see the second equality, we observe that  $\hat{\BO}^{\bar{\omega}}(\phi)\bigl(v_{1p^0_1}, \ldots, v_{dp^0_d};a\bigr) = \phi\bigl(\Pi(a,p^0)\bigr)$ and for $t \in \{1, \ldots, dn\}$
\[
\begin{aligned}
\hat{\BO}^{\bar{\omega}}(\phi)\bigl(v_{1p^t_1}, \ldots, v_{dp^t_d};a\bigr) &= \hat{\BO}^{\bar{\omega}}(\phi)\bigl(v_{1p^{t-1}_1}, \ldots, v_{dp^{t-1}_d};a\bigr)  + \phi\bigl(\Pi(a;p^t)\bigr) - \phi\bigl(\Pi(a;p^{t-1})\bigr). 
\end{aligned}
\]
In other words, for $t \in \{0, \ldots, dn\}$, $\hat{\BO}^{\bar{\omega}}(\phi)\bigl(v_{1p^t_1}, \ldots, v_{dp^t_d};a\bigr) = \phi\bigl(\Pi(a;p^t)\bigr)$. 

Now, the fourth statement follows the same argument as the first one by observing that $\hat{\BO}(\phi)(\tilde{s}; \tilde{a})$ the affine interpolation of $\BO(\phi)(\tilde{s};\tilde{a})$ over $\vertex(\tilde{Q})$, which, by Corollary~\ref{cor:stair-switching}, is an overestimator of $\phi(\tilde{s}_{1n}, \ldots, \tilde{s}_{dn})$ over $\tilde{Q}$, and observing that $\bigl\{U(T)(\Upsilon_\omega)\bigr\}_{\omega \in \Omega}$ is a triangulation of $\tilde{Q}$ and, for $\omega \in \Omega$ and for $\tilde{s} \in \vertex\bigl(U(T)(\Upsilon_\omega)\bigr)$, $\hat{\BO}(\phi)(\tilde{s};\tilde{a}) = \phi(\tilde{s}_{1n}, \ldots, \tilde{s}_{dn} )$.\Halmos
%\end{compositeproof}

\subsection{Illustration that Theorem~\ref{them:improve_mor} improves over Proposition~\ref{prop:stair-Q}}\label{app:example-convex-each-argument}
%\begin{example}
Consider a function $\sqrt{x_1+x_2^2}$ over $[0,5] \times [0,2]$, and let $\sqrt{f_1+f_2}$ be its outer-function, which is submodular over $[0,5] \times [0,4]$. Let $s_{1}(x):=(0,x_1)$ and $s_2(x) :=\bigl(0,\ \max\{\frac{3}{4}x_2^2, 2x_2-1 \},\ x_2^2\bigr)$, and consider their upper bounds $a_1 := (0,5)$ and $a_2 := (a_{20}, a_{21}, a_{22}) = (0,3,4)$. Observe that there are $3$ staircases, and for the staircase $\bigl((0,0),(1,0), (1,1), (1,2)\bigr)$, Proposition~\ref{prop:decomposition} yields:
\[
\begin{aligned}
\sqrt{x_1+x_2^2} &= \sqrt{0+0} + \bigl(\sqrt{x_1+0} - \sqrt{0+0}\bigr) + \bigl(\sqrt{x_1+s_{21}(x)}- \sqrt{x_1+0}\bigr) \\
&\quad \qquad + \bigl(\sqrt{x_1+s_{22}(x)}- \sqrt{x_1+s_{21}(x)}\bigr) \\
	&\geq \sqrt{x_1} + \sqrt{5+s_{21}(x)} - \sqrt{5} + \sqrt{5+s_{22}(x)} - \sqrt{5+s_{21}(x)} \\
	& \geq \frac{\sqrt{5}}{5}x_1+\sqrt{5+x_2^2} - \sqrt{5}=:\varphi_1(x),
\end{aligned}
\]
where the first inequality holds by the submodularity of $\sqrt{f_1+f_2}$, and the second inequality follows because $\frac{\sqrt{5}}{5}x_1$ is the affine interpolation of the concave function $\sqrt{x_1}$ over $[0,5]$. In contrast, Example 1 of~\cite{he2021tractable} yields
$
\sqrt{x_1+x_2^2} \geq \frac{\sqrt{5} }{5}x_1 + \Bigl( \frac{\sqrt{8} - \sqrt{5}}{3} - \frac{\sqrt{9} - \sqrt{8}}{1}\Bigr)s_{21}(x) + \frac{\sqrt{9} - \sqrt{8}}{1}s_{22}(x)$,
which can be obtained by relaxing $\sqrt{5+x_2^2} - \sqrt{5}$ in $\varphi_1(x)$ as follows:
\[
\begin{aligned}
\sqrt{5+x_2^2} - \sqrt{5} &= \sqrt{5+s_{21}(x)} - \sqrt{5} + \sqrt{5+s_{22}(x)} - \sqrt{5+s_{21}(x)} \\
&\geq  \frac{\sqrt{8} - \sqrt{5}}{3} s_{21}(x) + \frac{\sqrt{9} - \sqrt{8}}{1}\bigl(s_{22}(x) - s_{21}(x)\bigr),
\end{aligned}
\]
where the inequality holds by the concavity of $\sqrt{5+f_2}$ over $[0,4]$. \Halmos

\subsection{Proof of Proposition~\ref{prop:Kronecker}}\label{app:Kronecker}
%\begin{compositeproof}
	For a direction vector $\omega$ in the grid given by $\{0, \ldots, n\}^2$, we proceed as follows:
\[
\begin{aligned}
F(x) \otimes G(x) &= U^n(x) \otimes V^n(x) \\
&=  U^0(x) \otimes V^0(x) + \sum_{t=1}^{2n} \biggl(U^{p^t_1}(x) \otimes V^{p^t_2}(x) - U^{p^{t-1}_1}(x) \otimes V^{p^{t-1}_2}(x) \biggr) \\
& = U^0(x) \otimes V^0(x) +  \sum_{t:\omega_t = 1} \Bigl(U^{p^t_1}(x) - U^{p^{t-1}_1}(x)\Bigr) \otimes V^{p^t_2}(x)   \\
& \qquad \qquad \qquad \qquad \qquad +  \sum_{t:\omega_t = 2} U^{p^t_1}(x) \otimes \Bigl(V^{p^t_2}(x) - V^{p^{t-1}_2}(x)\Bigr) \\
& \preceq A^0(x) \otimes B^0(x) +  \sum_{t:\omega_t = 1} \Bigl(U^{p^t_1}(x) - U^{p^{t-1}_1}(x)\Bigr) \otimes B^{p^t_2} (x)  \\
& \qquad \qquad \qquad \qquad \qquad +  \sum_{t:\omega_t = 2} A^{p^t_1}(x) \otimes \Bigl(V^{p^t_2}(x) - V^{p^{t-1}_2}(x)\Bigr),
\end{aligned}
\]
where the first equality follows from the first requirement in~(\ref{eq:matrix-system}), the second equality follows from staircase expansion, the third equality follows from the second statement in Lemma~\ref{lemma:Kron-prod}, and the inequality follows from the second and third statement in Lemma~\ref{lemma:Kron-prod} and~(\ref{eq:matrix-system}). \Halmos
%\Halmos
%\end{compositeproof}

\subsection{Proof of Theorem~\ref{them:stair-lattice}}\label{app:stair-lattice}
%\begin{compositeproof}
The proof is similar to that of Theorem~\ref{them:stair-ineq} except that inequalities~(\ref{eq:them1-1}) and~(\ref{eq:them1-2}) need some justifications. To generalize (\ref{eq:them1-1}), let the $t^{\text{th}}$ movement be along $k^{\text{th}}$ coordinate. Then,
\begin{equation}\label{eq:degreereduce}
\phi(\Pi(\tilde{\upsilon}, p^t)) - \phi(\Pi(\tilde{\upsilon}, p^{t-1})) 
% &\leq\phi(\alpha_{1p^t_1}, \tilde{ \upsilon}_{2p^t_2}, \ldots, \tilde{ \upsilon}_{dp^t_d}  ) - \phi(\alpha_{1p^{t-1}_1}, \tilde{ \upsilon}_{2p^{t-1}_2}, \ldots, \tilde{ \upsilon}_{dp^{t-1}_d}) \\
%& \qquad \cdots \\
\leq \phi\Bigl(\Pi\bigl((\alpha_{-k}, \tilde{\upsilon}_k), p^t\bigr) \Bigr)  - \phi\Bigl(\Pi\bigl((\alpha_{-k}, \tilde{\upsilon}_k), p^t\bigr) \Bigr),
\end{equation}
where the inequality follows readily by recursively replacing coordinates of $\tilde{v}$ with those of $\alpha$ using $\tilde{\upsilon}_{ij} \preceq_i \alpha_{ij}$ for all $i$,  $\tilde{\upsilon}_{kj-1} \preceq_k \tilde{\upsilon}_{kj}$ for all $j \neq n$, and $p^t - p^{t-1} = e_k$. Now, we generalize (\ref{eq:them1-2}). For any two points $p'$ and $p''$ of the grid $\{0, \ldots, n\}^d$ such that $p'_k = p''_k$ and $p'_j\leq p''_j$ for $j \neq k$ and for any permutation $\sigma$ of $\{1, \ldots, d\}\setminus \{k\}$, let $p^0 = p'$ and $p^i = p^{i-1} + \bigl(p''_{\sigma(i)} - p'_{\sigma(i)}\bigr)e_{\sigma(i)}$ for $i =1, \ldots, d-1$. Observe that 
%\todo{fix the next chain}
\[
\begin{aligned}
\phi\Bigl(\Pi\bigl((\alpha_{-k}, \tilde{\upsilon}_k), p'\bigr) \Bigr)& -\phi\Bigl(\Pi\bigl((\alpha_{-k}, \tilde{\upsilon}_k), p''\bigr) \Bigr)  \\
&=  \sum_{i =1 }^{d-1}  \phi\Bigl(\Pi\bigl((\alpha_{-k}, \tilde{\upsilon}_k), p^{i-1}\bigr) \Bigr) -  \phi\Bigl(\Pi\bigl((\alpha_{-k}, \tilde{\upsilon}_k), p^i\bigr) \Bigr) \\
& \leq   \sum_{i=1}^{d-1}  \phi\Bigl(\Pi\bigl((\alpha_{-k}, \upsilon_k), p^{i-1}\bigr) \Bigr)- \phi\Bigl(\Pi\bigl((\alpha_{-k}, \upsilon_k), p^i\bigr) \Bigr)\\
&=\phi\Bigl(\Pi\bigl((\alpha_{-k}, \upsilon_k), p'\bigr) \Bigr) -  \phi\Bigl(\Pi\bigl((\alpha_{-k}, \upsilon_k), p''\bigr) \Bigr),
\end{aligned}
\]
where both equalities from the staircase expansion, and the inequality follows from $\upsilon_{ij} \preceq_i \tilde{\upsilon}_{ij}$ and increasing differences of $\phi(\cdot)$. \Halmos
%\end{compositeproof}

\subsection{Proof of Lemma~\ref{lemma:Inc}}\label{app:Inc}
%\begin{compositeproof}
We start with proving the first statement. Let $(z,\delta)$ be a point that satisfies~(\ref{eq:Inc-1}). Then, there exists a vector of indices $(t_1, \ldots, t_d) \in \prod_{i=1}^d\{1, \ldots, l_i\}$ such that, for each $i \in \{1, \ldots, d\}$, $\delta_{ij} = 1$ for $j < t_i$ and $\delta_{ij} = 0$ for $j \geq t_i$. Thus, for every $i \in \{1, \ldots, d\}$, $z_{i \tau(i,t_i-1)} = 1$ and $z_{ij} = 0$ for $j>  \tau(i,t_i)$. In other words, $z \in \Delta_H$. Conversely, let $z \in \Delta_H$, where $H = \prod_{i=1}^d[a_{\tau(i,t_i-1)},a_{i\tau(i,t_i)}]$. Then, for $i \in \{1, \ldots, d\}$, $z_{i\tau(i,t_i-1)} = 1$ and $z_{ij} = 0$ for $j > \tau(i,t_i)$. We can define a binary vector $\delta$ such that, for every $i \in \{1, \ldots, d\}$, $\delta_{ij}= 1$ for $j < t_i$ and $\delta_{ij} = 0$ for $j \geq t_i$ so that $(z,\delta)$ satisfies~(\ref{eq:Inc-1}). Now, we prove the second statement.  Let $\zeta_{ij} :=\sum_{j'=0}^je_{ij'} $, where $e_{ij}$ is the $j^{\text{th}}$ principal vector in the space spanned by variables $(z_{i0}, \ldots,  z_{in} )$. Then, the result follows by observing that the affine function $F_i$ is non-decreasing and maps $\zeta_{i\tau(i,t_i-1)} $ and $\zeta_{i\tau(i,t_i)}$  to $a_{i\tau(i,t_i-1)}$ and $a_{i\tau(i,t_i)}$, respectively. 
%\end{compositeproof}

If $z\in \Delta_{\bar{H}}\backslash\Delta_{H}$, $Z^{-1}(z)_{\cdot n}\not\in H$. Let $\delta^H$ be the setting of $\delta$ variables that force $z\in \Delta_H$. Then, it follows that $(z,\delta^{\bar{H}}) \in \eqref{eq:Inc-1}$ while $(z,\delta^{H}) \not\in \eqref{eq:Inc-1}$. Assume without loss of generality, that there is an $(i,t)$ such that $\delta^{\bar{H}}_{it} \le z_{i\tau(i,t)} < \delta^H_{it}$. Then, it follows that $\delta^{\bar{H}}_{it} = 0$, $\delta^{\bar{H}}_{it} = 1$, $z_{ik} = 0$ for $k > \tau(i,t)$ and the left end-point of $H_i$ is no less that $a_{i\tau(i,t)}$. Moreover, $F(z) = a_{i0} + \sum_{j=1}^{\tau(i,t)} (a_{ij}-a_{ij-1})z_{ij} < a_{i\tau(i,t)}$ showing that $F_i(z_i)\not\in H_i$. \Halmos

\subsection{Proof of Proposition~\ref{prop:eval}}\label{app:eval}
%\begin{compositeproof}
We first show that $\varphi_{\mathcal{H}}(\bar{x}, \bar{f}) = \conc_Q(\ephi)(s^*)$. By the first statement of Lemma~\ref{lemma:Inc} and the definition of $Z$, $\varphi_{\mathcal{H}}(\bar{x},\bar{f}) = \max\bigl\{ \conc_Q(\ephi)(s) \bigm| \u \leq s,\ s_{\cdot n} = \bar{f},\ s \in \cup_{H \in \mathcal{H}}Q_H \bigr\}$, where the feasible region of the right hand side will be denoted as $\mathcal{L}$. We will show that $s^*$ is the smallest point in $\mathcal{L}$, \textit{i.e.}, $s^* \in \mathcal{L}$ and, for every $s' \in \mathcal{L}$, $s^* \wedge s' = s^*$, where $\wedge$ denotes the component-wise minimum of two vectors. Then, $\max\{\conc_Q(\ephi)(s) \mid s \in \mathcal{L}\} \geq  \conc_Q(\ephi)(s^*) \geq \max\{\conc_Q(\ephi)(s) \mid s \in \mathcal{L}\}$, where the first inequality holds as $s^* \in \mathcal{L}$, and the second inequality holds because, by Lemma 8 in~\cite{he2021new}, $\conc_Q(\ephi)$ is non-increasing in $s_{ij}$ for all $i$ and $j \neq n$, and, for every point $s' \in \mathcal{L}$ we have $s' \geq s^*$ and $s'_{\cdot n} = s^*_{\cdot n}$. In other words, $\varphi_{\mathcal{H}}(\bar{x},\bar{f}) = \conc_Q(\phi)(s^*)$.

Here, we show that $s^*$ is the smallest point in $\mathcal{L}$. It follows from Proposition 4 in~\cite{he2021new} that $s^*_i \in  Q_i$ and $s^*_i \geq u^*_i$. As $u_i^* \geq \u_i$, we obtain that $s^*_i \geq \u_i$. Moreover, since $s^*_{ij}=a_{ij}$ for $j \leq \tau(i,\bar{t}_i-1)$ (resp. $s^*_{ij} = \bar{f}_i$ for $j > \tau(i,\bar{t}_i)$), it follows that $Z(s^*)_{ij} = 1$ for $j \leq \tau(i,\bar{t}_i-1)$ (resp. $Z(s^*)_{ij} = 0$ for $j > \tau(i,\bar{t}_i)$). In other words, $Z(s^*) \in \Delta_{\bar{H}}$ and, so, $s^* \in Q_{\bar{H}}$. Therefore, we can conclude that $s^*\in \mathcal{L}$. Now, we show that, for $s' \in \mathcal{L}$, $s' \wedge s^* = s^*$. For any $s' \in \mathcal{L}$, we have $s'_{\cdot n}  \in \bar{H}$ and $Z(s') \in \cup_{H \in \mathcal{H}}\Delta_H$.
Then, by the second statement of Lemma~\ref{lemma:Inc} and $ s'_{\cdot n} = F\bigl(Z(s')\bigr)$, we obtain that $Z(s') \in \Delta_{\bar{H}}$.  Therefore, $s' \in Q_{\bar{H}}$, and in particular, $s'_{ij} = a_{ij}$ for $j \leq \tau(i,\bar{t}_i-1)$ and $s'_{ij} = \bar{f}_i$ for $j \geq \tau(i,\bar{t}_i)$.  This, together with $\u \leq s'$, implies that $ u^* \leq s'$, and thus $\xi_{i,a_i}(a;u^*_i) \leq \xi_{i,a_i}(a;s'_i) $ for every $a \in [a_{i0}, a_{in}]$. It turns out that for  $i \in \{1, \ldots, d\}$ and for  $j \in \{0, \ldots, n \}$, $(s'_i \wedge s^*_i)_j = \min\bigl\{ \conc(\xi_{i,a_i})(a_{ij};s'_i), \conc(\xi_{i,a_i})(a_{ij};u^*_i) \bigr\} = \conc(\xi_{i,a_i})(a_{ij};u^*_i) = s^*_{ij}$,
where the first equality holds because it follows from Lemma 5 in~\cite{he2021new} that, for any $s_i \in Q_i$, we have $s_{ij} = \conc(\xi_{i,a_i})(a_{ij};s_i)$, and the second equality follows from $\xi_{i,a_i}(a;u^*_i) \leq \xi_{i,a_i}(a;s'_i)$. 

Similarly, $\varphi(\bar{x}, \bar{f}) = \conc_Q(\ephi) (\s)$ and $\varphi_{\mathcal{H}-}(\bar{x}, \bar{f}) = \conc_Q(\ephi) (\hat{s})$. Hence, the proof is complete since for each $i\in \{1, \ldots, d\}$ we have $s^*_i \geq \s'_i$ and $s^*_i \geq \s''_i$ with $s^*_{in} =\s'_{in} = \s''_{in}$, and by Lemma 8 in~\cite{he2021new},  $\conc_Q(\bar{\phi})$ is non-increasing in $s_{ij}$ for all $i$ and $j \neq n$. \Halmos
%\end{compositeproof}

\subsection{Proof of Theorem~\ref{them:DCR+}}\label{app:DCR+}
\newcommand{\ba}{{\bar{a}}}
\newcommand{\baa}{{\breve{a}}}
\newcommand{\bn}{{\bar{n}}}
\newcommand{\ban}{{\breve{n}}}

\begin{OldVersion}
\begin{lemma}\label{lemma:lifta}
Let $\baa_{iK_i} = \ba_i$, $\bn_i=|K_i|-1$, and $\ban_i=|\baa_i|-1$. We lift \eqref{eq:ordered-oa} defined with $(a,u)=(\ba,u^{\ba})$ to that with $(a,u)=(\baa,u^{\baa})$ by defining $u^{\baa}_{ij}$ as follows:
\begin{equation*}
u^{\baa}_{ij} = \left\{\begin{alignedat}{2}
&u^{\ba}_{i,r-1} &\quad& j\text{ is the $r^{th}$ largest entry in } K_i\\
&\baa_{i0}-\epsilon && j\not\in K_i, j > 0, j < \ban_i \\
&\baa_{i0} && j\not\in K_i, j=0\\
&u^{\ba}_{i,\bn_i} &&j\not\in K_i, j=\ban_i.
\end{alignedat}\right.  
\end{equation*}
Let $Q_{\ba}$ (resp. $Q_{\baa}$) be the simplex defined with $\ba$ (resp. $\baa$). Let $s^{\baa}_i = \conc(\xi_{i\baa})(\cdot,u^{\baa})$ and $s^{\ba}_i = \conc(\xi_{i\ba})(\cdot,u^{\ba})$, where $\xi_{i\ba}(y,u^{\ba})=u^{\ba}_{ij}$ if $y=\ba_{ij}$. Then, $\conc_{Q_{\baa}}(\phi)(s^{\baa}) =
\conc_{Q_{\ba}}(\phi)(s^{\ba})$.
\end{lemma}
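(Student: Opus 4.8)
The plan is to realize the coarse simplex $Q_{\ba}$ as a face of the refined simplex $Q_{\baa}$ and to show that the constructed point $s^{\baa}$ lands precisely on that face and projects onto $s^{\ba}$. Write $K_i$ for the index set with $\baa_{iK_i}=\ba_i$, so that the grid points indexed by $K_i$ coincide, in both abscissa and ordinate, with the $\ba$-grid points $\{(\ba_{ir},u^{\ba}_{ir})\}_{r=0}^{\bn_i}$, while every newly inserted interior index $j\notin K_i$ (with $0<j<\ban_i$) carries the ordinate $\baa_{i0}-\epsilon$, strictly below the smallest underestimator value $\baa_{i0}=\ba_{i0}=u^{\ba}_{i0}$. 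Since $\baa_{i0}=\ba_{i0}$ and $\baa_{i\ban_i}=\ba_{i\bn_i}$, both one-dimensional constructions take place over the same interval $[\ba_{i0},\ba_{i\bn_i}]$.

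First I would establish the one-dimensional envelope identity
\[
\conc(\xi_{i\baa})(x;u^{\baa}_i)=\conc(\xi_{i\ba})(x;u^{\ba}_i)\qquad\text{for all }x\in[\ba_{i0},\ba_{i\bn_i}].
\]
The concave envelope is the upper boundary of the convex hull of the sample points; the $K_i$-points reproduce exactly the point set defining $\conc(\xi_{i\ba})(\cdot;u^{\ba}_i)$, and each decoy point $(\baa_{ij},\baa_{i0}-\epsilon)$ lies strictly below $\baa_{i0}$, hence strictly below this concave majorant. Adding points beneath a concave majorant leaves the upper hull unchanged, so the two envelopes agree as functions on the common interval. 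This step is mostly bookkeeping once the decoy points are certified to sit below the envelope, which holds for every $\epsilon>0$.

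From this identity I would read off the two structural facts I need. Evaluating at an abscissa $\baa_{ik}=\ba_{ir}$ with $k\in K_i$ gives $s^{\baa}_{ik}=s^{\ba}_{ir}$, so $\proj_{s_K}(s^{\baa})=s^{\ba}$. Moreover, because the common envelope is piecewise linear with breakpoints contained in $\{\baa_{ik}:k\in K_i\}$ (the decoys never become hull vertices), its slopes are constant across every non-$K_i$ grid point; equivalently, the barycentric weights of $s^{\baa}_i$ in $Q_{\baa,i}$ vanish off $K_i$. By Proposition~5 of~\cite{he2021new} this places $s^{\baa}_i$ on the face $\hat F_{K_i}:=\conv\{v^{\baa}_{ik}:k\in K_i\}$, and hence $s^{\baa}$ on the face $\hat F_K:=\prod_{i}\hat F_{K_i}$ of $Q_{\baa}$.

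Finally I would transfer the envelope value across the face. Since $\hat F_K$ is a face of $Q_{\baa}$ and $s^{\baa}\in\hat F_K$, restriction gives $\conc_{Q_{\baa}}(\phi)(s^{\baa})=\conc_{\hat F_K}(\phi)(s^{\baa})$. The coordinate-forgetting map $\proj_{s_K}$ is an affine isomorphism of $\hat F_K$ onto $Q_{\ba}$ sending $v^{\baa}_{ik}$ to the matching $\ba$-vertex and preserving the terminal coordinate (as $\baa_{i\ban_i}=\ba_{i\bn_i}$), on which alone $\ephi$ depends. By Lemma~8 of~\cite{he2021new} the hypographs of the two concave envelopes are affinely isomorphic under this map, so $\conc_{\hat F_K}(\phi)(s^{\baa})=\conc_{Q_{\ba}}(\phi)(\proj_{s_K}(s^{\baa}))=\conc_{Q_{\ba}}(\phi)(s^{\ba})$, using the projection identity of the previous step. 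Chaining the equalities yields the claim. The main obstacle will be the face-membership argument — certifying that the refinement introduces no new breakpoints so that $s^{\baa}$ genuinely lies on $\hat F_K$ rather than merely in $Q_{\baa}$; this is precisely where the below-the-envelope decoy ordinates $\baa_{i0}-\epsilon$ are essential and must be argued with care.
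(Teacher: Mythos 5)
Your overall route coincides with the paper's: establish the one-dimensional identity between the refined and coarse discrete envelopes, deduce $s^{\breve{a}}_{iK_i}=s^{\bar{a}}_i$ together with membership of $s^{\breve{a}}$ in the face of $Q_{\breve{a}}$ spanned by the $K$-indexed vertices, and transfer the envelope value through the affine isomorphism between that face and $Q_{\bar{a}}$ (the paper packages this isomorphism as Lemma~\ref{lemma:iso}, adapted from Lemma 8 of \cite{he2021new}, which you cite directly). There is, however, a genuine gap at your very first step: you assert ``Since $\breve{a}_{i0}=\bar{a}_{i0}$ and $\breve{a}_{i\breve{n}_i}=\bar{a}_{i\bar{n}_i}$, both one-dimensional constructions take place over the same interval,'' but this is not among the hypotheses. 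The lemma's definition of $u^{\breve{a}}$ explicitly provides for $j=0\notin K_i$ (value $\breve{a}_{i0}$) and $j=\breve{n}_i\notin K_i$ (value $u^{\bar{a}}_{i\bar{n}_i}$), i.e., for refined grids extending strictly beyond $[\bar{a}_{i0},\bar{a}_{i\bar{n}_i}]$ at either end; and this is exactly the case needed where the lemma is invoked: in Lemma~\ref{lemma:improvement-bdd}, $\breve{a}$ is a merge of $a$ and $a'$ with $a'_{i0}\ge a_{i0}$, so lifting $(a',u')$ produces $0\notin K_i$. Your hull argument --- ``adding points beneath a concave majorant leaves the upper hull unchanged'' --- does not cover the two endpoint insertions, which lie outside the domain of the coarse envelope rather than beneath its graph; on the wider interval you therefore have neither the envelope identity nor the vanishing of the barycentric weights $\lambda_{i0}=1-z_{i1}$ and $\lambda_{i\breve{n}_i}=z_{i\breve{n}_i}$ off $K_i$, which your face-membership step tacitly requires.

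The repair is precisely what the paper's proof does: define a three-piece candidate $h_i$ --- the identity line $y\mapsto y$ on $[\breve{a}_{i0},\bar{a}_{i0}]$ (it passes through $(\breve{a}_{i0},\breve{a}_{i0})$ and $(\bar{a}_{i0},u^{\bar{a}}_{i0})=(\bar{a}_{i0},\bar{a}_{i0})$, the latter by \eqref{eq:ordered-oa}), the coarse envelope on $[\bar{a}_{i0},\bar{a}_{i\bar{n}_i}]$, and the constant $u^{\bar{a}}_{i\bar{n}_i}$ on $[\bar{a}_{i\bar{n}_i},\breve{a}_{i\breve{n}_i}]$ --- then verify concavity of $h_i$ from the slope bounds $1\ge \bigl(s^{\bar{a}}_{i1}-s^{\bar{a}}_{i0}\bigr)/\bigl(\bar{a}_{i1}-\bar{a}_{i0}\bigr)$ and $\bigl(s^{\bar{a}}_{i\bar{n}_i}-s^{\bar{a}}_{i,\bar{n}_i-1}\bigr)/\bigl(\bar{a}_{i\bar{n}_i}-\bar{a}_{i,\bar{n}_i-1}\bigr)\ge 0$, check $h_i(\breve{a}_{ij})\ge u^{\breve{a}}_{ij}$ with strict inequality at the interior decoys, and conclude that $h_i$ is the refined envelope. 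Face membership when $0\notin K_i$ (resp.\ $\breve{n}_i\notin K_i$) then needs the further observation that the first refined slope equals $1$ (resp.\ the last equals $0$), supplied by the identity and constant pieces, so that $s^{\breve{a}}_i$ lies on the face indexed by $K_i$ itself and not merely by $K_i\cup\{0\}\cup\{\breve{n}_i\}$. With these additions your argument becomes the paper's proof; as written, it establishes the lemma only in the special case $\{0,\breve{n}_i\}\subseteq K_i$.
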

\begin{proof}
\begin{comment}
Let $\baa_{iK_i} = \ba_i$. We lift \eqref{eq:ordered-oa} defined with $(a,u)=(\ba,u^{\ba})$ to that with $(a,u)=(\baa,u^{\baa})$ by defining $u^{\baa}_{ij}$ as follows:
\begin{equation*}
u^{\baa}_{ij} = \left\{\begin{alignedat}{2}
&u^{\ba}_{ij} &\quad& j\in K_i\\
&\baa_{i0}-\epsilon && j\not\in K_i, j > 0, j < \ban_i\\
&\baa_{i0} && j\not\in K_i, j=0\\
&u^{\ba}_{i\bn_i} &&j\not\in K_i, j=\ban_i.
\end{alignedat}\right.  
\end{equation*}
Let $Q_{\ba}$ (resp. $Q_{\baa}$) be the simplex defined with $\ba$ (resp. $\baa$). Let $s^{\baa}_i = \conc(\xi_{i\baa})(\cdot,u^{\baa})$ and $s^{\ba}_i = \conc(\xi_{i\ba})(\cdot,u^{\ba})$, where $\xi_{i\ba}(y,u^{\ba})=u^{\ba}_{ij}$ if $y=\ba_{ij}$.
\end{comment}
Let
%To see this, observe that $s^{\baa}_{iK_i} = s^{\ba}_i$. 
%Consider any $j$ such that $\baa_{ij}\in [\ba_{i0},\ba_{in}]$. Let 
\begin{equation}
    h(y) = \left\{\begin{alignedat}{2}
    &y &\quad& y\le \ba_{i0}\\
    &\conc(\xi_{\ba})(y,u^{\ba}) && y\in [\ba_{i0},\ba_{i\bn_i}]\\
    &u^{\ba}_{i\bn_i} && y\ge \ba_{i\bn_i}.\\
    \end{alignedat}\right.
\end{equation}
We show that $h(\cdot) = \conc(\xi_{i,\baa})(\cdot;u^{\baa})$. To begin, we show that $h(y)$ is concave and $h(\baa_{ij})\ge u^{\baa}_{ij}$, where the inequality is strict if $j\not\in K_i\cup\{0\}\cup\{j'\mid \nexists j'\}$. The concavity of $h$ follows since $1\ge \frac{s^{\ba}_{i1}-s^{\ba}_{i0}}{\ba_{i1}-\ba_{i0}}\ge \frac{s^{\ba}_{i\bn_i}-s^{\ba}{i,\bn_i-1}}{\ba_{i,\bn_i}-\ba_{i,\bn_i-1}}\ge 0$. We now show that $h(\baa_{ij})\ge u^{\baa}_{ij}$. If $j\in K_i$, the result follows since $h(y) = \conc(\xi_{\ba})(y,u^{\ba})$ and the concave envelope construction used a point $(\baa_{ij}, u^{\baa}_{ij})$. Now, let $j\not\in K_i$. If there does not exist $j'\in K_i$ such that $j' < j$, then $h(\baa_{ij}) = \baa_{ij}\ge \baa_{i0}$, where the last ineqaulity is strict if $j > 0$. Now, assume that there exist $j'\in K_i$ such that $j' < j$ and $j<\ban_i$. Then, $h(\baa_{ij})\ge \bar{a}_{i0} > u^{\baa}_{ij}$. Finally, if $j=\ban_i$, we have $h(\baa_{ij})=u^{\ba}_{i\bn_i}=u^{\baa}_{ij}$. Clearly, $\conc(\xi_{i\baa})(\cdot,u^{\baa})\le h(\cdot)$ since $h(\cdot)$ is concave and overestimates the left-hand-side. Combining this with the observation that $\conc(\xi_{i\baa})(\cdot,u^{\baa})$ is constructed by including all points that are extremal in the hypograph of $h(y)$, it follows that $\conc(\xi_{i\baa})(\cdot,u^{\baa})= h(\cdot)$. Consequently, $s^{\baa}_{iK_i} = s^{\ba}_i$. It follows from Proposition 5 of~\cite{he2021new} that $s^{\baa}_i$ belongs to the face $F_{K_i\cup\{0\}\cup\{|\baa_i|\}}$ of $Q_{\baa}$. Moreover, if $0\not\in K_i$ (resp. $|\baa_i|\not\in K_i$), we have
$\frac{s^{\baa}_{i1}-s^{\baa}_{i0}}{\baa_{i1}-\baa{i0}}=1$ (resp. $\frac{s^{\baa}_{i\bn_i}-s^{\baa}_{i,\bn_i-1}}{\baa_{i\bn_i}-\baa{i,\bn_i-1}}=0$ showing that $s^{\baa}_i$ belongs to the face $F_{K_i}$ of $Q_{\baa}$. However, this face is isomorphic to $Q_{\ba}$. By Lemma 8 of~\cite{he2021new}, the hypographs of the concave envelope are also isomorphic and therefore, $\conc_{Q_{\baa}}(\phi)(s^{\baa}) =
\conc_{Q_{\ba}}(\phi)(s^{\ba})$.\qed
\end{proof}

\begin{lemma}\label{lemma:moveu}
Let $j' < j$, $j\not= 0$, and $\baa_{ij'}\ge u^\baa_{ij}$. Define $u'^{\baa}$ so that 
$u'^{\baa}_{ij} = a_{i0}-\epsilon$ if $j<|\baa_i|$, $u'^{\baa}_{i|\baa_i|} = u^{\baa}_{i|\baa_i|}$, $u'^{\baa}_{ij'} = \max\{u^{\baa}_{ij},u^{\baa}_{ij'}\}$.
Then, $\conc(\xi_{i\baa})(\cdot,u'^\baa)\ge \conc(\xi_{i\baa})(\cdot,u^\baa)$.
\end{lemma}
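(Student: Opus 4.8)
The plan is to exhibit $\conc(\xi_{i\baa})(\cdot;u'^{\baa})$ as a concave function that majorizes the discrete function $\xi_{i\baa}(\cdot;u^{\baa})$ at every grid point, and then to invoke the fact that $\conc(\xi_{i\baa})(\cdot;u^{\baa})$ is by definition the pointwise-least concave majorant of $\xi_{i\baa}(\cdot;u^{\baa})$ over $[\baa_{i0},\baa_{i|\baa_i|}]$. Since $\conc(\xi_{i\baa})(\cdot;u'^{\baa})$ is concave and $\xi_{i\baa}(\cdot;u^{\baa})=-\infty$ off the grid, once we verify $\conc(\xi_{i\baa})(\baa_{ik};u'^{\baa})\ge u^{\baa}_{ik}$ for every $k$, the inequality $\conc(\xi_{i\baa})(\cdot;u'^{\baa})\ge \conc(\xi_{i\baa})(\cdot;u^{\baa})$ follows at once. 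Before the case analysis I would record the key structural fact that the terminal value dominates all others, namely $u^{\baa}_{i|\baa_i|}\ge u^{\baa}_{ij}$: this holds because the last component carries the true inner value $u^{\baa}_{i|\baa_i|}=u^{\ba}_{i\bn_i}=f_i$, while every other component underestimates $\min\{\baa_{ik},f_i\}\le f_i$ (the artificial entries $\baa_{i0}-\epsilon$ are smaller still).

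The grid-point majorization then splits into two cases. For $k\neq j$ the definition gives $u'^{\baa}_{ik}\ge u^{\baa}_{ik}$, since the two vectors agree except at $j'$, where $u'^{\baa}_{ij'}=\max\{u^{\baa}_{ij},u^{\baa}_{ij'}\}\ge u^{\baa}_{ij'}$; hence $\conc(\xi_{i\baa})(\baa_{ik};u'^{\baa})\ge u'^{\baa}_{ik}\ge u^{\baa}_{ik}$, using that a concave envelope overestimates its own defining points. For $k=j$, I would write $\baa_{ij}$ as a convex combination of $\baa_{ij'}$ and $\baa_{i|\baa_i|}$; since $j'<j<|\baa_i|$ and $\baa_i$ is increasing, $\baa_{ij}=\mu\baa_{ij'}+(1-\mu)\baa_{i|\baa_i|}$ with $\mu=\frac{\baa_{i|\baa_i|}-\baa_{ij}}{\baa_{i|\baa_i|}-\baa_{ij'}}\in[0,1]$. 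Concavity of $\conc(\xi_{i\baa})(\cdot;u'^{\baa})$ then yields
\[
\conc(\xi_{i\baa})(\baa_{ij};u'^{\baa})\ \ge\ \mu\,u'^{\baa}_{ij'}+(1-\mu)\,u'^{\baa}_{i|\baa_i|}\ \ge\ \mu\,u^{\baa}_{ij}+(1-\mu)\,u^{\baa}_{ij}\ =\ u^{\baa}_{ij},
\]
where the middle inequality uses $u'^{\baa}_{ij'}=\max\{u^{\baa}_{ij},u^{\baa}_{ij'}\}\ge u^{\baa}_{ij}$ together with $u'^{\baa}_{i|\baa_i|}=u^{\baa}_{i|\baa_i|}\ge u^{\baa}_{ij}$.

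The main obstacle is not the algebra but identifying precisely which facts drive the inequality: the terminal-value dominance $u^{\baa}_{i|\baa_i|}\ge u^{\baa}_{ij}$, which makes the right anchor $\baa_{i|\baa_i|}$ usable, and the verification $\mu\in[0,1]$, which is exactly where $j<|\baa_i|$ enters. I would also emphasize the (secondary) role of the hypothesis $\baa_{ij'}\ge u^{\baa}_{ij}$: it is not needed for the envelope inequality itself, but it guarantees $u'^{\baa}_{ij'}=\max\{u^{\baa}_{ij},u^{\baa}_{ij'}\}\le\baa_{ij'}$, so that $u'^{\baa}$ remains an admissible underestimator vector satisfying $u'^{\baa}_{ik}\le\baa_{ik}$. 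This admissibility is what allows the conclusion to be composed with the lifting step of Lemma~\ref{lemma:lifta} and with the $Q$-based envelope machinery in the overall proof of Theorem~\ref{them:DCR+}.
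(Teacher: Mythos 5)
Your proof is correct, but the key step is argued differently from the paper, so a comparison is worthwhile. The paper's proof is a one-liner: it notes that only the two altered coordinates $j$ and $j'$ need checking, and handles both with the single chain
$\conc(\xi_{i\baa})(\baa_{ij};u'^{\baa}) \ge \conc(\xi_{i\baa})(\baa_{ij'};u'^{\baa}) \ge u'^{\baa}_{ij'} = \max\{u^{\baa}_{ij},u^{\baa}_{ij'}\} \ge u^{\baa}_{ij}$,
where the first inequality invokes the fact that the concave envelope of such a discrete function is \emph{non-decreasing} (Lemma 5 of \cite{he2021new}). You instead prove the nontrivial grid-point inequality at $k=j$ directly, by writing $\baa_{ij}$ as a convex combination of $\baa_{ij'}$ and the right endpoint and combining concavity with the terminal-dominance fact $u^{\baa}_{i|\baa_i|}\ge u^{\baa}_{ij}$. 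These are two packagings of the same structural ingredient: the envelope is non-decreasing precisely \emph{because} the right-most value dominates all others, so your convex-combination argument is essentially an inlined, self-contained proof of the monotonicity that the paper outsources to an external lemma. What the paper's route buys is brevity; what yours buys is that the proof stands on its own, making explicit which hypothesis does the work. One point to tighten: terminal dominance is not among the lemma's stated hypotheses; it comes from the standing assumption that $(\baa,u^{\baa})$ lies in the underestimator polytope $\PPolytope$ (the ordering constraints~(\ref{eq:ordered-oa}), inherited through the lifting of Lemma~\ref{lemma:lifta}), so it should be cited as such rather than derived from the informal reading ``$u^{\baa}_{i|\baa_i|}=f_i$.'' The paper's appeal to Lemma 5 of \cite{he2021new} carries exactly the same implicit dependence, so this is not a gap. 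Finally, your closing observation about the hypothesis $\baa_{ij'}\ge u^{\baa}_{ij}$ --- that it is not needed for the envelope inequality itself but only to keep $u'^{\baa}_{ij'}\le \baa_{ij'}$, i.e., to keep $u'^{\baa}$ admissible for the downstream envelope machinery in the proof of Lemma~\ref{lemma:improvement-bdd} --- is accurate, and indeed the paper's own chain never uses that hypothesis either.
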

\begin{comment}
Let $j' < j$, $j\not= 0$, and $\baa_{ij'}\ge u_{ij}$. Define $u'^{\baa}$ so that 
$u'^{\baa}_{ij} = a_{i0}-\epsilon$ if $j<|\baa_i|$, $u'^{\baa}_{i|\baa_i|} = u^{\baa}_{i|\baa_i|}$, $u'^{\baa}_{ij'} = \max\{u^{\baa}_{ij},u^{\baa}_{ij'}\}$.
We argue that $\conc_{\xi_{i\baa}}(\cdot,u'^\baa)\ge \conc_{\xi_{i\baa}}(\cdot,u^\baa)$.
\end{comment}
\begin{proof}
Let $h'(\cdot) = \conc_{\xi_{i\baa}}(\cdot,u'^\baa)$ and $h(\cdot) = \conc_{\xi_{i\baa}}(\cdot,u^\baa)$. It suffices to show that  $h'(\baa_{ij})\ge u^\baa_{ij}$ and $h'(\baa_{ij'})\ge u^\baa_{ij'}$.  These follow from $h'(\baa_{ij}) \ge h'(\baa_{ij'})\ge u'(\baa_{ij'}) =  \max\{\max\{u^{\baa}_{ij},u^{\baa}_{ij'}\}\ge u^\baa_{ij}$, where the first inequality is because $h'(\cdot)$ is non-decreasing by Lemma 5 of~\cite{he2021new}, the second inequality is by concave envelope construction, and the equality is by definition. \qed
\end{proof}

Now, we prove Lemma~\ref{lemma:improvement-bdd}. We may assume without loss of generality that $a$ and $a'$ are strictly increasing. If not, let $j$ be such that $a_{ij} = a_{i,j+1}$. We assume that $u_{i,j+2} = \max\{u_{ij},u_{ij+1}\}$ since $\max\{u_{ij}(x),u_{i,j+1}(x)\}$ is a valid underestimator for $f_i(x)$. Then, the inequalities for $Q$ can be written as $(s_{ij}-s_{i,j-1})(a_{i,j+1}-a_{ij}) \ge (s_{i,j+1}-s_{ij})(a_{ij}-a_{i,j-1})$ and $(s_{i,j+1}-s_{ij})(a_{i,j+2}-a_{ij+1}) \ge (s_{i,j+2}-s_{i,j+1})(a_{i,j+1}-a_{ij})$. Consider the system where $u_{ij}(x)$ and $u_{i,j+1}(x)$ were not present. In other words, let $(\tilde{a},\tilde{u})$ be defined so that $\tilde{a}_i = a_{iK_i}$ and $\tilde{u}_i=u_{iK_i}$, where $K_i = \{0,\ldots,n\}\backslash \{j,j+1\}$. We let $(\tilde{a}_{i'}, \tilde{u}_{i'}) = (a_{i'},u_{i'})$ for $i'\ne i$. Then, we can lift any feasible point $\tilde{s} \in Q_{\tilde{a}}$ to one in $s\in Q_{a}$ by defining $s_{ij'}=\tilde{s}_{ij'}$ for $j' \le j$, $s_{i,j+1} =\tilde{s}_{ij}$, and $s_{ij'} = \tilde{s}_{i,j'-2}$ for $j' \ge j+2$. Moreover, if $\tilde{s}_{ij'} \ge \tilde{u}_{ij'}$ for $0\le j'\le n-2$, $\tilde{s}_{i0}=\tilde{u}_{i0}$, and $\tilde{s}_{i,n-2}=\tilde{u}_{i,n-2}$ then it follows easily that $s_{ij'}\ge u_{ij'}$ for $0\le j'\le n$, $s_{i0}=u_{i0}$, and $s_{in}=\tilde{s}_{i,n-2} = \tilde{u}_{i,n-2} = u_{in}$. Therefore, $s\in Q_a$.  Moreover, if $s\in Q_a$, it follows easily by setting $\tilde{s}_i=s_{iK_i}$ that $\tilde{s}\in Q_{\tilde{a}}$. In other words, the inequalities obtained with $a$, when projected, yield the same relaxation as if we had used $\tilde{a}$ instead. Since $\tilde{a}$ satisfies at least one less equality than $a$ (even if the procedure required adding $\max\{u_{ij},u_{i,j+1}\}$ as a new underestimator), it follows, by induction, that we can remove repetitions from $a$ without affecting the relaxation. The argument is similar for the case when $a'$ is not strictly increasing.

We construct $\baa$ by merge sorting $a$ and $a'$ and removing any replicates. Then, we lift $(a,u)$ (resp. $(a',u)$) to $(\baa, u^{a})$ (resp. $(\baa, u^{a'})$) as in Lemma~\ref{lemma:lifta}. Let $s^a_{ij} = \conc(\xi_{ia_i})(a_{ij},u_{ij})$, $s^{a'}_{ij} = \conc(\xi_{i{a'}_i})({a'}_{ij},u_{ij})$, $s^1_{ij} = \conc(\xi_{i\baa_i})(\baa_{ij},u^a_{ij})$, and $s^2_{ij} = \conc(\xi_{i\baa_i})(\baa_{ij},u^{a'}_{ij})$.
It follows from Lemma~\ref{lemma:lifta} that $\conc_{Q_\baa}(\phi)(s^1) = \conc_{Q_a}(\phi)(s^a)$ and $\conc_{Q_\baa}(\phi)(s^2) = \conc_{Q_{a'}}(\phi)(s^{a'})$. Now, we show that $s^1\le s^2$.  We carry out the following procedure. Intialize $k=0$, $a^0=a$, and let $u^0$ be obtained by lifting $(a,u)$ to $(\baa,u^0)$ as in Lemma~\ref{lemma:lifta}. Let $J^k=\{\bar{j}\mid a'_{i\bar{j}} < a^k_{i\bar{j}}\}$. As long as $J^k$ is not empty, let $j''$ be the smallest index in $J^k$. Then, there exist $j,j'$ with $j' < j$ and $j\ne 0$ such that $\baa_{ij} = a^k_{ij''}$ and $\baa_{ij'}=a^k_{ij''}$. We have $u^k_{ij}=u_{ij''}\le a'_{ij''} = \baa_{ij'}$. Use the construction in Lemma~\ref{lemma:moveu} to obtain $u^{k+1}$ and observe that $\conc(\xi_{i\baa_i})(\cdot,u^{k+1})\ge \conc(\xi_{i\baa_i})(\cdot,u^{k})$. Let $a^{k+1}_{i\bar{j}} = a^k_{i\bar{j}}$ for $j\ne j''$ and $a^{k+1}_{ij''} = a'_{ij''}$, increase $k$ by $1$ and iterate. Since, $|J^{k+1}|<|J^{k}|$ because $j''\in J^{K}\backslash J^{k+1}$, it follows that the procedure terminates in $t$ iterations. Moreover, $s^1_i$ (resp. $s^2_i$) is obtained by evaluating $\conc(\xi_{i\baa_i})(\cdot,u^{0})$ (resp. $\conc(\xi_{i\baa_i})(\cdot,u^{t})$) at $\baa_i$. Therefore, $s^1\le s^2$. Now, 
\begin{equation*}
    \conc_{Q_a}(\phi)(s^a) =  \conc_{Q_\baa}(\phi)(s^1) \le \conc_{Q_\baa}(\phi)(s^2) = \conc_{Q_{a'}}(\phi)(s^{a'}),
\end{equation*}
where the inequality follows from Proposition~8 in \cite{he2021new} and $s^1\le s^2$.

---------------------------------------
\end{OldVersion}

\newcommand{\SSb}{\tilde{S}}
\newcommand{\SSa}{S}
 We may assume without loss of generality that $a$ and $a'$ are strictly increasing. We treat $a'$ is a vector of values rather than a function of $\delta$ because our arguments will assume that $\delta$ is fixed at binary values. We provide the argument for $a'$, which is similar to that for $a$. If entries in $a'$ repeat, let $(i,j)$ be the lexicographically smallest pair such that $a'_{ij} = a'_{i,j+1}$. Without loss of generality and, for ease of notation, we assume that $i=1$. Define
\begin{equation}\label{eq:DCRLemma+}
\SSa:=\left\{ (x, \phi, s, z, \delta) \left| \;
\begin{aligned}
&\phi \leq \phi(s_{1n},\ldots,s_{dn}),\ s= G(z,\delta),\ (z,\delta ) \in (\ref{eq:Inc-1}) \\
&u(x) \leq s,\ (x,s_{\cdot n}) \in W	
\end{aligned}
\right.
\right\},
\end{equation}
where $W$ is a convex outer-approximation of $\{(x,s_{\cdot n})\mid s_{\cdot n} = f(x), x\in X\}$.  The set $\SSb$ is now obtained in a manner similar to $\SSa$ except that the construction is performed as if the underestimators $u_{ij}(x)$ and $u_{i,j+1}(x)$ are replaced with an underestimator $\max\{u_{ij}(x),u_{i,j+1}(x)\}$ which has a bound of $a'_{ij}$. Observe that this replacement can be done because $\max\{u_{ij}(x),u_{i,j+1}(x)\}$ is a valid underestimator of $f_i(x)$ and $\max\{u_{ij}(x), u_{i,j+1}(x)\}\le a'_{i,j+1}=a'_{ij}$, {\it i.e.}\/, $\bar{a}$ is obtained from $a'$ by projecting out $a'_{1,j+1}$. Therefore, $\SSb$ can be used to construct MICP reformulations as in Theorem~\ref{them:DCR+}. Let $t = \arg\max\{t'\mid \tau(i,t') < j+1\}$ and $\bar{a}_{i'j'} = a'_{i'j'}$ for $i'> 1$ or $j'\le j$ and $\bar{a}_{1j'}=a'_{1,j'+1}$ for $j'>j$. Similarly, let $\tau'(i',t') = \tau(i',t')$ for $i'>1$, $\tau'(1,t')=\tau(1,t')$ for $t'\le t$, and $\tau'(1,t')=\tau(1,t')-1$ for $t' > t$. Then, $\SSb$ is defined as follows:
\begin{equation}\label{eq:DCRLemmaRemoveReplicate}
\SSb:=\left\{ (x, \phi, s, z, \delta) \left| \;
\begin{aligned}
&\phi \leq \phi(s_{1,n-1},\ldots,s_{dn}),\ s= G'(z,\delta),\\
&1\ge z_{i'1}\ge\cdots\ge z_{i'n}\ge 0, \forall i' > 1\\ 
&1\ge z_{11}\ge\cdots\ge z_{1,n-1}\ge 0\\ &z_{i'\tau'(i',t')}\le \delta_{i't'}\le z_{i'\tau'(i',t')+1} \forall i', t=1,\ldots,l_{i'}-1\\
&u_{i'j'}(x) \leq s_{i'j'},\forall i' > 1, 0\le j'\le n\\
&u_{1j'}(x) \leq s_{1j'},\text{ for } j' = 0,\ldots,j,\\
&u_{1,j'+1}(x) \leq s_{1j'},\text{ for } j'=j,\ldots,n-1\\
&(x,s_{1,n-1},s_{2n},\ldots,s_{dn}) \in W	
\end{aligned}
\right.
\right\},
\end{equation}
where $G'_{i'j'}(z,\delta)= \bar{a}_{i'0} + \sum_{j''=1}^{j'} z_{ij''}(\bar{a}_{ij''}-\bar{a}_{i,j''-1})$. 
The main changes to the formulation of $\SSb$ relative to that for $\SSa$ are that (i) the bound $a'_{i,j+1}$ and the corresponding variable $s_{i,j+1}$ are dropped along with associated constraints, (ii) the constraint $s_{ij}\ge u_{i,j+1}(x)$ is added, (iii) $(x,s_{\cdot n})\in W$ is replaced with $(x,s_{1n-1},s_{2n},\ldots,s_{dn})\in W$, and (iv) $\phi \leq \phi(s_{1n},\ldots,s_{dn})$ is replaced with $\phi \leq \phi(s_{1,n-1},s_{2n},\ldots,s_{dn})$.
We argue that the relaxation \eqref{eq:DCR+} created using $\SSa$ projects to that created using $\SSb$. 
Let $(\tilde{x},\tilde{\phi},\tilde{s},\tilde{z},\tilde{\delta})\in \SSa$. Observe that, since $\tilde{s}=Z^{-1}\tilde{z}$, $a'_{ij}=a'_{i,j+1}$, and $\tilde{s}\ge \tilde{u}$, it follows that $\tilde{s}_{ij}=\tilde{s}_{i,j+1}\ge \max\{\tilde{u}_{ij}(x),\tilde{u}_{i,j+1}\}$. Let $p=(\tilde{x},\tilde{\phi},s',z',\tilde{\delta})$ be the vector obtained by projecting variables $(z_{i,j+1},s_{i,j+1})$ out of $(\tilde{x},\tilde{\phi},\tilde{s},\tilde{z},\tilde{\delta})$. Then, it is easy to see that $p\in \SSb$. In particular, since $s'_{i'n} = \tilde{s}_{i'n}$ for all $i' > 1$ and $s'_{1,n-1}=\tilde{s}_{1n}$, we have $\phi(s'_{1,n-1},s'_{2n},\ldots,s'_{dn}) = \phi(\tilde{s}_{1n},\tilde{s}_{2n},\ldots,\tilde{s}_{dn})\le \phi'$. On the other hand, let $(x',\phi',s',z',\delta')\in \SSb$. Let $K_i = \{0,\ldots,n\}\backslash\{j+1\}$ and recall that $t = \arg\max\{t'\mid \tau(i,t') < j+1\}$. Then, we define $(x',\phi',\tilde{s},\tilde{z},\delta')\in \SSa$, where $\tilde{z}_{i'}=z'_{i'}$ and $\tilde{s}_{i'} = s'_{i'}$ for $i'>1$, $\tilde{z}_{i,j+1}=\min\{\delta'_{it},z'_{ij}\}$, and $\tilde{s}_{i,j+1}=s'_{ij}$. Again,  $\phi'\le \phi(s'_{1,n-1},s'_{2n},\ldots,s'_{dn}) = \phi(\tilde{s}_{1,n-1},\tilde{s}_{2n},\ldots,\tilde{s}_{dn})$. It follows that  $\proj_{s,\phi}\SSa$ is affinely isomporhic to $\proj_{s,\phi}\SSb$ where the forward mapping is by projecting out $s_{i,j+1}$ and the reverse mapping is by setting. Observe that $\bar{a}$ has strictly fewer repetitions than in $a'$ which in turn has no more than $nd$ repetitions. Then, by induction, it follows that $\SSa$ models the same relaxation as if entries in $a'$ do not repeat and a maximum of the corresponding underestimators is used instead. Here onwards, whenever entries in the bound vector repeat, we will replace them with a single entry. In particular, if $a_{ij}=\cdots=a_{ij'}$, then we replace $(a_{ij''},u_{ij''})_{j''=j}^{j'}$ with $(a_{ij},\max_{j\le j''\le j'} u_{ij})$ instead.

Observe that the concave envelope of $\ephi$ at $u$ is computed by lifting $u$ to $s$ in $Q := \{\bar{s}\mid \bar{s}=Z(a)^{-1}(z), z\in \Delta\}$, where $Z(a)^{-1}$ maps $(z_1,\ldots,z_d)$ to $(\bar{s}_1,\ldots,\bar{s}_d)$ as below:
\begin{equation}\label{eq:Za_inv}
\bar{s}_{ij} = a_{i0}z_{i0}  + \sum_{k = 1}^j (a_{ ik} - a_{i k-1})z_{ik} \quad \text{for } j = 0, \ldots, n. 
\end{equation}
Similarly, the concave envelope of $\ephi$ at $u'$ is computed by lifting $u'$ to $s'$ in $Q':=\{\bar{s}\mid \bar{s}=Z(a')^{-1}(z), z\in \Delta\}$. Since $Q\ne Q'$, the comparison is not directly available. Instead, in the next results, we lift the vectors $(a,u)$ and $(a',u')$ so that they share the same bound vector $\breve{a}$. We remark that the underestimator $u'_{ij}$ is the same as $u_{ij}$ for all $j$ such that $a'_{ij} > a'_{i0}$.
This lifting allows us to compare, in Lemma~\ref{lemma:improvement-bdd} below, the concave envelopes over $Q$ and $Q'$ and show that replacing $(a,u)$ with $(a',u')$ helps tighten the relaxation. 

The proof of Lemma~\ref{lemma:improvement-bdd} requires that we can embed the concave envelopes over $Q$ and $Q'$ on faces of a higher-dimensional simplotope obtained using $\breve{a}$. This is achieved in Lemma~\ref{lemma:iso} by modifying the proof of Lemma 8 in~\cite{he2021new}. 
%The purpose of this lemma is to show that the hypograph of the concave envelope over a face of the simplotope $Q$ is affinely isomorphic to the concave envelope obtained by limiting the set of bounds. We will use this to relate the concave envelope constructed over $\breve{a}$ with that constructed using $a$ and $a'$. 
\begin{lemma}\label{lemma:iso}
Consider the Cartesian product of simplices $Q$ defined as in~(\ref{eq:Q-V}) with a vector $a \in \R^{d \times (n+1)}$. For  $K = (K_1,\ldots,K_d)$ where $K_i \subseteq\{0, \ldots, n \}$, let $F_{K}$ be the face of $Q$ defined as $F_{K} := \prod_{i=1}^d\conv\bigl(\{v_{ij}\bigm| j \in K_i \} \bigr)$ where $v_{ij} =(a_{i0}, \ldots,a_{ij-1}, a_{ij}, \ldots, a_{ij})$, and let $Q^K$ be the Cartesian product of simplices defined with $a_K: = (a_{1K_1}, \ldots, a_{dK_d})$, where $a_{iK_i} = (a_{ij})_{j \in K_i}$.  For each $i$, let $k^*_i$  denote the maximum  element in $K_i$. Let $\ephi_Q(s) = \phi(s_{1n},\ldots,s_{dn})$ and $\ephi_{Q_k}(s_K) = \phi(s_{1k_1^*},\ldots,s_{dk_d^*})$. Then, the hypograph, denoted as $\Phi^{F_K}$, of $\conc_{F_K}(\ephi_Q)$ is affinely isomorphic to that, denoted as $\Phi^{Q_K}$, of  $\conc_{Q_K}(\ephi_{Q_K})$.
\end{lemma}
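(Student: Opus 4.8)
The plan is to realize both concave envelopes as convex hulls of hypographs and to transport one onto the other through an affine bijection between the face $F_K$ and the reduced simplotope $Q^K$, following and lightly modifying the argument of Lemma 8 in~\cite{he2021new}.

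First I would set up, for each $i$, the affine map $T_i$ carrying the simplex $Q^K_i$ onto the face $\conv(\{v_{ij}\mid j\in K_i\})$ of $Q_i$. Writing $K_i=\{j_0<\cdots<j_{m_i}\}$, the reduced bound vector is $a_{iK_i}=(a_{ij_0},\ldots,a_{ij_{m_i}})$ and $Q^K_i$ has vertices $v^K_{i\ell}=(a_{ij_0},\ldots,a_{ij_{\ell-1}},a_{ij_\ell},\ldots,a_{ij_\ell})\in\R^{m_i+1}$. I define $T_i$ to be the unique affine map sending $v^K_{i\ell}\mapsto v_{ij_\ell}$; this is well defined and a bijection onto its image because the vertices of a simplex are affinely independent, so any subset $\{v_{ij}\mid j\in K_i\}$ of $\vertex(Q_i)$ is affinely independent and spans a face. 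Setting $T=\prod_{i=1}^d T_i$, I would verify that $T$ is an affine isomorphism of $Q^K$ onto $F_K$, using that a product of faces is a face and that $T$ respects the product structure.

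The crux is to check the compatibility of the two objectives, namely that $\bar\phi_Q\circ T=\bar\phi_{Q_K}$ on all of $Q^K$, not merely at vertices. For a point $s_K=\prod_i\sum_\ell\mu_{i\ell}v^K_{i\ell}\in Q^K$, the last coordinate of its $i$-th block equals $\sum_\ell\mu_{i\ell}a_{ij_\ell}$, while the $n$-th coordinate of $T_i$ applied to that block equals $\sum_\ell\mu_{i\ell}(v_{ij_\ell})_n=\sum_\ell\mu_{i\ell}a_{ij_\ell}$, since $(v_{ij})_n=a_{ij}$. Because $\min(n,j)=j$ and $\min(k^*_i,j_\ell)=j_\ell$, these agree block by block, whence $\bar\phi_Q(T(s_K))=\phi\bigl((\sum_\ell\mu_{i\ell}a_{ij_\ell})_{i=1}^d\bigr)=\bar\phi_{Q_K}(s_K)$. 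This identity is the only place where the precise combinatorics of $v_{ij}$ and the choice $k^*_i=\max K_i$ enter, and I expect it to be the main obstacle — chiefly the bookkeeping of the index correspondence $j\leftrightarrow j_\ell$ and confirming the last-coordinate formula at interior points rather than only at vertices.

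Finally I would lift $T$ to $\tilde T(s_K,\mu)=(T(s_K),\mu)$ and conclude. Since the hypograph of a concave envelope over a domain is the convex hull of the hypograph of the function over that domain, the identity $\bar\phi_Q\circ T=\bar\phi_{Q_K}$ gives $\tilde T\bigl(\hypo(\bar\phi_{Q_K})\bigr)=\hypo(\bar\phi_Q|_{F_K})$; taking convex hulls and using that the affine bijection $\tilde T$ commutes with convexification yields $\Phi^{F_K}=\tilde T(\Phi^{Q_K})$. As $\tilde T$ is an affine isomorphism, this establishes that $\Phi^{F_K}$ and $\Phi^{Q_K}$ are affinely isomorphic, as claimed. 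The appeal to ``convexification commutes with affine maps'' is the same fact already invoked elsewhere in the paper and can be cited directly.
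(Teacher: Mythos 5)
Your proof is correct and follows essentially the same route as the paper's: both construct an affine bijection between $Q^K$ and the face $F_K$, observe that $\ephi_Q$ and $\ephi_{Q_K}$ correspond under it (because on $F_K$ the $n$-th coordinate of each block agrees with the $k_i^*$-th coordinate), and then push hypographs through the map using the facts that the envelope's hypograph is the convex hull of the function's hypograph and that affine bijections commute with convexification. The only difference is presentational: the paper writes the map via an explicit coordinate-interpolation formula, its $A_K$ in~(\ref{eq:Q-F-iso}), whereas you define the same map as the unique affine extension of the vertex correspondence $v^K_{i\ell}\mapsto v_{ij_\ell}$, which is equivalent since the two affine maps agree on the affinely independent vertices of $Q^K$.
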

\begin{compositeproof}
 	Consider an affine map $A_K: s_K \mapsto t$, where $t = (t_1, \ldots, t_d)$ such that 
\begin{equation}\label{eq:Q-F-iso}
t_{ij} = \begin{cases}
 s_{ij} & j \in K_i\backslash \{0,n\} \\
	a_{ij}  & j =0 \\
	s_{ik_i^*} & j = n \\
	(1-\gamma_{ij})s_{il(i,j)} + \gamma_{ij}s_{ir(i,j)} & j \notin K_i \cup \{0\} \cup \{n\} ,
\end{cases}
\end{equation}
where $l(i,j) = \max\{j' \in K_i \cup \{0\} \cup \{n\} \mid j'\leq j\}$, $r(i,j) = \min \{j' \in  K_i \cup \{0\} \cup \{n\} \mid j' \geq j \}$, and $\gamma_{ij} = (a_{ij} - a_{il(i,j)})/(a_{ir(i,j)} - a_{il(i,j)})$. It follows from the second statement of Proposition 5 in~\cite{he2021new} that $t_i$ belongs to the face $F_{K_i\cup\{0\}\cup\{n\}}$ of $Q_i$. Moreover, if $0\not\in K_i$ (resp. $n_i \not\in K_i$), we have
$\frac{t_{i1}-t_{i0}}{a_{i1}-a_{i0}}=1$ (resp. $\frac{t_{in}-t_{in-1}}{a_{in}-a_{in-1}}=0$), showing that $t$ belongs to the face $F_K$ of $Q$. The inverse of $A_K$ is defined as $s \mapsto s_K$ and maps the face $F_K$ into the polytope $Q_{K}$. This is because, for any $s_i \in \vertex(F_{iK_i})$, there exists a $k \in K_i$ such that $s_{ij} = \min\{a_{ij}, a_{ik}\}$ for all $j \in K_i$. Thus, $s_{iK_i} \in Q_i$. Consider the affine transformation, $\Pi_K$, defined as $(s_K, \phi) \mapsto \bigl(A_K(s_K), \phi\bigr)$ and its inverse, $\Pi^{-1}_K$, $(s,\phi) \mapsto (s_K, \phi)$. Note that, in calling the projection operation as an inverse of $\Pi_K$, we are interpreting $\Pi_K$ as a transformation into the affine hull of $\Phi^{F_K}$. For all $s\in F_K$, since $s_{in}=s_{ik_{i}^*}$, it follows that $\ephi_Q(s) = \ephi_{Q_k}(s_K)$. Therefore, $(s,\phi)$ belongs to the hypograph of $\ephi_Q$ over $F_K$ if and only if $(s_K,\phi)$ is in the hypograph of $\ephi_{Q_k}$. In other words, $\Pi_K \Phi^{Q_K} = \Phi^{F_K}$ and $\Pi^{-1}_K\Phi^{F_K} = \Phi^{Q_K}$. Therefore, $\conv(\Phi^{F_K}) = \conv(\Pi_K\Phi^{Q_K}) = \Pi_K\conv(\Phi^{Q_K})$ and, similarly, $\conv(\Phi^{Q_K}) = \Pi^{-1}_K\conv(\Phi^{F_K})$, completing the proof. \Halmos
\end{compositeproof}

The next lemma describes the actual lifting operation, whose purpose is to add bounds and underestimators in \eqref{eq:ordered-oa} so that the concave envelope of $\phi$ obtained with fewer bounds resides as the concave envelope over a face in the larger system. Let $\PPolytope(n') = \bigl\{(a',u')\bigm| u'_{i0} = a'_{i0}\le \cdots\le a'_{in'_{i}}, u'_{ij}\le \min\{u'_{i{n'_i}}, a'_{ij}\}\forall (i,j) \bigr\}$. We simply write $\PPolytope$ when $n'_i = n$ for all $i$. We denote the discrete univariate function that maps $a_{ij}$ to $u_{ij}$ for $j=1,\ldots,n_i$ as $\xi_{i,a_i}(\cdot;u_i)$.
\begin{lemma}\label{lemma:lifta}
Let $\baa_{iK_i} = \ba_i$, $\bn_i=|K_i|-1$, and $\ban_i=|\baa_i|-1$. Assume $(\ba,u^{\ba})\in \PPolytope(\bar{n})$ and $(\baa,u^{\baa})\in \PPolytope(\breve{n})$ and that $\ba$ and $\baa$ are strictly increasing. We lift $(\ba,u^{\ba})$ to $(\baa,u^{\baa})$ by defining $u^{\baa}_{ij}$ as follows:
\begin{equation}\label{eq:defineu}
u^{\baa}_{ij} = \left\{\begin{alignedat}{2}
&u^{\ba}_{i,r-1} &\quad& j\text{ is the $r^{th}$ largest entry in } K_i\\
&\baa_{i0}-\epsilon && j\not\in K_i, j > 0, j < \ban_i \\
&\baa_{i0} && j\not\in K_i, j=0\\
&u^{\ba}_{i,\bn_i} &&j\not\in K_i, j=\ban_i.
\end{alignedat}\right.  
\end{equation}
Let $Q_{\ba}$ (resp. $Q_{\baa}$) be the simplex defined with $\ba$ (resp. $\baa$). Let $s^{\baa}_i = \conc(\xi_{i,\baa})(\cdot;u^{\baa}_i)$ and $s^{\ba}_i = \conc(\xi_{i,\ba})(\cdot;u^{\ba}_i)$, where $\xi_{i,\ba}(y;u^{\ba}_i)=u^{\ba}_{ij}$ if $y=\ba_{ij}$. Then, $\conc_{Q_{\baa}}(\ephi)(s^{\baa}) =
\conc_{Q_{\ba}}(\ephi)(s^{\ba})$.
\end{lemma}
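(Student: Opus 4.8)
The plan is to reduce the claimed equality of $d$-dimensional concave envelopes to a coordinatewise one-dimensional envelope computation, and then to transport the result through the face isomorphism of Lemma~\ref{lemma:iso}. First I would establish, for each coordinate $i$, an explicit formula for the lifted one-dimensional envelope $\conc(\xi_{i,\baa})(\cdot;u^{\baa}_i)$: I claim it equals the extension $h_i$ of $\conc(\xi_{i,\ba})(\cdot;u^{\ba}_i)$ that is the identity below $\ba_{i0}$, agrees with the original envelope on $[\ba_{i0},\ba_{i\bn_i}]$, and is constant at $u^{\ba}_{i\bn_i}$ above $\ba_{i\bn_i}$. The purpose of spreading the lifted data as in~\eqref{eq:defineu} is precisely that this extension is recovered.

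To prove $\conc(\xi_{i,\baa})(\cdot;u^{\baa}_i)=h_i$, I would first verify that $h_i$ is concave. This follows from the slope chain $1 \ge (s^{\ba}_{i1}-s^{\ba}_{i0})/(\ba_{i1}-\ba_{i0}) \ge \cdots \ge (s^{\ba}_{i\bn_i}-s^{\ba}_{i,\bn_i-1})/(\ba_{i,\bn_i}-\ba_{i,\bn_i-1}) \ge 0$, where the leftmost bound uses $u^{\ba}_{i1}\le \ba_{i1}$ together with $s^{\ba}_{i0}=\ba_{i0}$, the interior inequalities hold by concavity of the envelope, and the rightmost bound holds because the envelope is non-decreasing (Lemma~5 of~\cite{he2021new}); the slopes $1$ and $0$ of the two flanking pieces then fit monotonically at the ends. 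Next I would check $h_i(\baa_{ij})\ge u^{\baa}_{ij}$ for every $j$, distinguishing the four cases in~\eqref{eq:defineu}: the inequality is tight at the images of $K_i$ by construction, holds at the endpoints by the flanking pieces, and is strict at the filler indices since there $u^{\baa}_{ij}=\baa_{i0}-\epsilon$ while $h_i\ge \baa_{i0}$. Because $h_i$ is concave and dominates all of the lifted data, $\conc(\xi_{i,\baa})(\cdot;u^{\baa}_i)\le h_i$; and since the extreme points of the hypograph of $h_i$ are exactly the retained data points (the filler points lie strictly below $h_i$ and are therefore inactive), the reverse inequality also holds, giving equality.

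From this identity I would read off two facts. Evaluating at the points of $K_i$ gives $s^{\baa}_{iK_i}=s^{\ba}_i$, since $h_i(\baa_{ik_r})=h_i(\ba_{ir})=s^{\ba}_{ir}$. Moreover, by Proposition~5 of~\cite{he2021new}, $s^{\baa}_i$ lies on the face $F_{K_i\cup\{0\}\cup\{\ban_i\}}$ of $Q_{\baa}$, and the two flanking slopes ($1$ on the identity piece and $0$ on the constant piece) force the boundary ratios to collapse whenever $0\notin K_i$ or $\ban_i\notin K_i$, so that in fact $s^{\baa}$ lies on the face $F_{K}=\prod_{i}F_{K_i}$. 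Lemma~\ref{lemma:iso} then supplies an affine isomorphism $\Pi_K$ carrying the hypograph of $\conc_{Q_{\ba}}(\ephi)$ onto that of $\conc_{F_K}(\ephi)$, whose inverse maps $s^{\baa}$ to $s^{\baa}_{iK_i}=s^{\ba}$. Since $F_K$ is a face of $Q_{\baa}$, we have $\conc_{Q_{\baa}}(\ephi)(s^{\baa})=\conc_{F_K}(\ephi)(s^{\baa})$, and the isomorphism equates the latter with $\conc_{Q_{\ba}}(\ephi)(s^{\ba})$, which is the desired conclusion.

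The main obstacle I anticipate is the one-dimensional envelope identity $\conc(\xi_{i,\baa})(\cdot;u^{\baa}_i)=h_i$, and in particular the bookkeeping of the boundary cases where $0$ or $\ban_i$ fails to belong to $K_i$: there the identity and constant flanking pieces of $h_i$ must be shown to be simultaneously concavity-compatible and dominating, and the role of the slack $\epsilon$ in rendering the filler points strictly interior to the hypograph (hence inactive) must be handled carefully so that inserting them does not perturb the envelope. Once this coordinatewise identity and the resulting membership of $s^{\baa}$ in $F_K$ are secured, transporting the equality through Lemma~\ref{lemma:iso} and the standard fact that the concave envelope over a polytope restricts to the concave envelope over any of its faces is routine.
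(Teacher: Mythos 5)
Your proposal is correct and follows essentially the same route as the paper's proof: the same three-piece extension $h_i$, the same concavity-and-domination argument (with the $\epsilon$-slack rendering the filler points inactive) to identify $\conc(\xi_{i,\baa})(\cdot;u^{\baa}_i)$ with $h_i$, and the same conclusion that $s^{\baa}$ lies on the face $F_K$ so that Lemma~\ref{lemma:iso} transports the envelope value. The only cosmetic difference is that you spell out the boundary-slope collapse explicitly in this proof, whereas the paper delegates that bookkeeping to the proof of Lemma~\ref{lemma:iso} via the map $A_K$.
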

\begin{proof}
\begin{comment}
Let $\baa_{iK_i} = \ba_i$. We lift \eqref{eq:ordered-oa} defined with $(a,u)=(\ba,u^{\ba})$ to that with $(a,u)=(\baa,u^{\baa})$ by defining $u^{\baa}_{ij}$ as follows:
\begin{equation*}
u^{\baa}_{ij} = \left\{\begin{alignedat}{2}
&u^{\ba}_{ij} &\quad& j\in K_i\\
&\baa_{i0}-\epsilon && j\not\in K_i, j > 0, j < \ban_i\\
&\baa_{i0} && j\not\in K_i, j=0\\
&u^{\ba}_{i\bn_i} &&j\not\in K_i, j=\ban_i.
\end{alignedat}\right.  
\end{equation*}
Let $Q_{\ba}$ (resp. $Q_{\baa}$) be the simplex defined with $\ba$ (resp. $\baa$). Let $s^{\baa}_i = \conc(\xi_{i\baa})(\cdot,u^{\baa}_i)$ and $s^{\ba}_i = \conc(\xi_{i\ba})(\cdot;u^{\ba})$, where $\xi_{i,\ba}(y;u^{\ba})=u^{\ba}_{ij}$ if $y=\ba_{ij}$.
\end{comment}
First, we show that $\conc(\xi_{i,\baa_i})(y;u^\baa_i) = h_i(y)$ for every $y \in [\baa_{i0}, \baa_{i\breve{n}_i}]$, where
%To see this, observe that $s^{\baa}_{iK_i} = s^{\ba}_i$. 
%Consider any $j$ such that $\baa_{ij}\in [\ba_{i0},\ba_{in}]$. Let 
\begin{equation}\label{eq:hy}
    h_i(y) = \left\{\begin{alignedat}{2}
    &y &\quad& y\le \ba_{i0}\\
    &\conc(\xi_{i,\ba})(y;u^{\ba}_i) && y\in [\ba_{i0},\ba_{i\bn_i}]\\
    &u^{\ba}_{i\bn_i} && y\ge \ba_{i\bn_i}.\\
    \end{alignedat}\right.
\end{equation}
To begin, we show that $h_i(y)$ is concave and $h_i(\baa_{ij})\ge u^{\baa}_{ij}$, where the inequality is strict if $j\not\in K_i\cup\{0\}\cup\{\ban_i \}$. The concavity of $h_i$ follows since $1\ge \frac{s^{\ba}_{i1}-s^{\ba}_{i0}}{\ba_{i1}-\ba_{i0}}\ge \frac{s^{\ba}_{i\bn_i}-s^{\ba}_{i,\bn_i-1}}{\ba_{i,\bn_i}-\ba_{i,\bn_i-1}}\ge 0$. We now show that $h_i(\baa_{ij})\ge u^{\baa}_{ij}$. If $j\in K_i$, the result follows since $h_i(\baa_{ij} ) = \conc(\xi_{i,\ba})(\baa_{ij} ;u^{\ba}_i)$ and the concave envelope construction used a point $(\baa_{ij}, u^{\baa}_{ij})$. Now, let $j\not\in K_i$. If there does not exist $j'\in K_i$ such that $j' < j$, then $h_i(\baa_{ij}) = \baa_{ij}\ge \baa_{i0}$, where the last inequality is strict if $j > 0$. Now, assume that there exists $j'\in K_i$ such that $j' < j$ and $j<\ban_i$. Then, $h_i(\baa_{ij})\ge \bar{a}_{i0} > u^{\baa}_{ij}$. Finally, if $j=\ban_i$, we have $h_i(\baa_{ij})=u^{\ba}_{i\bn_i}=u^{\baa}_{ij}$. Clearly, $\conc(\xi_{i,\baa})(\cdot;u^{\baa}_i)\le h_i(\cdot)$ since $h_i(\cdot)$ is concave and overestimates the left-hand-side. Combining this with the observation that $\conc(\xi_{i,\baa})(\cdot,u^{\baa})$ is constructed by including all points that are extremal in the hypograph of $h_i(y)$, it follows that $\conc(\xi_{i,\baa})(\cdot;u^{\baa}_i)= h_i(\cdot)$.

Consequently, $s^{\baa}_{iK_i} = s^{\ba}_i$. Moreover, we can conclude that $s^{\baa} = A_K(s^{\ba})$ and $s^{\ba} = A^{-1}_K(s^{\baa})$, where $A_K$ is as defined in~(\ref{eq:Q-F-iso}).  Therefore,  by Lemma~\ref{lemma:iso}, $s^\baa$ belongs to the face $F_K$ of $Q_{\breve{a}}$, and $\conc_{Q_{\baa}}(\ephi)(s^\baa) = \conc_{F_K}(\ephi)(s^\baa) = \conc_{Q_{\ba}}(\ephi)(s^\ba)$. \Halmos

%
%It follows from Proposition 5 of~\cite{he2021new} that $s^{\baa}_i$ belongs to the face $F_{K_i\cup\{0\}\cup\{\ban_i\}}$ of $Q_{\baa}$. Moreover, if $0\not\in K_i$ (resp. $\ban_i\not\in K_i$), we have
%$\frac{s^{\baa}_{i1}-s^{\baa}_{i0}}{\baa_{i1}-\baa{i0}}=1$ (resp. $\frac{s^{\baa}_{i\bn_i}-s^{\baa}_{i,\bn_i-1}}{\baa_{i\bn_i}-\baa{i,\bn_i-1}}=0$ showing that $s^{\baa}_i$ belongs to the face $F_{K_i}$ of $Q_{\baa}$. However, this face is isomorphic to $Q_{\ba}$. By Lemma 8 of~\cite{he2021new}, the hypographs of the concave envelope are also isomorphic and therefore, $\conc_{Q_{\baa}}(\phi)(s^{\baa}) =
%\conc_{Q_{\ba}}(\phi)(s^{\ba})$.\qed
\end{proof}

Finally, we are ready to compare the concave envelope of $\ephi$ over $Q$ at $u$ with that over $Q'$ at $u'$. 
\begin{lemma}~\label{lemma:improvement-bdd}
Consider vectors $(a,u)$ and $(a',u')$ in  $\PPolytope$. Assume that 
\begin{enumerate}
    \item $a'_{i0}\ge a_{i0}$ and whenever $a'_{ij} > a'_{i0}$, we have $a'_{ij} \le a_{ij}$;
    \item $u'_{ij}=u_{ij}$ for all $j$ such that $a_{ij} > a'_{i0}$ or $j=n$.
\end{enumerate}
Let $Q$ (resp. $Q'$) be the Cartesian product of simplices defined as in~(\ref{eq:Q-V}) with respect to $a$ (resp. $a'$). Then, $\conc_{Q'}(\ephi)(s') \leq \conc_{Q}(\ephi)(s)$, where for $i$ and $j$, $s'_{ij} = \conc(\xi_{i,a'_i})\bigl(a'_{ij};u'_i\bigr)$ and $s_{ij} = \conc(\xi_{i,a_i})\bigl(a_{ij};u_i\bigr)$.
\end{lemma}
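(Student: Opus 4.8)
# Proof Proposal for Lemma~\ref{lemma:improvement-bdd}

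The plan is to reduce the comparison of the two concave envelopes -- one over $Q$ at $u$ and one over $Q'$ at $u'$ -- to a comparison of two lifted vectors residing on faces of a common, higher-dimensional simplotope. The difficulty is that $Q$ and $Q'$ are built from different bound vectors $a$ and $a'$, so $\conc_Q(\ephi)(s)$ and $\conc_{Q'}(\ephi)(s')$ are not directly comparable. First I would form a common bound vector $\breve{a}$ by merge-sorting $a$ and $a'$ coordinatewise (within each block $i$) and removing replicates, so that $\breve{a}_i$ is strictly increasing and both $a_i$ and $a'_i$ appear as subsequences $\breve{a}_{iK_i}=a_i$ and $\breve{a}_{iK'_i}=a'_i$ for index sets $K_i, K'_i$. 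I would then invoke Lemma~\ref{lemma:lifta} twice: once to lift $(a,u)$ to $(\breve{a}, u^a)$ with $u^a$ defined via \eqref{eq:defineu} on $K_i$, and once to lift $(a',u')$ to $(\breve{a}, u^{a'})$ defined via \eqref{eq:defineu} on $K'_i$. Lemma~\ref{lemma:lifta} then gives $\conc_{Q_{\breve a}}(\ephi)(s^1)=\conc_Q(\ephi)(s)$ and $\conc_{Q_{\breve a}}(\ephi)(s^2)=\conc_{Q'}(\ephi)(s')$, where $s^1_i=\conc(\xi_{i,\breve a})(\cdot;u^a_i)$ and $s^2_i=\conc(\xi_{i,\breve a})(\cdot;u^{a'}_i)$ evaluated at $\breve{a}_i$. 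Thus the two quantities to be compared now live over the \emph{same} simplotope $Q_{\breve a}$.

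With both envelopes on $Q_{\breve a}$, the remaining task is to show $s^1 \le s^2$ componentwise with equal last components $s^1_{in}=s^2_{in}$, because then Proposition~8 of~\cite{he2021new} -- that $\conc_{Q_{\breve a}}(\ephi)$ is non-increasing in $s_{ij}$ for $j\ne n$ -- immediately yields $\conc_{Q_{\breve a}}(\ephi)(s^2)\le \conc_{Q_{\breve a}}(\ephi)(s^1)$, i.e.\ $\conc_{Q'}(\ephi)(s')\le \conc_Q(\ephi)(s)$, as desired. To establish $s^1\le s^2$, I would compare the two univariate concave-envelope functions $\conc(\xi_{i,\breve a})(\cdot;u^a_i)$ and $\conc(\xi_{i,\breve a})(\cdot;u^{a'}_i)$ on $[\breve a_{i0},\breve a_{i\breve n_i}]$. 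The key is to verify, using hypotheses (1) and (2), that $u^{a'}_i$ dominates $u^a_i$ after accounting for the lifted $-\infty$-like entries $\breve a_{i0}-\epsilon$. Concretely, for each index $j$ with $\breve a_{ij} > a'_{i0}$, hypothesis (2) gives $u'_{ij}=u_{ij}$, so the two underestimators agree on the ``upper'' portion; while on the ``lower'' portion where $a$-bounds fall below $a'_{i0}$, hypothesis (1) ensures that the corresponding mass can be shifted upward. I expect to need a short argument (in the spirit of an auxiliary lemma like the dropped \texttt{lemma:moveu}) showing that relocating an underestimator value at a smaller bound up to a larger bound, when it is dominated there, only raises the concave envelope.

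The main obstacle will be the bookkeeping in this monotonicity step: carefully tracking how the merge-sort index sets $K_i,K'_i$ interleave and confirming that the lifted underestimators $u^a_i$ and $u^{a'}_i$ are ordered so that their concave envelopes satisfy $\conc(\xi_{i,\breve a})(\cdot;u^a_i)\le \conc(\xi_{i,\breve a})(\cdot;u^{a'}_i)$ pointwise. I would handle this by an iterative argument: starting from $u^a$, I repeatedly identify the smallest index where $a'$ strictly exceeds the current bound, use hypothesis (1) to locate a companion index $j'<j$ with $\breve a_{ij'}\ge$ the underestimator value, and apply the move-up operation to raise the envelope without decreasing it, decreasing a potential like $|\{j: a'_{ij}>\text{current bound}\}|$ at each step so the process terminates. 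Since each step only increases the relevant univariate envelope and preserves the final ($j=n$) component by hypothesis (2) and the last case of \eqref{eq:defineu}, the terminal vector is exactly $s^2$ and satisfies $s^2\ge s^1$ with matching last entries, closing the proof. The equalities $s^1_{in}=s^2_{in}$ follow since $u^a_{i\breve n_i}=u_{in}=u'_{in}=u^{a'}_{i\breve n_i}$ by construction, so the non-increasing-envelope comparison applies cleanly.
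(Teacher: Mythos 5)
Your proposal follows the paper's own proof essentially step for step: merge-sort $a$ and $a'$ into a common vector $\breve a$, lift both pairs via Lemma~\ref{lemma:lifta} so that both envelopes become $\conc_{Q_{\breve a}}(\ephi)$ evaluated at lifted points, show those lifted points are ordered componentwise with equal last coordinates, and finish with the monotonicity of the envelope (Proposition 8 of \cite{he2021new}). The paper performs the componentwise comparison by morphing $(a,u)$ into $(a',u')$ one mismatched coordinate at a time and checking that the lifted univariate envelope $h^r_i(\cdot)$ of \eqref{eq:hy} never decreases, which is the same iterative scheme you sketch.

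Two points in your sketch would fail as written and need repair. First, your auxiliary lemma is stated backwards: ``relocating an underestimator value at a smaller bound up to a larger bound \dots\ only raises the concave envelope'' is false. Since the univariate envelopes here are non-decreasing (Lemma 5 of \cite{he2021new}), moving a constraint point to a larger abscissa weakens it and can only lower the envelope. The statement you actually apply in your iterative paragraph is the correct one, and the one you need: a value sitting at a larger bound may be relocated to a smaller bound $\breve a_{ij'}$ ($j'<j$) that still dominates it, and this weakly raises the envelope; correspondingly, the mismatch set driving the iteration should be $\{j \mid a'_{ij} < \text{current bound}\}$, not the reverse. Second, the relocation operation only covers indices where the bound decreases, i.e.\ $a_{ij}>a'_{i0}$, which is exactly where hypothesis (2) guarantees $u'_{ij}=u_{ij}$. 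The indices with $a_{ij}\le a'_{i0}$ are different in kind: there the bound jumps up to $a'_{i0}$ and $u'_{ij}$ need not equal $u_{ij}$, so no relocation of old values produces the new configuration. What saves this case is that $(a',u')\in\PPolytope$ forces $u'_{i0}=a'_{i0}$, so the new lifted envelope lies above the segment joining $(\breve a_{i0},\breve a_{i0})$ to $(a'_{i0},a'_{i0})$, while every old point with $a_{ij}\le a'_{i0}$ lies on or below that segment because $u_{ij}\le a_{ij}$; hence the new envelope dominates all old points on this lower portion regardless of the values $u'_{ij}$ there. This is precisely the $j=0$ case treated separately in the paper's proof; your phrase ``the corresponding mass can be shifted upward'' must be replaced by this argument for the proof to close.
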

\begin{proof}
We construct $\baa$ by merge sorting $a$ and $a'$ and removing any replicates. We lift $(a,u)$ (resp. $(a',u')$) into $(\baa,\breve{u})$ (resp. $(\baa', \breve{u}')$) using the transformation \eqref{eq:defineu} in Lemma~\ref{lemma:lifta} and compute $\breve{s}'$ (resp. $\breve{s}$) so that $\breve{s}'_{ij} = \conc(\xi_{i,\breve{a}_i})\bigl(\breve{a}_{ij};\breve{u}'_i\bigr)$ and $\breve{s}_{ij} = \conc(\xi_{i,\breve{a}_i})\bigl(\breve{a}_{ij};\breve{u}_i\bigr)$. Throughout, wherever we use Lemma~\ref{lemma:lifta} to lift, say $(\bar{a},\bar{u})$ to $(\breve{a},\tilde{u})$, we use the argument at the beginning of the proof, to combine repeated entries in $\bar{a}$ and correspondingly modify the $\bar{u}$ coordinate so that it is the maximum among for the repeated entries. Then, we will show that 
\begin{enumerate}
    \item\label{itm:equalconv} $\conc_{Q'}(\ephi)(s') = \conv_{Q^\baa}(\ephi)(\breve{s}')$ and $\conc_{Q}(\ephi)(s') = \conv_{Q^\baa}(\ephi)(\breve{s})$,
    \item\label{itm:sincrease} $\breve{s}_i\le \breve{s}'_i$ for all $i$,
    \item\label{itm:sendequal} $\breve{s}_{i\breve{n}_i} =  \breve{s}'_{i\breve{n}_i}$ for all $i$.
\end{enumerate}
With these relations, the result follows easily using the following argument:
\begin{equation*}
    \conc_{Q}(\ephi)(s) =  \conc_{Q_\baa}(\ephi)(\breve{s} ) \le \conc_{Q_\baa}(\ephi)(\breve{s}') = \conc_{Q'}(\ephi)(s'),
\end{equation*}
where the inequality follows from Proposition 8 in~\cite{he2021new} since $\breve{s}_i \leq \breve{s}'_i$ with $\breve{s}_{i\breve{n}_i} = \breve{s}'_{i\breve{n}_i}$. 

Now, we prove the claimed Items~\ref{itm:equalconv}-\ref{itm:sendequal}. Item~\ref{itm:equalconv} follows directly from Lemma~\ref{lemma:lifta}. Item~\ref{itm:sendequal} follows since $\breve{s}_{i\breve{n}_i} = u_{in} = u'_{in}=\breve{s}'_{i\breve{n}_i}$, where the first and last equality follow since $a_{in}$ (resp $a'_{in}$) are the largest coordinates and the concave envelope and, by \eqref{eq:hy} and \eqref{eq:defineu}, $\breve{s}_{i\breve{n}_i}=\conc(\xi_{i,\breve{a}_i})\bigl(a_{in};\breve{u}_i\bigr) = \breve{u}_{i\breve{n}_i} = u_{in}$ (resp. $\breve{s}'_{i\breve{n}_i}=\conc(\xi_{i,\breve{a}_i})\bigl(a'_{in};\breve{u}'_i\bigr) = \breve{u}'_{i\breve{n}_i} = u_{in}$). Finally, we show Item~\ref{itm:sincrease}. To see this, we obtain $(a',u')$ from $(a,u)$ in a series of steps. 
Assume at the beginning of the $k^{\text{th}}$ step we have $(a^{k-1},u^{k-1})$. Then, we find the lexicographically minimum $(i,j)$ such that $(a^{k-1}_{ij},u^{k-1}_{ij})\ne (a'_{ij},u'_{ij})$ and construct $(a^k,u^k)\in \PPolytope$ so that it satisfies a few conditions. First, the set of unmatched pairs $(i',j')$ where $(a^{k},u^{k})$ differs from $(a',u')$ strictly reduces and, in particular, $(a^{k}_{ij},u^{k}_{ij}) = (a'_{ij},u'_{ij})$. Second, $(a^k,u^k)$ is constructed so that $u^k_{ij'} = u_{ij'} = u'_{ij'}$ when $j' > 0$. Third, if $(a^{k}_{i0},u^k_{i0}) = (a'_{i0},u'_{i0})$ then $(a^{k}_{ij'},u^k_{ij'}) = (a'_{ij'},u'_{ij'})$ for all $j'$ such that $a_{ij'} < a'_{i0}$.
As before, we lift each $(a^r,u^r)$ into $(\baa,\breve{u}^r)$ using the transformation \eqref{eq:defineu} in Lemma~\ref{lemma:lifta}. For each $r$ and $i'$, let $h^r_{i'}(\cdot) = \conc(\xi_{i',\breve{a}_{i'}})\bigl(\cdot;\breve{u}^r_{i'}\bigr)$ be as given in \eqref{eq:hy}. We only need to show that the function $h^r_{i'}(\cdot)$ is non-decreasing in $r$.  Since the number of unmatched pairs decreases the process converges in finite number of steps, say $k'$. Then, we have $\breve{s}_{i'} = h^0_{i'}(\breve{a}) \le h^{k'}_{i'}(\breve{a}) = \breve{s}'_{i'}$.

Now, we define $(a^k_{i'},u^k_{i'}) = (a^{k-1}_{i'},u^k_{i'})$ for $i' \ne i$.  First, consider the case $j=0$. Let $(a^k_{i0},u^k_{i0}) = (a'_{i0},a'_{i0})$, for all $j'>0$ such that $a_{ij'}\le a'_{i0}$,  $(a^k_{ij'},u^k_{ij'}) = (a'_{i0},u'_{ij'})$, and $(a^k_{ij'},u^k_{ij'}) = (a^{k-1}_{ij'},u^{k-1}_{ij'})$ otherwise. Since $j=0$, we have $(a^{k-1}_{i0},u^{k-1}_{i0}) = (a_{i0},a_{i0})$. Since Lemma~\ref{lemma:lifta} adds a point $(a_{i0},a_{i0})$ during the lifting, we have $h^k_i(a^{k-1}_{i0})  = h^k_i(a_{i0}) \ge a_{i0} = u^{k-1}_{i0}$. Moreover, if $j' > 0$ and $a_{ij'}\le a'_{i0}$ we have $h^k_i(a^{k-1}_{ij'}) \ge (1-\lambda)a_{i0} + \lambda a'_{i0} = a^{k-1}_{ij'}\ge u^{k-1}_{ij'}$, where $\lambda = \frac{a^{k-1}_{ij'}-a_{i0}}{a'_{i0}-a_{i0}}$. The first inequality is because $h^r_i(\cdot)$ a concave function whose epigraph contains $(a_{i0},a_{i0})$ and $(a'_{i0},a'_{i0})$, and the second inequality is because $(a^{k-1},u^{k-1})\in \PPolytope$. For $j'$ such that $a_{ij} > a'_{i0}$, we have $h^k_i(a^{k-1}_{ij'}) =h^k_i(a^k_{ij'}) \ge u^k_{ij'} = u^{k-1}_{ij'}$. Therefore, it follows that $h^k_i(\cdot)\ge h^{k-1}_i(\cdot)$. Moreover, it is easy to verify that $(a^k,u^k)$ satisfies the conditions so that unmatched pairs reduce, $(i,j)$ is no longer an unmatched pair, and $(i,j')$ is not unmatched if $a_{ij'} < a'_{i0}$. We argue that $(a^k,u^k)\in \PPolytope$. Clearly, $a^k_i$ is sorted and $u^k_{i0} = a^k_{i0}$. Now, consider $j'>0$. If $a_{ij'}\le a'_{i0}$, we have $u^k_{ij'}=u'_{ij'}\le \min\{u'_{in},a'_{ij'}\} =\min\{u^{k}_{in}, a^k_{ij'}\}$. Otherwise, $u^k_{ij'}=u^{k-1}_{ij'}\le \min\{u^{k-1}_{in}, a^{k-1}_{ij'}\} = \min\{u^{k}_{in}, a^k_{ij'}\}$. 
%For $j' >0$ such that $a_{ij'} > a'_{i0}$, we have $u^k_{ij'} = u^{k-1}_{ij'}\le \min\{u^{k-1}_{in}, a^{k-1}_{ij'}\}=\min\{u^{k}_{in},a^k_{ij'}\}$.

Now, assume that $j>0$. By construction, $j$ must be such that $a_{ij} > a'_{i0}$. Then, we construct $(a^k_{ij},u^k_{ij}) = (a'_{ij},u'_{ij})$ and $(a^k_{i'j'},u^k_{i'j'}) = (a^{k-1}_{ij},u^{k-1}_{ij})$ otherwise. Then,  for $j'\ne j$, we have $h^k_i(a^{k-1}_{ij'}) = h^k_i(a^k_{ij'}) \ge u^k_{ij'} = u^{k-1}_{ij'}$. Also, $h^k_i(a^{k-1}_{ij}) \ge h^k_i(a^k_{ij})\ge u^k_{ij} = u^{k-1}_{ij}$, where the first inequality is because $h^k_i(\cdot)$ is non-decreasing by Lemma~5 of \cite{he2021new} and $a^k_{ij} = a'_{ij} \le a_{ij} = a^{k-1}_{ij}$. The last equality is because of the hypothesis that $u_{ij}=u'_{ij}$. To see that $(a^k,u^k)\in \PPolytope$, observe that $a^k_{ij} = a'_{ij} \ge a'_{i,j-1} = a^{k-1}_{i,j-1} = a^{k}_{i,j-1}$, where the inequality is by the definition of $a'$, and the second equality is because $(i,j)$ is the lexicographically minimum unmatched pair. Further, $u^k_{ij} = u'_{ij} \le \min\{u'_{in},a'_{ij}\} = \min\{u^k_{in}, a^k_{ij}\}$, where the inequality is because $(a',u')\in \PPolytope$. For $j'\ne j$, we have $u^k_{ij'}=u^{k-1}_{ij'}\le \min\{u^{k-1}_{in},a^{k-1}_{ij'}\} = \min\{u^k_{in}, a^k_{ij'}\}$, where the inequality is because $(a^{k-1},u^{k-1})\in \PPolytope$. \Halmos
\end{proof}

Then, as in the proof of Theorem~\ref{them:DCR}, it is easy to show that given a point $(\bar{x},\bar{\phi},\bar{u})$, such that $\bar{\phi} = \phi\circ f (\bar{x})$ and $\bar{u} = u(\bar{x})$, there is a point $(\bar{x},\bar{\phi},\bar{s},\bar{z},\bar{\delta})\in \SSb$ where the $u$ in constraints \eqref{eq:DCRLemmaRemoveReplicate} can be chosen to be $\bar{u}$. Then, as described at the beginning of the proof, this point can be lifted to belong to $\SSa$ showing that \eqref{eq:DCR+} is an MICP relaxation of the hypograph of $\phi\circ f$. Then, after removing repeated entries in $a'$ as described above, the proof of Proposition~\ref{prop:eval} shows that $\phi_{{\cal H}_+}(\bar{x},\bar{f})=\conc_{Q'}(\bar{\phi})(s')$.  
Also, Proposition~\ref{prop:eval} shows that $\phi_{{\cal H}}(\bar{x},\bar{f}) = \conc_{Q}(\ephi)(s)$ where $s_{ij} =\conc(\xi_{i,a_i})(a_{ij};u_i^*)$, where $u_i^*$ is defined therein. Observe that if $\delta_{ik}=1$ and $\delta_{ik+1}=0$, we have $a'_{i0} = a_{i\tau(i,k)}$ because $b_{i0k} = a_{i\tau(i,k)} - a_{i\tau(i,k-1)}$. Since $(a,u^*)$ and $(a',\tilde{u})$ satisfy the hypothesis in Lemma~\ref{lemma:improvement-bdd}, it shows that $\phi_{{\cal H}_+}(\bar{x},\bar{f}) \le \phi_{{\cal H}}(\bar{x},\bar{f})$.\Halmos
\begin{OldVersion}
=========================

\begin{lemma}\label{lemma:lifta}
Let $\baa = (\baa_1, \ldots,\baa_d) \in \R^{\sum_{i=1}^d(\breve{n}_i+1)}$ and $\ba = (\ba_1, \ldots, \ba_d) \in  \R^{d \times (n+1)}$ so that for each $i$ there exists an index set $K_i \subseteq \{0, \ldots, \breve{n}_i\} $ such that $\baa_{iK_i} = \ba_i$.  Let  $P_{\ba}$ (resp. $P_{\baa}$) be the polytope defined as in in~(\ref{eq:P-3}) with $a=\ba$ (resp. $a = \baa$). For $\bar{u} \in  P_{\ba}$, we modify $\bar{u}$ to construct $\breve{u}$ in $P_{\baa}$, where $\breve{u} = (\breve{u}_1, \ldots, \breve{u}_d)$ such that for each $i$, $\breve{u}_{iK_i} =  \bar{u}_{i}$ and, for $j \notin K_i$,
\begin{equation*}
\breve{u}_{ij} = \left\{\begin{alignedat}{2}
&\baa_{i0} && \quad\text{if } j=0\\
&\bar{u}_{in} && \quad\text{if } j= \breve{n}_i \\
&\baa_{i0}-\epsilon && \quad\text{if } j > 0, j < \breve{n}_i .
\end{alignedat}\right.  
\end{equation*}
For each $i$, let $\breve{s}_{ij} = \conc(\xi_{i,\baa_i})(\baa_{ij} ;\breve{u}_i)$ for  $j \in \{0, \ldots, \breve{n}_i \}$ and $\bar{s}_{ik} = \conc(\xi_{i,\ba_i})(\ba_{ik};\bar{u}_i)$ for $k \in \{0, \ldots, n\}$. Then, $\conc_{Q_{\baa}}(\ephi)(\breve{s}) =
\conc_{Q_{\ba}}(\ephi)(\bar{s})$, where $Q_{\ba}$ (resp. $Q_{\baa}$) be the Cartesian product of simplices defined with $\ba$ (resp. $\baa$).
\end{lemma}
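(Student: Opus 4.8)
The plan is to reduce the claimed equality of two $d$-dimensional concave envelopes to the affine isomorphism, supplied by Lemma~\ref{lemma:iso}, between a face of the larger simplotope $Q_{\baa}$ and the smaller simplotope $Q_{\ba}$. Concretely, I would argue that the lifted point $\breve{s}$ lands exactly on the face $F_K$ of $Q_{\baa}$ (in the notation of Lemma~\ref{lemma:iso}, the face spanned in each coordinate $i$ by the vertices indexed by $K_i$), that under the affine map $A_K$ of \eqref{eq:Q-F-iso} this face is carried isomorphically onto $Q_{\ba}$, and that $A_K^{-1}(\breve{s})=\bar{s}$. Since a concave envelope restricted to a face agrees with the envelope taken over that face, and the isomorphism of Lemma~\ref{lemma:iso} preserves envelope values, the two evaluations coincide.

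The heart of the argument is a one-dimensional computation for each coordinate $i$. I would first show that $\conc(\xi_{i,\baa_i})(\cdot;\breve{u}_i)$ equals the function $h_i$ obtained by extending the original univariate envelope: $h_i(y)=y$ for $y\le\ba_{i0}$, $h_i(y)=\conc(\xi_{i,\ba_i})(y;\bar{u}_i)$ for $y\in[\ba_{i0},\ba_{in}]$, and $h_i(y)=\bar{u}_{in}$ for $y\ge\ba_{in}$. Concavity of $h_i$ follows from the fact that the successive slopes of $\bar{s}_i$ lie in $[0,1]$ and are non-increasing (Lemma~5 of \cite{he2021new}), so the appended slope-$1$ segment at the bottom and slope-$0$ segment at the top splice on concavely. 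I then verify $h_i(\baa_{ij})\ge\breve{u}_{ij}$, with strict inequality whenever $j\notin K_i\cup\{0,\breve{n}_i\}$; this is exactly where the choice $\breve{u}_{ij}=\baa_{i0}-\epsilon$ matters, since those points sit strictly below $h_i$ and hence never enter the concave envelope. Because $h_i$ is concave, overestimates $\breve{u}_i$, and its hypograph is generated precisely by the interpolated points retained in the envelope construction, $h_i$ is exactly $\conc(\xi_{i,\baa_i})(\cdot;\breve{u}_i)$; in particular $\breve{s}_{iK_i}=\bar{s}_i$.

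Having identified $\breve{s}_{iK_i}=\bar{s}_i$, I would read off $\breve{s}=A_K(\bar{s})$ and $\bar{s}=A_K^{-1}(\breve{s})$ directly from \eqref{eq:Q-F-iso}. The boundary behaviour of $h_i$ (slope $1$ across $[\baa_{i0},\baa_{i1}]$ when $0\notin K_i$, and slope $0$ across the top edge when $\breve{n}_i\notin K_i$), together with Proposition~5 of \cite{he2021new}, certifies that $\breve{s}$ lies on the face $F_K$ of $Q_{\baa}$ rather than merely on the larger face $F_{K\cup\{0,\breve{n}_i\}}$. Lemma~\ref{lemma:iso} then yields $\conc_{Q_{\baa}}(\ephi)(\breve{s})=\conc_{F_K}(\ephi)(\breve{s})=\conc_{Q_{\ba}}(\ephi)(\bar{s})$, where the first equality is because $F_K$ is a face of $Q_{\baa}$ and the second is the affine isomorphism of hypographs (Lemma~8 of \cite{he2021new}).

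The step I expect to be the main obstacle is the univariate envelope identification $h_i=\conc(\xi_{i,\baa_i})(\cdot;\breve{u}_i)$: one must simultaneously confirm concavity of the spliced function, show that the auxiliary low values $\baa_{i0}-\epsilon$ do not perturb the envelope, and---most delicately---check the endpoint behaviour so that $\breve{s}$ genuinely sits on the tighter face $F_K$ rather than a larger face. It is precisely this face identification that makes the affine isomorphism of Lemma~\ref{lemma:iso} applicable and thereby closes the argument.
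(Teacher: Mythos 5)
Your proposal is correct and takes essentially the same route as the paper's own proof: the same spliced univariate function $h_i$ (identity below $\bar{a}_{i0}$, the original envelope in the middle, constant above $\bar{a}_{in}$), the same verification that the dummy values $\breve{a}_{i0}-\epsilon$ lie strictly below $h_i$ so that $\conc(\xi_{i,\breve{a}_i})(\cdot;\breve{u}_i)=h_i$ and hence $\breve{s}_{iK_i}=\bar{s}_i$, and the same conclusion via membership of $\breve{s}$ in the face $F_K$ and the affine isomorphism of concave-envelope hypographs from Lemma~\ref{lemma:iso}. No gaps to report.
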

\begin{proof}
\begin{comment}
Let $\baa_{iK_i} = \ba_i$. We lift \eqref{eq:ordered-oa} defined with $(a,u)=(\ba,u^{\ba})$ to that with $(a,u)=(\baa,u^{\baa})$ by defining $u^{\baa}_{ij}$ as follows:
\begin{equation*}
u^{\baa}_{ij} = \left\{\begin{alignedat}{2}
&u^{\ba}_{ij} &\quad& j\in K_i\\
&\baa_{i0}-\epsilon && j\not\in K_i, j > 0, j < |\baa_i| \\
&\baa_{i0} && j\not\in K_i, j=0\\
&u^{\ba}_{in} &&j\not\in K_i, j=|\baa_i|.
\end{alignedat}\right.  
\end{equation*}
Let $Q_{\ba}$ (resp. $Q_{\baa}$) be the simplex defined with $\ba$ (resp. $\baa$). Let $s^{\baa}_i = \conc(\xi_{i\baa})(\cdot,u^{\baa})$ and $s^{\ba}_i = \conc(\xi_{i\ba})(\cdot,u^{\ba})$, where $\xi_{i\ba}(y,u^{\ba})=u^{\ba}_{ij}$ if $y=\ba_{ij}$.
\end{comment}
We start with showing that $\breve{s} = A_K(\bar{s})$ and $\bar{s} = A^{-1}_K(\breve{s})$, where $A_K$ is defined as in~(\ref{eq:Q-F-iso}). To prove this, it suffices to show that $\conc(\xi_{i,\baa_i})(y;\breve{u}_i) = h_i(y)$ for every $y \in [\baa_{i0}, \baa_{i\breve{n}_i}]$, where
%To see this, observe that $s^{\baa}_{iK_i} = s^{\ba}_i$. 
%Consider any $j$ such that $\baa_{ij}\in [\ba_{i0},\ba_{in}]$. Let 
\begin{equation*}
    h_i(y) = \left\{\begin{alignedat}{2}
    &y &\quad& y\le \ba_{i0}\\
    &\conc(\xi_{i,\ba_i})(y;\bar{u}_i) && y\in [\ba_{i0},\ba_{in}]\\
    &\bar{u}_{in} && y\ge \ba_{in}.\\
    \end{alignedat}\right.
\end{equation*}
Clearly, we have $h_i(y) \leq \conc(\xi_{i,\baa_i})(y;\breve{u}_i)$ since the envelope is constructed by including all extremal in the hypograph of $h_i(y)$. To show $h_i(y) \geq \conc(\xi_{i,\baa_i})(y;\breve{u}_i)$, it suffices to show that $h_i(y)$ is concave and $h(\baa_{ij})\ge \breve{u}_{ij}$. The concavity of $h_i$ follows from $1\ge \frac{\bar{s}_{i1}- \bar{s}_{i0}}{\ba_{i1}-\ba_{i0}}\ge \frac{\bar{s}_{in}- \bar{s}_{i,n-1}}{\ba_{in}-\ba_{i,n-1}}\ge 0$. Next, we show  $h_i(\baa_{ij})\ge \breve{u}_{ij}$, where the inequality is strict if $j \notin  K_i \cup \{0\} \cup \{\breve{n}_i\}$. If $j\in K_i$, the result follows since $h_i(\baa_{ij}) = \conc(\xi_{i,\ba_i})(\baa_{ij};\bar{u}_i)$ and the concave envelope construction used the point $(\baa_{ij}, \breve{u}_{ij})$. So, let $j\not\in K_i$. If there does not exist $j'\in K_i$ such that $j' < j$, then $h_i(\baa_{ij}) = \baa_{ij}\ge \baa_{i0}$, where the last ineqaulity is strict if $j > 0$. Now, assume that there exist $j'\in K_i$ such that $j' < j$ and $j<\breve{n}_i$. Then, $h_i(\baa_{ij})\ge \bar{a}_{i0} > \breve{u}_{ij}$. Finally, if $j= \breve{n}_i$, we have $h_i(\baa_{ij})=\bar{u}_{in}= \breve{u}_{ij}$. 

Now, by Lemma~\ref{lemma:iso}, $\breve{s}$ belongs to the face $F_K$ of $Q_{\breve{a}}$. Therefore, we obtain $\conc_{Q_{\baa}}(\ephi)(\breve{s}) = \conc_{F_K}(\ephi)(\breve{s}) = \conc_{Q_{\ba}}(\bar{s})$, where the second equality follows from Lemma~\ref{lemma:iso}.  \Halmos
\end{proof}

\begin{lemma}\label{lemma:moveu}
Let $u_i$ be a vector in $P_i \subseteq \R^{n+1}$ defined as in~(\ref{eq:P-3}) with vector $a_i$, and  consider two distinct indices $j',j''$ such that $j'<j''$, $j' \neq 0$ and $u_{ij''} \leq a_{ij'}$. Let $\hat{u}_i$ be defined so that $\hat{u}_{ij} = u_{ij}$ for $j \notin \{j', j''\}$, $\hat{u}_{ij'} = \max\{u_{ij'}, u_{ij''}\}$, and  $\hat{u}_{ij''} = a_{i0} - \epsilon$ if $j'' < n_i$ and  $\hat{u}_{ij''} = u_{in}$ if $j'' = n$. Then, $\conc(\xi_{i,a_i})(y;\hat{u}_i)\ge \conc(\xi_{i, a_i})(y;u_i)$ where the equality holds when $y = a_{in}$.
\end{lemma}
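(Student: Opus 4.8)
The plan is to exploit the defining property of $\conc(\xi_{i,a_i})(\cdot\,;v)$ as the least concave function on $[a_{i0},a_{in}]$ that majorizes the discrete data $\{(a_{ij},v_{ij})\}_{j=0}^{n}$. Writing $h:=\conc(\xi_{i,a_i})(\cdot\,;u_i)$ and $h':=\conc(\xi_{i,a_i})(\cdot\,;\hat u_i)$, the function $h'$ is concave by construction, so to obtain $h'\ge h$ on the whole interval it suffices to check that $h'$ lies above every data point that defines $h$, i.e.\ that $h'(a_{ij})\ge u_{ij}$ for all $j$. The point of the lemma is that reassigning the mass of index $j''$ to the smaller index $j'$ and discarding $j''$ can only raise the envelope.

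First I would dispose of the routine indices. For $j\notin\{j',j''\}$ we have $\hat u_{ij}=u_{ij}$, hence $h'(a_{ij})\ge\hat u_{ij}=u_{ij}$ directly from the majorization property; and for $j=j'$, $h'(a_{ij'})\ge\hat u_{ij'}=\max\{u_{ij'},u_{ij''}\}\ge u_{ij'}$. The only substantive index is $j''$. If $j''=n$ then $\hat u_{in}=u_{in}$ and $h'(a_{in})\ge u_{in}$ is immediate. If $j''<n$, the value at $j''$ has been pushed to $a_{i0}-\epsilon$, below the left endpoint, so it drops out of the upper hull; here I would argue $h'(a_{ij''})\ge h'(a_{ij'})\ge\hat u_{ij'}=\max\{u_{ij'},u_{ij''}\}\ge u_{ij''}$, the first inequality using $a_{ij'}\le a_{ij''}$ (recall $j'<j''$) together with monotonicity of $h'$.

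The hard part will be justifying that $h'$ is non-decreasing, because $\hat u_i$ is \emph{not} a member of $P_i$: the coordinate $\hat u_{ij''}=a_{i0}-\epsilon$ violates the lower bound in \eqref{eq:P-3}. I would handle this by observing that every coordinate of $\hat u_i$ still satisfies $\hat u_{ij}\le u_{in}=\hat u_{in}$ (for $j'$ this uses $u_{ij'},u_{ij''}\le u_{in}$, and for $j''<n$ it is clear), so the largest data value $u_{in}$ is attained at the right endpoint $a_{in}$. Since the constant $u_{in}$ is a concave majorant of the data, the maximum of $h'$ over $[a_{i0},a_{in}]$ equals $u_{in}$ and is attained at $a_{in}$; a concave function whose maximum sits at the right end of its domain is non-decreasing. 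This monotonicity is exactly the content of Lemma~5 of \cite{he2021new}, which I would cite after verifying its hypothesis $\hat u_{ij}\le\hat u_{in}$. I would also remark here why $j'\ne0$ is assumed: it ensures the reassignment $\hat u_{ij'}=\max\{u_{ij'},u_{ij''}\}$ leaves the fixed left-endpoint anchor $\hat u_{i0}=u_{i0}=a_{i0}$ untouched, so $h'$ is a legitimate concave envelope over the same interval.

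Finally, the equality at $y=a_{in}$ is a boundary fact. Because $a_{in}$ is the strictly largest abscissa, the unique hull point over $x=a_{in}$ is the data point itself, so any such envelope evaluated at the right endpoint returns its top value; as $\hat u_{in}=u_{in}$ in every case, we get $h'(a_{in})=u_{in}=h(a_{in})$, which completes the argument.
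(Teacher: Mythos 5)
Your proof is correct and follows essentially the same route as the paper's: reduce to checking that the envelope of $\hat{u}_i$ majorizes the original data at every breakpoint, then settle the indices $j',j''$ via the chain $h'(a_{ij}) \ge h'(a_{ij'}) \ge \hat{u}_{ij'} = \max\{u_{ij'},u_{ij''}\} \ge u_{ij}$, with monotonicity of the envelope supplying the first inequality. The only differences are that you justify that monotonicity from first principles (all data values are bounded by $\hat{u}_{in}$, so the concave envelope peaks at the right endpoint and is hence non-decreasing) — a worthwhile precaution, since $\hat{u}_i \notin P_i$ and the paper simply cites Lemma~5 of \cite{he2021new} — and that you spell out the endpoint equality at $y=a_{in}$, which the paper leaves implicit; these are refinements of the same argument, not a different one.
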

\begin{compositeproof}
Let $h(\cdot) = \conc(\xi_{i,a_i})(\cdot;\hat{u}_i)$. It suffices to show that  $h(a_{ij})\ge u_{ij}$ for $j \in \{j',j''\}$. This follow from $h(a_{ij}) \ge h(a_{ij'})\ge \hat{u}_{ij'} = \max\{u_{ij'},u_{ij''}\}\ge \hat{u}_{ij}$ for $j \in \{j',j''\}$, where the first inequality is because $h(\cdot)$ is non-decreasing by Lemma 5 of~\cite{he2021new}, the second inequality is by concave envelope construction, and the equality and the last inequality hold by definition. \Halmos
\end{compositeproof}

Here, we prove Lemma~\ref{lemma:improvement-bdd}. First, we construct $\baa$ by merge sorting $a$ and $a'$ and removing any replicates. Then, we lift $(a,u)$ (resp. $(a',u)$) to $(\baa, \breve{u})$ (resp. $(\baa, \breve{u}')$) as in Lemma~\ref{lemma:lifta}. Let $\breve{s}_{ij} = \conc(\xi_{i,\baa_i})(\baa_{ij};\breve{u}_{i})$, and $\breve{s}'_{ij} = \conc(\xi_{i,\baa_i})(\baa_{ij};\breve{u}'_{i})$. It follows from Lemma~\ref{lemma:lifta} that $\conc_{Q_\baa}(\ephi)(\breve{s}) = \conc_{Q}(\ephi)(s)$ and $\conc_{Q_\baa}(\ephi)(\breve{s}') = \conc_{Q'}(\ephi)(s')$. Next, we show that $\breve{s}_i \leq \breve{s}'_i$ with the equality holds for the last coordinate. To do so, we carry out the following procedure. Initialize $k=0$, $a^0_i=a_i$, and let $u^0_i$ be obtained by lifting $(a_i,u_i)$ to $(\baa_i,u^0_i)$ as in Lemma~\ref{lemma:lifta}. Let $J^k=\{j\mid a'_{ij} < a^k_{ij}\}$. As long as $J^k$ is not empty, let $\bar{j}$ be the smallest index in $J^k$. Then, there exist $j', j''$ with $j' < j'' \neq 0$  such that $\baa_{ij''} = a^k_{i\bar{j}}$ and $\baa_{ij}= a'_{i\bar{j}}$. Moreover, $u^k_{ij''} = u_{i\bar{j}} \leq a'_{i\bar{j}} =\baa_{ij'}$. Therefore, using the construction in Lemma~\ref{lemma:moveu}, we obtain $u_i^{k+1}$ and observe that $\conc(\xi_{i,\baa_i})(\cdot;u^{k+1}_i)\ge \conc(\xi_{i,\baa_i})(\cdot;u^{k}_i)$. Let $a^{k+1}_{ij} = a_{ij}$ for $j\neq \bar{j}$ and $a^{k+1}_{i\bar{j}} = a'_{i\bar{j}}$, increase $k$ by $1$ and iterate.  Since $\bar{j} \in J^k \setminus J^{k+1}$ implies $|J^{k+1}|<|J^{k}|$, it follows that the procedure terminates in $t$ iterations. Now, observe that $\breve{s}_{ij}$ (resp. $\breve{s}'_{ij}$) is obtained by evaluating $\conc(\xi_{i\baa_i})(\cdot;u^{0})$ (resp. $\conc(\xi_{i\baa_i})(\cdot;u^{t})$) at $\baa_{ij}$. Therefore,  $\breve{s}_i \leq \breve{s}'_i$ with the equality holds for the last coordinate. Last, we complete the proof by observing that
\begin{equation*}
    \conc_{Q}(\ephi)(s) =  \conc_{Q_\baa}(\ephi)(\breve{s} ) \le \conc_{Q_\baa}(\ephi)(\breve{s}') = \conc_{Q'}(\ephi)(s'),
\end{equation*}
where the inequality follows from Proposition 8 in~\cite{he2021new}. \Halmos

\end{OldVersion}
\subsection{Proof of Proposition~\ref{prop:MIP-phi-lambda-log}}\label{app:MIP-phi-lambda-log}

Let $E$ denote the above formulation. As mentioned above, \eqref{eq:SOS2-log} is an ideal formulation for $\lambda_i\in \Lambda_i$, where $\lambda_i$ additionally satisfy SOS2 constraints \cite{vielma2011modeling}. Then, the validity of $E$ follows directly from Proposition~\ref{prop:MIP-phi-z-simu} by considering the transformation $T$ that maps $\Delta$ to $\Lambda$ defined as $\lambda_{ij} = z_{ij} - z_{ij+1}$ for $j \leq n-1$ and $\lambda_{in} = z_{in}$, and its inverse $T^{-1}$ defined as $z_{ij} = \sum_{j' = j}^n \lambda_{ij'}$ for all $j$. Similar to the ideality proof of Theorem~\ref{them:MIP-phi-z}, the ideality of the formulation can be established by applying Lemma~\ref{lemma:ideal} recursively because, by assumption, for every extreme point $(\lambda, \theta) $ of the convex hull of $\hypo(\theta \mcirc A)$, we have that $\lambda \in \vertex(\Lambda)$, which is a subset of $\{0,1\}^{d \times (n+1)}$,
%\[
%\proj_\lambda\biggl(\vertex\Bigl(\conv\bigl(\hypo(\theta\mcirc A)\bigr)\Bigr)\biggr) = \vertex(\Lambda) \subseteq \{0,1\}^{d \times (n+1)},
%\]
 and, for each $i$ and $k$, the inequality $\sum_{j \notin L_{ik}} \lambda_{ij} \leq 1 - \sum_{j \notin R_{il}} \lambda_{ij}$ in~(\ref{eq:SOS2-log}) defines a face of $\Lambda$.\Halmos

\bibliographystyle{plain} 
\bibliography{arxSubmit}	

\end{document}